\theoremstyle{plain}
\newtheorem{remark}{Remark}
\newtheorem{theorem}{Theorem}
\newtheorem{lemma}{Lemma}
\newtheorem{teo}{Theorem}
\newtheorem{prop}[teo]{Proposition}
\newcommand\E{\mathbb{E}}
\newcommand{\etahatS}{\hat{\eta}_{S}}
\newcommand{\etahatH}{\hat{\eta}_{H}}
\newcommand{\etahatL}{\hat{\eta}_{L}}
\newcommand{\etahatE}{\hat{\eta}_{E}}
\DeclareMathOperator*{\argmax}{arg\,max}
\DeclareMathOperator*{\argmin}{arg\,min}
\newcommand\supp{\operatorname{supp}}
\newcommand{\lambdas}{\lambda_{*}}
\newcommand\simiid{\overset{i.i.d}{\sim}}
\newcommand{\Var}{{\rm Var}}
\newcommand{\calP}{\mathcal{P}}
\newcommand{\calM}{\mathcal{M}}
\newcommand{\R}{\mathbb{R}}
\newcommand{\calN}{\mathcal{N}}
\newcommand*{\rom}[1]{\expandafter\@slowromancap\romannumeral #1@}
\newcounter{relctr} 
\everydisplay\expandafter{\the\everydisplay\setcounter{relctr}{0}} 
\newcommand\labelrel[2]{%
  \begingroup
    \refstepcounter{relctr}%
    \stackrel{\textnormal{(\alph{relctr})}}{\mathstrut{#1}}%
    \originallabel{#2}%
  \endgroup
}
\newcommand{\secThmIV}{\ref{proof:thm4:rg1}}
\newcommand{\secThmV}{\ref{thm5:proof:sec}}
\newcommand{\secPropII}{\ref{app:proof_prop2}}
\newcommand{\secThmVI}{\ref{prf:thm:nine}}
\newcommand{\secPropIV}{\ref{sec::lasso-risk-regime_3}}
\newcommand{\secScaling}{Section \ref{sec:scaling}}
\newcommand{\thmMinimaxLower}{\ref{thm::minimax-lower-bound}}
\newcommand{\secRegimeILower}{Section \ref{sec::lower-bound-regime-1}}
\newcommand{\EqBlockPriorLower}{Equation \eqref{reduce:1d}}
\newcommand{\lemSpikeBayes}{\ref{lem::spike-bayes-risk-regime-1}}
\newcommand{\lemSpikeBayesDecompose}{ \ref{lem::spike-bayes-risk-decompose}}
\newcommand{\thmMinimax}{\ref{thm:minimax}}
\newcommand{\thmIndepLessFavor}{\ref{thm:indep_less_favor}}
\newcommand{\secPropI}{Section \ref{app:proof_lasso_regime_1}}
\newcommand{\lemPropItuning}{\ref{pre:order:tuning}}
\newcommand{\eqLemPropItuning}{Equation \eqref{eq:label:optlambda-regime_1}}
\newcommand{\eqSoftThresholdZeroDerivative}{Equation \eqref{eq::soft-thresholding-zero-derivative}}
\newcommand{\eqLassoRegimeILoII}{Equation \eqref{lasso:regime_1-lo2}}
\newcommand{\eqLimitCaseILastNew}{Equations \eqref{eq:limit:case1:last2:new}-\eqref{eq:limit:case1:last1:new}}
\newcommand{\eqAKeyMaster}{Equation \eqref{a:key:master}}
\newcommand{\lemRefineOrder}{Lemma \ref{refine:order}}
\newcommand{\eqRiskAtZertoSoftSecondMom}{Equations \eqref{risk:at:zero}-\eqref{eq::soft-thresholding-second-moment}}
\newcommand{\eqRiskOfLambdatoSoftSecondMom}{Equations \eqref{eq::soft-thresholding-risk-of-lambda}-\eqref{eq::soft-thresholding-second-moment}}
\newcommand{\lemGaussianMill}{Lemma \ref{lem::gaussian-tail-mills-ratio}}
\begin{document}
\title{Signal-to-noise ratio aware minimaxity and higher-order asymptotics}

\author{Yilin Guo, Haolei Weng, Arian Maleki
\thanks{This work is supported by NSF-DMS 2210506, and NSF-DMS 2210505.}
\thanks{Y. Guo is with the Department of Statistics, Columbia University, New York, USA. (e-mail: yilinguo97@gmail.com). H. Weng is with the Department of Statistics and Probability, Michigan State University, East Lansing, Michigan, USA. (e-mail: wenghaol@msu.edu). A. Maleki is with the Department of Statistics, Columbia University, New York, USA. (e-mail: arian.maleki@gmail.com).}
}



\maketitle

\begin{abstract}
Since its development, the minimax framework has been one of the corner stones of theoretical statistics, and has contributed to the popularity of many well-known estimators, such as the regularized M-estimators for high-dimensional problems. In this paper, we will first show through the example of sparse Gaussian sequence model, that the theoretical results under the classical minimax framework are insufficient for explaining empirical observations. In particular, both hard and soft thresholding estimators are (asymptotically) minimax, however, in practice they often exhibit sub-optimal performances at various signal-to-noise ratio (SNR) levels. The first contribution of this paper is to demonstrate that this issue can be resolved if the signal-to-noise ratio is taken into account in the construction of the parameter space. We call the resulting minimax framework the signal-to-noise ratio aware minimaxity. The second contribution of this paper is to showcase how one can use higher-order asymptotics to obtain accurate approximations of the SNR-aware minimax risk and discover minimax estimators. The theoretical findings obtained from this refined minimax framework provide new insights and practical guidance for the estimation of sparse signals.
\end{abstract}

\begin{IEEEkeywords}
Minimaxity, signal-to-noise ratio, sparsity, soft thresholding, hard thresholding, linear shrinkage, higher-order asymptotics, Gaussian sequence model.
\end{IEEEkeywords}


\section{Introduction}

\subsection{Motivation}

\IEEEPARstart{T}{he} minimax framework is one of the most popular approaches for comparing the performance of estimators and obtaining the optimal ones. Since its development, the minimax framework has been used for the study of optimality and the design of optimal estimators in a broad range of areas including, among others, classical statistical decision theory \cite{le1986asymptotic, lehmann1998theory}, non-parametric statistics \cite{johnstone19, Tsybakov:2008:INE:1522486}, high-dimensional statistics \cite{wainwright2019high}, and mathematical data science \cite{fan2020statistical}. Despite its popularity, when the parameter space is set too general, since the minimax framework focuses on particular areas of the parameter space, its conclusions can be misleading if translated and used in practice. Take the high-dimensional sparse linear regression for example. It has been proved that the best subset selection is minimax rate-optimal over the class of $k$-sparse parameters \cite{raskutti2011minimax}. Nevertheless, recent empirical and theoretical works demonstrate the inferior performance of best subset selection in low signal-to-noise ratio (SNR) \cite{hastie2020best, mazumder2022subset, zheng2017does}. The key issue in this problem is that the parameter space in the minimax analysis only incorporates sparsity structure and does not control the signal strength for non-zero components of the sparse vector.

In this paper, we focus on the popular example of the sparse Gaussian sequence model -- a special case of the sparse linear regression model with an orthogonal design. We first discuss in detail the limitations of classical minimaxity in Section \ref{sec::misleading-sparse-model}. The rest of the paper is then devoted to the development of a much more informative minimax framework that alleviates major drawbacks of the classical one. This is made possible by controlling and monitoring the signal-to-noise ratio and sparsity level through the parameter space. As will be discussed later, solving this new constrained minimax problem is much more challenging than the original minimax analysis. Hence, we resort to higher-order asymptotic analysis to obtain approximate minimax results. The conclusions of this signal-to-noise ratio aware minimax framework turn out to provide new insights into the estimation of sparse signals.  

\subsection{Classical minimaxity and its limitations in sparse Gaussian sequence model} \label{sec::misleading-sparse-model}

We consider the Gaussian sequence model:
\begin{eqnarray}\label{model::gaussian_sequence}
    y_i=\theta_i+\sigma_n z_i, \quad i=1,2,\ldots, n.
\end{eqnarray}
Here, $y=(y_1,\ldots, y_n)$ is the vector of observations, $\theta=(\theta_1,\ldots,\theta_n)$ is the unknown signal consisting of $n$ unknown parameters, $z_i$'s are i.i.d. standard Gaussian error variables, and $\sigma_n>0$ is the noise level that may vary with sample size $n$.  
The goal is to estimate $\theta$ from the sparse parameter space 
\begin{eqnarray}\label{model::sparse-parameter-space}
    \Theta(k_n)=\Big\{\theta \in \mathbb{R}^n: ~\| \theta\|_0\leq k_n\Big\},
\end{eqnarray}
where $\norm{\theta}_{0}$ denotes the number of non-zero components of $\theta$, and the sparsity $k_n$ is allowed to change with $n$. The most popular approach for studying this estimation problem and obtaining the optimal estimators is the {\em minimax} framework. Considering the squared loss, the minimax framework aims to find the estimator that achieves the minimax risk given by   
\begin{eqnarray}\label{eq::sparse-minimax}
    R(\Theta(k_n),\sigma_n)=\inf_{\hat{\theta}}\sup_{\theta\in \Theta(k_n)}\mathbb{E}_{\theta}\|\hat{\theta}-\theta\|_2^2,
\end{eqnarray}
where $\mathbb{E}_{\theta}(\cdot)$ is the expectation taken under \eqref{model::gaussian_sequence} with true parameter value $\theta$. 



Gaussian sequence model plays a fundamental role in non-parametric and high-dimensional statistics. There exists extensive literature on the minimax estimation of $\theta$ or its functionals over various structured parameter spaces such as Sobolev ellipsoids, hyperrectangles and Besov bodies. These parameter spaces usually characterize the smoothness properties of functions in terms of their Fourier or wavelet coefficients. We refer to \cite{gine2021mathematical, johnstone19, Tsybakov:2008:INE:1522486} and references therein for a systematic treatment of this topic. The estimation problem over $\Theta(k_n)$ has been also well studied in statistical decision theory (e.g., with application to wavelet signal processing) since 1990s. Define the soft thresholding estimator $\hat{\eta}_S(y,\lambda)\in \mathbb{R}^n$ and hard thresholding estimator $\hat{\eta}_H(y,\lambda)\in \mathbb{R}^n$ with coordinates: for $1\leq i \leq n$,
\begin{align}
     [ \hat{\eta}_{S}(y,\lambda) ]_{i} &= \argmin _{\mu\in \mathbb{R}} ~(y_i-\mu)^{2} + 2\lambda|\mu| \nonumber \\ 
     &= {\rm sign}(y_i)(|y_i|-\lambda)_{+}, \label{eq::softthresh} \\
     [ \hat{\eta}_{H}(y,\lambda) ]_{i} &= \argmin _{\mu\in \mathbb{R}}~(y_i-\mu)^{2} + \lambda^{2} I(\mu\neq 0) \nonumber \\
     &= y_i I(|y_i|>\lambda), \label{eq::hardthresh}
\end{align}
where sign$(u), u_+$ represent the sign and positive part of $u$ respectively, $I(\cdot)$ denotes the indicator function, and $\lambda \geq 0$ is a tuning parameter. We summarize a classical asymptotic minimax result in the following theorem. 

\begin{theorem}[\cite{donoho1994minimax, donoho1992maximum, johnstone19}]\label{thm::Thm8.21-Johnstone}
Assume model \eqref{model::gaussian_sequence} and parameter space \eqref{model::sparse-parameter-space} with $ k_n/n \rightarrow 0$ as $n\rightarrow \infty$. Then the minimax risk, defined in \eqref{eq::sparse-minimax}, satisfies
\begin{equation*}
    R(\Theta(k_n),\sigma_n) = (2+o(1))\cdot \sigma_{n}^{2} k_n \log(n/k_n).
\end{equation*}
Moreover, both the soft and hard thresholding estimators with tuning $\lambda_n=\sigma_{n}\sqrt{2\log (n/k)}$ are asymptotically minimax, i.e., for $\hat{\theta}=\hat{\eta}_S(y,\lambda_n)$ or $\hat{\eta}_H(y,\lambda_n)$, it holds that
\begin{align*}
 \sup_{\theta \in \Theta(k_n)} \mathbb{E}_{\theta} \|\hat{\theta} - \theta\|_2^2~=(2+o(1))\cdot \sigma_{n}^{2} k_n \log(n/k_n).
\end{align*}
\end{theorem}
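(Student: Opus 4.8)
The plan is to establish the upper bound and the lower bound separately, the latter being the standard part and the former requiring a careful analysis of the risk of scalar thresholding rules. For the lower bound, I would reduce to a Bayesian risk computation by placing a prior on $\Theta(k_n)$ that independently assigns each coordinate the value $\mu_n$ with probability $k_n/n$ and $0$ otherwise, with $\mu_n \asymp \sigma_n\sqrt{2\log(n/k_n)}$ chosen near the "detection boundary." Since this prior is (asymptotically) supported on $\Theta(k_n)$ up to a negligible-probability event (a Chernoff bound on $\mathrm{Bin}(n,k_n/n)$ controls the mass outside), the minimax risk is bounded below by the Bayes risk, which by independence across coordinates factors into $n$ copies of a one-dimensional Bayes problem: estimating a scalar that is $\mu_n$ or $0$ from a single Gaussian observation. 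A direct computation of the posterior mean and its mean-squared error, optimized over $\mu_n$, yields the constant $2$ in front of $\sigma_n^2 k_n\log(n/k_n)$; the condition $k_n/n\to 0$ is what makes $\log(n/k_n)\to\infty$ and drives the sub-leading terms into the $o(1)$.

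For the upper bound it suffices to bound $\sup_{\theta\in\Theta(k_n)}\mathbb{E}_\theta\|\hat\theta-\theta\|_2^2$ for the soft (resp. hard) thresholding estimator at $\lambda_n=\sigma_n\sqrt{2\log(n/k)}$. By coordinate separability, $\mathbb{E}_\theta\|\hat\eta_S(y,\lambda_n)-\theta\|_2^2 = \sum_{i=1}^n r_S(\theta_i/\sigma_n,\lambda_n/\sigma_n)\sigma_n^2$, where $r_S(\mu,\lambda) = \mathbb{E}(\hat\eta_S(\mu+Z,\lambda)-\mu)^2$ is the scalar soft-thresholding risk with $Z\sim\mathcal N(0,1)$; likewise for hard thresholding. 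The worst case over $\Theta(k_n)$ is achieved by putting $k_n$ coordinates at the value maximizing $r_S(\cdot,\lambda_n/\sigma_n)$ (which one checks is $\mu=\infty$, giving risk $1+\lambda_n^2/\sigma_n^2$ for soft thresholding, i.e. $\asymp 2\log(n/k_n)$) and the remaining $n-k_n$ coordinates at $0$. The zero-signal risk $r_S(0,\lambda)$ is a Gaussian tail integral that, via Mills-ratio bounds (invoking the Gaussian tail lemma referenced later in the paper), is of order $\lambda^{-1}e^{-\lambda^2/2} = O(k_n/(n\sqrt{\log(n/k_n)}))$ per coordinate, so the $n-k_n$ null coordinates contribute $o(k_n\log(n/k_n))$ in total. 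Adding the two pieces gives $\sup_{\theta\in\Theta(k_n)}\mathbb{E}_\theta\|\hat\eta_S-\theta\|_2^2 = \sigma_n^2 k_n(1+2\log(n/k_n)) + o(\sigma_n^2 k_n\log(n/k_n)) = (2+o(1))\sigma_n^2 k_n\log(n/k_n)$, and the hard-thresholding computation is entirely parallel, with the scalar risk bounds only differing in lower-order terms.

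The main obstacle is the lower bound's sharp constant: a crude two-point or Fano argument gives the right rate $k_n\sigma_n^2\log(n/k_n)$ but not the constant $2$, so one genuinely needs the independent-coordinate prior together with an accurate asymptotic evaluation of the one-dimensional Bayes risk near the detection threshold. Getting the constant right requires tracking how the posterior mean behaves when the prior mass $k_n/n$ and the signal size $\mu_n$ are simultaneously scaled, and verifying that the optimal $\mu_n$ indeed sits at $(2+o(1))\sigma_n^2\log(n/k_n)$; this is the classical sparse-prior calculation and the place where the $k_n/n\to 0$ hypothesis is essential. The matching of upper and lower bounds then completes the proof, and the asymptotic minimaxity of the two thresholding estimators follows immediately since their worst-case risk meets the minimax risk up to the $(1+o(1))$ factor.
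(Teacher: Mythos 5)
The paper does not actually prove this theorem: it is stated as a classical result and attributed to \cite{donoho1994minimax, donoho1992maximum, johnstone19}, and the paper's only internal commentary is the later remark (in the proof of Theorem \ref{thm::regime_1}) that the i.i.d.\ univariate spike prior ``served as a suitable tool to establish a sharp lower bound for proving Theorem \ref{thm::Thm8.21-Johnstone}.'' Your proposal reconstructs exactly that classical argument, and its outline is correct: the upper bound via coordinate separability, with the worst case being $k_n$ coordinates at the risk-maximizing scalar value and $n-k_n$ at zero, and the null coordinates contributing $o(\sigma_n^2 k_n\log(n/k_n))$ by Mills-ratio bounds; the lower bound via an i.i.d.\ two-point prior and the asymptotic evaluation of the scalar Bayes risk near the detection boundary. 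Two points would need care in a full write-up. First, if each coordinate is nonzero with probability exactly $k_n/n$, the number of nonzero coordinates exceeds $k_n$ with probability roughly $1/2$, so a Chernoff bound does not make the off-support mass negligible; the standard fix is to use probability $(1-\delta_n)k_n/n$ with $\delta_n\to 0$ slowly (or to condition/truncate as in Johnstone's Chapter 8) and check this does not degrade the constant. Second, the hard-thresholding case is not ``entirely parallel'' to soft thresholding at the level of the scalar risk function: $r_H(\lambda,\mu)\to 1$ as $\mu\to\infty$ and its supremum over $\mu$ is attained at a finite $\mu$ near $\lambda$ (cf.\ the paper's Lemmas \ref{lem::hard-threshold-risk-nonasymptotic}--\ref{lem::sup-risk-hard-threshold}); the conclusion $\sup_\mu r_H(\lambda,\mu)=(1+o(1))\lambda^2$ still holds, but it requires the two-sided bound $\bar r_H$ rather than monotonicity. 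Neither issue affects the first-order constant, so the proposal is sound.
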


\begin{figure}[!t]
\centering
\setlength\tabcolsep{-8.pt}
\begin{tabular}{c}
\includegraphics[scale=0.37]{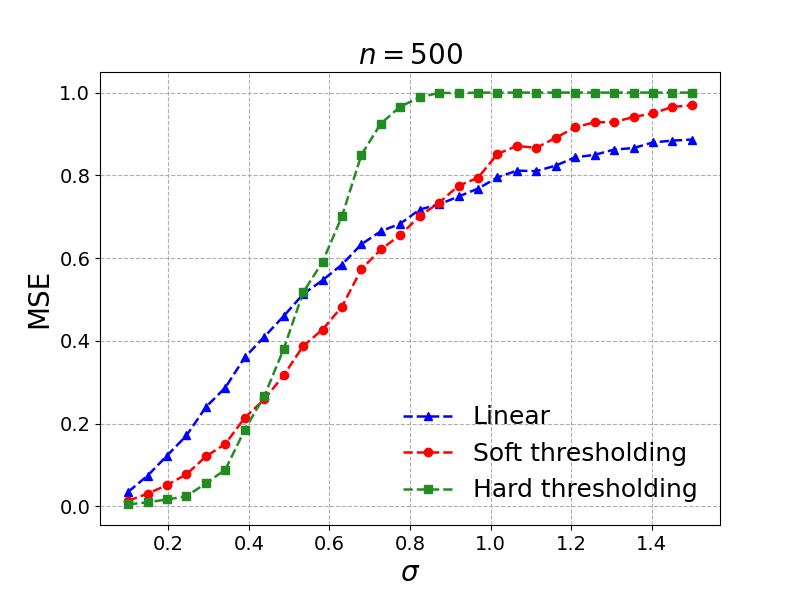} \\
\includegraphics[scale=0.37]{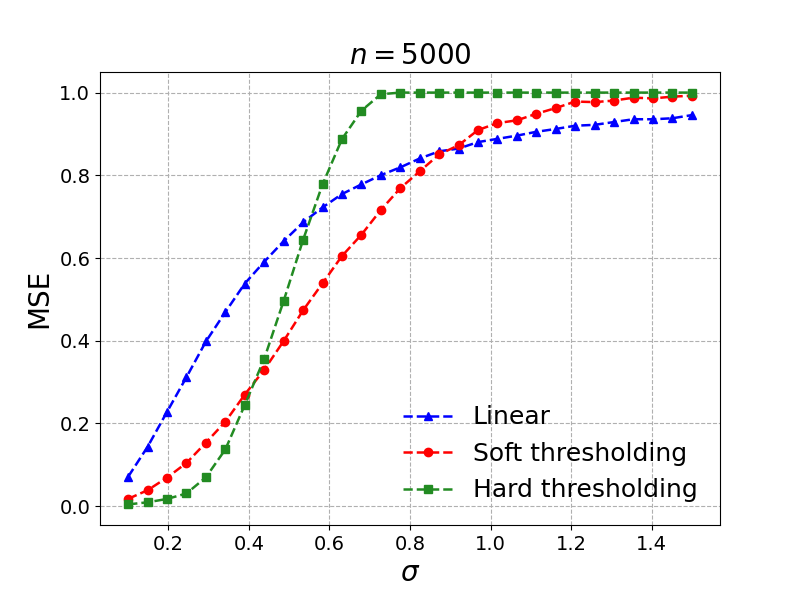} 
\end{tabular}
\vspace{-0.45cm}
\caption{Mean squared error comparison at different noise levels. Data is generated according to \eqref{model::gaussian_sequence} with $k_n= \lfloor n^{2/3}\rfloor$ and $\theta$ having $k_n$ components equal to $1.5$. ``linear" denotes the simple linear estimator $\frac{1}{1+\lambda}y$. All the three estimators are optimally tuned. MSE is averaged over 20 repetitions along with standard error. Other details of the simulation can be found in Section \ref{simulations}.} \label{fig:one}
\end{figure}

Theorem \ref{thm::Thm8.21-Johnstone} shows that both soft and hard thresholding estimators are minimax optimal for estimating sparse signals (with small values of $k_n/n$). Despite the mathematical beauty of the above results, its practical implications seem not clear. We demonstrate this point by a simulation in Figure \ref{fig:one}. As is clear from the upper panel, when the noise level is low, hard thresholding performs the best among the three estimators; as the noise level increases, hard thresholding starts to be outperformed by soft thresholding, and eventually both hard and soft thresohlding are outperformed by the linear estimator. The same comparison holds in the lower panel as  the sample size increases from $500$ to $5000$. This phenomenon can be widely observed for different types of sparse signals. We provide more simulations in Section \ref{simulations}. 

 In light of Theorem \ref{thm::Thm8.21-Johnstone} and Figure \ref{fig:one}, we would like to raise a few critical comments:

 \begin{enumerate}
     \item Despite their minimax optimality, both hard and soft thresholding estimators selected by the classical minimaxity do not perform well compared to a simple linear estimator when the noise is large. 
     \item The hard and soft thresholding estimators have distinct performances at different noise levels, despite they are both asymptotically minimax.
     \item Figure \ref{fig:one} implies that the signal-to-noise ratio (SNR) has a significant impact on the estimation. However, the effect of SNR is not well captured in the classical minimax results (Theorem \ref{thm::Thm8.21-Johnstone}). 
      \end{enumerate}
These observations lead us to the following question: is it possible to develop a refined minimax framework which addresses differences between hard and soft thresholding estimators and characterizes the role of SNR in the recovery of sparse signals? Such a framework will provide more proper insights and sound guidance for practical purpose.

\subsection{Our contributions and paper structure}

To overcome the limitations of the classical minimaxity discussed in Section \ref{sec::misleading-sparse-model}, in this paper, we aim to develop a signal-to-noise-ratio-aware minimax framework. This framework imposes direct constraints on the signal strength over the parameter space and performs the corresponding minimax analysis that accounts for the impact of signal-to-noise ratio (SNR). To obtain accurate minimax results in the SNR-aware setting, we will derive higher-order asymptotics which provides asymptotic approximations precise up to the second order. As will be discussed in detail in Section \ref{SNR-aware:minmaxity}, our proposed framework reveals three regimes in which distinct estimators achieve minimax optimality. In particular, hard-thresholding estimator outperforms soft-thresholding estimator and remains (asymptotically) minimax optimal in the high SNR regime; as SNR decreases, new optimal estimators will emerge. These new theoretical findings offer much better explanations for what is happening in Figure \ref{fig:one}, and are much more informative towards understanding the sparse estimation problem in practice.

The rest of the paper is organized as follows. Section \ref{SNR-aware:minmaxity} presents the main results from the SNR-aware minimax framework. Section \ref{simulations} includes more simulations to support our theoretical findings. Section \ref{discuss} summarizes the main messages of the paper and discusses some related works. All the proofs are presented in Section \ref{sec::proofs}.

We collect the notations used throughout the paper here for convenience. For a scalar $x\in \mathbb{R}$, $x_{+}$ and $\text{sign}(x)$ denote the positive part of $x$ and its sign respectively; $\lfloor x \rfloor$ is the largest integer less than or equal to $x$. For an integer $n$, $[n]=\{1,2,\ldots, n\}$. We use $I_{A}$ and $I(A)$ to represent the indicator function of the set $A$ interchangeably. For a given vector $v = (v_{1}, \ldots, v_{p})\in \mathbb{R}^{p}$, $\norm{v}_{0} = \# \{i: v_{i}\neq 0\}$, $\norm{v}_{\infty} = \max_{i}|v_{i}|$, and $\norm{v}_{q} = \left(\sum_{i=1}^{p} |v_{i}|^{q}\right)^{1/q}$ for $q \in (0,\infty)$. We use the notation $\delta_{\mu}$ as the point mass at $\mu \in \mathbb{R}$. We also use $\{e_j\}_{j=1}^p$ to denote the natural basis in $\mathbb{R}^p$. For two non-zero real sequences $\{a_n\}_{n=1}^{\infty}$ and $\{b_n\}_{n=1}^{\infty}$, we use $a_n = o(b_n)$ to represent $|a_n/b_n| \rightarrow 0$ as $n \rightarrow \infty$, and $a_n=\omega(b_n)$ if and only if $b_n=o(a_n)$; $a_n = O(b_n)$ means $\sup_n|a_n/b_n| < \infty$, and $a_n=\Omega(b_n)$ if and only if $b_n=O(a_n)$; $a_n=\Uptheta(b_n)$ denotes $a_n=O(b_n)$ and $a_n=\Omega(b_n)$. For a distribution $\pi$, $\supp(\pi)$ denotes its support. Finally, we reserve the notations $\phi(y)$ and $\Phi(y) = \int_{-\infty}^{y}\phi(s)d s $ for the standard normal density and its cumulative distribution function respectively. 

\section{SNR-aware minimaxity}
\label{SNR-aware:minmaxity}

\subsection{SNR-aware minimax framework}

We focus on the above-mentioned Gaussian sequence model \eqref{model::gaussian_sequence}. To develop the SNR-aware minimax framework, we start by inserting a notion of signal-to-noise ratio in the minimax setting. To this end, we consider the following SNR-aware parameter space:
\begin{equation}\label{eq::parameter-space-SNR}
    \Theta(k_n,\tau_{n}) = \Big \{\theta\in\mathbb{R}^{n}: ~\norm{\theta}_{0}\leq k_n, ~\norm{\theta}_{2}^{2}\leq k_n\tau_{n}^{2}\Big \}.
\end{equation}
Here, as before, $k_n$ is the parameter that controls the number of nonzero components of the signal $\theta\in \mathbb{R}^{n}$. The new parameter $\tau_n$ can be considered as a measure of signal strength (on average) for each non-zero coordinate of $\theta$. Unlike $\Theta(k_n)$, the new parameter space $\Theta(k_n,\tau_{n})$ is responsive to changing signal strength. Minimax analysis based on it may thus provide a viable path for revealing the impact of SNR on the estimation of sparse signals. Define the corresponding minimax risk (for squared loss):
\begin{equation}\label{eq::minimax-risk}
    R(\Theta(k_n,\tau_{n}),\sigma_{n}) = \inf_{\hat{\theta}} \sup_{\theta \in \Theta(k_n,\tau_{n})} \mathbb{E}_{\theta}\|\hat{\theta}-\theta\|_{2}^{2}.
\end{equation}
We aim to investigate the following problems:

\begin{enumerate}
    \item Characterizing the minimax risk, $R(\Theta(k_n,\tau_{n}),\sigma_{n})$, for different choices of sparsity level and signal-to-noise ratio. This will help us understand the intertwined roles of SNR and sparsity on signal recovery.  

\item Obtaining minimax optimal estimators in the aforementioned settings, along with evaluating the performance of some common estimators (e.g., soft thresholding).
\end{enumerate}

The solutions to the above problems will help resolve the issues we raised before about the classical minimax results.  First, we introduce two critical quantities associated with the target parameter space $\Theta(k_n,\tau_{n})$ introduced in \eqref{eq::parameter-space-SNR} under the model \eqref{model::gaussian_sequence}. Denote 
\begin{equation}
\label{key:quantity}
\epsilon_n=\frac{k_n}{n}, \quad \mu_n=\frac{\tau_n}{\sigma_n}.
\end{equation}
It is clear that $\epsilon_n$ represents the sparsity level and $\mu_n$ is a form of signal-to-noise ratio over the parameter space. We aim to study $R(\Theta(k_n,\tau_{n}),\sigma_{n})$ for different values of $(\epsilon_n,\mu_n)$. Since an explicit solution to exact minimaxity is very challenging to derive (it is not even available for $\Theta(k_n)$), we focus on obtaining asymptotic minimaxity, and consider the following regimes: as $n\rightarrow \infty$,

\begin{itemize}[label={}]
\item \textbf{Regime (\rom{1})}  Low signal-to-noise ratio: $\mu_{n}\rightarrow 0$, $\epsilon_{n}\rightarrow 0$; 
\item \textbf{Regime (\rom{2})} Moderate signal-to-noise ratio: $\mu_{n}\rightarrow \infty$,  $\epsilon_{n}\rightarrow 0$, $\mu_{n}=o(\sqrt{\log \epsilon_{n}^{-1}})$;
\item \textbf{Regime (\rom{3})} High signal-to-noise ratio: $\epsilon_{n}\rightarrow 0$, $\mu_{n}=\omega(\sqrt{\log \epsilon_{n}^{-1}})$. 
\end{itemize}

The condition $\epsilon_n\rightarrow 0$ is standard to model sparse signals. The above three regimes are classified according to the order of signal-to-noise ratio $\mu_n$. As will be shown in Section \ref{ssec:higher-order} via higher-order asymptotics, each regime exhibits unique minimaxity, and distinct minimax estimators emerge in different regimes. But before that, we first derive similar first-order asymptotic result as the classical one and reveal its limitations in the SNR-aware minimax setting.

\subsection{First order analysis of SNR-aware minimaxity and its drawbacks} \label{first:order:d}

Our first theorem generalizes Theorem \ref{thm::Thm8.21-Johnstone}, to our SNR-aware minimax framework.

\begin{theorem}\label{thm:firstorder:firstregime}
Assume model \eqref{model::gaussian_sequence} and parameter space \eqref{eq::parameter-space-SNR}. The following hold:
\begin{itemize}
\item {\rm Regime (\rom{1})}. When $\mu_{n}\rightarrow 0, \epsilon_{n}\rightarrow 0$, 
\[
R(\Theta(k_n,\tau_{n}),\sigma_{n})=(1+o(1))\cdot n \sigma_{n}^{2} \epsilon_{n} \mu_{n}^{2},
\]
and the zero estimator is asymptotically minimax optimal (up to the first order).
\item {\rm Regime (\rom{2})}. When $\mu_{n}\rightarrow \infty$,  $\epsilon_{n}\rightarrow 0$, $\mu_{n}=o(\sqrt{\log \epsilon_{n}^{-1}})$, 
\[
R(\Theta(k_n,\tau_{n}),\sigma_{n})=(1+o(1))\cdot n \sigma_{n}^{2} \epsilon_{n} \mu_{n}^{2},
\]
and the zero estimator is asymptotically minimax optimal (up to the first order).
\item {\rm Regime (\rom{3})}. When $\epsilon_{n}\rightarrow 0$, $\mu_{n}=\omega(\sqrt{\log \epsilon_{n}^{-1}})$,
\[
R(\Theta(k_n,\tau_{n}), \sigma_{n}) = (2+o(1))\cdot n \sigma_{n}^{2} \epsilon_{n} \log(\epsilon_{n}^{-1}).
\]
Furthermore, both soft and hard thresholding estimators \eqref{eq::softthresh}-\eqref{eq::hardthresh} with the tuning parameter $\lambda_{n} = \sigma_{n}\sqrt{2 \log \epsilon_{n}^{-1}}$ are asymptotically minimax optimal (up to the first order).
\end{itemize}
\end{theorem}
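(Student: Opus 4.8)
The plan is to prove matching upper and lower bounds in each of the three regimes; the upper bounds come from explicit estimators and the lower bounds from the Bayes risk of a product ``sparse spike'' prior that is supported, with probability tending to one, inside $\Theta(k_n,\tau_{n})$.

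\emph{Upper bounds.} In Regimes (\rom{1}) and (\rom{2}) the zero estimator $\hat\theta\equiv 0$ has worst-case risk $\sup_{\theta\in\Theta(k_n,\tau_{n})}\|\theta\|_{2}^{2}=k_n\tau_n^{2}=n\sigma_n^{2}\epsilon_n\mu_n^{2}$, which already matches the claimed rate. In Regime (\rom{3}), since $\Theta(k_n,\tau_{n})\subseteq\Theta(k_n)$ and $\log(n/k_n)=\log\epsilon_n^{-1}$, Theorem \ref{thm::Thm8.21-Johnstone} directly gives that soft and hard thresholding with $\lambda_n=\sigma_n\sqrt{2\log\epsilon_n^{-1}}$ have worst-case risk at most $(2+o(1))\sigma_n^{2}k_n\log\epsilon_n^{-1}$. (A self-contained argument also works: write the thresholding risk coordinatewise as $\sum_i r(\theta_i,\lambda_n)$, bound the at most $k_n$ nonzero coordinates by $\sup_\mu r_S(\mu,\lambda_n)=\sigma_n^{2}+\lambda_n^{2}=(2+o(1))\sigma_n^{2}\log\epsilon_n^{-1}$, using monotonicity of the soft-thresholding risk in $|\mu|$ together with the analogous $(1+o(1))\lambda_n^{2}$ estimate for hard thresholding, and bound the remaining coordinates by $(n-k_n)\,r(0,\lambda_n)$, which is of smaller order than $\sigma_n^{2}k_n\log\epsilon_n^{-1}$ by Mills-ratio tail estimates.)

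\emph{Lower bounds.} I would take the product prior $\pi_n=\bigotimes_{i=1}^{n}\big((1-\epsilon_n')\delta_{0}+\epsilon_n'\delta_{m_n}\big)$ (symmetrized if convenient), with $\epsilon_n'=(1-\delta_n)\epsilon_n$, and with $m_n=\tau_n$ in Regimes (\rom{1})--(\rom{2}) and $m_n=(1-\delta_n)\sigma_n\sqrt{2\log\epsilon_n^{-1}}$ in Regime (\rom{3}), where $\delta_n\to 0$ slowly enough that $\delta_n\log\epsilon_n^{-1}\to\infty$. One first checks $\pi_n(\Theta(k_n,\tau_{n}))\to 1$: under $\pi_n$, $\|\theta\|_{0}\sim\mathrm{Bin}(n,\epsilon_n')$ concentrates below $k_n$, whence $\|\theta\|_{2}^{2}=m_n^{2}\|\theta\|_{0}\le m_n^{2}k_n\le k_n\tau_n^{2}$ --- the last inequality being automatic in Regimes (\rom{1})--(\rom{2}) and holding in Regime (\rom{3}) precisely because $\mu_n=\omega(\sqrt{\log\epsilon_n^{-1}})$. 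Conditioning $\pi_n$ on $\{\theta\in\Theta(k_n,\tau_{n})\}$ yields an admissible prior whose Bayes risk, by a standard truncation/decomposition argument (using that $\|\theta\|_{2}$ is controlled under $\pi_n$), is at least the unconditioned Bayes risk minus a term of smaller order; and the unconditioned Bayes risk, being a product, equals $n\cdot b(\epsilon_n',m_n)$, where $b(\epsilon,m)$ is the one-dimensional Bayes risk for estimating $\mu\in\{0,m\}$ from $y=\mu+\sigma_n z$ under a $\mathrm{Bernoulli}(\epsilon)$ prior.

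The heart of the proof is then the one-dimensional estimate $b(\epsilon_n',m_n)\ge(1-o(1))\epsilon_n' m_n^{2}$ whenever $m_n/\sigma_n$ lies below the detection threshold $\sqrt{2\log\epsilon_n^{-1}}$ by a margin growing to infinity (which covers $m_n=\tau_n$ with $\mu_n=o(\sqrt{\log\epsilon_n^{-1}})$ as well as the Regime (\rom{3}) choice) --- i.e.\ below detection the Bayes estimator cannot beat the zero estimator by more than a vanishing fraction. I would prove it by analysing the posterior odds $\epsilon_n'\exp(m_n y/\sigma_n^{2}-m_n^{2}/(2\sigma_n^{2}))$: under $\mu=m_n$ the exponent concentrates near $(1+o(1))\log\epsilon_n\to-\infty$ on a high-probability event, so the posterior mass placed at $\mu=m_n$ stays $o(1)$ and that component contributes $(1-o(1))\epsilon_n' m_n^{2}$ to the Bayes risk, while a second-moment bound on the likelihood ratio under $\mu=0$ shows the $\mu=0$ component contributes only a lower-order amount. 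Feeding this back, Regimes (\rom{1})--(\rom{2}) give $R(\Theta(k_n,\tau_{n}),\sigma_{n})\ge n(1-o(1))\epsilon_n'\tau_n^{2}=(1-o(1))k_n\tau_n^{2}$, and Regime (\rom{3}) gives $R(\Theta(k_n,\tau_{n}),\sigma_{n})\ge n(1-o(1))\epsilon_n' m_n^{2}=(2-o(1))\sigma_n^{2}k_n\log\epsilon_n^{-1}$. I expect the main obstacle to be exactly this sub-detection one-dimensional bound when $m_n/\sigma_n\to\infty$ (Regime (\rom{2})): the heuristic ``weak signal, estimate by zero'' is transparent, but quantifying it requires careful control of the log-normal fluctuations of $\exp(m_n z/\sigma_n)$ against the prior mass $\epsilon_n$, together with the bookkeeping needed to pass from the product prior to one genuinely supported on $\Theta(k_n,\tau_{n})$.
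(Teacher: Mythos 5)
Your proof is essentially correct, but it follows a genuinely different route from the paper. The paper does not prove this theorem directly: it derives it as the first-order consequence of its second-order results (Theorems \ref{thm::regime_1}, \ref{thm::regime_2}, \ref{thm::regime_4}), whose lower bounds use an \emph{independent block prior} --- $k_n$ blocks of size $n/k_n$, each carrying a single spike --- precisely because that prior is supported \emph{exactly} on $\Theta(k_n,\tau_n)$ and its Bayes risk decomposes as $k_n B(\pi_S^{\mu,m})$, with no truncation loss to swamp the second-order term. You instead use the classical i.i.d.\ two-point sparse prior with a deflated sparsity $\epsilon_n'=(1-\delta_n)\epsilon_n$ and a truncation/conditioning correction, paired with the zero estimator (Regimes (\rom{1})--(\rom{2})) and Theorem \ref{thm::Thm8.21-Johnstone} (Regime (\rom{3})) for the upper bounds. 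For a first-order statement this is legitimate and arguably more economical: the truncation correction only needs to be $o(n\sigma_n^2\epsilon_n\mu_n^2)$ (resp.\ $o(n\sigma_n^2\epsilon_n\log\epsilon_n^{-1})$), which the standard Cauchy--Schwarz/fourth-moment argument delivers, whereas the paper's block-prior machinery is overkill here but is what makes the second-order analysis possible. Your one-dimensional sub-detection bound $b(\epsilon_n',m_n)\ge(1-o(1))\epsilon_n'm_n^2$ via the posterior odds is the right key lemma and is essentially the univariate core of the paper's Lemmas \lemSpikeBayes{} and \ref{lem::single-one-sided-spike}, transplanted to the product setting.

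One quantitative slip needs fixing in Regime (\rom{3}). Writing $\nu_n=\sqrt{2\log\epsilon_n^{-1}}$ and $m_n/\sigma_n=(1-\delta_n)\nu_n$, the log posterior odds under $\mu=m_n$ equal $\log\epsilon_n'+\tfrac{m_n^2}{2\sigma_n^2}+\tfrac{m_n}{\sigma_n}z=-(2\delta_n-\delta_n^2)\log\epsilon_n^{-1}+(1-\delta_n)\nu_n z+O(\epsilon_n)$. This does \emph{not} concentrate near $(1+o(1))\log\epsilon_n$ (that description is only accurate in Regimes (\rom{1})--(\rom{2}), where $\mu_n^2=o(\log\epsilon_n^{-1})$); its drift is only $-2\delta_n\log\epsilon_n^{-1}(1+o(1))$, while the Gaussian fluctuation has standard deviation $\asymp\nu_n$. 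Your stated condition $\delta_n\log\epsilon_n^{-1}\to\infty$ therefore does not force the odds to $-\infty$ with probability tending to one (take $\delta_n=(\log\epsilon_n^{-1})^{-3/4}$: the drift is then $o(\nu_n)$ and the posterior mass at $m_n$ stays bounded away from $0$ with constant probability, costing you a constant factor and destroying the sharp constant $2$). The correct requirement is $\delta_n\nu_n\to\infty$, i.e.\ $\delta_n\sqrt{\log\epsilon_n^{-1}}\to\infty$ --- which is exactly your informal ``margin below the detection threshold growing to infinity'' --- and it is compatible with $\delta_n\to 0$ and with the binomial concentration $\mathrm{Bin}(n,\epsilon_n')\le k_n$. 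With that repair, and noting that the $\mu=0$ component of the Bayes risk is nonnegative and can simply be discarded for the lower bound (no second-moment bound is actually needed there), the argument goes through.
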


This theorem is covered as a special case of Theorems \ref{thm::regime_1}, \ref{thm::regime_2}, and \ref{thm::regime_4}  we present in Section \ref{ssec:higher-order}. Hence, the proof is skipped.

There are a few aspects of the above results that we would like to emphasize here:

\begin{enumerate}
    \item As is clear, first-order analysis under the new SNR-aware minimax framework already provides more information than in the previous framework. For instance, it implies that below a certain signal-to-noise-ratio, i.e. when  $\mu_n = o (\sqrt{\log \epsilon_n^{-1}})$, sparsity promoting estimators such as hard or soft thresholding do not seem to have any advantage over the zero estimator. In fact, the zero estimator is optimal up to the first order. Later in Section \ref{ssec:higher-order} we will argue that even these theorems should be interpreted carefully, and that the current interpretation is not fully accurate.

    \item If we consider the rate of $\epsilon_n$ fixed and evaluate the minimax risk as a function of $\mu_n$, we will see a phase transition happening in the first order term of the minimax risk. As long as the first order is concerned, the trivial zero estimator is minimax optimal for any $\mu_n = o (\sqrt{\log \epsilon_n^{-1}})$. Hence, it seems that unless $\mu_n = \Omega(\sqrt{\log \epsilon_n^{-1}})$, even the optimal minimax estimators will miss the signal. Once $\mu_n = \omega (\sqrt{\log \epsilon_n^{-1}})$, the first order result implies the optimality of non-trivial estimators, such as soft-thresholding. While it is challenging to provide an intuitive argument for the phase transition occurring at $\sqrt{\log\epsilon_n^{-1}}=\sqrt{\log (n/k_n)}$, the following explanation may offer some insight: Consider a $k_n$-sparse signal (with $k_n$ non-zero components) in $\mathbb{R}^n$ with Gaussian noises. \emph{On average}, there exists one non-zero signal component among $n/k_n$ locations. The maximum absolute value of the noises at the $n/k_n$ locations is on the order of $\sqrt{\log(n/k_n)}$. Consequently, from an intuitive perspective, it becomes easier to detect signals when their magnitudes exceed this threshold, but significantly more challenging when they fall below this threshold. It's important to note that heuristic arguments like the one above have their limitations and should not be solely relied upon for drawing conclusive results. This aspect will be further clarified in the next section, where we will demonstrate that minimax estimators can outperform zero estimators even when $\mu_n = o (\sqrt{\log \epsilon_n^{-1}})$.

\end{enumerate}

One of the main issues in the above theorem is that the first-order asymptotic approximation of minimax risk does not seem to always offer accurate information. 
For example, as the signal-to-noise ratio significantly increases from Regime (\rom{1}) to Regime (\rom{2}), the first-order analysis falls short of capturing any difference and continues to generate the naive zero estimator as the optimal one. Moreover, in Regime (III), the analysis is inadequate to explain the difference between hard and soft thresholding estimators. In the next section, we push the analysis one step further to develop second-order asymptotics. This refined version of the SNR-aware minimax analysis will provide a much more accurate approximation of the minimax risk, and can provide more useful information and resolve the confusing aspects of the first-order results presented above.

\subsection{Second order analysis of SNR-aware minimaxity}\label{ssec:higher-order}

In this section, we discuss how the analysis provided in Section \ref{first:order:d} can be refined to resolve the issues we raised in Section \ref{sec::misleading-sparse-model}.

\subsubsection{Results in Regime (\rom{1})}

We start with Regime (\rom{1}). As discussed in Theorem \ref{thm:firstorder:firstregime}, as far as the first order of minimax risk is concerned, the zero estimator is asymptotically optimal in this regime, and no other estimators can outperform the zero estimator. The reason this peculiar feature arises is that since the exact expression for $R(\Theta(k_n,\tau_{n}),\sigma_{n})$ is very complicated, Theorem \ref{thm:firstorder:firstregime} resorts to an approximation that is asymptotically accurate. However, this approximation is coarse when $n$ is not too large and/or $\epsilon_{n}$ is not too small. The conclusions that are based on such first order analysis are hence not reliable. Therefore, we pursue a second-order asymptotic analysis of minimax risk to achieve better approximations. This more delicate analysis turns out to be   instructive for understanding the three regimes of varying SNRs. We first present the result in Regime (\rom{1}). Define the simple linear estimator $\hat{\eta}_L(y,\lambda)\in \mathbb{R}^n$ with coordinates:
\begin{equation}\label{def::ridge-estimator}
    [\hat{\eta}_L(y,\lambda)]_i = \frac{y_i}{1+\lambda} = \argmin_{\mu\in \mathbb{R}}~(y_i-\mu)^{2} + \lambda \mu^{2}, \quad 1\leq i \leq n.
\end{equation}

\begin{theorem}\label{thm::regime_1}
Consider model \eqref{model::gaussian_sequence} and parameter space \eqref{eq::parameter-space-SNR}.
For Regime (\rom{1}) in which $\epsilon_{n} \rightarrow 0, \mu_{n} \rightarrow 0$ as $n\rightarrow \infty$, we have
\begin{equation*}
    R(\Theta(k_n,\tau_{n}),\sigma_{n}) =  n \sigma_{n}^{2} \left(\epsilon_{n} \mu_{n}^{2} - \epsilon_{n}^{2} \mu_{n}^{4} \left(1+o(1)\right) \right).
\end{equation*}
In addition, the linear estimator $\hat{\eta}_L(y, \lambda_n)$ with tuning $\lambda_n=\left(\epsilon_{n}\mu_{n}^{2}\right)^{-1}$ is asymptotically minimax up to the second order term, i.e.
\[
 \sup_{\theta \in \Theta(k_n,\tau_{n}) } \mathbb{E}_{\theta} \norm{\hat{\eta}_L (y, \lambda_n) - \theta}_{2}^{2} = n \sigma_{n}^{2} \left(\epsilon_{n} \mu_{n}^{2} - \epsilon_{n}^{2} \mu_{n}^{4} \left(1+o(1)\right) \right).
\]
\end{theorem}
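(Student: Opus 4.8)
The plan is to pin down $R(\Theta(k_n,\tau_{n}),\sigma_{n})$ by matching an upper and a lower bound, and to read off minimaxity of $\hat{\eta}_L(y,\lambda_n)$ from the upper-bound step. The upper bound is a direct computation. Since $\hat{\eta}_L(y,\lambda)$ acts coordinatewise by $y_i\mapsto y_i/(1+\lambda)$, a bias--variance split gives, for every $\theta$, $\E_{\theta}\|\hat{\eta}_L(y,\lambda)-\theta\|_2^2 = (n\sigma_n^2+\lambda^2\|\theta\|_2^2)/(1+\lambda)^2$. This is increasing in $\|\theta\|_2^2$ and does not depend on $\|\theta\|_0$, so its supremum over $\Theta(k_n,\tau_n)$ is attained at $\|\theta\|_2^2=k_n\tau_n^2$ and equals $n\sigma_n^2(1+\lambda^2\epsilon_n\mu_n^2)/(1+\lambda)^2$. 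Minimizing the scalar map $\lambda\mapsto(1+a\lambda^2)/(1+\lambda)^2$ with $a=\epsilon_n\mu_n^2$ gives the minimizer $\lambda_n=a^{-1}=(\epsilon_n\mu_n^2)^{-1}$ and minimal value $a/(1+a)$; since $a\to 0$ in Regime (\rom{1}), the expansion $a/(1+a)=a-a^2(1+o(1))$ yields
\[
\sup_{\theta\in\Theta(k_n,\tau_n)}\E_{\theta}\|\hat{\eta}_L(y,\lambda_n)-\theta\|_2^2 = n\sigma_n^2\bigl(\epsilon_n\mu_n^2-\epsilon_n^2\mu_n^4(1+o(1))\bigr),
\]
which both bounds the minimax risk from above and proves the statement about $\hat{\eta}_L$.

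For the lower bound I would pass to a one-dimensional Bayes problem via the reduction in \eqref{reduce:1d}, which bounds $R(\Theta(k_n,\tau_n),\sigma_n)$ below by $n$ times the Bayes risk of a univariate ``spike'' prior meeting the sparsity and energy budgets; it then suffices to produce one good prior. I would take $\pi_n=(1-\epsilon_n)\delta_0+\epsilon_n\cdot\tfrac12(\delta_{\tau_n}+\delta_{-\tau_n})$ (or the variant with slightly shrunken mass and/or amplitude needed to satisfy the constraints exactly). By the bias--variance identity, the Bayes risk of $\pi_n$ equals $\E_{\pi_n}[\theta^2]-\E[(\hat\theta_{\pi_n}(y))^2]=\epsilon_n\tau_n^2-\E[(\hat\theta_{\pi_n}(y))^2]$, where $\hat\theta_{\pi_n}$ is the posterior mean and the outer expectation is over the marginal of $y$. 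The crux is to expand the posterior mean: exploiting the symmetry of the spike and the smallness of $\epsilon_n,\mu_n$, one obtains $\hat\theta_{\pi_n}(y)=\epsilon_n\mu_n^2\,y+r_n(y)$ with $r_n$ of strictly higher order, so that $\E[(\hat\theta_{\pi_n}(y))^2]=\epsilon_n^2\mu_n^4\,\E[y^2](1+o(1))=\epsilon_n^2\mu_n^4\sigma_n^2(1+o(1))$, using $\E[y^2]=\sigma_n^2(1+\epsilon_n\mu_n^2)$. Multiplying by $n$ matches the upper bound. It is worth noting that the emergent Bayes rule $\hat\theta_{\pi_n}(y)\approx\epsilon_n\mu_n^2\,y$ is precisely $\hat{\eta}_L$ with $\lambda_n=(\epsilon_n\mu_n^2)^{-1}$, which is why that tuning is the right one.

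The main obstacle is second-order precision in the lower bound. First, the expansion $\hat\theta_{\pi_n}(y)=\epsilon_n\mu_n^2\,y+r_n(y)$ must be controlled uniformly enough in $y$ that both $\E[r_n(y)\,y]$ and $\E[r_n(y)^2]$ are $o(\epsilon_n^2\mu_n^4\sigma_n^2)$; naive pointwise bounds on $r_n$ are themselves of order $\epsilon_n^2\mu_n^4$, so one has to use that the cross term is killed to leading order by the symmetry of the spike and that large $|y|$ is Gaussian-rare. Second, the reduction \eqref{reduce:1d} must be set up so that descending from the $n$-dimensional constrained minimax problem to a (near-)product univariate prior loses only $o(\epsilon_n^2\mu_n^4)$ of the risk; this is delicate because of the tension between the hard constraints $\|\theta\|_0\le k_n$, $\|\theta\|_2^2\le k_n\tau_n^2$ and the fluctuations of a product prior, compounded by the unboundedness of the squared loss. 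Placing the prior exactly on the boundary of $\Theta(k_n,\tau_n)$ (e.g.\ a uniformly random support of size exactly $k_n$ with $\pm\tau_n$ amplitudes) removes the leakage at the price of weak inter-coordinate dependence, which must then be shown not to affect the Bayes risk at the $\epsilon_n^2\mu_n^4$ level; balancing these two effects is where the work is.
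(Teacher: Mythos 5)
Your upper bound is correct and is essentially the paper's own argument: the exact bias--variance identity $\E_{\theta}\|\hat{\eta}_L(y,\lambda)-\theta\|_2^2=(n\sigma_n^2+\lambda^2\|\theta\|_2^2)/(1+\lambda)^2$, the supremum attained at $\|\theta\|_2^2=k_n\tau_n^2$, and the expansion $a/(1+a)=a-a^2(1+o(1))$ with $a=\epsilon_n\mu_n^2$. (The paper simply plugs in $\lambda_n=a^{-1}$ rather than optimizing over $\lambda$; your extra optimization step is harmless.)

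The lower bound is where there is a genuine gap. The prior you propose --- the i.i.d.\ product of $\pi_n=(1-\epsilon_n)\delta_0+\tfrac{\epsilon_n}{2}(\delta_{\tau_n}+\delta_{-\tau_n})$ --- is precisely the univariate spike prior that the paper states is \emph{inadequate} at second order, and the obstruction is structural, not merely technical: this product prior is not supported on $\Theta(k_n,\tau_n)$, since $\|\theta\|_0$ is ${\rm Binomial}(n,\epsilon_n)$ and exceeds $k_n$ with probability roughly $1/2$. Any repair (conditioning on the constraints, or shrinking the spike probability to $(1-\delta_n)\epsilon_n$) incurs a deficit of order $\delta_n\, n\epsilon_n\mu_n^2$ in the \emph{first}-order term while requiring $\delta_n\sqrt{k_n}\to\infty$ to control the leakage; making both effects $o(n\sigma_n^2\epsilon_n^2\mu_n^4)$ forces side conditions (e.g.\ $\epsilon_n\mu_n^2\sqrt{k_n}\to\infty$) that are not hypotheses of the theorem. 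Your fallback --- a prior on exactly $k_n$-sparse vectors with uniformly random support --- fixes the support but destroys independence, and you explicitly leave that dependence analysis as ``where the work is.'' The paper's device that resolves exactly this tension is the \emph{independent block prior}: partition the coordinates into $k_n$ blocks of size $m=n/k_n$ and place a single symmetric spike of amplitude $\mu_n$ uniformly within each block. Every realization then lies on the boundary of $\Theta(k_n,\mu_n)$, yet the prior factorizes over blocks, so the Bayes risk decomposes exactly as $k_nB(\pi_S^{\mu_n,m})$ (Equation \eqref{reduce:1d}); the substantial remaining work is the second-order bound $B(\pi_S^{\mu_n,m})\ge\mu_n^2-\frac{\mu_n^4}{m}(1+o(1))$ of Lemma \ref{lem::spike-bayes-risk-regime-1}, which uses the explicit posterior-mean formula of Lemma \ref{lem::spike-bayes-risk-decompose}, the decomposition of $\E_{\mu e_1}p_m$ into the three terms $E_1,E_2,E_3$, and a truncation at $|z_1|\le \mu^{-1/2}$ to lower bound the quadratic term. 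Your univariate heuristic predicts the correct constant but does not substitute for this argument.
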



The proof of this theorem can be found in Section \ref{proof:them:regime1}. Compared with Theorem \ref{thm:firstorder:firstregime}, Theorem \ref{thm::regime_1} obtains the additional second dominating term in the minimax risk. This negative term quantifies the amount of improvement that can be possibly achieved over the trivial zero estimator (whose supremum risk exactly equals $n\sigma_n^2\epsilon_n\mu_n^2$). Indeed, the non-trivial linear estimator $\hat{\eta}_L(y, \lambda_n)$ has supremum risk matching with the minimax risk up to the second order. Therefore, through the lens of second-order asymptotics, we discover a new minimax optimal estimator that outperforms the zero estimator recommended from the first-order analysis.

The second-order optimality of the linear estimator $\hat{\eta}_L(y, \lambda_n)$ in Regime (\rom{1}) raises the following question: how do non-linear estimators compare with $\hat{\eta}_L(y, \lambda_n)$? For instance, the soft thresholding estimator $\etahatS(y,\lambda)$ in \eqref{eq::softthresh} with $\lambda=\infty$ recovers the zero estimator and is hence first-order optimal. Can $\etahatS(y,\lambda)$ with proper tuning become second-order asymptotically optimal in this regime? The following theorem shows that the answer is negative.  

\begin{prop}\label{lem::lasso-risk-regime-1}
Consider model \eqref{model::gaussian_sequence} and parameter space \eqref{eq::parameter-space-SNR}. In Regime (\rom{1}) where $\epsilon_{n} \rightarrow 0, \mu_{n} \rightarrow 0$ as $n \rightarrow \infty$, the optimally tuned soft thresholding estimator $\etahatS(y, \lambda)$ has supremum risk:
\begin{align*}
    & \inf_{\lambda} \sup_{\theta \in \Theta(k_n,\tau_{n})} \mathbb{E}_{\theta} \norm{\etahatS(y,\lambda) - \theta}_{2}^{2} \nonumber \\
    =  & n \sigma_{n}^{2} \left( \epsilon_{n}\mu_{n}^{2} - \exp\left[ -\frac{1}{2} \frac{1}{\mu_{n}^{2}} \left(\log \frac{1}{\epsilon_{n}}\right)^{2}\left(1 + o(1) \right)\right] \right).
\end{align*}
\end{prop}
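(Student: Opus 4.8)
The plan is to compute the supremum risk of the soft thresholding estimator over $\Theta(k_n,\tau_n)$ exactly, reduce it to a one-dimensional optimization over the prior, and then carefully expand the optimally tuned risk in the regime $\epsilon_n\to 0$, $\mu_n\to 0$. First I would recall the well-known decomposition of the soft thresholding risk: writing $r_S(\mu,\lambda) = \E_{Z}[(\mathrm{sign}(\mu+Z)(|\mu+Z|-\lambda)_+ - \mu)^2]$ for the per-coordinate risk at signal value $\mu$ (with unit noise), the supremum risk over $\Theta(k_n,\tau_n)$ is, after rescaling by $\sigma_n$,
\begin{align*}
\sup_{\theta\in\Theta(k_n,\tau_n)}\E_\theta\|\etahatS(y,\lambda)-\theta\|_2^2
= n\sigma_n^2\sup_{\pi}\ \E_{\mu\sim\pi} r_S(\mu/\sigma_n,\lambda/\sigma_n),
\end{align*}
where the supremum runs over all distributions $\pi$ on $\R$ with $\pi(\{0\})\ge 1-\epsilon_n$ and second moment $\le \epsilon_n\tau_n^2$. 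Because $r_S(\cdot,\lambda)$ is even, monotone on $[0,\infty)$ up to its bounded limit, and — crucially — behaves like a concave-then-linear function of $\mu^2$, the worst-case prior is a three-point mixture $(1-\epsilon_n)\delta_0 + \epsilon_n\cdot\frac12(\delta_{\mu^\star}+\delta_{-\mu^\star})$ for an appropriate $\mu^\star$ (this sparse least-favorable reduction is exactly the kind of one-dimensional Bayes problem referenced as Equation~\eqref{reduce:1d} and Lemma~\lemSpikeBayes\ in the earlier material, so I would invoke that machinery rather than reprove it). This gives
\begin{align*}
\inf_\lambda\sup_{\theta}\E_\theta\|\etahatS-\theta\|_2^2
= n\sigma_n^2\inf_{\lambda\ge 0}\Big[(1-\epsilon_n)\,r_S(0,\lambda) + \epsilon_n\sup_{0\le m^2\le \mu_n^2}r_S(m,\lambda)\Big],
\end{align*}
with the understanding that the inner supremum is attained either at the second-moment boundary $m=\mu_n$ or at the point where $r_S(\cdot,\lambda)$ stops increasing.

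Next I would analyze the two competing terms. The noise term $r_S(0,\lambda) = 2\int_\lambda^\infty (z-\lambda)^2\phi(z)\,dz = 2[(1+\lambda^2)\bar\Phi(\lambda)-\lambda\phi(\lambda)]$ is controlled via Mills-ratio asymptotics (Lemma~\lemGaussianMill), giving $r_S(0,\lambda)\sim 2\lambda^{-3}\phi(\lambda)$ for large $\lambda$ — a quantity that must be made exponentially small, forcing $\lambda\to\infty$ at the optimum. The signal term: since $\mu_n\to 0$, the relevant signal values $m\le\mu_n$ are tiny relative to $\lambda\to\infty$, and a Taylor expansion of $r_S(m,\lambda)$ around $m=0$ yields $r_S(m,\lambda) = m^2(1-2\bar\Phi(\lambda))\big(1+o(1)\big) + O(\text{lower order})$, essentially the bias of the fully-shrunk estimator plus a small correction; in particular $\sup_{m\le\mu_n} r_S(m,\lambda)\approx \mu_n^2$. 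Thus the objective is approximately $(1-\epsilon_n)\cdot 2\lambda^{-3}\phi(\lambda) + \epsilon_n\mu_n^2(1-2\bar\Phi(\lambda))$, and since the second term is essentially fixed at $\epsilon_n\mu_n^2$, minimizing over $\lambda$ amounts to pushing the first (noise) term down while recovering as much as possible the tiny $\bar\Phi(\lambda)$ discount in the second term. Balancing $\lambda^{-3}\phi(\lambda)\asymp \exp(-\lambda^2/2)$ against the scale $\epsilon_n\mu_n^2$ and tracking the precise trade-off leads to the optimal choice $\lambda_n\asymp \mu_n^{-1}\log\epsilon_n^{-1}$ (so that $\lambda_n^2/2 \approx \frac12\mu_n^{-2}(\log\epsilon_n^{-1})^2$), and substituting back produces the stated second-order term $\exp[-\tfrac12\mu_n^{-2}(\log\epsilon_n^{-1})^2(1+o(1))]$.

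The main obstacle is the delicate bookkeeping in the last step: one must show that the best achievable negative second-order correction is \emph{exactly} of order $\exp[-\tfrac12\mu_n^{-2}(\log\epsilon_n^{-1})^2(1+o(1))]$ and not something larger, which requires a genuinely two-sided (matching lower and upper) analysis of $\inf_\lambda$ of the objective, keeping careful track of which terms contribute at the exponential scale. In particular I would need to verify that (i) the polynomial prefactors $\lambda^{-3}$ and the $(1+\lambda^2)$ factors only affect the $o(1)$ inside the exponent, (ii) the signal-term correction $2\epsilon_n\mu_n^2\bar\Phi(\lambda_n)$ is of the same exponential order as the noise term $2(1-\epsilon_n)\lambda_n^{-3}\phi(\lambda_n)$ at the optimum (so that they genuinely trade off), and (iii) the worst-case prior really does sit at the second-moment boundary $m=\mu_n$ for the optimal $\lambda$ rather than at an interior critical point of $r_S(\cdot,\lambda)$. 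Steps (i)–(iii) are where the care is concentrated; the structural reduction to the three-point prior and the Mills-ratio expansions are routine given the earlier lemmas.
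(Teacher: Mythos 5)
Your overall architecture matches the paper's: reduce by scale invariance to unit noise, use the monotonicity of the one\mbox{-}dimensional risk to show the supremum is attained at $\theta$ with $k_n$ coordinates equal to $\mu_n$ (in the paper this comes from Lemma \ref{lem::elastic-net-risk} with $\gamma=0$, not from the spike-prior Bayes machinery of \EqBlockPriorLower{}, which serves the minimax \emph{lower} bound — a mis-citation, but the structural claim is right), and then minimize $F(\lambda)=(1-\epsilon_n)r_S(\lambda,0)+\epsilon_n r_S(\lambda,\mu_n)$ over $\lambda$. The gap is in how you propose to locate the optimal $\lambda$. You Taylor-expand the signal term around $m=0$ to get $r_S(m,\lambda)\approx m^2\big(1-2(1-\Phi(\lambda))\big)$, so that the recoverable discount is $2\epsilon_n\mu_n^2(1-\Phi(\lambda))\asymp \epsilon_n\mu_n^2\lambda^{-1}\phi(\lambda)$. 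But at the optimum one has $\lambdas\mu_n\to\infty$ (indeed $\lambdas\mu_n\approx\log(2/\epsilon_n)$), so $m=\mu_n$ is far outside the range where that expansion controls the error: the true correction to $r_S(\lambda,\mu_n)$ is $-\,(2+o(1))\,\epsilon_n\mu_n\lambda^{-2}\phi(\lambda-\mu_n)$, and $\phi(\lambda-\mu_n)=\phi(\lambda)e^{\lambda\mu_n-\mu_n^2/2}$ carries an exponentially large factor that your expansion discards. Balancing your simplified objective would give $\lambda\asymp\epsilon_n^{-1/2}\mu_n^{-1}$ or $\lambda\asymp\sqrt{\log\epsilon_n^{-1}}$ depending on which heuristic you apply — neither equals the correct $\lambdas=(1+o(1))\mu_n^{-1}\log\epsilon_n^{-1}$, which in the paper comes from the stationarity condition $\epsilon_n\mu_n\lambdas e^{\lambdas\mu_n-\mu_n^2/2}=2+o(1)$ (Lemma \lemPropItuning). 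Since the final answer is $\exp[-\lambdas^2(1+o(1))/2]$, getting $\lambdas$ wrong changes the exponent itself, not just the $o(1)$.

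A second, related issue: at the correct $\lambdas$ the noise term $(4+o(1))\lambdas^{-3}\phi(\lambdas)$ and the signal discount $(4+o(1))\lambdas^{-3}\phi(\lambdas)$ \emph{cancel at leading order}; the surviving net improvement is $-(4+o(1))\mu_n\lambdas^{-4}\phi(\lambdas)$, smaller by a factor $\mu_n/\lambdas$. Your item (ii) anticipates that the two terms are "of the same exponential order and genuinely trade off," but establishing that the net is negative (so that soft thresholding actually beats the zero estimator, which is the content of the claimed minus sign) requires resolving this cancellation — the paper does it by substituting the first-order condition $F'(\lambdas)=0$ back into $F(\lambdas)$ (\lemRefineOrder). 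While the polynomial prefactor only perturbs the $o(1)$ in the exponent, the sign of the second-order term does not come for free from your outline, so this step needs to be made explicit before the plan closes.
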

The proof of this proposition can be found in Section \ref{app:proof_lasso_regime_1}.

It is straightforward to confirm that $$\exp\Big[ -\frac{1}{2} \frac{1}{\mu_{n}^{2}} \left(\log \frac{1}{\epsilon_{n}}\right)^{2}\left(1 + o(1) \right)\Big]/(\epsilon_n^2\mu_n^4)=o(1)$$ under the scaling $\epsilon_n\rightarrow 0,\mu_n\rightarrow 0$. Hence, soft thresholding $\etahatS(y,\lambda)$ is outperformed by the linear estimator $\hat{\eta}_L(y, \lambda_n)$ and is sub-optimal (up to second order). A similar result can be proved for the hard thresholding estimator as well.

\begin{prop}\label{lem::hardthreshold-risk-regime-1}
Consider model \eqref{model::gaussian_sequence} and parameter space \eqref{eq::parameter-space-SNR}. In Regime (\rom{1}) where $\epsilon_{n} \rightarrow 0, \mu_{n} \rightarrow 0$ as $n \rightarrow \infty$, the optimally tuned hard thresholding estimator $\etahatH(y, \lambda)$ has supremum risk:
\begin{align*}
     \inf_{\lambda} \sup_{\theta \in \Theta(k_n,\tau_{n})} \mathbb{E}_{\theta} \norm{\etahatH(y,\lambda) - \theta}_{2}^{2} 
    =  n \sigma_{n}^{2}  \epsilon_{n}\mu_{n}^{2}.
\end{align*}
\end{prop}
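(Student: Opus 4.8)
\emph{Plan.} The idea is to prove the equality directly by matching bounds: the supremum risk of the optimally tuned hard thresholding estimator is squeezed from below and above by $n\sigma_n^2\epsilon_n\mu_n^2$, which is precisely the supremum risk of the zero estimator over $\Theta(k_n,\tau_{n})$ (attained at a signal whose nonzero block saturates the $\ell_2$ budget). By scale invariance it is convenient to pass to the normalized threshold $\tilde\lambda=\lambda/\sigma_n$ and the single-coordinate hard-thresholding risk $r_H(m,\tilde\lambda):=\mathbb{E}\big[\big((m+z)I(|m+z|>\tilde\lambda)-m\big)^2\big]$ with $z\sim\calN(0,1)$. Since the estimator error in coordinate $i$ equals $\sigma_n z_i$ on $\{|\theta_i/\sigma_n+z_i|>\tilde\lambda\}$ and $-\theta_i$ otherwise, one has $r_H(m,\tilde\lambda)=\mathbb{E}[z^2 I(|m+z|>\tilde\lambda)]+m^2\,\mathbb{P}(|m+z|\le\tilde\lambda)$, and the supremum risk factorizes as $\sup_{\theta\in\Theta(k_n,\tau_{n})}\mathbb{E}_\theta\|\etahatH(y,\lambda)-\theta\|_2^2=\sigma_n^2\sup\big\{(n-s)r_H(0,\tilde\lambda)+\sum_{j=1}^s r_H(m_j,\tilde\lambda)\big\}$, the inner supremum being over integers $0\le s\le k_n$ and reals $m_1,\dots,m_s$ with $\sum_j m_j^2\le k_n\mu_n^2$.

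For the lower bound I would evaluate the risk at the single configuration $\theta^\star$ with $k_n$ coordinates equal to $\tau_n$ and the remaining $n-k_n$ equal to $0$ (which lies in $\Theta(k_n,\tau_{n})$), giving $\sigma_n^2\big[(n-k_n)r_H(0,\tilde\lambda)+k_n r_H(\mu_n,\tilde\lambda)\big]$, and argue this is at least $\sigma_n^2 k_n\mu_n^2$ for every $\tilde\lambda>0$ once $n$ is large. The key identity is $r_H(m,\tilde\lambda)-m^2=\mathbb{E}\big[(z^2-m^2)I(|m+z|>\tilde\lambda)\big]$. If $\tilde\lambda\ge 2\mu_n$, then $|\mu_n+z|>\tilde\lambda$ forces $|z|>\mu_n$, so the right-hand side is nonnegative and $r_H(\mu_n,\tilde\lambda)\ge\mu_n^2$; together with $r_H(0,\tilde\lambda)\ge 0$ this yields the bound. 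If $\tilde\lambda<2\mu_n$, monotonicity of $\tilde\lambda\mapsto r_H(0,\tilde\lambda)$ together with $r_H(0,2\mu_n)\to r_H(0,0)=1$ gives $r_H(0,\tilde\lambda)\ge r_H(0,2\mu_n)\ge\tfrac12$ for large $n$, so already the zero-coordinate term $(n-k_n)r_H(0,\tilde\lambda)$ exceeds $k_n\mu_n^2=n\epsilon_n\mu_n^2$ (using $\epsilon_n\to 0$). Taking the supremum over $\theta$ and then the infimum over $\lambda$ gives $\inf_\lambda\sup_\theta(\cdot)\ge n\sigma_n^2\epsilon_n\mu_n^2$.

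For the upper bound, since $\inf_{\lambda\ge 0}$ is at most $\liminf_{\tilde\lambda\to\infty}$ of the supremum risk, it suffices to produce an upper bound on the supremum risk that converges to $\sigma_n^2 k_n\mu_n^2$ as $\tilde\lambda\to\infty$ for each fixed $n$. I would establish the uniform pointwise estimate $r_H(m,\tilde\lambda)\le m^2\big(1+4\tilde\lambda^{-2}\big)+r_H(0,\tilde\lambda/2)$ for all $m\in\mathbb{R}$: when $|m|\le\tilde\lambda/2$ one uses $\{|m+z|>\tilde\lambda\}\subseteq\{|z|>\tilde\lambda/2\}$ to bound the noise-passing part $\mathbb{E}[z^2 I(|m+z|>\tilde\lambda)]$ by $r_H(0,\tilde\lambda/2)$, while when $|m|>\tilde\lambda/2$ the crude bound $r_H(m,\tilde\lambda)\le 1+m^2\le m^2(1+4\tilde\lambda^{-2})$ suffices. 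Summing over the (at most $k_n$) nonzero coordinates, using $\sum_j m_j^2\le k_n\mu_n^2$ and $(n-s)r_H(0,\tilde\lambda)\le n\,r_H(0,\tilde\lambda)$, bounds the supremum risk by $\sigma_n^2\big[n\,r_H(0,\tilde\lambda)+(1+4\tilde\lambda^{-2})\,k_n\mu_n^2+k_n\,r_H(0,\tilde\lambda/2)\big]$, whose limit as $\tilde\lambda\to\infty$ is $\sigma_n^2 k_n\mu_n^2=n\sigma_n^2\epsilon_n\mu_n^2$. Hence $\inf_\lambda\sup_\theta(\cdot)\le n\sigma_n^2\epsilon_n\mu_n^2$, which combined with the previous paragraph gives the claimed equality.

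The only non-routine point is the upper bound's need for an estimate of $r_H(m,\tilde\lambda)$ that is uniform in $m$: one has to rule out the adversary gaining from a few spikes of magnitude comparable to $\tilde\lambda$, where $r_H(m,\tilde\lambda)$ is bounded away from $m^2$ (indeed $r_H(\tilde\lambda,\tilde\lambda)\approx\tilde\lambda^2/2$) yet is still controlled by $m^2$ up to a $1+o(1)$ factor; splitting at $|m|=\tilde\lambda/2$ settles this cleanly. On the lower-bound side, the mild subtlety is that the inequality must hold for \emph{all} $\tilde\lambda$, including arbitrarily small ones, which is why the second case above is needed: there the overwhelming number of zero coordinates makes even an $O(1)$ per-coordinate variance dominate the tiny target $n\sigma_n^2\epsilon_n\mu_n^2$.
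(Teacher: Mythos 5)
Your proposal is correct, and it shares the paper's overall architecture: the lower bound is obtained by evaluating the risk at the configuration with $k_n$ coordinates equal to $\tau_n$ and splitting on the size of the threshold, and the upper bound by sending $\lambda\to\infty$. The technical execution, however, differs in both halves, and in each case your version is more elementary. For the lower bound in the large-threshold case, the paper expands $r_H(\lambda,\mu_n)$ with Mills-ratio asymptotics and checks that the leftover terms are nonnegative for large $n$, whereas you use the exact identity $r_H(m,\tilde\lambda)-m^2=\E[(z^2-m^2)I(|m+z|>\tilde\lambda)]$ and note the integrand is pointwise nonnegative once $\tilde\lambda\geq 2\mu_n$; your split at $\tilde\lambda=2\mu_n$ also replaces the paper's $O(1)$-versus-$\omega(1)$ dichotomy on the optimal threshold, with the small-threshold case handled identically (the $n-k_n$ null coordinates alone contribute $\Uptheta(n)$, swamping $n\epsilon_n\mu_n^2$). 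For the upper bound, the paper expands the squared error via Cauchy--Schwarz and invokes monotonicity of $\mu\mapsto\E|\etahatH(\mu+z,\lambda)|^2$ together with dominated convergence to show $\sup_{\theta}\E_\theta\|\etahatH(y,\lambda)\|_2^2\to 0$ as $\lambda\to\infty$; you instead prove the uniform deterministic bound $r_H(m,\tilde\lambda)\leq m^2(1+4\tilde\lambda^{-2})+r_H(0,\tilde\lambda/2)$ by splitting at $|m|=\tilde\lambda/2$, which controls the supremum over $\theta$ directly and with explicit rates. Both routes are valid; yours avoids asymptotic tail expansions and the DCT, at the cost of needing the uniformity in $m$ near $|m|\approx\tilde\lambda$, which you correctly identify as the only delicate point and handle cleanly.
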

The proof of this proposition is presented in Section \ref{proof:hardthreshold:reg1}.





The fact that $\hat{\eta}_L(y, \lambda_n)$ is optimal and $\etahatS(y,\lambda)$ and $\etahatH(y,\lambda)$  are sub-optimal in Regime (\rom{1}) is intriguing. It says that the former non-sparse estimator is better than the latter sparse ones for recovering sparse signals. In fact, the result further implies that any sparsity-promoting procedure cannot improve over a simple linear shrinkage for the recovery of sparse signals. A high-level explanation is that since Regime (\rom{1}) has low signal-to-noise ratio in which variance is the dominating factor of mean squared error, linear shrinkage achieves a better balance between bias and variance than those more ``aggressive" sparsity-inducing operations. These results demonstrate the practical relevance of SNR-aware minimaxity as opposed to the classical minimax approach.

\subsubsection{Results in Regime (\rom{2})}

We now move on to discuss Regime (\rom{2}) where new minimaxity results arise as the signal-to-noise ratio increases. Introduce an estimator $\hat{\eta}_E(y,\lambda,\gamma)=\frac{\hat{\eta}_S(y,\lambda)}{1+\gamma}\in \mathbb{R}^n$ with coordinates:
\begin{align}\label{def::elastic-estimator}
    & [\hat{\eta}_E(y,\lambda, \gamma)]_i = \frac{[\etahatS(y,\lambda)]_i}{1+\gamma} \nonumber \\
    = & \argmin_{u\in \mathbb{R}}~(y_i-u)^{2} + 2\lambda |u|+\gamma u^{2}, \quad 1\leq i \leq n.
\end{align}
The estimator $\hat{\eta}_E(y,\lambda,\gamma)$ is a composition of soft thresholding and linear shrinkage. It can be considered as an``interpolation" between soft thresholding estimator and linear estimator. 

\begin{theorem}\label{thm::regime_2}
Consider model \eqref{model::gaussian_sequence} and parameter space \eqref{eq::parameter-space-SNR}.
For Regime (\rom{2}) in which $\epsilon_{n} \rightarrow 0, \mu_{n} \rightarrow \infty, \mu_{n} = o(\sqrt{\log \epsilon_{n}^{-1}})$ as $n\rightarrow \infty$, we have
\begin{equation*}
    R(\Theta(k_n,\tau_{n}),\sigma_{n}) \geq n \sigma_{n}^{2} \Big(\epsilon_{n} \mu_{n}^{2} - \frac{1}{2}\epsilon_{n}^{2} \mu_{n}^{2} e^{\mu_{n}^{2}} \left(1+o(1)\right) \Big). 
\end{equation*}
In addition, based on the estimator $\hat{\eta}_E(y,\lambda_n,\gamma_n)$ with tuning parameters $\lambda_n=2\sigma_n\mu_{n}$, and $\gamma_n=(2\epsilon_{n}\mu_{n}^{2}e^{\frac{3}{2}\mu_{n}^{2}})^{-1}-1$, we have
\begin{eqnarray*}
    && R(\Theta(k_n,\tau_{n}),\sigma_{n}) \nonumber \\
    &\leq& \sup_{\theta \in \Theta(k_n,\tau_{n})} \mathbb{E}_{\theta}\norm{\hat{\eta}_E(y,\lambda_n,\gamma_n) - \theta}_{2}^{2} \\
    &=& n \sigma_{n}^{2} \left(\epsilon_{n} \mu_{n}^{2} - (\sqrt{2/\pi}+o(1))\epsilon_{n}^{2} \mu_{n} e^{\mu_{n}^{2}}  \right).
\end{eqnarray*}
\end{theorem}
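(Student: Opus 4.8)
The statement splits into a lower bound on $R(\Theta(k_n,\tau_{n}),\sigma_{n})$ and an exact asymptotic evaluation of the supremum risk of $\hat\eta_E(y,\lambda_n,\gamma_n)$, and I would prove these two parts independently. The lower bound I would obtain by the Bayesian reduction (the same one used in the proof of Theorem~\ref{thm::regime_1}): lower-bound $R$ by the Bayes risk of a product ``spike'' prior supported, up to negligible mass, on $\Theta(k_n,\tau_{n})$, which separates across coordinates. The upper bound I would obtain by noting that $\hat\eta_E=(1+\gamma_n)^{-1}\hat\eta_S(\cdot,\lambda_n)$ is coordinate-separable, expanding its risk in terms of the first two moments of soft thresholding, and reducing $\sup_{\theta\in\Theta(k_n,\tau_n)}$ to a finite-dimensional constrained optimization.

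\textbf{Lower bound.} Take the i.i.d.\ product prior with marginal $(1-\epsilon_n')\delta_0+\tfrac{\epsilon_n'}{2}(\delta_{\sigma_n\mu_n'}+\delta_{-\sigma_n\mu_n'})$, with $\epsilon_n'\uparrow\epsilon_n$, $\mu_n'\uparrow\mu_n$ slowly enough that concentration of $\|\theta\|_0$ and $\|\theta\|_2^2$ makes conditioning on $\Theta(k_n,\tau_n)$ cost only a $1+o(1)$ factor in the Bayes risk; since $\mu_n=o(\sqrt{\log\epsilon_n^{-1}})$, $e^{\mu_n^2}$ is stable under these perturbations, so the primes disappear in the limit. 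Then $R(\Theta(k_n,\tau_{n}),\sigma_{n})\ge n\sigma_n^2\rho_n(1+o(1))$ where, writing the one-dimensional Bayes risk (unit noise) as $\rho_n=\E_\pi[\theta^2]-\E_\pi[\hat\theta_{\mathrm{Bayes}}(y)^2]=\epsilon_n\mu_n^2-\E_\pi[\hat\theta_{\mathrm{Bayes}}(y)^2]$ and using
\[
\hat\theta_{\mathrm{Bayes}}(y)=\frac{\epsilon_n\mu_n e^{-\mu_n^2/2}\sinh(\mu_n y)}{(1-\epsilon_n)+\epsilon_n e^{-\mu_n^2/2}\cosh(\mu_n y)},
\]
I would bound the normalizer below by $1-\epsilon_n$ and compute $\int\sinh^2(\mu_n y)\phi(y)\,dy=\tfrac12(e^{2\mu_n^2}-1)$ to get $\E_\pi[\hat\theta_{\mathrm{Bayes}}^2]\le\tfrac{\epsilon_n^2\mu_n^2}{2(1-\epsilon_n)}(e^{\mu_n^2}-e^{-\mu_n^2})=\tfrac12\epsilon_n^2\mu_n^2 e^{\mu_n^2}(1+o(1))$. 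Hence $\rho_n\ge\epsilon_n\mu_n^2-\tfrac12\epsilon_n^2\mu_n^2 e^{\mu_n^2}(1+o(1))$, which after multiplying by $n\sigma_n^2$ is exactly the claimed lower bound; notably only the crude bound on the posterior normalizer is needed, no Laplace estimate.

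\textbf{Upper bound: reduction and asymptotics.} Since $\hat\eta_E(y,\lambda_n,\gamma_n)=a_n\hat\eta_S(y,\lambda_n)$ with $a_n:=(1+\gamma_n)^{-1}=2\epsilon_n\mu_n^2 e^{3\mu_n^2/2}\to0$, expanding the square and using separability,
\[
\E_\theta\|\hat\eta_E-\theta\|_2^2=\sum_{i:\theta_i\ne0}\big(\theta_i^2-2a_n\theta_i m_1(\theta_i)\big)+a_n^2(n-\|\theta\|_0)r_S(0)+a_n^2\!\!\sum_{i:\theta_i\ne0}\!\!m_2(\theta_i),
\]
where $m_1(t)=\E_t[\hat\eta_S(y,\lambda_n)]$, $m_2(t)=\E_t[\hat\eta_S(y,\lambda_n)^2]$, $r_S(0)=m_2(0)$. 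By $m_2(t)\le\sigma_n^2+t^2$ the last sum is $O\!\big(a_n^2 k_n(\sigma_n^2+\tau_n^2)\big)=o(n\sigma_n^2\epsilon_n^2\mu_n e^{\mu_n^2})$, since $\mu_n=o(\sqrt{\log\epsilon_n^{-1}})$ forces $\epsilon_n e^{c\mu_n^2}\to0$ for every fixed $c$. Mills-ratio expansions (noting the threshold-to-signal ratio is $\lambda_n/\tau_n=2$) give $r_S(0)=\sigma_n^2\E[(|z|-2\mu_n)_+^2]=\sigma_n^2\phi(2\mu_n)/(2\mu_n^3)(1+o(1))$ and $m_1(\tau_n)=\sigma_n\phi(\mu_n)/\mu_n^2(1+o(1))$, hence $a_n^2 n\,r_S(0)=\sqrt{2/\pi}\,n\sigma_n^2\epsilon_n^2\mu_n e^{\mu_n^2}(1+o(1))$ and $2a_n k_n\tau_n m_1(\tau_n)=2\sqrt{2/\pi}\,n\sigma_n^2\epsilon_n^2\mu_n e^{\mu_n^2}(1+o(1))$. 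Writing $\psi(t):=t^2-2a_n t\,m_1(t)\;(\le t^2)$, this leaves, uniformly over $\theta\in\Theta(k_n,\tau_n)$, $\E_\theta\|\hat\eta_E-\theta\|_2^2=\sum_{i:\theta_i\ne0}\psi(\theta_i)+a_n^2(n-\|\theta\|_0)r_S(0)+(\text{negligible})$.

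\textbf{Upper bound: the optimization, and the main obstacle.} What remains is $\sup\{\sum_{i\in S}\psi(\theta_i)+a_n^2(n-|S|)r_S(0):|S|\le k_n,\ \sum_{i\in S}\theta_i^2\le k_n\tau_n^2\}$, which I claim is attained to leading order by $|S|=k_n$ with all coordinates equal to $\tau_n$, giving $k_n\tau_n^2-2a_n k_n\tau_n m_1(\tau_n)+a_n^2(n-k_n)r_S(0)$. The plan is: (i) show $t\mapsto m_1(t)/t$ is increasing and $u\mapsto\psi(\sqrt u)$ is concave on the relevant range, so that for a fixed $\ell_2$ budget $\sum_{i\in S}\psi(\theta_i)$ is maximized by an equal split; (ii) show exhausting the $\ell_2$ budget is optimal, since $\psi(t)-t^2$ has strictly smaller order than $t^2$ once $|t|$ is bounded away from $0$; (iii) rule out tiny coordinates via $\psi(t)\le t^2$ and $|S|\le k_n$, since adding a slot with $|\theta_i|\lesssim a_n\sqrt{r_S(0)}$ changes the objective by $\psi(\theta_i)-a_n^2 r_S(0)\le0$, while each of the $k_n$ slots at value $\tau_n$ satisfies $\psi(\tau_n)\approx\tau_n^2\gg a_n^2 r_S(0)$. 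This constrained optimization, together with the uniform-in-$t$ control of the soft-thresholding moments $m_1,m_2$ that it needs, is the technical heart and the step I expect to be the main obstacle. Granting it and inserting the asymptotics above, the supremum risk equals $n\sigma_n^2\epsilon_n\mu_n^2-2\sqrt{2/\pi}\,n\sigma_n^2\epsilon_n^2\mu_n e^{\mu_n^2}(1+o(1))+\sqrt{2/\pi}\,n\sigma_n^2\epsilon_n^2\mu_n e^{\mu_n^2}(1+o(1))=n\sigma_n^2\big(\epsilon_n\mu_n^2-(\sqrt{2/\pi}+o(1))\epsilon_n^2\mu_n e^{\mu_n^2}\big)$; plugging the configuration with $k_n$ coordinates equal to $\tau_n$ (which lies in $\Theta(k_n,\tau_n)$) into the same expansion shows the supremum is at least this. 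Combining with the first part completes the proof.
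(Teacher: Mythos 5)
Your upper bound follows essentially the paper's route: exploit the coordinate separability of $\hat{\eta}_E=(1+\gamma_n)^{-1}\hat{\eta}_S(\cdot,\lambda_n)$, locate the worst case at $k_n$ coordinates equal to $\tau_n$, and evaluate via Mills-ratio expansions. Your moment asymptotics and the resulting constants ($a_n^2 n\,r_S(0)=\sqrt{2/\pi}\,n\sigma_n^2\epsilon_n^2\mu_n e^{\mu_n^2}$, cross term twice that) check out against the paper's Lemma \ref{lem:optimally-tuned_elastic}. The one step you defer --- that the constrained supremum is attained at the equal-magnitude boundary configuration --- is exactly what the paper's Lemma \ref{lem::elastic-net-risk} proves, via monotonicity of $r_e(\cdot;\lambda,\gamma)$ in $|\mu|$ and the two-point ``equal split on a circle'' property (established through a sign analysis of $H'(x)$ rather than the global concavity of $u\mapsto\psi(\sqrt{u})$ you propose; your concavity claim would need its own proof, and monotonicity of $m_1(t)/t$ alone does not deliver it). So the upper bound is an acceptable sketch with its key lemma unproved but provable.

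The lower bound has a genuine gap. You use an i.i.d.\ spike product prior and assert that conditioning on $\Theta(k_n,\tau_n)$ ``costs only a $1+o(1)$ factor in the Bayes risk.'' A multiplicative $(1+o(1))$ loss is not good enough here: the second-order term $\tfrac12\epsilon_n^2\mu_n^2e^{\mu_n^2}$ is itself $o(1)$ \emph{relative} to the leading term $\epsilon_n\mu_n^2$ (since $\epsilon_n e^{\mu_n^2}\to 0$ in Regime (\rom{2})), so any truncation error must be shown to be $o\bigl(n\sigma_n^2\epsilon_n^2\mu_n^2e^{\mu_n^2}\bigr)$ in absolute terms. That requires controlling $\int_{\Theta^c}\E_\theta\|\hat\theta-\theta\|_2^2\,d\pi$ uniformly over (near-)Bayes estimators to this precision, plus large-deviation bounds on $\|\theta\|_0$ and $\|\theta\|_2^2$ that are strong enough even when $k_n$ grows slowly; none of this is automatic, and the paper explicitly states (Section \ref{sec::lower-bound-regime-1}) that the i.i.d.\ spike prior is inadequate for second-order lower bounds for precisely this reason. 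The paper instead uses the independent block prior: one spike per block of size $m=n/k_n$, which is supported \emph{exactly} on $\Theta(k_n,\tau_n)$ (so Theorem \thmMinimaxLower{} applies with no truncation) while the Bayes risk still tensorizes across blocks; Lemma \ref{lem::single-spike-case} then gives the per-block bound $\mu_n^2(1-\tfrac{e^{\mu_n^2}}{2m}(1+o(1)))$. Your closed-form computation of $\E_\pi[\hat\theta_{\mathrm{Bayes}}^2]$ via $\int\sinh^2(\mu_n y)\phi(y)\,dy=\tfrac12(e^{2\mu_n^2}-1)$ is correct and does reproduce the right constant, so the arithmetic is fine --- the unresolved issue is purely the support of the prior. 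To repair your argument you would either have to carry out the truncation analysis at second-order precision or switch to the block prior.
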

The proof of this theorem can be found in Section \secThmIV.

\begin{remark}
Theorem \ref{thm::regime_2} does not provide a tight upper or lower bound for the minimax risk approximation. However, the upper bound given by $\hat{\eta}_E(y,\lambda_n,\gamma_n)$ only differs from the lower bound up to an order of $\mu_{n}$ in the second order term. Note that this difference is very small in view of the occurrence of $e^{\mu_{n}^{2}}$ in the second order term. In this sense, the estimator $\hat{\eta}_E(y,\lambda_n,\gamma_n)$ is nearly optimal in Regime (\rom{2}). In this theorem, we believe that the upper bound is not necessarily sharp. In fact, we anticipate that there may be other estimators capable of outperforming $\hat{\eta}_E(y,\lambda_n,\gamma_n)$. Our next theorem (Theorem \ref{thm::regime_2_compactness}) gives an accurate second order term for the minimax risk in Regime (\rom{2}), under a uniform boundedness condition on parameter coordinates in the parameter space. However, as will be elaborated in the proof, the technique employed to establish the upper bound on the minimax risk is not constructive and does not identify the minimax estimator.
\end{remark}

\begin{theorem}\label{thm::regime_2_compactness}
Consider model \eqref{model::gaussian_sequence} with the following parameter space:
\begin{align}\label{eq::parameter-space-compact}
    \Theta^{A}(k_n,\tau_{n}) := \Big\{ & \theta \in \mathbb{R}^{n}:
    ~\norm{\theta}_{0} \leq k_n, \nonumber \\
    & ~\norm{\theta}_{2}^{2} \leq k_n \tau_{n}^{2}, ~\norm{\theta}_{\infty} \leq A \tau_{n} \Big \}. 
\end{align}
For Regime (\rom{2}) in which $\epsilon_{n} \rightarrow 0, \mu_{n}  \rightarrow \infty, \mu_{n} = o(\sqrt{\log \epsilon_{n}^{-1}})$ as $n\rightarrow \infty$, we have that for any constant $A>1$,
\begin{equation*}
    R(\Theta^{A}(k_n,\tau_{n}),\sigma_{n}) = n \sigma_{n}^{2} \left(\epsilon_{n} \mu_{n}^{2} - \frac{1}{2}\epsilon_{n}^{2} \mu_{n}^{2} e^{\mu_{n}^{2}} \left(1+o(1)\right) \right).
\end{equation*}
\end{theorem}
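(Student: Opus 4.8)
The plan is to establish matching upper and lower bounds on $R(\Theta^{A}(k_n,\tau_n),\sigma_n)$, both equal to $n\sigma_n^2(\epsilon_n\mu_n^2 - \tfrac12\epsilon_n^2\mu_n^2 e^{\mu_n^2}(1+o(1)))$, by reducing the problem to a one-dimensional Bayesian calculation. The first step is the standard minimax-over-Bayes reduction: because $\Theta^A(k_n,\tau_n)$ is permutation-symmetric and the loss and model are separable across coordinates, the minimax risk is governed by the least favorable prior, which by a compactness/convexity argument can be taken to be an i.i.d. product of a single-coordinate prior $\pi$ on $\mathbb{R}$ subject to the constraints $\pi(\{0\}) \geq 1-\epsilon_n$ (sparsity), $\int \mu^2\, d\pi \leq \epsilon_n\tau_n^2$ (the $\ell_2$ budget, per coordinate), and $\supp(\pi)\subseteq[-A\tau_n,A\tau_n]$ (the $\ell_\infty$ constraint — this is exactly what the enlarged parameter space buys us). So the problem becomes $R = n\sigma_n^2\cdot \sup_\pi B(\pi)$, where $B(\pi)$ is the Bayes risk of the scalar problem $y=\mu+z$, $z\sim\calN(0,1)$, $\mu\sim\pi$ rescaled; I would invoke the machinery referenced by \lemSpikeBayes{} and \lemSpikeBayesDecompose{} (spike-and-slab Bayes risk decomposition) to make this precise.

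The second step is to analyze the scalar Bayes problem. Write $\pi = (1-\epsilon_n)\delta_0 + \epsilon_n G$ where $G$ is supported on $[-A\mu_n,A\mu_n]$ (after rescaling by $\sigma_n$) with $\int \mu^2\, dG \leq \mu_n^2$. The Bayes risk has the well-known form $B(\pi) = \int \mu^2\, d\pi - \int \frac{(\int \mu \phi(y-\mu)\,d\pi(\mu))^2}{\int \phi(y-\mu)\,d\pi(\mu)}\, dy$, so maximizing $B(\pi)$ is equivalent to minimizing the second (information) term subject to the second-moment budget being saturated, $\int\mu^2\, d\pi = \epsilon_n\mu_n^2$. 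I expect the least favorable $G$ to be (essentially) a two-point symmetric distribution at $\pm$ a value of order $\mu_n$ — in fact at the boundary $\pm A\mu_n$ with the mass chosen so the variance constraint is tight, or a mixture thereof — and the key estimate is a sharp asymptotic evaluation of the information term. The dominant contribution comes from the region where $y$ is of order $\mu_n$, and a Laplace-type expansion should yield that the information term is $\epsilon_n^2\mu_n^2 e^{\mu_n^2}(\tfrac12 + o(1))$; the factor $e^{\mu_n^2}$ arises from $\int \mu^2\phi(y)\phi(\mu)\cosh(\mu y)\cdots$ type integrals, i.e. from the tails of the Gaussian convolution against a prior with mass at scale $\mu_n$. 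Since $\mu_n = o(\sqrt{\log\epsilon_n^{-1}})$, we have $\epsilon_n e^{\mu_n^2} = \epsilon_n^{o(1)}\to 0$, so the second-order term is genuinely subordinate to the first, and the expansion is legitimate.

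For the matching lower bound (Step 3), I would restrict to the explicit sub-family of priors just described — i.i.d. copies of $(1-\epsilon_n)\delta_0 + \tfrac{\epsilon_n}{2}(\delta_{c\sigma_n} + \delta_{-c\sigma_n})$ with $c$ chosen near $A\mu_n$ so that $c^2 = \mu_n^2$ up to lower-order (here one must check $c \leq A\mu_n$, which holds since $A>1$) — and compute $n\sigma_n^2 B(\pi)$ directly, extracting the same second-order term; this shows $R \geq n\sigma_n^2(\epsilon_n\mu_n^2 - \tfrac12\epsilon_n^2\mu_n^2 e^{\mu_n^2}(1+o(1)))$. For the upper bound (Step 4) I would argue that no prior on the constrained class can do better: using the decomposition of \lemSpikeBayesDecompose{}, bound the information term from below uniformly over all admissible $G$ supported in $[-A\mu_n,A\mu_n]$ with second moment $\leq\mu_n^2$. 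The main obstacle — and the reason the $\ell_\infty$ constraint is essential — is precisely this uniform lower bound on the information term: without the compactness constraint, a prior could place a vanishingly small mass very far out (at location $\gg \mu_n$), which inflates $\int\mu^2 d\pi$ cheaply while contributing almost nothing to the information integral, breaking the clean $e^{\mu_n^2}$ asymptotics (this is exactly the gap between Theorem \ref{thm::regime_2} and Theorem \ref{thm::regime_2_compactness}). With $\norm{\theta}_\infty \leq A\tau_n$ in force, every unit of second moment must be deposited at scale $\Uptheta(\mu_n)$, where the Gaussian convolution tail is of a fixed exponential order $e^{\mu_n^2}$, pinning down the constant $\tfrac12$. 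I would carry out this uniform bound via a convexity argument in $G$ (the information term is concave in the likelihood mixture, so extremal $G$ is supported on few atoms, reducing to a low-dimensional optimization over atom locations in $[-A\mu_n,A\mu_n]$ and masses) plus the Laplace expansion of Step 2 applied uniformly.
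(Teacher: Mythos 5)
Your overall architecture for the upper bound — reduce to a univariate Bayes problem over priors $(1-\epsilon_n)\delta_0+\epsilon_n G$ with $G$ supported in $[-A\mu_n,A\mu_n]$ and second moment at most $\mu_n^2$, then bound the ``information term'' $\E(\E(\theta\mid Y))^2$ uniformly from below — is exactly the paper's route (Lemmas \ref{lem::reduce_to_univariate_upper_bound}--\ref{lem::univariate-Bayes}). However, there are two genuine gaps. First, your lower bound uses an i.i.d.\ product prior with marginal $(1-\epsilon_n)\delta_0+\tfrac{\epsilon_n}{2}(\delta_{c\sigma_n}+\delta_{-c\sigma_n})$. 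That prior is \emph{not} supported on $\Theta^A(k_n,\tau_n)$: the number of nonzero coordinates is ${\rm Binomial}(n,\epsilon_n)$ and exceeds $k_n$ with probability tending to $1/2$, and likewise $\|\theta\|_2^2$ exceeds $k_n\tau_n^2$ on an event of non-vanishing probability, so Theorem \thmMinimaxLower{} (which requires $\supp\mathcal{P}\subset\Theta$) does not apply. Repairing this by conditioning or truncation is not free at the level of a second-order term of size $\epsilon_n^2\mu_n^2e^{\mu_n^2}$. The paper avoids the issue entirely by using the independent block prior of \secRegimeILower{} with one spike of height $\pm\mu_n$ per block of size $n/k_n$, which satisfies the $\ell_0$, $\ell_2$ and $\ell_\infty$ constraints \emph{deterministically}; the lower bound then reduces to the single-block Bayes risk estimate of Lemma \ref{lem::single-spike-case}.

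Second, the uniform upper bound over all admissible $G$ — the hardest step — is not carried by your proposed mechanism. The information term $\int (\int t\phi(y-t)\,d\pi(t))^2/(\int\phi(y-t)\,d\pi(t))\,dy$ is jointly \emph{convex} in $\pi$ (it is an integral of $f^2/g$ with $f,g$ linear in $\pi$), not concave; and since the adversary \emph{minimizes} it, there is no reduction to priors with few atoms, so the ``optimize over atom locations plus Laplace expansion'' step does not go through as described. The paper instead runs a chain of inequalities valid for every $G$: truncate the $y$-integral to $|y|\le\sqrt{\log\epsilon_n^{-1}}$ to control the denominator, expand the square as $\iint tt'e^{tt'}\,dG\,dG'$, discard the $tt'<0$ part at a cost of $A^2\mu_n^2$ (this is one place the $\ell_\infty$ constraint enters), and then apply H\"older termwise to get $\iint_{tt'\ge0}tt'e^{tt'}\,dG\,dG'\ge\tfrac12(\E_G|t|^2e^{\E_G|t|^2}-\E_G|t|^2)$, which depends on $G$ only through $\E_G t^2$ and is then optimized by monotonicity. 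Two smaller points: the least favorable $G$ sits at $\pm\mu_n$ (not at the boundary $\pm A\mu_n$ — since $e^{a^2}/a^2$ is increasing for $a>1$, pushing mass outward \emph{increases} the information term and hurts the adversary), and your heuristic for why compactness is essential runs backwards: far-out mass is easily detected, so it contributes its full second moment to $\E(\E(\theta\mid Y))^2$ rather than ``almost nothing''; the constraint is needed to make the truncation and cross-term estimates above uniform.
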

The theorem is proved in Section \secThmV{}.

Now let us interpret the above results. First note that in Regime (\rom{2}), compared to Regime (\rom{1}), the magnitude of the second order term (relative to the first order term) is much larger, so that the possible improvement over the zero estimator is much more significant. This is expected as the SNR is higher compared to Regime (\rom{1}). Furthermore, the (near) optimality of $\hat{\eta}_E(y,\lambda_n,\gamma_n)$ showed in Theorem \ref{thm::regime_2} indicates that thresohlding and linear shrinkage together play an important role in estimating sparse signals in Regime (\rom{2}). To shed more light on it, the following three propositions prove that neither thresohlding estimators $\etahatS(y,\lambda), \etahatH(y,\lambda)$ nor linear estimator $\hat{\eta}_L(y, \lambda)$ alone is close to optimal.

\begin{prop}\label{lem::lasso-risk-regime-2}
Consider model \eqref{model::gaussian_sequence} and parameter space \eqref{eq::parameter-space-SNR}. In Regime (\rom{2}) where $\epsilon_{n}\rightarrow 0, \mu_{n} \rightarrow \infty$, $\mu_{n} = o(\sqrt{\log \epsilon_{n}^{-1}})$, as $n \rightarrow \infty$, the optimally tuned soft thresholding estimator has supremum risk:
\begin{eqnarray*}
    && \inf_{\lambda} \sup_{\theta \in \Theta(k_n,\tau_{n})} \mathbb{E}_{\theta} \norm{\etahatS(y,\lambda) - \theta}_{2}^{2} \nonumber \\
    &=& n \sigma_{n}^{2} \left( \epsilon_{n}\mu_{n}^{2} - \exp\left[ -\frac{1}{2} \frac{1}{\mu_{n}^{2}} \left(\log \frac{1}{\epsilon_{n}}\right)^{2}\left(1 + o(1) \right)\right] \right).
\end{eqnarray*}
\end{prop}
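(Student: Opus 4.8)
The plan is to exploit that $\etahatS(\cdot,\lambda)$ acts coordinatewise and reduce the supremum risk to a one–dimensional optimization over the threshold. Let $\rho_S(\nu;t):=\mathbb{E}\,[(\etahatS(\nu+Z,t)-\nu)^2]$ be the scalar soft thresholding risk at signal level $\nu$ and threshold $t$, $Z\sim\mathcal{N}(0,1)$; by scaling, the coordinate-$i$ risk under \eqref{model::gaussian_sequence} is $\sigma_n^2\rho_S(\theta_i/\sigma_n;\lambda/\sigma_n)$. Writing $t=\lambda/\sigma_n$, $\mu_n=\tau_n/\sigma_n$, and $q_t(\nu):=\rho_S(\nu;t)-\rho_S(0;t)\ge 0$, separability gives
\[
\sup_{\theta\in\Theta(k_n,\tau_n)}\mathbb{E}_\theta\|\etahatS(y,\lambda)-\theta\|_2^2
= n\sigma_n^2\rho_S(0;t)+\sigma_n^2\sup\Big\{\textstyle\sum_{j=1}^{k_n}q_t(\nu_j):\ \sum_{j=1}^{k_n}\nu_j^2\le k_n\mu_n^2\Big\}.
\]
The first step is to evaluate the inner supremum. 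Using the sharp expansion of $\rho_S$ from \eqRiskOfLambdatoSoftSecondMom{} together with \lemGaussianMill{} (and cf.\ \lemRefineOrder{}), one checks that for the range of thresholds that matters — namely $t\to\infty$ with $\mu_n=o(t)$, which will turn out to be where the optimal threshold sits — the ``recovered'' correction $\nu^2-\rho_S(\nu;t)$ behaves like $(2+o(1))\nu\phi(t-\nu)/(t-\nu)^2$ and is convex in $\nu^2$ on $[0,\mu_n^2]$, so that $s\mapsto q_t(\sqrt{s})$ is concave there. Jensen's inequality then forces the worst configuration to be the equal-spike vector; configurations with fewer, taller spikes (which may cross the threshold) can be bounded crudely and shown never to be worst at the relevant $t$. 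Hence, for such $t$,
\[
\sup_{\theta\in\Theta(k_n,\tau_n)}\mathbb{E}_\theta\|\etahatS(y,\lambda)-\theta\|_2^2=(n-k_n)\sigma_n^2\rho_S(0;t)+k_n\sigma_n^2\rho_S(\mu_n;t).
\]

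Next I would optimize over $t$ the quantity $F(t):=(n-k_n)\rho_S(0;t)+k_n\rho_S(\mu_n;t)$. Substituting $\rho_S(0;t)=\Uptheta(\phi(t)/t^3)$ and $\rho_S(\mu_n;t)=\mu_n^2-(2+o(1))\mu_n\phi(t-\mu_n)/(t-\mu_n)^2+O(\rho_S(0;t))$, and using $\phi(t-\mu_n)=\phi(t)e^{t\mu_n-\mu_n^2/2}$, the problem becomes to minimize $k_n\mu_n^2+\phi(t)\big[\Uptheta(nt^{-3})-\Uptheta(k_n\mu_n(t-\mu_n)^{-2})\,e^{t\mu_n-\mu_n^2/2}\big]$. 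The bracket changes sign near $t_0$ where $t_0\mu_n=\log\epsilon_n^{-1}+\tfrac12\mu_n^2+O(\log\log\epsilon_n^{-1})$, i.e.\ $t_0=(1+o(1))\log\epsilon_n^{-1}/\mu_n$ (which is $\gg\mu_n$ precisely because $\mu_n=o(\sqrt{\log\epsilon_n^{-1}})$ in Regime (\rom{2})), and a short computation shows $F$ is minimized at $t_n^{*}=t_0+O(1/t_0)$, i.e.\ at $\lambda_n=(1+o(1))\sigma_n^2\log\epsilon_n^{-1}/\tau_n$. There $(t_n^{*}-\mu_n)^2=(1+o(1))(\log\epsilon_n^{-1})^2/\mu_n^2$, so $\phi(t_n^{*}-\mu_n)=\exp[-(1+o(1))(\log\epsilon_n^{-1})^2/(2\mu_n^2)]$; since $\log\epsilon_n^{-1}=o((\log\epsilon_n^{-1})^2/\mu_n^2)$ in Regime (\rom{2}), the remaining prefactors ($\epsilon_n$ and powers of $\mu_n,t_n^{*}$) are swallowed by the $(1+o(1))$ in the exponent, yielding
\[
\sigma_n^2\min_t F(t)=n\sigma_n^2\Big(\epsilon_n\mu_n^2-\exp\Big[-\tfrac12\tfrac1{\mu_n^2}\big(\log\tfrac1{\epsilon_n}\big)^2(1+o(1))\Big]\Big).
\]
The upper bound follows by taking $\lambda=\lambda_n$ in the displayed equality; the matching lower bound follows because for \emph{every} $\lambda$ the supremum over $\Theta(k_n,\tau_n)$ is at least the risk at the equal-spike vector $\tau_n\sum_{j=1}^{k_n}e_j$, which equals $\sigma_n^2F(\lambda/\sigma_n)$, so $\inf_\lambda\sup_\theta(\cdots)\ge\sigma_n^2\inf_t F(t)$.

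This is the identical conclusion, and essentially the identical argument, as Proposition \ref{lem::lasso-risk-regime-1}; the only difference is that here $\mu_n\to\infty$, which affects only the lower-order bookkeeping in the expansions of $\rho_S$, so the two regimes can be treated together. The main obstacle is twofold. First, justifying the reduction to the equal-spike least-favorable configuration: this needs \emph{uniform} (not merely pointwise) second-order control of $\rho_S(\nu;t)$, in particular the convexity of $\nu^2-\rho_S(\nu;t)$ in $\nu^2$ on the relevant range, together with a genuine bound ruling out taller-spike configurations. Second, pinning down the constant $\tfrac12$ in the exponent of the second-order term: this rests on sharp Mills-ratio asymptotics (\lemGaussianMill) and on verifying that all subexponential prefactors are absorbed into $(1+o(1))$ exactly when $\mu_n=o(\sqrt{\log\epsilon_n^{-1}})$.
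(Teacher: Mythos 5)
Your proposal is correct and follows essentially the same route as the paper: reduce the supremum risk to the equal-spike configuration so that it equals $n\sigma_n^2 F(t)$ with $F(t)=(1-\epsilon_n)r_S(t,0)+\epsilon_n r_S(t,\mu_n)$, locate the optimal threshold via the balance condition $t\mu_n=\log\epsilon_n^{-1}+\tfrac12\mu_n^2+O(\log\log\epsilon_n^{-1})$ (the paper's Lemma \ref{optimal:tuing:regime:2}), and evaluate $F$ there with Mills-ratio asymptotics, absorbing all subexponential prefactors into the $(1+o(1))$ in the exponent (the paper's Lemma \ref{function:eval:opt}). The only substantive difference is in the reduction step: where you invoke concavity of the scalar risk in $\nu^2$ on the relevant threshold range (and correctly flag this, plus the need to exclude taller-spike configurations, as the main obstacle), the paper instead uses Lemma \ref{lem::elastic-net-risk}, which shows by a direct derivative computation that the scalar risk is increasing in $|\mu|$ and that the two-coordinate exchange $\max_{x^2+y^2=c^2}[r(x)+r(y)]=2r(c/\sqrt2)$ holds for \emph{every} tuning, so the equal-spike worst case follows uniformly without any convexity verification or case analysis over spike heights.
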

The proof of this proposition can be found in Section \secPropII{}.

\begin{prop}\label{lem::hardthreshold-risk-regime-2}
Consider model \eqref{model::gaussian_sequence} and parameter space \eqref{eq::parameter-space-SNR}. In Regime (\rom{2}) where $\epsilon_{n} \rightarrow 0, \mu_{n} \rightarrow \infty$, $\mu_n = o(\sqrt{\log \epsilon_n^{-1}})$ as $n \rightarrow \infty$, the optimally tuned hard thresholding estimator $\etahatH(y, \lambda)$ has supremum risk:
\begin{align*}
     \inf_{\lambda} \sup_{\theta \in \Theta(k_n,\tau_{n})} \mathbb{E}_{\theta} \norm{\etahatH(y,\lambda) - \theta}_{2}^{2} 
    =  n \sigma_{n}^{2}  \epsilon_{n}\mu_{n}^{2}.
\end{align*}
\end{prop}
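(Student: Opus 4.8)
We sketch the argument. The plan is to sandwich the quantity $\inf_{\lambda}\sup_{\theta\in\Theta(k_n,\tau_n)}\E_\theta\norm{\etahatH(y,\lambda)-\theta}_2^2$ between $n\sigma_n^2\epsilon_n\mu_n^2$ (from below, valid for \emph{every} $\lambda$ and all large $n$) and $n\sigma_n^2\epsilon_n\mu_n^2(1+o(1))$ (from above, for one well-chosen $\lambda_n$). Since hard thresholding acts coordinatewise, let $r(\mu,\lambda):=\E\big[\big((\mu+\sigma_n z)\,I(|\mu+\sigma_n z|>\lambda)-\mu\big)^2\big]$ with $z\sim\calN(0,1)$ denote the scalar risk, so $\E_\theta\norm{\etahatH(y,\lambda)-\theta}_2^2=\sum_{i=1}^{n}r(\theta_i,\lambda)$. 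Everything rests on two elementary facts obtained by conditioning on $\{|y|\le\lambda\}$ versus $\{|y|>\lambda\}$ (with $y=\mu+\sigma_n z$): (i) $r(\mu,\lambda)=\mu^2\Pr(|y|\le\lambda)+\sigma_n^2\E[z^2 I(|y|>\lambda)]\le\mu^2+\sigma_n^2$, and in particular $r(0,\lambda)=\sigma_n^2\E[z^2 I(|z|>\lambda/\sigma_n)]$; (ii) if $\lambda\ge 2\mu\ge 0$, then on $\{|y|>\lambda\}$ one has $|\sigma_n z|\ge|y|-\mu>\lambda-\mu\ge\mu$, hence $\sigma_n^2z^2>\mu^2$ there, so $r(\mu,\lambda)-\mu^2=\E[(\sigma_n^2z^2-\mu^2)I(|y|>\lambda)]\ge0$.

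For the upper bound, take $\lambda_n=\sigma_n\sqrt{2\log\epsilon_n^{-1}+2\log\log\epsilon_n^{-1}}$. By the Gaussian tail estimates of \lemGaussianMill{}, $\E[z^2 I(|z|>\ell)]\le 4\ell\phi(\ell)$ for $\ell\ge1$, and substituting $\ell=\lambda_n/\sigma_n$ yields $r(0,\lambda_n)=\sigma_n^2\cdot o(\epsilon_n\mu_n^2)$ using only $\epsilon_n\to0$ and $\mu_n\to\infty$ (the extra $\log\log$ term makes $\phi(\lambda_n/\sigma_n)$ smaller than $\epsilon_n$ by a logarithmic factor, which beats $1/\mu_n^2$). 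Then for every $\theta\in\Theta(k_n,\tau_n)$, applying (i) on the at most $k_n$ nonzero coordinates and $r(0,\lambda_n)$ on the rest,
\[
\sum_{i=1}^{n}r(\theta_i,\lambda_n)\le\sum_{i:\,\theta_i\neq0}(\theta_i^2+\sigma_n^2)+n\,r(0,\lambda_n)\le k_n\tau_n^2+k_n\sigma_n^2+n\,r(0,\lambda_n).
\]
Since $k_n\tau_n^2=n\sigma_n^2\epsilon_n\mu_n^2$, $k_n\sigma_n^2=n\sigma_n^2\epsilon_n=o(n\sigma_n^2\epsilon_n\mu_n^2)$ (this is where $\mu_n\to\infty$ enters), and $n\,r(0,\lambda_n)=o(n\sigma_n^2\epsilon_n\mu_n^2)$, the right-hand side is $n\sigma_n^2\epsilon_n\mu_n^2(1+o(1))$, uniformly in $\theta$.

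For the lower bound, fix any $\lambda\ge0$ and evaluate the risk at $\theta^\star=\tau_n\sum_{j=1}^{k_n}e_j\in\Theta(k_n,\tau_n)$; since $\tau_n=\sigma_n\mu_n$ this risk equals $k_n\,r(\sigma_n\mu_n,\lambda)+(n-k_n)\,r(0,\lambda)$. If $\lambda\ge 2\sigma_n\mu_n$, fact (ii) gives $r(\sigma_n\mu_n,\lambda)\ge\sigma_n^2\mu_n^2$, so the risk is $\ge k_n\sigma_n^2\mu_n^2=n\sigma_n^2\epsilon_n\mu_n^2$. If $\lambda<2\sigma_n\mu_n$, then by (i) and monotonicity of the truncated second moment, $r(0,\lambda)\ge\sigma_n^2\E[z^2 I(|z|>2\mu_n)]\ge\sigma_n^2\cdot\tfrac{4\mu_n}{\sqrt{2\pi}}e^{-2\mu_n^2}$ (again \lemGaussianMill{}), so $(n-k_n)\,r(0,\lambda)\ge(1-\epsilon_n)\,n\sigma_n^2\,\tfrac{4\mu_n}{\sqrt{2\pi}}e^{-2\mu_n^2}$, which exceeds $n\sigma_n^2\epsilon_n\mu_n^2$ as soon as $\tfrac{4}{\sqrt{2\pi}}(1-\epsilon_n)\epsilon_n^{-1}e^{-2\mu_n^2}\ge\mu_n$; this holds for large $n$ because $\mu_n=o(\sqrt{\log\epsilon_n^{-1}})$ forces $2\mu_n^2\le\tfrac12\log\epsilon_n^{-1}$ eventually, hence $\epsilon_n^{-1}e^{-2\mu_n^2}\ge\epsilon_n^{-1/2}\to\infty$ while $\mu_n=o(\epsilon_n^{-1/4})$. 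In either case $\sup_\theta\sum_i r(\theta_i,\lambda)\ge n\sigma_n^2\epsilon_n\mu_n^2$ for $n$ large, uniformly over $\lambda$, so $\inf_\lambda\sup_\theta\ge n\sigma_n^2\epsilon_n\mu_n^2$; combined with the upper bound this gives the asymptotic identity.

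I expect the delicate step to be the lower bound in the sub-case $\lambda<2\sigma_n\mu_n$: there hard thresholding barely shrinks the signal coordinates, so the entire obstruction must come from the price $r(0,\lambda)$ paid at the $\approx n$ null coordinates, and one has to confirm that this null price alone already matches $n\sigma_n^2\epsilon_n\mu_n^2$. This is precisely where Regime~(\rom{2})'s defining condition $\mu_n=o(\sqrt{\log\epsilon_n^{-1}})$ is indispensable: it guarantees that $e^{-2\mu_n^2}$ decays slower than any fixed power of $\epsilon_n$, so $\epsilon_n^{-1}e^{-2\mu_n^2}\to\infty$; without it (for instance in Regime~(\rom{3})) both the argument and the conclusion fail. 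The upper bound, by contrast, is routine once one notices that the crude coordinatewise bound $r(\mu,\lambda)\le\mu^2+\sigma_n^2$ wastes only an additive $k_n\sigma_n^2$, which is negligible against $k_n\sigma_n^2\mu_n^2$ exactly because $\mu_n\to\infty$.
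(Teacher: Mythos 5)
Your argument is essentially the paper's: the lower bound is obtained by evaluating the risk at the spike configuration $\theta^\star$ with $k_n$ coordinates equal to $\tau_n$ and case-splitting on the tuning parameter, and both cases run on the same two mechanisms (for large $\lambda$ the signal coordinates already pay $k_n\tau_n^2$; for small $\lambda$ the $\approx n$ null coordinates pay a price of order $n\sigma_n^2\mu_n e^{-2\mu_n^2}$ that dominates $n\sigma_n^2\epsilon_n\mu_n^2$ precisely because $\mu_n=o(\sqrt{\log\epsilon_n^{-1}})$). Your two-case split at $\lambda=2\sigma_n\mu_n$, together with the pointwise inequality $r(\mu,\lambda)\ge\mu^2$ for $\lambda\ge 2\mu$, is a clean, non-asymptotic reorganization of the paper's three-case analysis of the optimal $\lambda_n^*$; both are correct, and your version avoids having to reason about subsequences of $\lambda_n^*$.

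There is, however, one discrepancy with the statement as written. The proposition asserts the \emph{exact} identity $\inf_\lambda\sup_\theta\E_\theta\norm{\etahatH(y,\lambda)-\theta}_2^2=n\sigma_n^2\epsilon_n\mu_n^2$ (no $(1+o(1))$ factor), and this exactness carries content in this paper: it says optimally tuned hard thresholding achieves \emph{no} second-order improvement over the zero estimator, in contrast to $\etahatS$, $\etahatL$ and $\etahatE$ in this regime. Your upper bound uses a finite threshold $\lambda_n=\sigma_n\sqrt{2\log\epsilon_n^{-1}+2\log\log\epsilon_n^{-1}}$ and therefore necessarily retains the positive additive terms $k_n\sigma_n^2+n\,r(0,\lambda_n)$, yielding only $n\sigma_n^2\epsilon_n\mu_n^2(1+o(1))$. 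To close the gap you should instead bound $\inf_\lambda\sup_\theta(\cdot)\le\lim_{\lambda\to\infty}\sup_\theta(\cdot)$ and show, as the paper does via dominated convergence, that $\sup_\theta\E_\theta\norm{\etahatH(y,\lambda)}_2^2\to0$ as $\lambda\to\infty$, so that the limiting supremum risk is exactly $\sup_\theta\norm{\theta}_2^2=k_n\tau_n^2=n\sigma_n^2\epsilon_n\mu_n^2$. This is a one-line repair, but without it you have proved a strictly weaker (asymptotic) version of the claim.
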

The proof of this proposition is presented in Section \ref{proof:hardthreshold:reg2}.

\begin{prop}\label{lem::ridge-risk-regime-2}
Consider model \eqref{model::gaussian_sequence} and parameter space \eqref{eq::parameter-space-SNR}. In Regime (\rom{2}) where $\epsilon_{n} \rightarrow 0, \mu_{n} \rightarrow \infty, \mu_{n} = o(\sqrt{\log \epsilon_{n}^{-1}})$, as $n \rightarrow \infty$, the optimally tuned linear estimator has supremum risk:
\begin{equation*}
    \inf_{\lambda} \sup_{\theta \in \Theta(k_n,\tau_{n})} \mathbb{E}_{\theta} \norm{\hat{\eta}_L(y,\lambda) - \theta}_{2}^{2} = n \sigma_{n}^{2} \left( \epsilon_{n}\mu_{n}^{2} - \frac{\epsilon_{n}^{2}\mu_{n}^{4}}{1+\epsilon_{n}\mu_{n}^{2}} \right).
\end{equation*}
\end{prop}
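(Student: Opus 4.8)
The plan is to compute exactly the supremum risk of the optimally tuned linear estimator $\hat{\eta}_L(y,\lambda)$ over $\Theta(k_n,\tau_n)$, since this is a finite-dimensional optimization that can be carried out in closed form; unlike the thresholding estimators, the linear estimator's risk is a quadratic function of $\theta$ and the worst-case configuration is transparent.

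First I would write down the coordinatewise risk. For a single coordinate $y_i = \theta_i + \sigma_n z_i$, the estimator $[\hat{\eta}_L]_i = y_i/(1+\lambda)$ has mean squared error
\[
\E\Big(\tfrac{y_i}{1+\lambda} - \theta_i\Big)^2 = \frac{\sigma_n^2}{(1+\lambda)^2} + \frac{\lambda^2}{(1+\lambda)^2}\,\theta_i^2 .
\]
Summing over $i$, the total risk at parameter $\theta$ equals $\frac{n\sigma_n^2}{(1+\lambda)^2} + \frac{\lambda^2}{(1+\lambda)^2}\|\theta\|_2^2$, which is increasing in $\|\theta\|_2^2$. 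Hence the supremum over $\Theta(k_n,\tau_n)$ is attained at the boundary $\|\theta\|_2^2 = k_n\tau_n^2$ (the sparsity constraint $\|\theta\|_0\le k_n$ being slack and irrelevant since only $\|\theta\|_2^2$ enters), giving
\[
\sup_{\theta\in\Theta(k_n,\tau_n)} \E_\theta\|\hat{\eta}_L(y,\lambda)-\theta\|_2^2 = \frac{n\sigma_n^2 + \lambda^2 k_n\tau_n^2}{(1+\lambda)^2} = n\sigma_n^2\cdot\frac{1 + \lambda^2\epsilon_n\mu_n^2}{(1+\lambda)^2},
\]
using $k_n\tau_n^2 = n\epsilon_n\cdot\sigma_n^2\mu_n^2$.

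Next I would minimize $g(\lambda) := (1+\lambda^2 c)/(1+\lambda)^2$ over $\lambda\ge 0$, where $c = \epsilon_n\mu_n^2$. Differentiating and setting the numerator of $g'(\lambda)$ to zero yields $\lambda c(1+\lambda) = 1 + \lambda^2 c$, i.e. $\lambda = 1/c$, so the optimal tuning is $\lambda^\star = (\epsilon_n\mu_n^2)^{-1}$. Substituting back, $g(\lambda^\star) = (1 + c^{-1})/(1 + c^{-1})^2 = 1/(1+c^{-1}) = c/(1+c)$, so
\[
\inf_\lambda \sup_{\theta} \E_\theta\|\hat{\eta}_L(y,\lambda)-\theta\|_2^2 = n\sigma_n^2\cdot\frac{\epsilon_n\mu_n^2}{1+\epsilon_n\mu_n^2} = n\sigma_n^2\Big(\epsilon_n\mu_n^2 - \frac{\epsilon_n^2\mu_n^4}{1+\epsilon_n\mu_n^2}\Big),
\]
which is exactly the claimed expression. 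Note this is an exact identity, valid in every regime, not just Regime (\rom{2}); the regime hypotheses are only used in the surrounding discussion to compare orders.

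There is essentially no hard step here — the only things to be careful about are (i) justifying that the supremum is genuinely attained, which follows since $\Theta(k_n,\tau_n)$ is compact and the risk is continuous, and (ii) confirming that $\lambda^\star = 1/c$ is a minimum rather than a saddle, which is immediate from $g(0)=1$, $g(\lambda)\to c$ as $\lambda\to\infty$, and $g$ having a unique stationary point on $(0,\infty)$ when $c<1$ (in Regime (\rom{2}), $\epsilon_n\mu_n^2\to 0$ so $c<1$ eventually). If one wants the displayed form to be recognizably ``first order minus second order,'' one simply notes $\epsilon_n\mu_n^2/(1+\epsilon_n\mu_n^2) = \epsilon_n\mu_n^2 - \epsilon_n^2\mu_n^4/(1+\epsilon_n\mu_n^2)$ by direct algebra. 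The main (minor) obstacle is purely bookkeeping: keeping track of the substitution $k_n\tau_n^2 = n\sigma_n^2\epsilon_n\mu_n^2$ so that the answer comes out in the stated normalization.
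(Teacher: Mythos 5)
Your proof is correct and follows essentially the same route as the paper: the paper likewise writes the risk coordinatewise as $\sum_i[\lambda^2\theta_i^2+\sigma_n^2]/(1+\lambda)^2$, takes the supremum at $\|\theta\|_2^2=k_n\tau_n^2$, and minimizes over $\lambda$ to get $n\sigma_n^2\epsilon_n\mu_n^2/(1+\epsilon_n\mu_n^2)$, exactly as you do (see the computation for Proposition \ref{prop:ridge:thirdregime} and the upper-bound calculation in Section \ref{sec::regime_1-upper-bound}). Your observation that the identity is exact and regime-independent is also consistent with how the paper reuses the same calculation across regimes.
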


The proof of this proposition can be easily followed by the discussion in Section \ref{sec::regime_1-upper-bound}.

Comparing the second order term in Theorem \ref{thm::regime_2} and Propositions \ref{lem::lasso-risk-regime-2}-\ref{lem::ridge-risk-regime-2} under the scaling condition $\epsilon_{n} \rightarrow 0, \mu_{n} \rightarrow \infty, \mu_{n} = o(\sqrt{\log \epsilon_{n}^{-1}})$, it is straightforward to verify that the supremum risk of $\hat{\eta}_E(y,\lambda_n,\gamma_n)$ is much smaller than that of optimally tuned soft thresholding, hard thresholding, and linear estimator. In light of what we have discussed in Regime (\rom{1}), the results in Regime (\rom{2}) deliver an interesting message: when SNR increases from low to moderate level, sparsity promoting operation becomes effective in estimating sparse signals; on the other hand, since SNR is not sufficiently high yet, a component of linear shrinkage towards zero still boosts the performance.

\subsubsection{Results in Regime (\rom{3})}

Finally, let us consider the high-SNR regime, i.e., Regime (\rom{3}). As shown in Theorem \ref{thm:firstorder:firstregime}, the first-order approximation of minimax risk claims that both hard and soft thresholding estimators are optimal. However, the refined second-order analysis will reveal that hard thresholding remains optimal while soft thresholding is in fact sub-optimal, up to the second order term. 

\begin{theorem}\label{thm::regime_4}
Consider model \eqref{model::gaussian_sequence} and parameter space \eqref{eq::parameter-space-SNR}. For Regime (\rom{3}) in which $\epsilon_{n} \rightarrow 0, \mu_{n} = \omega(\sqrt{\log \epsilon_{n}^{-1}})$ as $n \rightarrow \infty$, we have
\begin{eqnarray*}
    && R(\Theta(k_n,\tau_{n}), \sigma_{n}) \nonumber \\
    &=& n \sigma_{n}^{2} \left( 2 \epsilon_{n} \log \epsilon_{n}^{-1} - 2 \epsilon_{n} \nu_{n} \sqrt{2 \log \nu_{n}} \left(1 + o(1) \right) \right),
\end{eqnarray*}
 where $\nu_{n}:= \sqrt{2 \log \epsilon_{n}^{-1}}$. In addition, the hard thresholding $\etahatH(y, \lambda_n)$ with tuning $\lambda_n=\sigma_{n}\sqrt{2\log\epsilon_{n}^{-1}}$ is asymptotically minimax up to the second order term, i.e.
\begin{eqnarray*}
  && \sup_{\theta \in \Theta(k_n,\tau_{n})} \mathbb{E}_{\theta} \norm{\etahatH(y,\lambda_n) - \theta}_{2}^{2} \nonumber \\
  &=& n \sigma_{n}^{2} \left( 2 \epsilon_{n} \log \epsilon_{n}^{-1} - 2 \epsilon_{n} \nu_{n} \sqrt{2 \log \nu_{n}} \left(1 + o(1) \right) \right).
\end{eqnarray*}
\end{theorem}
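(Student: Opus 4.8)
The plan is to prove matching upper and lower bounds on $R(\Theta(k_n,\tau_{n}),\sigma_{n})$, each sharp to second order, with the correction term $-2n\sigma_n^{2}\epsilon_n\nu_n\sqrt{2\log\nu_n}$ emerging on both sides from a Laplace-type analysis of a one-coordinate problem whose signal sits just below the detection threshold $\lambda_n=\sigma_n\nu_n$. The organizing structural fact is that in Regime (\rom{3}) the $\ell_2$ budget is \emph{inactive}: because $\mu_n=\omega(\nu_n)$, any configuration with non-zero entries of size $\Uptheta(\lambda_n)$ uses only a vanishing fraction of $k_n\tau_n^2$, so $R(\Theta(k_n,\tau_n),\sigma_n)$ coincides, to the order we track, with the second-order refinement of the nearly-black minimax risk of Theorem \ref{thm::Thm8.21-Johnstone}.

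\emph{Upper bound via hard thresholding.} Since $\etahatH(\cdot,\lambda_n)$ is separable, $\E_\theta\norm{\etahatH(y,\lambda_n)-\theta}_2^2=\sum_{i=1}^n r_H(\theta_i,\lambda_n)$ with $r_H(\mu,\lambda):=\E\big[(yI(|y|>\lambda)-\mu)^2\big]$, $y=\mu+\sigma_n z$, and I would start from the identity $r_H(\mu,\lambda_n)=\sigma_n^2\,\E[z^2 I(|\mu+\sigma_n z|>\lambda_n)]+\mu^2\,P(|\mu+\sigma_n z|\le\lambda_n)$ together with $r_H(0,\lambda_n)=2\sigma_n^2\big(\nu_n\phi(\nu_n)+1-\Phi(\nu_n)\big)$. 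The maximization of $\sum_i r_H(\theta_i,\lambda_n)$ over $\{\norm{\theta}_0\le k_n,\ \norm{\theta}_2^2\le k_n\tau_n^2\}$ separates: $r_H(\cdot,\lambda_n)$ attains its maximum over $[0,\tau_n]$ at an interior point $\mu_n^{*}$ with $(\mu_n^{*})^2=o(\tau_n^2)$, hence the $\ell_2$ constraint is slack; switching a coordinate from $0$ to $\mu_n^{*}$ strictly increases the sum (since $r_H(0,\lambda_n)=o(r_H(\mu_n^{*},\lambda_n))$), so the supremum equals $k_n\sup_\mu r_H(\mu,\lambda_n)+(n-k_n)r_H(0,\lambda_n)$. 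Writing $\mu=\sigma_n(\nu_n-t)$ and invoking the Mills-ratio estimates of \lemGaussianMill gives $r_H(\sigma_n(\nu_n-t),\lambda_n)=\sigma_n^2\big[(\nu_n-t)^2\Phi(t)+o(\nu_n\sqrt{\log\nu_n})\big]$ uniformly on the range that matters; the stationarity equation $(\nu_n-t)\phi(t)=2\Phi(t)$ then pins the maximizer at $t_n^{*}=\sqrt{2\log\nu_n}\,(1+o(1))$, so $\sup_\mu r_H(\mu,\lambda_n)=\sigma_n^2\big(\nu_n^2-2\nu_n\sqrt{2\log\nu_n}\,(1+o(1))\big)$. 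Multiplying by $k_n=n\epsilon_n$, and noting $(n-k_n)r_H(0,\lambda_n)=\Uptheta(n\sigma_n^2\epsilon_n\nu_n)=o\big(n\sigma_n^2\epsilon_n\nu_n\sqrt{2\log\nu_n}\big)$ because $\phi(\nu_n)=\epsilon_n/\sqrt{2\pi}$ and $\nu_n\to\infty$, yields both the stated supremum risk of $\etahatH(y,\lambda_n)$ and the upper bound on $R(\Theta(k_n,\tau_n),\sigma_n)$.

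\emph{Lower bound via a sparse spike prior.} Here I would use the reduction of the $n$-dimensional minimax problem to a one-dimensional Bayes problem furnished by \EqBlockPriorLower: $R(\Theta(k_n,\tau_n),\sigma_n)\ge(1-o(1))\,n\,B_n$, where $B_n$ is the Bayes risk of estimating $\mu$ from $y=\mu+\sigma_n z$ under $\pi=(1-\epsilon_n')\delta_0+\epsilon_n'\delta_{\mu_n^{*}}$, with $\epsilon_n'=\epsilon_n(1-o(1))$ absorbing the concentration of $\norm{\theta}_0$ and $\norm{\theta}_2^2$ about their means under the i.i.d.\ product prior (via truncation to the event on which both constraints of $\Theta(k_n,\tau_n)$ hold), and $\mu_n^{*}=\sigma_n(\nu_n-\sqrt{2\log\nu_n})$, which is admissible because $(\mu_n^{*})^2\le\tau_n^2$. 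Using the two-point Bayes-risk identity $B_n=(\mu_n^{*})^2\,\E[w(y)(1-w(y))]$ with $w(y)$ the posterior probability of the spike (the spike-prior risk decomposition already established in the paper), a Laplace analysis shows the posterior decision boundary lies at $y\approx\lambda_n$, so that when $\mu=\mu_n^{*}<\lambda_n$ the posterior collapses onto $0$ with probability $\Phi(\sqrt{2\log\nu_n})(1+o(1))\to1$; hence $B_n=\epsilon_n'(\mu_n^{*})^2(1+o(1))$ up to a contribution of the atom at $0$ of order $\sigma_n^2\epsilon_n\nu_n=o(\sigma_n^2\epsilon_n\nu_n\sqrt{2\log\nu_n})$, and $nB_n=n\sigma_n^2\epsilon_n\big(\nu_n^2-2\nu_n\sqrt{2\log\nu_n}\,(1+o(1))\big)$, matching the upper bound.

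\emph{Main obstacle.} The delicate point is that the second-order term is only a $\Uptheta\big(\sqrt{\log\log\epsilon_n^{-1}/\log\epsilon_n^{-1}}\big)$ fraction of the first, so every approximation must be sharpened well beyond a first-order analysis — the Mills-ratio substitution $1-\Phi(\nu_n)\sim\phi(\nu_n)/\nu_n$, the displacement of the worst-case/least-favorable signal height from $\lambda_n$ by $\Uptheta(\sigma_n\sqrt{2\log\nu_n})$, and the atom-at-$0$ contribution to $B_n$ all have to be controlled to within $o(\sigma_n^2\epsilon_n\nu_n\sqrt{2\log\nu_n})$ — and the critical shift $\sqrt{2\log\nu_n}\,(1+o(1))$ must be identified on \emph{both} sides so that the leading constants (in particular the factor $2$) agree and the bounds lock together. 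A secondary but necessary technical step is the rigorous passage, in the lower bound, from the i.i.d.\ product prior to the constrained space $\Theta(k_n,\tau_n)$, i.e.\ showing that conditioning on $\{\norm{\theta}_0\le k_n,\ \norm{\theta}_2^2\le k_n\tau_n^2\}$ costs only a $(1+o(1))$ factor in the Bayes risk.
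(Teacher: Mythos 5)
Your upper bound follows the paper's route: the risk of $\etahatH(\cdot,\lambda_n)$ separates coordinatewise, the $\ell_2$ constraint is slack because the one-dimensional maximizer sits at height $\sigma_n(\nu_n-(1+o(1))\sqrt{2\log\nu_n})=o(\tau_n)$, and the second-order term comes from locating that maximizer. Your stationarity equation $(\nu_n-t)\phi(t)=2\Phi(t)$ is a clean substitute for the squeeze arguments of Lemmas \ref{lem::hard-threshold-risk-near-lambda} and \ref{lem::sup-risk-hard-threshold} (you would still need to rule out $\mu>\lambda_n$, which Lemma \ref{lem::hard-threshold-risk-nonasymptotic}(b) handles), and the bookkeeping $(n-k_n)r_H(0,\lambda_n)=\Uptheta(n\sigma_n^2\epsilon_n\nu_n)=o(n\sigma_n^2\epsilon_n\nu_n\sqrt{\log\nu_n})$ is correct. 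This half is sound.

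The lower bound has a genuine gap. You invoke the block-prior reduction \EqBlockPriorLower\ but then actually compute with the i.i.d.\ two-point prior $(1-\epsilon_n')\delta_0+\epsilon_n'\delta_{\mu_n^*}$ and propose to pass to $\Theta(k_n,\tau_n)$ ``via truncation.'' These are different constructions: the block prior places exactly one spike per block of size $m=n/k_n$, is supported on $\Theta(k_n,\tau_n)$ by construction, and needs no truncation, whereas the product prior is not so supported, and the truncation is exactly where second-order precision is lost. The error budget is $o(n\sigma_n^2\epsilon_n\nu_n\sqrt{\log\nu_n})$, a relative $o(\sqrt{\log\nu_n}/\nu_n)$ fraction of the main term $n\sigma_n^2\epsilon_n\nu_n^2$; to preserve the main term you must take $\epsilon_n'=\epsilon_n(1-\delta_n)$ with $\delta_n=o(\sqrt{\log\nu_n}/\nu_n)$, but then $\mathbb{P}(\norm{\theta}_0>k_n)$ for $\norm{\theta}_0\sim\mathrm{Bin}(n,\epsilon_n')$ tends to zero only if $\delta_n\sqrt{k_n}\to\infty$, which forces $k_n=\omega(\nu_n^2/\log\nu_n)$ and fails for bounded or slowly growing $k_n$ (for $k_n=1$ the overflow probability is a nonvanishing constant). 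Even when the overflow probability is small, converting it into a lower bound on the constrained minimax risk requires controlling $\E_\pi[\norm{\hat{\theta}-\theta}_2^2 I(\theta\notin\Theta)]$ for an arbitrary near-Bayes estimator, which you do not address. This is precisely why the paper abandons the product prior and uses the one-sided block spike prior at height $\mu=\nu_{m-1}-\sqrt{2\log\nu_{m-1}}$, bounding the posterior leakage $\E_{\mu e_1}p_m=O(1/\nu_{m-1})$ directly through the truncated-second-moment concentration of $\sum_je^{\mu z_j}$ in Lemma \ref{lem::F2-complement}. Your univariate posterior-collapse computation is the right heuristic, and its block-prior analogue is exactly what the paper proves, but as written your reduction to it is not valid uniformly over the regime covered by the theorem.
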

The proof of this theorem can be found in Section \secThmVI. 
Before we interpret this result, let us obtain the risk of the soft thresholding estimator and linear estimator as well.

\begin{prop}\label{lem::lasso-risk-regime-4}
Consider model \eqref{model::gaussian_sequence} and parameter space \eqref{eq::parameter-space-SNR}. In Regime (\rom{3}) where $\epsilon_{n} \rightarrow 0, \mu_{n} = \omega(\sqrt{\log \epsilon_{n}^{-1}})$ as $n \rightarrow \infty$, the optimally tuned soft thresholding achieves the supremum risk:
\begin{eqnarray*}
    && \inf_{\lambda} \sup_{\theta \in \Theta(k_n,\tau_{n})} \mathbb{E}_{\theta} \norm{\etahatS(y,\lambda) - \theta}_{2}^{2} \nonumber \\
    &=& n \sigma_{n}^{2} \left( 2 \epsilon_{n} \log \epsilon_{n}^{-1} - 6 \epsilon_{n} \log \nu_{n} \left(1 + o(1) \right) \right),
\end{eqnarray*}
where $\nu_{n}= \sqrt{2 \log \epsilon_{n}^{-1}}$.
\end{prop}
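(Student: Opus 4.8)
\emph{Proof proposal.}
My plan is to follow the same blueprint as Propositions~\ref{lem::lasso-risk-regime-1} and~\ref{lem::lasso-risk-regime-2}: reduce the supremum risk to a one-dimensional function of the rescaled threshold, identify the least favourable configuration of $\theta$, and then optimize the resulting scalar expression in two stages -- a first-order localization of the threshold followed by a second-order refinement. Unlike in Regimes~(\rom{1})--(\rom{2}), here $\mu_n$ is large enough that the optimal threshold (which turns out to be of order $\nu_n = o(\mu_n)$) ``fully clips'' every signal coordinate, which is exactly why the second-order term takes a different form. Concretely, write $\tilde\lambda=\lambda/\sigma_n$ and let $r_S(\mu,\tilde\lambda):=\E\big[(\mathrm{sign}(\mu+z)(|\mu+z|-\tilde\lambda)_+-\mu)^2\big]$, $z\sim\calN(0,1)$, be the scalar soft-thresholding risk; by positive homogeneity of soft thresholding and the coordinatewise form of model~\eqref{model::gaussian_sequence},
\[
\sup_{\theta\in\Theta(k_n,\tau_n)}\E_\theta\|\etahatS(y,\lambda)-\theta\|_2^2=\sigma_n^2\sup\Big\{\textstyle\sum_{i=1}^n r_S(\mu_i,\tilde\lambda):\ \|\mu\|_0\le k_n,\ \|\mu\|_2^2\le k_n\mu_n^2\Big\}.
\]
Using $r_S(0,\tilde\lambda)\le r_S(0,0)=1$, $0\le r_S(\cdot,\tilde\lambda)\le 1+\tilde\lambda^2$, and monotonicity of $r_S(\cdot,\tilde\lambda)$ on $[0,\infty)$, the inner supremum is sandwiched between $(n-k_n)r_S(0,\tilde\lambda)+k_n r_S(\mu_n,\tilde\lambda)$ (its value at the feasible configuration with all $k_n$ nonzero entries equal to $\mu_n$) and $(n-k_n)r_S(0,\tilde\lambda)+k_n(1+\tilde\lambda^2)$. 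Since $\mu_n=\omega(\nu_n)$, for every $\tilde\lambda=O(\nu_n)$ the Gaussian tail bounds (\lemGaussianMill) give $1+\tilde\lambda^2-r_S(\mu_n,\tilde\lambda)=O\big((\mu_n^2+\tilde\lambda^2)\phi(\mu_n-\tilde\lambda)\big)$, which is super-exponentially small in $\mu_n$ and hence negligible against the second-order term, of size $k_n\log\nu_n$, that we are after. It therefore suffices to study, over $\tilde\lambda>0$,
\[
\Psi_n(\tilde\lambda):=(n-k_n) r_S(0,\tilde\lambda)+k_n(1+\tilde\lambda^2),\qquad r_S(0,\tilde\lambda)=2\big[(1+\tilde\lambda^2)(1-\Phi(\tilde\lambda))-\tilde\lambda\phi(\tilde\lambda)\big],
\]
and to show $\inf_{\tilde\lambda>0}\Psi_n(\tilde\lambda)=k_n\nu_n^2-6k_n\log\nu_n(1+o(1))$.

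\textbf{First-order localization.} A short three-case comparison pins the minimizer of $\Psi_n$ to the window $\tilde\lambda\to\infty$, $\tilde\lambda^2=\nu_n^2(1+o(1))$: if $\tilde\lambda=O(1)$ then $(n-k_n)r_S(0,\tilde\lambda)=\Omega(n)\gg k_n\nu_n^2$; if $\tilde\lambda^2\ge(1+\delta)\nu_n^2$ for some fixed $\delta>0$ then $k_n\tilde\lambda^2$ overshoots $k_n\nu_n^2$ by $\Omega(k_n\nu_n^2)\gg k_n\log\nu_n$; and if $1\ll\tilde\lambda^2\le(1-\delta)\nu_n^2$ then $\phi(\tilde\lambda)\ge \epsilon_n^{1-\delta}/\sqrt{2\pi}$ forces $(n-k_n)r_S(0,\tilde\lambda)\gtrsim k_n\epsilon_n^{-\delta}/\nu_n^3\gg k_n\nu_n^2$. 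On the surviving window $\tilde\lambda\to\infty$, the Mills-ratio expansion (\lemGaussianMill) gives $r_S(0,\tilde\lambda)=\frac{4\phi(\tilde\lambda)}{\tilde\lambda^3}(1+o(1))$.

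\textbf{Second-order refinement.} Parametrize $\tilde\lambda^2=\nu_n^2-2c$ with $c=c_n=o(\nu_n^2)$. Then $\phi(\tilde\lambda)=\frac{1}{\sqrt{2\pi}}e^{-\nu_n^2/2+c}=\frac{\epsilon_n e^{c}}{\sqrt{2\pi}}$, $\tilde\lambda^3=\nu_n^3(1+o(1))$, $(n-k_n)=n(1+o(1))$, and hence
\[
\Psi_n(\tilde\lambda)=k_n\nu_n^2+k_n-2ck_n+\frac{4k_n e^{c}}{\sqrt{2\pi}\,\nu_n^3}\,(1+o(1)).
\]
The map $c\mapsto -2c+\tfrac{4}{\sqrt{2\pi}\nu_n^3}e^{c}$ is strictly convex and minimized at $c^\star$ with $e^{c^\star}=\tfrac{\sqrt{2\pi}}{2}\nu_n^3$, i.e. $c^\star=3\log\nu_n+O(1)$, where it equals $-2c^\star+2=-6\log\nu_n(1+o(1))$. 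Therefore $\inf_{\tilde\lambda>0}\Psi_n(\tilde\lambda)=k_n\nu_n^2-6k_n\log\nu_n(1+o(1))$; multiplying by $\sigma_n^2$ and substituting $k_n=n\epsilon_n$, $\nu_n^2=2\log\epsilon_n^{-1}$ yields exactly $n\sigma_n^2\big(2\epsilon_n\log\epsilon_n^{-1}-6\epsilon_n\log\nu_n(1+o(1))\big)$, which gives both the achievability (plug in this threshold) and, combined with the left member of the sandwich above, the lower bound: minimizing $(n-k_n)r_S(0,\tilde\lambda)+k_n r_S(\mu_n,\tilde\lambda)$ over $\tilde\lambda$, one uses that on $\tilde\lambda=\Uptheta(\nu_n)$ this coincides with $\Psi_n$ up to the negligible clipping error, while elsewhere (including $\tilde\lambda\gtrsim\mu_n$, where $r_S(\mu_n,\tilde\lambda)\to\mu_n^2$ makes it far too large) the estimates of the localization step show it strictly exceeds the claimed value.

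\textbf{Main obstacle.} The delicate point is the two-sided sharpness of the scalar optimization over $\tilde\lambda$: one must rule out \emph{every} threshold outside the narrow window $\tilde\lambda^2=\nu_n^2-6\log\nu_n+O(1)$, and verify that the $o(1)$ remainders -- in the Mills-ratio asymptotics of $r_S(0,\tilde\lambda)$ and in the clipping estimate $1+\tilde\lambda^2-r_S(\mu_n,\tilde\lambda)$ -- are uniformly negligible against the $k_n\log\nu_n$-sized second-order term throughout that whole window. This is where the quantitative form of \lemGaussianMill, together with the monotonicity and boundedness of the scalar risk $r_S$, must be invoked with care.
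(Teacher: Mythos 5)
Your proposal is correct and follows the same overall blueprint as the paper's proof in Section \ref{sec::lasso-risk-regime_3}: reduce to a scalar objective in the rescaled threshold, localize the optimizer at $\tilde\lambda^2=\nu_n^2(1+o(1))$, then extract the $-6\epsilon_n\log\nu_n$ correction. Two execution choices differ. First, where the paper identifies the least favorable $\theta$ exactly (all $k_n$ spikes at $\mu_n$, via the monotonicity and constrained-maximum properties of Lemma \ref{lem::elastic-net-risk}), you sandwich the supremum between that configuration and the universal bound $r_S(\mu,\tilde\lambda)\le 1+\tilde\lambda^2$, then show the gap --- the clipping error, which the paper computes as $\frac{(2+o(1))\mu_n}{(\mu_n-\tilde\lambda)^2}\phi(\mu_n-\tilde\lambda)$ in Lemma \ref{lem::analysis-second-order-lasso-risk-regime-4} --- is negligible for $\tilde\lambda=O(\nu_n)$; this is more self-contained but obliges you to handle $\tilde\lambda$ comparable to $\mu_n$ separately. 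Second, where the paper pins down $\nu_n^2-\lambdas^2=(6+o(1))\log\nu_n$ by case-by-case contradiction (Steps 2--3), you reparametrize $\tilde\lambda^2=\nu_n^2-2c$ and minimize the convex function $-2c+\frac{4}{\sqrt{2\pi}\nu_n^3}e^{c}$ exactly; this is cleaner, provided (as you note) the $(1+o(1))$ factors in the Mills-ratio expansion are uniform over the window $c=o(\nu_n^2)$. The one step to tighten is the overshoot case $\tilde\lambda^2\ge(1+\delta)\nu_n^2$: your phrase ``$k_n\tilde\lambda^2$ overshoots'' tacitly assumes $r_S(\mu_n,\tilde\lambda)\approx 1+\tilde\lambda^2$, which fails once $\tilde\lambda$ approaches $\mu_n$, and the fallback bound $r_S(\mu_n,\tilde\lambda)\ge\frac12\min\{r_S(\tilde\lambda,0)+\mu_n^2,\,1+\tilde\lambda^2\}$ (Lemma \ref{lem::soft-threshold-risk-nonasymptotic}) only yields $\frac{1+\delta}{2}\nu_n^2$, which for $\delta<1$ does not exceed $\nu_n^2$. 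You need a further split --- clipping for $\tilde\lambda=O(\nu_n)$, the bound $\frac12(1+\tilde\lambda^2)>\nu_n^2$ for $\tilde\lambda^2\ge(2+\delta)\nu_n^2$, and $r_S(\mu_n,\tilde\lambda)\ge\frac12\mu_n^2=\omega(\nu_n^2)$ for $\tilde\lambda\gtrsim\mu_n$ --- to make the exclusion airtight; this is exactly the role of the paper's Step 1 showing $\mu_n-\lambdas\to+\infty$.
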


The proof of the proposition can be found in Section \secPropIV{}.

\begin{prop}\label{prop:ridge:thirdregime}
Consider model \eqref{model::gaussian_sequence} and parameter space \eqref{eq::parameter-space-SNR}. In Regime (\rom{3}) where $\epsilon_{n} \rightarrow 0, \mu_{n} = \omega(\sqrt{\log \epsilon_{n}^{-1}})$ as $n \rightarrow \infty$, the optimally tuned linear estimator achieves the supremum risk:
\begin{eqnarray*}
    && \inf_{\lambda} \sup_{\theta \in \Theta(k_n,\tau_{n})} \mathbb{E}_{\theta} \norm{\etahatL(y,\lambda) - \theta}_{2}^{2} \nonumber \\
    &=& \frac{n\sigma_n^2\epsilon_n\mu_n^2}{1+\epsilon_n\mu_n^2} = \omega(n \sigma_n^2 \epsilon_n \log(\epsilon_n^{-1})). 
\end{eqnarray*}
\end{prop}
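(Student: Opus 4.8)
The plan is to compute the supremum risk of the linear estimator exactly --- no asymptotic approximation is needed, since $\etahatL$ admits a clean bias--variance decomposition --- and then verify the claimed order comparison.

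First I would fix $\lambda \ge 0$, set $c = 1/(1+\lambda) \in (0,1]$ so that $[\etahatL(y,\lambda)]_i = c\,y_i$, and compute coordinate-wise under \eqref{model::gaussian_sequence}:
\[
\mathbb{E}_{\theta} \norm{\etahatL(y,\lambda) - \theta}_{2}^{2} = \sum_{i=1}^{n} \mathbb{E}_{\theta}\big((c-1)\theta_i + c\sigma_n z_i\big)^{2} = (1-c)^{2}\norm{\theta}_{2}^{2} + n c^{2}\sigma_{n}^{2}.
\]
The crucial observation is that this depends on $\theta$ only through $\norm{\theta}_2^2$ and is nondecreasing in it, so the supremum over $\Theta(k_n,\tau_n)$ is attained when $\norm{\theta}_{2}^{2} = k_n\tau_n^{2}$ (feasible, e.g., with $\theta$ having $k_n$ coordinates equal to $\tau_n$), giving
\[
\sup_{\theta \in \Theta(k_n,\tau_{n})} \mathbb{E}_{\theta} \norm{\etahatL(y,\lambda) - \theta}_{2}^{2} = (1-c)^{2}k_n\tau_n^{2} + n c^{2}\sigma_{n}^{2}.
\]

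Next I would minimize this convex quadratic in $c$ over $(0,1]$. The unconstrained minimizer $c^{\star} = k_n\tau_n^{2}/(k_n\tau_n^{2} + n\sigma_n^{2})$ lies in $(0,1)$, hence is feasible (it corresponds to $\lambda^{\star} = n\sigma_n^{2}/(k_n\tau_n^{2}) \ge 0$), and substituting it back and simplifying yields the optimal value $n\sigma_n^2 k_n\tau_n^2/(k_n\tau_n^2 + n\sigma_n^2)$. Rewriting this via $\epsilon_n = k_n/n$ and $\mu_n = \tau_n/\sigma_n$ produces $n\sigma_n^2\epsilon_n\mu_n^2/(1+\epsilon_n\mu_n^2)$, the claimed closed form; note this part is regime-independent and is consistent with Proposition \ref{lem::ridge-risk-regime-2}.

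Finally I would verify $n\sigma_n^2\epsilon_n\mu_n^2/(1+\epsilon_n\mu_n^2) = \omega(n\sigma_n^2\epsilon_n\log\epsilon_n^{-1})$, equivalently $\mu_n^2/\big((1+\epsilon_n\mu_n^2)\log\epsilon_n^{-1}\big) \to \infty$, under the Regime (\rom{3}) scaling $\epsilon_n \to 0$, $\mu_n = \omega(\sqrt{\log\epsilon_n^{-1}})$, by a two-case split: if $\epsilon_n\mu_n^2 \le 1$ the ratio is at least $\mu_n^2/(2\log\epsilon_n^{-1}) \to \infty$, and if $\epsilon_n\mu_n^2 > 1$ it is at least $1/(2\epsilon_n\log\epsilon_n^{-1}) \to \infty$ since $\epsilon_n\log\epsilon_n^{-1} \to 0$. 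There is no substantive obstacle here --- the whole argument is an exact computation followed by an elementary limit --- and the only point needing care is that $\epsilon_n\mu_n^2$ may be unbounded in Regime (\rom{3}), which the case split handles.
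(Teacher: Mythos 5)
Your proposal is correct and follows essentially the same route as the paper's proof in Section \ref{sec:proof:thridregime:ridge}: an exact bias--variance computation of the linear estimator's risk, noting the supremum over $\Theta(k_n,\tau_n)$ is attained at $\|\theta\|_2^2=k_n\tau_n^2$, and then minimizing the resulting quadratic over the tuning parameter to get $n\sigma_n^2\epsilon_n\mu_n^2/(1+\epsilon_n\mu_n^2)$. Your explicit two-case verification of the $\omega(n\sigma_n^2\epsilon_n\log\epsilon_n^{-1})$ comparison is a small but welcome addition that the paper leaves implicit.
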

The proof of this proposition is presented in Section \ref{sec:proof:thridregime:ridge}. 



Combining the above three results, we can conclude that overall in Regime (\rom{3}) hard thresholding offers a better estimate than soft thresholding and linear shrinkage. The intuition is that Regime (\rom{3}) has a high SNR where bias becomes the dominating factor of mean squared error, therefore hard thresholding has an edge on soft thresholding and linear shrinkage by producing zero coordinates while not shrinking the above-threshold coordinates. Moreover, note that the difference between the first order and second order terms in the minimax risk is smaller than $\sqrt{\log \epsilon_n^{-1}}$. This implies that the second order term in our approximations can be relevant in a wide range of sparsity levels. 

\section{Numerical experiments}
\label{simulations}

As discussed in Section \ref{sec::misleading-sparse-model} through one simulation example, classical minimax results are inadequate for characterizing the role of signal-to-noise ratio (SNR) in the estimation of sparse signals. Hence, we developed the new SNR-aware minimax framework in Section \ref{SNR-aware:minmaxity} to overcome the limitations of the classical minimaxity. In this section, we provide more empirical results to evaluate the points we discussed above. 

We generate the signal $\theta$ in the following way: for a sample size $n$, $\theta = (\theta_{1}, \ldots, \theta_{n})$ is generated by assigning $\tau_n$ to a random choice of $k_{n}$ coordinates and setting the others to zero. Then $y=(y_{1}, \ldots, y_{n})$ and $z=(z_{1}, \ldots, z_{n})$ are generated according to Model \eqref{model::gaussian_sequence} for a certain noise level $\sigma_n$.

Given the sample size $n$, we consider three sparsity levels $k_{n} =\lfloor n^{2/3} \rfloor$, $\lfloor n^{3/4} \rfloor$, $\lfloor n^{1/2} \rfloor$, so that $\epsilon_{n} = k_{n}/n \rightarrow 0$ as $n\rightarrow\infty$. In addition, since SNR is decided by $\mu_{n} = \tau_{n}/\sigma_{n}$, without the loss of generality, we fix the value of the signal strength $\tau_{n} = 10$. We demonstrate our findings in two ways: 
\begin{enumerate}
    \item Let $\mu_{n}$ change from small to large values, and plot the mean squared error (MSE) of different estimators as a function of $\mu_n$. 
    \item Let $\sigma_{n}$ change from small to large values, and plot the MSE as a function of $\sigma_n$. 
    
\end{enumerate}
    
 In our experiments, we consider moderate sample size $n=500$ and large sample size $n=5000$. We consider the four estimators that have been extensively discussed in the previous sections: linear estimator $\etahatL$ defined in \eqref{def::ridge-estimator}, soft thresholding $\etahatS$ defined in \eqref{eq::softthresh}, hard thresholding $\etahatH$ defined in \eqref{eq::hardthresh}, and the soft-linear ``interpolation" estimator $\etahatE$ defined in \eqref{def::elastic-estimator} (since $\etahatE$ is the composition of soft thresholding and linear shrinkage, we refer to it as soft-linear ``interpolation" for convenience). We evaluate the performance of estimators using the empirical MSE scaled by the total signal strength: $\|\theta\|_{2}^{-2}\cdot\sum_{i=1}^{n}(\hat{\theta}_{i} - \theta_{i})^{2}$. The MSEs shown in Figures \ref{fig:MSEvsSigma1}-\ref{fig:MSEvsMu2} are averaged over 20 repetitions, plotted with 95\% confidence intervals from t-distribution. For each estimator, tuning parameters are chosen by grid search to obtain the minimum possible MSE.

\begin{figure}[!thbp]
\centering
\setlength\tabcolsep{-8.pt}
\begin{tabular}{c}
\includegraphics[scale=0.35]{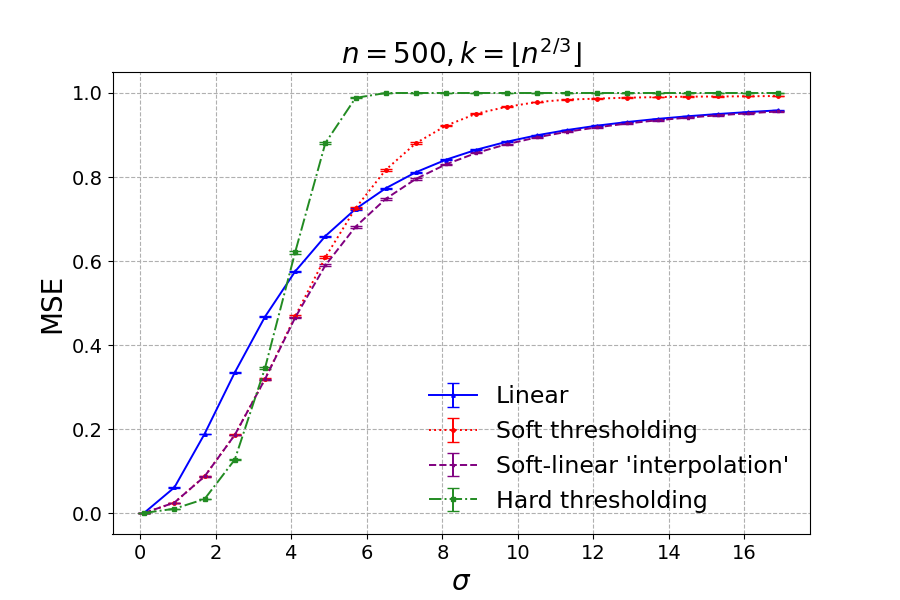} \\
\includegraphics[scale=0.35]{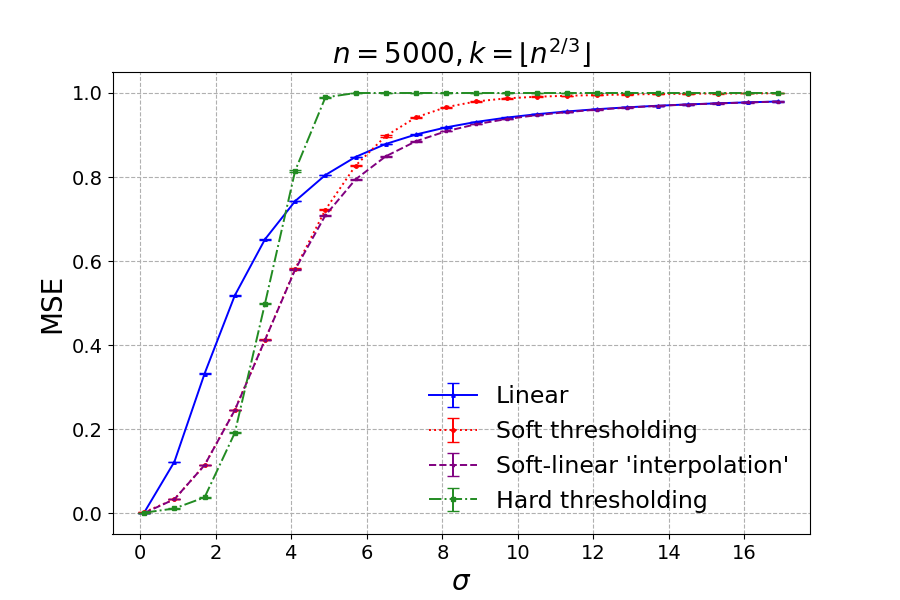} \\
\includegraphics[scale=0.35]{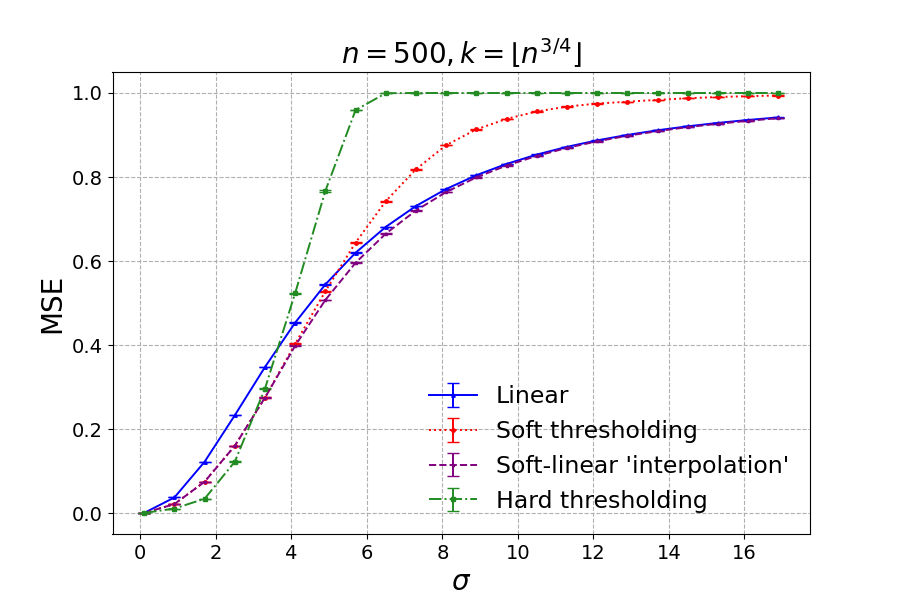} \\
\includegraphics[scale=0.35]{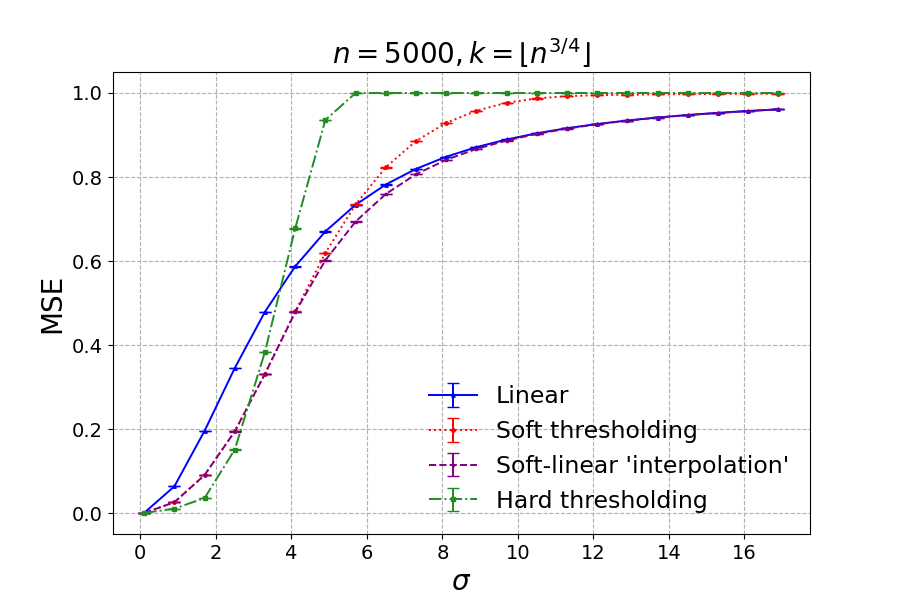}
\end{tabular}
\vspace{-0.3cm}
\caption{Mean squared error comparison at different noise levels. On each graph, the y-axis is the scaled MSE, and the x-axis is the noise standard deviation $\sigma_n$. (to be continued in Fig. \ref{fig:MSEvsSigma2})}
\label{fig:MSEvsSigma1}
\end{figure}

\begin{figure}[!thbp]
\centering
\setlength\tabcolsep{-8.pt}
\begin{tabular}{c}
\includegraphics[scale=0.35]{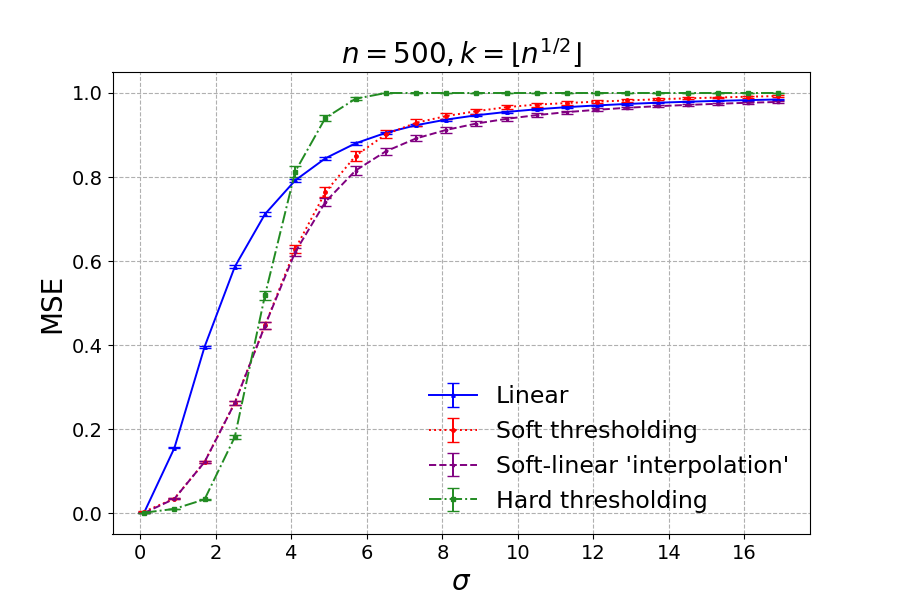} \\
\includegraphics[scale=0.35]{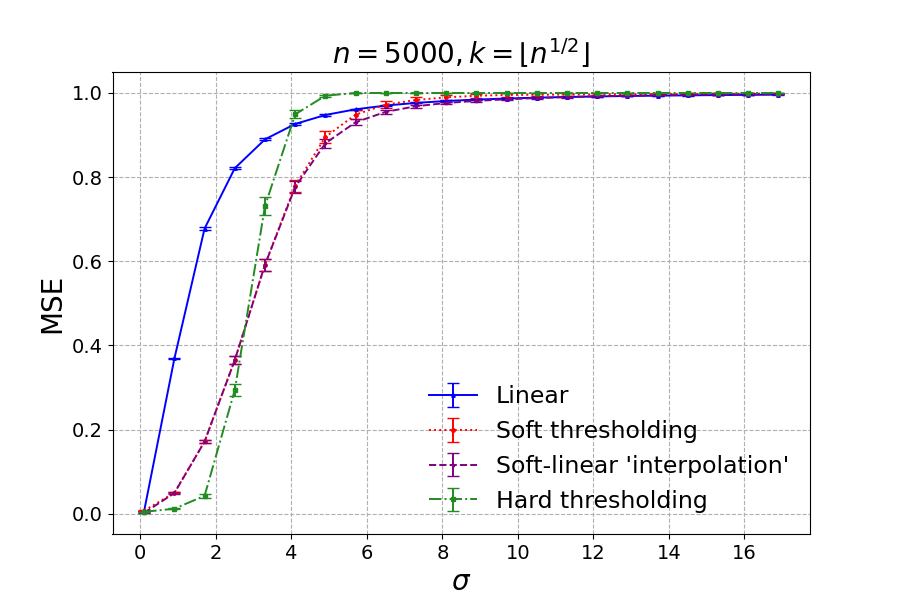}
\end{tabular}
\vspace{-0.45cm}
\caption{(continued from Fig. \ref{fig:MSEvsSigma1})}
\label{fig:MSEvsSigma2}
\end{figure}

\begin{figure}[!thbp]
\centering
\setlength\tabcolsep{-8.pt}
\begin{tabular}{c}
\includegraphics[scale=0.35]{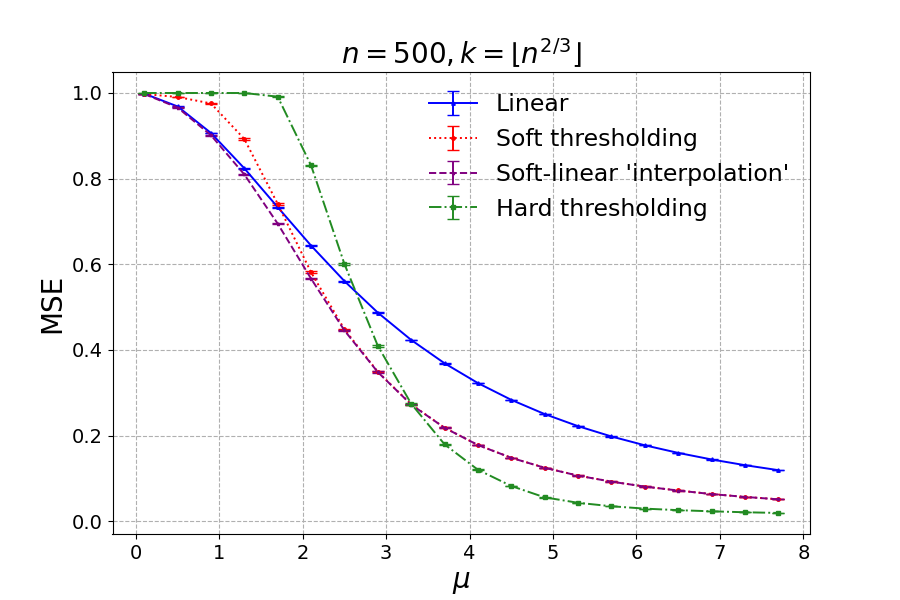} \\
\includegraphics[scale=0.35]{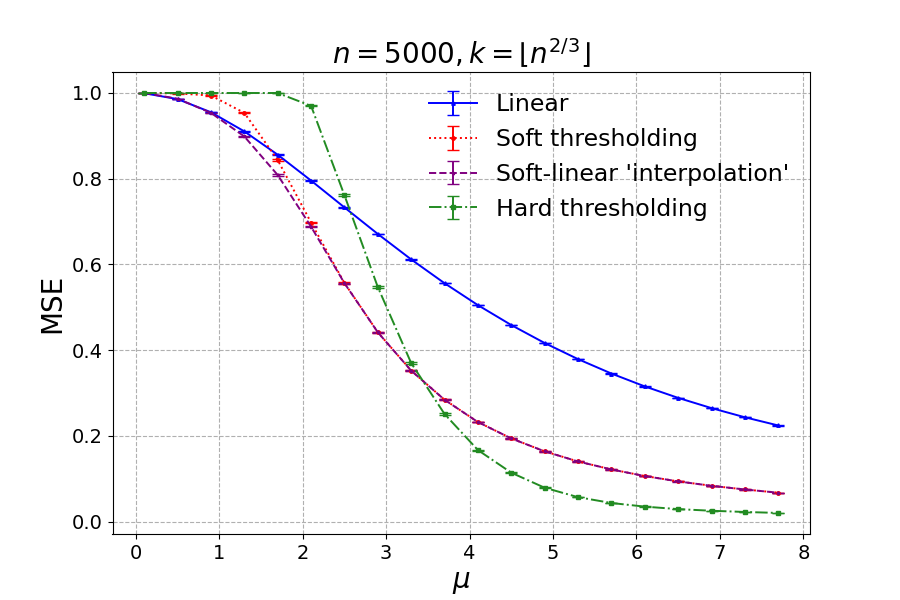}
\end{tabular}
\vspace{-0.3cm}
\caption{Mean squared error comparison at different SNR levels. On each graph, the $y$-axis is the scaled MSE, and the $x$-axis is the SNR $\mu_n$. (to be continued in Fig. \ref{fig:MSEvsMu2})}
\label{fig:MSEvsMu1}
\end{figure}

\begin{figure}[!thbp]
\centering
\setlength\tabcolsep{-8.pt}
\begin{tabular}{c}
\includegraphics[scale=0.35]{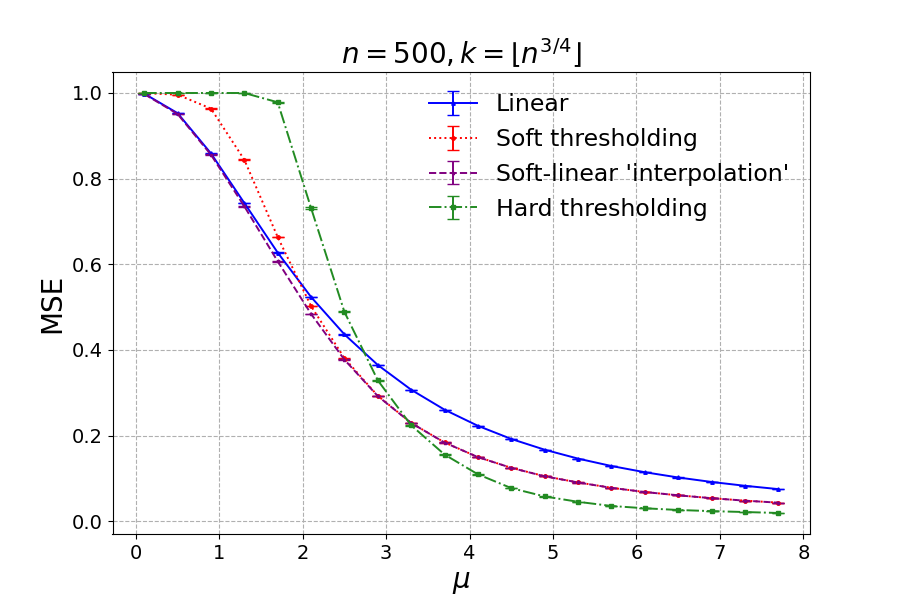} \\
\includegraphics[scale=0.35]{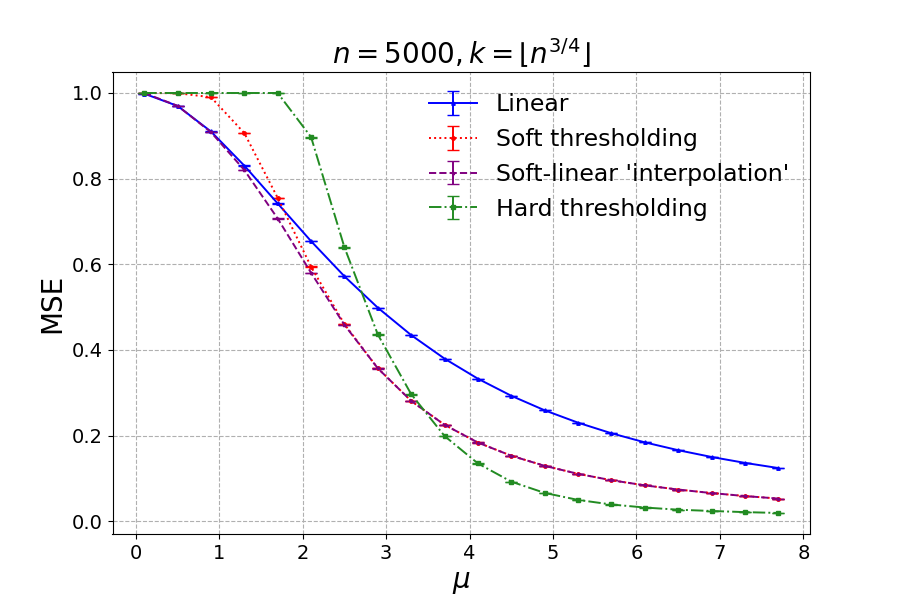} \\
\includegraphics[scale=0.35]{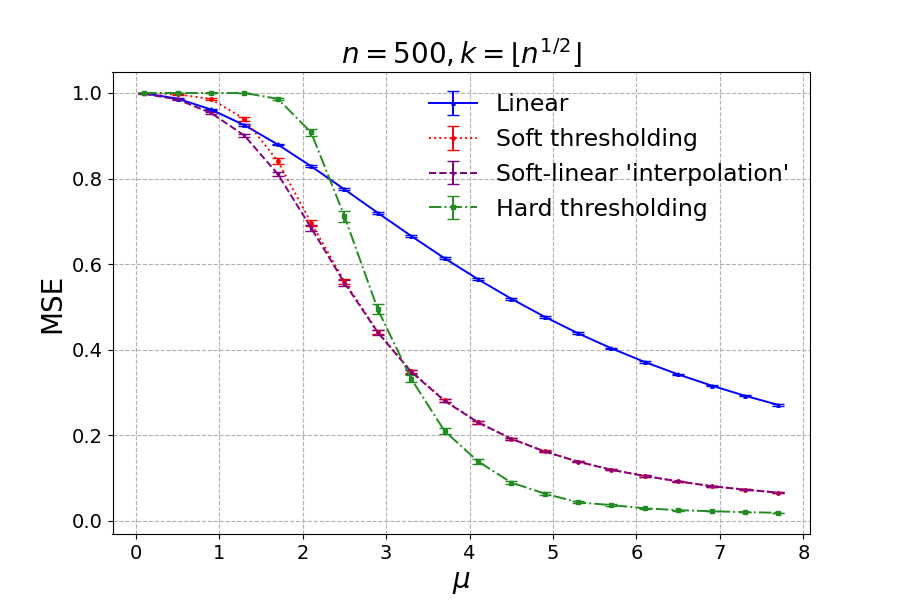} \\
\includegraphics[scale=0.35]{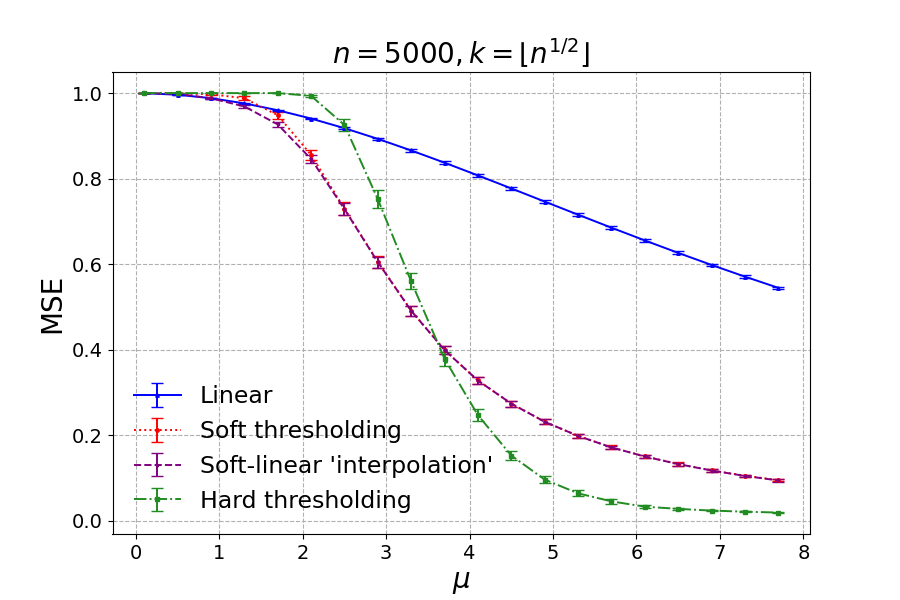}
\end{tabular}
\vspace{-0.45cm}
\caption{(continued from Fig. \ref{fig:MSEvsMu1})}
\label{fig:MSEvsMu2}
\end{figure}

From Figures \ref{fig:MSEvsSigma1}-\ref{fig:MSEvsSigma2}, when $\sigma_n$ changes from small to large values, we observed that: (1) When $\sigma_n$ is near zero, hard thresholding achieves the minimum MSE among the four estimators discussed in previous sections. This corresponds to Regime (\rom{3}) in our theory. (2)  When $\sigma_n$ is in moderate area, the soft-linear `interpolation' estimator $\etahatE$ has the minimum empirical MSE. This corresponds to Regime (\rom{2}) in our theory. (3) When $\sigma_n$ becomes large, the linear estimator $\etahatL$ as well as the optimally tuned $\etahatE$ (since $\etahatE$ can achieve $\etahatL$ when optimally tuned) have the minimum empirical MSE. Our theory in Regime (\rom{1}) states that when SNR is small, $\etahatL$ becomes asymptotically minimax optimal. The empirical studies align well with our current theory. 

Figures \ref{fig:MSEvsMu1}-\ref{fig:MSEvsMu2} offer similar conclusions as the ones we mentioned above. The main difference is that instead of revealing MSE as a function of the noise level, we view it as a function of SNR. Due to this difference, the leftmost part of each graph corresponds to Regime (\rom{1}). As $\mu_n$ increases, the curves will correspond to Regime (\rom{2}) and Regime (\rom{3}). In particular, when $\mu_n$ is large, it corresponds with the area of $\sigma_n$ near zero in Figures \ref{fig:MSEvsSigma1}-\ref{fig:MSEvsSigma2}. Here, it is shown more clearly that in the large SNR regime, hard thresholding has the minimum empirical MSE among all the estimators.

\section{Discussions}
\label{discuss}

\subsection{Summary}

We introduced two notions that can make the minimax results more meaningful and appealing for practical purposes: (i) signal-to-noise-ratio aware minimaxity, (ii) second-order asymptotic approximation of minimax risk. We showed that these two notions can alleviate the major drawbacks of the classical minimax results. For instance, while the classical results prove that the hard and soft thresholding estimators are minimax optimal, the new results reveal that in a wide range of low signal-to-noise ratios the two estimators are in fact sub-optimal. Even when the signal-to-noise ratio is high, only hard thresholding is optimal and soft thresholding remains sub-optimal. Furthermore, our refined minimax analysis identified three optimal (or nearly optimal) estimators in three regimes with varying SNR: hard thresholding $\hat{\eta}_H(y,\lambda)$ of \eqref{eq::hardthresh} in high SNR; $\hat{\eta}_E(y,\lambda,\gamma)$ of \eqref{def::elastic-estimator} in moderate SNR; linear estimator $\hat{\eta}_L(y,\lambda)$ of \eqref{def::ridge-estimator} in low SNR. As is clear from the definition of the three estimators, they are induced by $\ell_0$-regularization, elastic net regularization \cite{zou2005regularization} and  $\ell_2$-regularization, respectively. These regularization techniques have been widely used in statistics and machine learning \cite{hastie2009elements}.

The concepts of signal-to-noise ratio aware minimaxity and higher-order asymptotic approximations introduced in this paper may open up new venues for investigating various estimation problems. We have recently used the same framework to revisit the sparse estimation problem in high-dimensional linear regression and obtained new insights. That being said, it is important to acknowledge that the additional insights gained from this framework come with increased mathematical complexity when computing minimax estimators. Therefore, one direction we plan to explore in the future is the development of simpler and more general techniques for obtaining higher-order approximations of minimax risk or the supremum risk of well-established estimators. 

\subsection{Related works}


 There are some recent works on the significance of SNR for sparse learning. The extensive simulations conducted in the linear regression setting by \cite{hastie2020best} demonstrated that best subset selection ($\ell_0$-regularization) performs better than the lasso ($\ell_1$-regularization) in very high SNR, while the lasso outperforms best subset selection in low SNR regimes. \cite{hazimeh2020fast,mazumder2022subset} developed new variants of subset selection that can perform consistently well in various levels of SNR. Some authors of the current paper (with their collaborators) established sharp theoretical characterizations of $\ell_q$-regularization under varying SNR regimes in high-dimensional sparse regression and variable selection problems \cite{wang2020bridge, weng2018overcoming, zheng2017does}. In particular, their results revealed that among the $\ell_q$-regularization for $q \in [0,2]$, as SNR
decreases from high to low levels, the optimal value of $q$ for parameter estimation and variable selection will move from 0 towards 2. All the aforementioned works studied the impact of SNR on several or a family of popular estimators. Hence their comparison conclusions are only applicable to a restricted set of estimators. In contrast, our work focused on minimax analysis that led to stronger optimality-type conclusions. For example, the preceding works showed that $\ell_2$-regularization outperforms other $\ell_q$-regularization when SNR is low. We obtained a stronger result that $\ell_2$-regularization is in fact (minimax) optimal among all the estimators in low SNR. 

In a separate work, the first order minimax optimality is also proved for other estimators, such as empirical Bayes estimators \cite{10.1214/08-AOS638}. However, as we discussed before, first order minimax analysis is inherently incapable of evaluating the impact of the SNR on the performance of different estimators.

The second-order analysis of the minimax risk of the Gaussian sequence model under the sparsity constraint has been discussed in  \cite{johnstone1994minimax}. To compare this paper with our work, we have to mention the following points: (1)
Such analysis still suffers from the fact that it disregards the effect of the signal-to-noise ratio. By restricting the signal-to-noise ratio, our SNR-aware minimax framework provides much more refined information about the minimax estimators. (2) In terms of the theoretical analysis, the SNR-aware minimax analysis requires much more delicate analysis compared to the classical settings where there is no constraint on the SNR. In particular, constructing and proving the least favorable distributions is more complicated in our settings compared to the classical setting. As a result, all the following steps of the proof become more complicated too.  

  We should also emphasize that minimax analysis over classes of $\ell_p$ balls (i.e., $\Theta=\{\theta:\|\theta\|_p\leq C_n\}$) for $p>0$ under Gaussian sequence model has been performed in \cite{donoho1994minimax, johnstone19, zhang2012minimax}. These works revealed that a notion of SNR involving $C_n$ and $\sigma_n$ plays a critical role in characterizing the asymptotic minimax risk and the optimality of linear or thresholding estimators. Finally, see \cite{butucea2018variable, collier2017minimax} for non-asymptotic minimax rate analysis of variable selection and functional estimation on sparse Gaussian sequence models.

\subsection{Future research}
Several important directions are left open for future research:
\begin{itemize}
    \item The paper considered estimating signals with sparsity $k_n/n\rightarrow 0$. The other denser regime where $k_n/n\rightarrow c>0$ is also important to study. This will provide complementary asymptotic insights into the estimation of signals with varying sparsity. There exists classical minimax analysis along this line (see Chapter 8 in \cite{johnstone19}). A generalization of SNR-aware minimaxity to this regime is an interesting future work.
     \item The obtained minimax optimal estimators involve tuning parameters that depend on unknown quantities such as sparsity $k_n$ and signal strength $\tau_n$ from the parameter space. It is important to develop fully data-driven estimators that retain optimality for practical use. Hence, adaptive minimaxity is the next step, and classical adaptivity results (e.g., \cite{johnstone19}) may be helpful for the development.
   \item In this paper, we have focused on the parameter spaces that imposed the exact sparsity on $\theta$. Sparsity promoting denoisers such as hard thresholding and soft thresholding have  been also used over other structured parameter spaces such as Sobolev ellipsoids and Besov bodies. These parameter spaces usually characterize the smoothness properties of functions in terms of their Fourier or wavelet coefficients. We refer to \cite{gine2021mathematical, johnstone19, Tsybakov:2008:INE:1522486} and references therein for a systematic treatment of this topic. An interesting future research would be to explore the implications of the SNR-aware minimaixty and higher-order approximation of the minimax risk for such spaces.  
   \item The current work focused on the classical sparse Gaussian sequence model. It would be interesting to pursue a generalization to high-dimensional sparse linear regressions. Existing works (see \cite{buhlmann2011statistics,wainwright2019high} and references there) established minimax rate optimality (with loose constants) which is not adequate to accurately capture the impact of SNR. Instead, the goal is to derive asymptotic approximations with sharp constants as we did for Gaussian sequence models. We believe that this is generally a very challenging problem without imposing specific constraint on the design matrix. A good starting point is to consider the ``compressed sensing" model whose design rows follow independent isotropic Gaussian distribution. We have made some major progress along this line and look forward to further development.
 \end{itemize}

\section{Proofs}\label{sec::proofs}
\subsection{Preliminaries}\label{sec:prelim}

\subsubsection{Scale invariance} \label{sec:scaling}

The minimax risk defined in \eqref{eq::minimax-risk} has the following scale invariance property
\[
R(\Theta(k_n,\tau_{n}),\sigma_{n}) = \sigma_{n}^{2} \cdot R(\Theta(k_n,\mu_{n}), 1),
\]
where we recall that $\mu_n=\tau_n/\sigma_n$. This can be easily verified by rescaling the Gaussian sequence model to have unit variance. Moreover, similar invariance holds for the four estimators considered in the paper. We state it without proof in the following: $\forall \sigma >0$,
\begin{align*}
&\sigma\cdot \hat{\eta}_S(y,\lambda)=\hat{\eta}_S(\sigma y,\sigma \lambda), \quad \sigma\cdot \hat{\eta}_H(y,\lambda)=\hat{\eta}_S(\sigma y,\sigma \lambda), \\
&\sigma\cdot \hat{\eta}_L(y,\lambda)=\hat{\eta}_L(\sigma y,\lambda), \quad \sigma\cdot \hat{\eta}_E(y,\lambda,\gamma)=\hat{\eta}_E(\sigma y,\sigma \lambda, \gamma).
\end{align*}
These invariance properties will be frequently used in the proof to reduce a problem to a simpler one under unit variance.

\subsubsection{Gaussian tail bound}\label{ssec:GTB}

Recall the notation that $\phi, \Phi$ denote the probability density function and cumulative distribution function of a standard normal random variable, respectively. The following Gaussian tail bound will be extensively used in the proof. 

\begin{lemma}[Exercise 8.1 in \cite{johnstone19}]\label{lem::gaussian-tail-mills-ratio}
Define
\begin{equation*}
    \Tilde{\Phi}_{l}(\lambda) := \lambda^{-1} \phi(\lambda) \sum_{k=0}^{l}\frac{(-1)^{k}}{k!} \frac{\Gamma(2k+1)}{2^{k}\lambda^{2k}},
\end{equation*}
where $\Gamma(\cdot)$ is the gamma function. Then, for each $k\geq 0$ and all $\lambda>0$:
\begin{equation*}
    \Tilde{\Phi}_{2k+1}(\lambda) \leq 1-\Phi(\lambda) \leq \Tilde{\Phi}_{2k}(\lambda).
\end{equation*}
\end{lemma}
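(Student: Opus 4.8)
The plan is to derive the claim from a single exact identity obtained by iterating integration by parts on the Gaussian tail integral. Write $\bar{\Phi}(\lambda) := 1-\Phi(\lambda) = \int_{\lambda}^{\infty}\phi(t)\,dt$. The only analytic input is the relation $\phi'(t) = -t\phi(t)$, which I would use in the form $\phi(t) = -t^{-1}\phi'(t)$; integrating by parts with $u = t^{-1}$ and $dv = -\phi'(t)\,dt$ gives
\[
\bar{\Phi}(\lambda) = \frac{\phi(\lambda)}{\lambda} - \int_{\lambda}^{\infty}\frac{\phi(t)}{t^{2}}\,dt .
\]
Applying the same device repeatedly to the leftover integral $\int_{\lambda}^{\infty} t^{-(2l+2)}\phi(t)\,dt = \int_{\lambda}^{\infty} t^{-(2l+3)}(-\phi'(t))\,dt$ produces, by induction on $l\geq 0$, the exact expansion
\[
\bar{\Phi}(\lambda) = \phi(\lambda)\sum_{k=0}^{l}\frac{(-1)^{k}(2k-1)!!}{\lambda^{2k+1}} + (-1)^{l+1}(2l+1)!!\int_{\lambda}^{\infty}\frac{\phi(t)}{t^{2l+2}}\,dt,
\]
with the convention $(-1)!!=1$. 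First I would check the base case $l=0$, which is exactly the displayed identity above, and then carry out the one-line inductive step: integrating the remainder integral by parts once more yields a boundary term $\lambda^{-(2l+3)}\phi(\lambda)$ that cancels precisely against the newly added summand, leaving the level-$(l+1)$ remainder with coefficient $(-1)^{l+2}(2l+3)!!$ and exponent $2(l+1)+2$.

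Next I would reconcile this with the notation of the statement. Since $\Gamma(2k+1) = (2k)!$ and $(2k)! = 2^{k}\,k!\,(2k-1)!!$, the partial sum above coincides with $\Tilde{\Phi}_{l}(\lambda)$ as defined in the lemma, so the exact identity becomes
\[
\bar{\Phi}(\lambda) - \Tilde{\Phi}_{l}(\lambda) = (-1)^{l+1}(2l+1)!!\int_{\lambda}^{\infty}\frac{\phi(t)}{t^{2l+2}}\,dt .
\]
For every $\lambda>0$ the integral on the right-hand side is strictly positive, so the sign of $\bar{\Phi}(\lambda)-\Tilde{\Phi}_{l}(\lambda)$ is exactly $(-1)^{l+1}$. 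Taking $l=2k$ gives $\bar{\Phi}(\lambda)\leq\Tilde{\Phi}_{2k}(\lambda)$ and taking $l=2k+1$ gives $\bar{\Phi}(\lambda)\geq\Tilde{\Phi}_{2k+1}(\lambda)$, which is precisely the asserted chain of inequalities.

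The argument is elementary and I do not expect any genuine obstacle; it is essentially Exercise~8.1 of \cite{johnstone19}, and one could simply cite it. The only points that require care are purely clerical: tracking the signs and the double-factorial coefficients faithfully through the induction, invoking the identity $(2k)! = 2^{k}k!(2k-1)!!$ to match the two forms of the constants, and checking that each boundary term in the integration by parts vanishes at $+\infty$, which follows from $t^{-m}\phi(t)\to 0$ as $t\to\infty$ for every $m$. Since the self-contained derivation is short, I would include it rather than merely referencing the textbook.
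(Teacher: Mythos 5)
Your proof is correct: the iterated integration by parts yields the exact identity $\bar{\Phi}(\lambda)-\Tilde{\Phi}_{l}(\lambda)=(-1)^{l+1}(2l+1)!!\int_{\lambda}^{\infty}t^{-(2l+2)}\phi(t)\,dt$, the coefficient match via $\Gamma(2k+1)=(2k)!=2^{k}k!\,(2k-1)!!$ is right, and the sign of the remainder gives both inequalities. The paper itself offers no proof of this lemma --- it simply cites Exercise 8.1 of the referenced textbook --- and your derivation is the standard argument behind that exercise, so there is nothing to reconcile.
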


\subsubsection{The minimax theorem}\label{app:minmax}

Consider the Gaussian sequence model:
\begin{eqnarray}
\label{gsm:one:more}
    y_i=\theta_i+\sigma z_i, \quad i=1,2,\ldots, n,
\end{eqnarray}
where $z_1,z_2,\ldots, z_n \overset{i.i.d.}{\sim}\mathcal{N}(0,1)$. If $\pi$ is a prior distribution of $\theta\in \mathbb{R}^n$, the integrated risk of an estimator $\hat{\theta}$ (with squared error loss) is
$B(\hat{\theta},\pi)=\E_{\pi}\E_{\theta}\|\hat{\theta}-\theta\|_2^2$, and the Bayes risk of $\pi$ is $B(\pi)=\inf_{\hat{\theta}}B(\hat{\theta},\pi)$. We state a version of minimax theorem suited to the Gaussian sequence model. The theorem allows to evaluate minimax risk by calculating the maximum Bayes risk over a class of prior distributions. 
 
 \begin{theorem}[Theorem 4.12 in \cite{johnstone19}]\label{thm:minimax}
 Consider the Gaussian sequence model \eqref{gsm:one:more}. Let $\mathcal{P}$ be a convex set of probability measures on $\mathbb{R}^n$. Then
 \begin{equation*}
     \inf_{\hat{\theta}}\sup_{\pi \in \mathcal{P}} B(\hat{\theta}, \pi) = \sup_{\pi\in\mathcal{P}} \inf_{\hat{\theta}} B(\hat{\theta}, \pi) = \sup_{\pi\in \mathcal{P}}B(\pi).
 \end{equation*}
 A maximising $\pi$ is called a least favorable distribution (with respect to $\mathcal{P}$).
 \end{theorem}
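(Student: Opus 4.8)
I would begin by disposing of the two easy relations. The identity $\sup_{\pi\in\mathcal{P}}\inf_{\hat{\theta}}B(\hat{\theta},\pi)=\sup_{\pi\in\mathcal{P}}B(\pi)$ is merely the definition of the Bayes risk $B(\pi)=\inf_{\hat{\theta}}B(\hat{\theta},\pi)$. Also, for any estimator $\hat{\theta}$ and any $\pi\in\mathcal{P}$ one has $B(\pi)\le B(\hat{\theta},\pi)\le\sup_{\pi'\in\mathcal{P}}B(\hat{\theta},\pi')$, so taking first the supremum over $\pi$ and then the infimum over $\hat{\theta}$ yields the weak-duality bound $\sup_{\pi\in\mathcal{P}}B(\pi)\le\inf_{\hat{\theta}}\sup_{\pi\in\mathcal{P}}B(\hat{\theta},\pi)$. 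Hence the entire content of the theorem is the reverse inequality $\inf_{\hat{\theta}}\sup_{\pi\in\mathcal{P}}B(\hat{\theta},\pi)\le\sup_{\pi\in\mathcal{P}}B(\pi)=:V$, and I may assume $V<\infty$, since otherwise both sides equal $+\infty$.

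The plan is to realize $(\hat{\theta},\pi)\mapsto B(\hat{\theta},\pi)$ as the payoff of a two-person zero-sum game meeting the hypotheses of a classical minimax theorem such as Sion's. On the statistician's side I would enlarge the action set to the convex class of randomized estimators; there $\hat{\theta}\mapsto B(\hat{\theta},\pi)$ is convex because squared-error loss is convex, and by strict convexity of the loss one may ultimately restrict attention to the posterior mean $\hat{\theta}_{\pi}(y)=\E_{\pi}[\theta\mid y]$, the unique Bayes rule of $\pi$. On nature's side $\pi\mapsto B(\hat{\theta},\pi)$ is affine, hence concave, and $\mathcal{P}$ is convex by hypothesis. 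To supply the compactness and semicontinuity Sion's theorem demands, I would first truncate, replacing $\|u\|_{2}^{2}$ by $L_{M}(u):=\min(\|u\|_{2}^{2},M)$ and, if convenient, confining $\theta$ to a cube $[-T,T]^{n}$. Under the bounded continuous loss $L_{M}$ the map $\theta\mapsto\E_{\theta}L_{M}(\hat{\theta}(y)-\theta)$ is bounded and continuous in $\theta$ (Gaussian densities depend smoothly on their mean), so $\pi\mapsto B_{M}(\hat{\theta},\pi)$ is weakly continuous; and equipping the randomized-rule space with the topology of (weak $L$-space) convergence of risk functions — available because the Gaussian experiment is dominated — renders it compact while keeping $\hat{\theta}\mapsto B_{M}(\hat{\theta},\pi)$ lower semicontinuous and convex. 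Sion's minimax theorem then gives, for every $M$ (and $T$),
\[
\inf_{\hat{\theta}}\sup_{\pi\in\mathcal{P}}B_{M}(\hat{\theta},\pi)=\sup_{\pi\in\mathcal{P}}\inf_{\hat{\theta}}B_{M}(\hat{\theta},\pi)=\sup_{\pi\in\mathcal{P}}B_{M}(\pi).
\]

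It then remains to remove the truncation. Since $L_{M}\uparrow\|\cdot\|_{2}^{2}$, monotone convergence gives $B_{M}(\hat{\theta},\pi)\uparrow B(\hat{\theta},\pi)$ for each fixed $\hat{\theta},\pi$; evaluating this along the posterior means and using that the posterior mean is the Bayes rule of the untruncated problem, one obtains $\sup_{\pi}B_{M}(\pi)\uparrow\sup_{\pi}B(\pi)=V$, while the left-hand minimax values are monotone in $M$ and pinched between $\sup_{\pi}B_{M}(\pi)$ and $\inf_{\hat{\theta}}\sup_{\pi}B(\hat{\theta},\pi)$; letting $M\to\infty$ (then $T\to\infty$) closes the chain of equalities. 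Along the way one may read off a (near-)minimax rule as a limit of the Bayes rules of the truncated games; and whenever $\mathcal{P}$ is weak-$*$ compact so that a maximiser $\pi^{*}$ of $B(\cdot)$ exists, the minimax rule is simply $\hat{\theta}_{\pi^{*}}$: for any $\pi\in\mathcal{P}$ the segment $\pi_{\varepsilon}:=(1-\varepsilon)\pi^{*}+\varepsilon\pi$ stays in $\mathcal{P}$, the map $\varepsilon\mapsto B(\pi_{\varepsilon})$ is concave and maximised at $\varepsilon=0$, and an envelope (Danskin-type) computation — legitimate because $\hat{\theta}_{\pi^{*}}$ is the unique Bayes rule of $\pi^{*}$ — identifies its right derivative at $0$ with $B(\hat{\theta}_{\pi^{*}},\pi)-V$, forcing $B(\hat{\theta}_{\pi^{*}},\pi)\le V$ for every $\pi$.

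The main obstacle is entirely the functional-analytic bookkeeping that makes the game-theoretic step legitimate: selecting a topology in which the (randomized) decision space is compact while the risk stays lower semicontinuous, and justifying the interchange of the limits $M\to\infty$ and $T\to\infty$ with the outer $\inf$ and $\sup$ — all without assuming that $\mathcal{P}$ is closed, tight, or stable under conditioning. The dominated, translation-invariant structure of the Gaussian sequence model is exactly what furnishes these ingredients, after which the application of Sion's theorem is routine.
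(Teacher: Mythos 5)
This statement is quoted verbatim from Johnstone (Theorem 4.12) and the paper supplies no proof of it, so there is nothing in the paper to compare your argument against line by line. Your sketch follows the standard decision-theoretic route, which is essentially the one behind the cited result: weak duality plus the definitional identity reduce everything to the reverse inequality, which you obtain from Sion's theorem after convexifying the statistician's strategies (randomized rules), truncating the loss, and invoking LeCam/Wald weak compactness of the decision space for a dominated experiment, then undoing the truncation by monotonicity. The logic is sound as a sketch; the only place it is genuinely thin is the final limit interchange, where to pass from $\inf_{\hat{\theta}}\sup_{\pi}B_{M}(\hat{\theta},\pi)\leq V$ for every $M$ to $\inf_{\hat{\theta}}\sup_{\pi}B(\hat{\theta},\pi)\leq V$ you must extract a single estimator $\hat{\theta}^{*}$ as a weak limit of the truncated (near-)minimax rules and use lower semicontinuity of each $B_{M_{0}}(\cdot,\pi)$ before letting $M_{0}\to\infty$; you allude to this ("read off a near-minimax rule as a limit") but the monotone "pinching" alone does not deliver it. Since all of the invoked ingredients (weak compactness of risk functions, l.s.c. of risk in the rule, continuity in $\pi$ for bounded continuous loss) are classical for the Gaussian model, I would count the proposal as correct modulo that deferred bookkeeping, and note that it is, in substance, the same argument as in the source rather than an alternative route.
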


\subsubsection{Independence is less favorable}

We present a useful result that can often help find the least favorable distributions. Let $\pi$ be an arbitrary prior, so that the $\theta_{j}$ are not necessarily independent. Denote by $\pi_{j}$ the marginal distribution of $\theta_{j}$. Build a new prior $\Bar{\pi}$ by making the $\theta_{j}$ independent: $\Bar{\pi} = \prod_{j}\pi_{j}$. This product prior has a larger Bayes risk. 

\begin{theorem}[Lemma 4.15 in \cite{johnstone19}]\label{thm:indep_less_favor}
$B(\Bar{\pi})\geq B(\pi)$.
\end{theorem}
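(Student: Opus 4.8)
\textbf{Proof proposal for Theorem \ref{thm:indep_less_favor} ($B(\bar\pi)\geq B(\pi)$).}

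The plan is to exploit the fact that, conditionally on the data, the Bayes-optimal estimator is the posterior mean, and that the Bayes risk decomposes coordinate-wise once the prior is a product. First I would recall that for the squared-error loss the Bayes risk of any prior $\rho$ equals $B(\rho)=\E_\rho\E_\theta\|\hat\theta_\rho-\theta\|_2^2$ where $\hat\theta_\rho(y)=\E_\rho[\theta\mid y]$ is the posterior mean, and that this is also the minimum mean squared error of predicting $\theta$ from $y$ in the joint model where $\theta\sim\rho$ and $y\mid\theta\sim\calN(\theta,\sigma^2 I_n)$. So $B(\rho)=\sum_{j=1}^n \E\,\Var(\theta_j\mid y)$, a sum of per-coordinate posterior variances, but note the conditioning is on the \emph{whole} vector $y$.

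The key step is the following observation: under $\bar\pi=\prod_j\pi_j$ the coordinates $(\theta_j,y_j)$ are mutually independent across $j$, so the posterior of $\theta_j$ given $y$ depends only on $y_j$, and hence
\[
B(\bar\pi)=\sum_{j=1}^n \E_{\bar\pi}\,\Var_{\bar\pi}(\theta_j\mid y_j)=\sum_{j=1}^n B_1(\pi_j),
\]
where $B_1(\pi_j)$ denotes the univariate Bayes risk of estimating a scalar $\theta_j\sim\pi_j$ from $y_j=\theta_j+\sigma z_j$. On the other hand, for the general prior $\pi$, I would bound $B(\pi)$ from above by the integrated risk of the (generally suboptimal) \emph{separable} estimator $\tilde\theta(y)=(\hat\mu_1(y_1),\dots,\hat\mu_n(y_n))$, where $\hat\mu_j$ is the univariate posterior mean computed under the marginal prior $\pi_j$. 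Since $B(\pi)=\inf_{\hat\theta}B(\hat\theta,\pi)\leq B(\tilde\theta,\pi)$, and since the loss is additive, $B(\tilde\theta,\pi)=\sum_j \E_\pi\E_{\theta_j}(\hat\mu_j(y_j)-\theta_j)^2$. Now the crucial point is that the $j$-th summand depends on $\pi$ only through the marginal $\pi_j$ of $\theta_j$ (because $y_j=\theta_j+\sigma z_j$ involves only $\theta_j$ and independent noise), so it equals $\E_{\pi_j}\E_{\theta_j}(\hat\mu_j(y_j)-\theta_j)^2 = B_1(\pi_j)$, the latter equality because $\hat\mu_j$ is by construction the Bayes-optimal rule for the marginal problem. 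Summing, $B(\pi)\leq\sum_j B_1(\pi_j)=B(\bar\pi)$, which is the claim.

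The main obstacle — really the only subtle point — is making airtight the assertion that the per-coordinate integrated risk of the separable estimator under the correlated prior $\pi$ coincides with the corresponding marginal Bayes risk; this rests on the fact that marginalizing the joint law of $(\theta,y)$ under $\pi$ onto the pair $(\theta_j,y_j)$ yields exactly the univariate Bayes model with prior $\pi_j$, which in turn uses that the Gaussian noise is independent across coordinates and that squared-error loss is separable. Once that bookkeeping is in place, the inequality $B(\pi)\le B(\tilde\theta,\pi)=\sum_j B_1(\pi_j)=B(\bar\pi)$ is immediate. A cosmetic alternative that avoids even this is to invoke the general principle that for a product experiment with additive loss the Bayes risk of a product prior is the sum of the one-dimensional Bayes risks, and for an arbitrary prior the Bayes risk is at most that sum because one may always restrict to separable procedures; I would present the explicit separable-estimator argument above since it is self-contained.
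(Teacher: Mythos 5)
Your proof is correct. Note that the paper itself does not prove this statement --- it is quoted verbatim as Lemma 4.15 of Johnstone's monograph and used as a black box (in the proof of Theorem \ref{thm::regime_2_compactness}, via the reduction to univariate marginals). Your argument is the standard one for this result: compute $B(\bar{\pi})=\sum_j B_1(\pi_j)$ by coordinate-wise independence, then upper-bound $B(\pi)$ by the integrated risk of the separable estimator built from the univariate Bayes rules for the marginals, observing that each coordinate's integrated risk under $\pi$ depends on $\pi$ only through $\pi_j$ because the Gaussian noise is independent across coordinates and the squared-error loss is additive. Both halves are airtight as written (the inequality $B(\pi)\le B(\tilde\theta,\pi)$ needs no regularity since $B(\pi)$ is defined as an infimum over all estimators), so there is nothing to fix.
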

\subsubsection{A machinery for obtaining lower bounds for the minimax risk}
In our results, we are often interested in finding lower bounds for the minimax risk. The following elementary result taken from Chapter 4.3 of \cite{johnstone19} will be useful in those cases. 
\begin{theorem}
\label{thm::minimax-lower-bound}
Consider the minimax risk of a risk function $r(\cdot,\cdot)$ over a parameter set $\Theta$:
\begin{equation*}
    R(\Theta) = \inf_{\hat{\theta}}\sup_{\theta \in \Theta} r(\hat{\theta},\theta). 
\end{equation*}
Recall that $B(\pi)$ is the Bayes risk of prior $\pi$: $B(\pi)=\inf_{\hat{\theta}}\int r(\hat{\theta},\theta) \pi(d \theta)$. Let $\mathcal{P}$ denote a collection of probability measure, and $\supp  \mathcal{P}$ denote the union of all $\supp \pi$ for $\pi$ in $\mathcal{P}$. If 
\begin{equation*}
    B(\mathcal{P}) = \sup_{\pi \in \mathcal{P}} B(\pi),
\end{equation*}
then 
\begin{equation*}
    \supp  \mathcal{P} \subset \Theta \quad \Rightarrow \quad R(\Theta) \geq B(\mathcal{P}).
\end{equation*}
\end{theorem}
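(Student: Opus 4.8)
The plan is to deduce the statement from the single elementary observation that the Bayes risk of any prior supported inside $\Theta$ is a lower bound for the minimax risk over $\Theta$, and then to take a supremum over the family $\mathcal{P}$. No convexity of $\mathcal{P}$ and no minimax identity (Theorem \ref{thm:minimax}) is needed for this direction; those tools are only required for a matching upper bound.

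First I would fix an arbitrary prior $\pi \in \mathcal{P}$ and an arbitrary estimator $\hat{\theta}$. Since $\supp \pi \subset \supp \mathcal{P} \subset \Theta$, the probability measure $\pi$ puts full mass on $\Theta$, so the integrated (average) risk cannot exceed the worst-case risk over $\Theta$:
\[
\int r(\hat{\theta},\theta)\,\pi(d\theta) \;\le\; \sup_{\theta \in \supp\pi} r(\hat{\theta},\theta) \;\le\; \sup_{\theta \in \Theta} r(\hat{\theta},\theta).
\]
Next I would take the infimum over all estimators $\hat{\theta}$ on both ends; because a pointwise inequality between functions of $\hat{\theta}$ is preserved under $\inf_{\hat{\theta}}$, this yields
\[
B(\pi) \;=\; \inf_{\hat{\theta}} \int r(\hat{\theta},\theta)\,\pi(d\theta) \;\le\; \inf_{\hat{\theta}} \sup_{\theta \in \Theta} r(\hat{\theta},\theta) \;=\; R(\Theta).
\]
Finally, since $B(\pi) \le R(\Theta)$ holds for every $\pi \in \mathcal{P}$, taking the supremum over $\pi \in \mathcal{P}$ gives $B(\mathcal{P}) = \sup_{\pi \in \mathcal{P}} B(\pi) \le R(\Theta)$, which is exactly the claimed implication.

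I do not expect a genuine obstacle: the result is a clean repackaging of the standard ``average $\le$ maximum'' device that underlies essentially all Bayesian minimax lower bounds. The only points deserving a word of care are that $r(\hat{\theta},\cdot)$ be $\pi$-integrable so that $B(\pi)$ is well defined (implicit in the statement), that $\pi$ indeed concentrates on $\supp\pi$ so the first displayed inequality is legitimate, and that the inequality direction be tracked correctly when passing to $\inf_{\hat{\theta}}$ (monotonicity of the infimum). None of these is substantive, so the proof should be only a few lines.
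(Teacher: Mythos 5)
Your proof is correct and is exactly the standard ``average is at most the maximum'' argument; the paper itself does not prove this statement but cites it as an elementary fact from Chapter 4.3 of \cite{johnstone19}, where the same reasoning is used. Your remarks on integrability and on $\pi$ concentrating on $\supp\pi$ cover the only technical points worth noting.
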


\subsection{Proof of Theorem \ref{thm::regime_1}}\label{proof:them:regime1}

To calculate the minimax risk $R(\Theta(k_n,\tau_{n}), \sigma_n)$, we first obtain an upper bound by computing the supremum risk of the linear estimator $\hat{\eta}_L(y,\lambda_n)$,
\[
R(\Theta(k_n,\tau_{n}), \sigma_n)\leq \sup_{\theta \in \Theta(k_n,\tau_{n})}\mathbb{E}_{\theta}\|\hat{\eta}_L(y,\lambda_n)-\theta\|_2^2.
\]
We then derive a matching lower bound based on Theorem \ref{thm::minimax-lower-bound}. In particular, we construct a particular prior supported on $\Theta(k_n,\tau_{n})$ (that is the least favorable prior at the level of approximation we require), and its corresponding Bayes risk leads to a sharp lower bound for the minimax risk. The detailed derivation of the upper and lower bounds is presented below.

\subsubsection{Upper bound} \label{sec::regime_1-upper-bound} 

Thanks to the simple form of the linear estimator $\hat{\eta}_L(y,\lambda_n)$, its supremum risk under tuning $\lambda_n=(\epsilon_n\mu_n^2)^{-1}$ can be computed in a straightforward way: for all $\theta \in \Theta(k_n,\tau_n)$,
\begin{eqnarray*}
    && \mathbb{E}_{\theta}\|\hat{\eta}_L(y,\lambda_n)-\theta\|_2^2=\E_{\theta} \sum_{i=1}^{n} \left(\frac{1}{1+\lambda_{n}} y_{i} - \theta_{i}\right)^{2} \\
    &=&  \sum_{i=1}^{n} \left[ \left(\frac{\lambda_{n}}{1+\lambda_{n}}\right)^{2} \theta_{i}^{2} + \left(\frac{1}{1+\lambda_{n}}\right)^{2}\sigma_n^2 \right] \nonumber \\
    &\leq & \frac{\lambda^2_{n}k_n\tau_n^2+n\sigma_n^2}{(1+\lambda_{n})^2} = \frac{n\sigma_n^2\epsilon_n\mu_n^2}{1+\epsilon_n\mu_n^2} \nonumber \\
    &=& n\sigma_n^2\epsilon_n\mu_n^2\cdot \Big(1-\epsilon_n\mu_n^2(1+\epsilon_n\mu_n^2)^{-1}\Big) \nonumber \\
    &=& n\sigma_n^2\epsilon_n\mu_n^2\cdot \Big(1-\epsilon_n\mu_n^2(1+o(1))\Big),
       \end{eqnarray*}
where we have used the assumption $\epsilon_n=k_n/n\rightarrow 0,\mu_n=\tau_n/\sigma_n\rightarrow 0$, and the constraint $\|\theta\|_2^2\leq k_n\tau_n^2, \forall \theta \in \Theta(k_n,\tau_n)$.
As a result, 
\begin{eqnarray*}
    && R(\Theta(k_n,\tau_{n}),\sigma_n) \leq \sup_{\theta \in \Theta(k_n,\tau_{n})} \mathbb{E}_{\theta}\|\hat{\eta}_L(y,\lambda_n)-\theta\|_2^2 \\
    &=& n\sigma_n^2\epsilon_n\mu_n^2\cdot \Big(1-\epsilon_n\mu_n^2(1+o(1))\Big).
\end{eqnarray*}

\subsubsection{Lower bound} \label{sec::lower-bound-regime-1}

First, due to the scale invariance property $R(\Theta(k_n,\tau_{n}),\sigma_n)=\sigma_n^2 \cdot R(\Theta(k_n,\mu_{n}),1)$  (see Section \ref{sec:scaling}), 
it is sufficient to obtain lower bound for $R(\Theta(k_n,\mu_{n}),1)$, i.e., the minimax risk under Gaussian sequence model: $y_i=\theta_i+z_i, 1\leq i \leq n$, with $z_i\overset{i.i.d.}{\sim} \mathcal{N}(0,1)$. A general strategy for finding lower bounds of minimax risk in sparse Gaussian sequence model, is to employ i.i.d. univariate spike prior as the (asymptotically) least favorable prior. Although such product prior served as a suitable tool to establish a sharp lower bound for proving Theorem \ref{thm::Thm8.21-Johnstone}, we have since recognized its inadequacy in providing a sufficiently sharp lower bound for obtaining the second-order approximation of the minimax risk. Hence, in order to use Theorem \ref{thm::minimax-lower-bound}, we utilize the family of \textit{independent block priors} \cite{donoho1997universal, johnstone19}. The specific independent block prior $\pi^{IB}(\theta)$ on $\Theta(k_n,\mu_{n})$ for our problem is constructed in the following steps:
\begin{enumerate}
\item Divide $\theta\in \mathbb{R}^n$ into $k_n$ disjoint blocks of dimension $m=n/k_n$\footnote{For simplicity, here we assume $n/k_n$ is an integer. In the case when it is not, we can slightly adjust the block size to obtain the same lower bound.}: \[\theta=(\theta^{(1)},\theta^{(2)},\ldots, \theta^{(k_n)}).\]
\item Sample each block $\theta^{(j)}\in \mathbb{R}^m$ from the symmetric spike prior $\pi_S^{\mu, m}$: for $ 1\leq i \leq m$, 
\[\pi_S^{\mu, m}\Big(\theta^{(j)}=\mu e_i\Big)=\pi_S^{\mu, m}\Big(\theta^{(j)}=-\mu e_i\Big)=\frac{1}{2m},\]
where $\mu \in (0, \mu_n]$ is a location parameter.
\item Combine independent blocks: \[\pi^{IB}(\theta)=\prod_{j=1}^{k_n}\pi_S^{\mu, m}(\theta^{(j)})\].
\end{enumerate}
In other words, the independent block prior $\pi^{IB}$ picks a single spike (from $2m$ possible locations) in each of $k_n$ non-overlapping blocks of $\theta$, with the spike location within each block being independent and uniform. As is clear from the construction, $\supp (\pi^{IB}) \subseteq \Theta(k_n,\mu_n)$ so that 
\begin{align}
\label{reduce:1d}
R(\Theta(k_n,\mu_{n}),1) \geq B(\pi^{IB})=k_n \cdot B(\pi_S^{\mu,m}).
\end{align}
Here, the last equation holds because when the prior has block independence and the loss function is additive, the Bayes risk can be decomposed into the sum of Bayes risk of prior for each block (see Chapter 4.5 in \cite{johnstone19}). 

As a result, the main goal of the rest of this section is to obtain a sharp lower bound ({\it up to the second order}) for the Bayes risk $B(\pi_S^{\mu,m})$, i.e., the risk of the posterior mean under the spike prior $\pi_S^{\mu,m}$. The following two lemmas are instrumental in obtaining such a sharp lower bound.

\begin{lemma}
\label{lem::spike-bayes-risk-decompose}
Consider the Gaussian sequence model: $y_i=\theta_i+z_i, 1\leq i \leq m$, with $z_i\overset{i.i.d.}{\sim} \mathcal{N}(0,1)$. The Bayes risk of $\pi_S^{\mu,m}$ takes the form
\begin{align*}
B(\pi_S^{\mu,m})=\mathbb{E}_{\mu e_1}(\hat{\theta}_1-\mu)^2+(m-1)\mathbb{E}_{\mu e_2}\hat{\theta}^2_1,
\end{align*}
where $\mathbb{E}_{\mu e_1}(\cdot)$ is taken with respect to $y\sim \mathcal{N}(\mu e_1,I)$ and $\mathbb{E}_{\mu e_2}(\cdot)$ for $y\sim \mathcal{N}(\mu e_2,I)$; $\hat{\theta}_1$ is the posterior mean for the first coordinate having the expression
\[
\hat{\theta}_1=\frac{\mu(e^{\mu y_1}-e^{-\mu y_1})}{\sum_{i=1}^m(e^{\mu y_i}+e^{-\mu y_i})}.
\]
\end{lemma}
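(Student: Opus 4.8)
The plan is to compute the posterior mean $\hat\theta = \E_{\pi_S^{\mu,m}}[\theta \mid y]$ explicitly and then exploit the symmetry of the spike prior to reduce the Bayes risk to a two-term expression. First I would write out the prior as $\pi_S^{\mu,m} = \frac{1}{2m}\sum_{i=1}^m (\delta_{\mu e_i} + \delta_{-\mu e_i})$, so that the posterior over the $2m$ atoms has weights proportional to the Gaussian likelihood $\exp(-\tfrac12\|y - (\pm\mu e_i)\|_2^2)$. Since $\|y \mp \mu e_i\|_2^2 = \|y\|_2^2 \mp 2\mu y_i + \mu^2$, the factors $\|y\|_2^2$ and $\mu^2$ cancel between numerator and denominator, leaving posterior weights proportional to $e^{\pm \mu y_i}$. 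The posterior mean of the first coordinate is therefore $\hat\theta_1 = \sum_i \mu\cdot\frac{e^{\mu y_i}}{Z} + \sum_i (-\mu)\cdot\frac{e^{-\mu y_i}}{Z}$ where only the $i=1$ term contributes to the first coordinate's value, i.e. $\hat\theta_1 = \frac{\mu(e^{\mu y_1} - e^{-\mu y_1})}{\sum_{i=1}^m (e^{\mu y_i}+e^{-\mu y_i})}$, which is the stated formula.

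Next I would decompose the Bayes risk. By the tower property, $B(\pi_S^{\mu,m}) = \E_{\theta\sim\pi_S^{\mu,m}}\E_\theta\|\hat\theta - \theta\|_2^2 = \frac{1}{2m}\sum_{i=1}^m\big(\E_{\mu e_i}\|\hat\theta - \mu e_i\|_2^2 + \E_{-\mu e_i}\|\hat\theta + \mu e_i\|_2^2\big)$. The key observation is a permutation-and-sign symmetry: the joint law of the pair $(\hat\theta, \theta)$ is invariant under permuting coordinates and under flipping the sign of any coordinate simultaneously in $y$ and in the estimator, because the prior and the Gaussian noise share these symmetries. Consequently every term $\E_{\pm\mu e_i}\|\hat\theta \mp \mu e_i\|_2^2$ equals $\E_{\mu e_1}\|\hat\theta - \mu e_1\|_2^2$, and so $B(\pi_S^{\mu,m}) = \E_{\mu e_1}\|\hat\theta - \mu e_1\|_2^2$. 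Then I would split the squared norm coordinatewise: $\E_{\mu e_1}\|\hat\theta - \mu e_1\|_2^2 = \E_{\mu e_1}(\hat\theta_1 - \mu)^2 + \sum_{j=2}^m \E_{\mu e_1}\hat\theta_j^2$. Under $y \sim \calN(\mu e_1, I)$, the coordinates $y_2,\ldots,y_m$ are i.i.d.\ $\calN(0,1)$ and play symmetric roles in the formula for $\hat\theta_j$ (each $\hat\theta_j$ has the same form as $\hat\theta_1$ with index $1$ replaced by $j$), so each $\E_{\mu e_1}\hat\theta_j^2$ for $j \geq 2$ equals $\E_{\mu e_2}\hat\theta_1^2$ after relabeling; this is exactly why the factor $(m-1)$ and the base point $\mu e_2$ appear.

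The main obstacle is bookkeeping the symmetry argument carefully rather than any hard estimate: one must verify that the distributional identity "$(\hat\theta,\theta)$ is equivariant under the hyperoctahedral group action" actually holds for the posterior-mean estimator, so that all $2m$ risk terms collapse to one, and then that within a single term the off-diagonal coordinates $j=2,\dots,m$ are genuinely exchangeable under $\calN(\mu e_1, I)$. Both facts follow from the manifest symmetry of $\pi_S^{\mu,m}$ under coordinate permutations and joint sign flips together with the rotational (hence permutation- and reflection-) invariance of the isotropic Gaussian noise, but stating the equivariance cleanly is the only subtle point; the remaining steps are direct calculation with the explicit posterior-mean formula.
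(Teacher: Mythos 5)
Your proposal is correct and follows essentially the same route as the paper: derive the explicit posterior mean from the likelihood-ratio cancellation, then collapse the Bayes risk using the permutation and sign symmetries of the spike prior and the isotropic noise. The only (immaterial) difference is the order of the two decompositions — you condition on the prior atom first and then split the squared norm coordinatewise, whereas the paper first uses exchangeability of the pairs $(\hat{\theta}_i,\theta_i)$ to reduce to $m\,\E(\hat{\theta}_1-\theta_1)^2$ and then conditions on the atoms; both orderings rely on exactly the same equivariance facts and yield the stated identity.
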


\begin{proof}
Let the posterior mean be $\hat{\theta}=\mathbb{E}[\theta|y]$. Using Bayes' Theorem we obtain

\begin{eqnarray*}
\hat{\theta}_1&=&\mu \mathbb{P}(\theta=\mu e_1\mid y)-\mu \mathbb{P}(\theta=-\mu e_1\mid y) \\
&=&\frac{\mu [\mathbb{P}(y\mid \theta=\mu e_1)- \mathbb{P}(y\mid \theta=-\mu e_1)]}{\sum_{i=1}^m[\mathbb{P}(y\mid \theta=\mu e_i)+\mathbb{P}(y\mid \theta=-\mu e_i)]} \\
&=&\frac{\mu(e^{\mu y_1}-e^{-\mu y_1})}{\sum_{i=1}^m(e^{\mu y_i}+e^{-\mu y_i})}.
\end{eqnarray*}
Moreover, since both $\theta_i$'s (under the prior) and $z_i$'s are exchangeable, the pairs $\{(\hat{\theta}_i,\theta_i)\}_{i=1}^m$ are exchangeable as well. As a result,
\begin{align*}
     & B(\pi_S^{\mu,m}) = \E \sum_{i=1}^{m} (\hat{\theta}_{i} - \theta_{i})^{2} = m \E (\hat{\theta}_{1} - \theta_{1})^{2} \nonumber\\
    =& m \left[\frac{1}{2m} \E_{\mu e_{1}} (\hat{\theta}_{1} - \mu)^{2} + \frac{1}{2m} \E_{-\mu e_{1}} (\hat{\theta}_{1} + \mu)^{2} \right. \nonumber \\
    & \left. + \frac{1}{2m} \sum_{i=2}^{m} \left( \E_{\mu e_{i}} \hat{\theta}_{1}^{2} + \E_{-\mu e_{i}} \hat{\theta}_{1}^{2} \right) \right] \nonumber \\
    =& \frac{1}{2} \left[ \E_{\mu e_{1}} (\hat{\theta}_{1} - \mu)^{2} + \E_{-\mu e_{1}} (\hat{\theta}_{1} + \mu)^{2} \right] \nonumber \\
    & + \frac{1}{2} \sum_{i=2}^{m} \left[ \E_{\mu e_{i}} \hat{\theta}_{1}^{2} + \E_{-\mu e_{i}} \hat{\theta}_{1}^{2} \right] \\
    =& \E_{\mu e_{1}} (\hat{\theta}_{1} - \mu)^{2} + (m-1) \E_{\mu e_{2}} \hat{\theta}_{1}^{2},
\end{align*}
where in the last equation we have used the facts that the distribution of $\hat{\theta}_1$ under $\theta=\mu e_1$ equals that of $-\hat{\theta}_1$ under $\theta=-\mu e_1$, and $\hat{\theta}_1$ has the same distribution when $\theta=\pm \mu e_i, i=2,\ldots, m$.
\end{proof}

\begin{lemma}\label{lem::spike-bayes-risk-regime-1}
As $\mu \rightarrow 0, m\rightarrow \infty$, The Bayes risk of $\pi_S^{\mu,m}$ has the lower bound
\begin{equation*}
   B(\pi_S^{\mu,m}) \geq \mu^{2} -\frac{\mu^{4}}{m} (1+o(1)).
\end{equation*}
\end{lemma}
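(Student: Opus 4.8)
The plan is to start from the exact decomposition in Lemma~\ref{lem::spike-bayes-risk-decompose}, namely $B(\pi_S^{\mu,m})=\E_{\mu e_1}(\hat\theta_1-\mu)^2+(m-1)\E_{\mu e_2}\hat\theta_1^2$, and lower bound each of the two pieces in the regime $\mu\to 0,\ m\to\infty$. The key structural fact is that $\hat\theta_1$ is bounded: $|\hat\theta_1|\le \mu$ deterministically, since $\hat\theta_1=\mu\,\mathbb P(\theta=\mu e_1\mid y)-\mu\,\mathbb P(\theta=-\mu e_1\mid y)$ is a difference of two probabilities scaled by $\mu$. This boundedness, together with the smallness of $\mu$, is what will let me treat the exponentials $e^{\pm\mu y_i}$ perturbatively. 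I would expand $e^{\mu y}=1+\mu y+\tfrac12\mu^2y^2+O(\mu^3)$ in both numerator and denominator of
\[
\hat\theta_1=\frac{\mu(e^{\mu y_1}-e^{-\mu y_1})}{\sum_{i=1}^m(e^{\mu y_i}+e^{-\mu y_i})}
=\frac{2\mu\sinh(\mu y_1)}{2\sum_{i=1}^m\cosh(\mu y_i)}\,,
\]
so that to leading order $\hat\theta_1\approx \mu^2 y_1/m$ when the $y_i$ are $O(1)$ and $m$ is large.

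From here I would handle the two terms separately. For the second term, under $y\sim\calN(\mu e_2,I)$ we have $y_1\sim\calN(0,1)$, and the leading approximation $\hat\theta_1\approx \mu^2 y_1/m$ gives $\E_{\mu e_2}\hat\theta_1^2\approx \mu^4/m^2$, hence $(m-1)\E_{\mu e_2}\hat\theta_1^2\approx \mu^4/m$; this is a lower-order contribution (of the same order as the claimed negative correction) but it is positive, so for a lower bound I may simply discard it or retain it as a harmless $+O(\mu^4/m)$. For the first term, under $y\sim\calN(\mu e_1,I)$ I write $(\hat\theta_1-\mu)^2=\mu^2-2\mu\hat\theta_1+\hat\theta_1^2\ge \mu^2-2\mu\hat\theta_1$, so $\E_{\mu e_1}(\hat\theta_1-\mu)^2\ge \mu^2-2\mu\,\E_{\mu e_1}\hat\theta_1$. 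Thus the whole problem reduces to showing $\E_{\mu e_1}\hat\theta_1 \le \tfrac{\mu^3}{m}(1+o(1))$ — i.e. an \emph{upper} bound on the expected posterior mean — because then $B(\pi_S^{\mu,m})\ge \mu^2-\tfrac{2\mu^4}{m}(1+o(1))$, which is stronger than (in particular implies) the claimed bound $\mu^2-\tfrac{\mu^4}{m}(1+o(1))$. I would double-check the constant by carrying the expansion of $\E_{\mu e_1}\hat\theta_1$ one more term: $\hat\theta_1\approx \mu^2 y_1/m \cdot(1 - \tfrac1m\sum_i(\cosh(\mu y_i)-1)+\cdots)$, and under $\E_{\mu e_1}$, $\E y_1=\mu$, giving $\E_{\mu e_1}\hat\theta_1\approx \mu^3/m$, which matches and even recovers the sharp constant $-\mu^4/m$ after combining with the $+\mu^4/m$ from the second term — so in fact I'd aim to prove the two-sided-accurate statement, but for the lemma only the lower bound is needed.

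The main obstacle is making the perturbative expansion of $\hat\theta_1$ rigorous uniformly over the Gaussian vector $y$: the expansion $e^{\mu y}\approx 1+\mu y$ is only accurate on an event where $\max_i|y_i|$ is not too large (say $\max_i|y_i|\le C\sqrt{\log m}$, which holds with probability $1-o(1)$), whereas on the complementary rare event I must control $\E_{\mu e_1}\hat\theta_1$ by brute force. Here the deterministic bound $|\hat\theta_1|\le\mu$ is the crucial safety net: on the bad event of probability $\le m^{-c}$, the contribution to $\E_{\mu e_1}\hat\theta_1$ is at most $\mu\cdot m^{-c}$, which is $o(\mu^3/m)$ for $c$ chosen large enough (the tail of $\max_i|y_i|$ decays faster than any polynomial once we go past $\sqrt{2\log m}$). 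So the proof skeleton is: (i) recall $|\hat\theta_1|\le\mu$; (ii) split the expectation over the event $\mathcal A_m=\{\max_i|y_i|\le t_m\}$ and its complement for a suitable slowly-growing $t_m$; (iii) on $\mathcal A_m$, Taylor-expand numerator and denominator of $\hat\theta_1$, keeping explicit error terms controlled by $\mu t_m$ and $t_m^2/m$; (iv) on $\mathcal A_m^c$, bound the contribution by $\mu\,\mathbb P(\mathcal A_m^c)=o(\mu^3/m)$; (v) assemble to get $\E_{\mu e_1}\hat\theta_1\le \tfrac{\mu^3}{m}(1+o(1))$, discard the nonnegative $(m-1)\E_{\mu e_2}\hat\theta_1^2$ term, and conclude $B(\pi_S^{\mu,m})\ge\mu^2-\tfrac{\mu^4}{m}(1+o(1))$. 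I expect step (iii), bookkeeping the second-order terms in the denominator sum $\sum_i\cosh(\mu y_i)$ so that the $1/m$ in the final answer comes out with the right constant, to be the most delicate part.
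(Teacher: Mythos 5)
Your reduction contains a sign/direction error that breaks the proof. After writing $\E_{\mu e_1}(\hat\theta_1-\mu)^2\ge \mu^2-2\mu\,\E_{\mu e_1}\hat\theta_1$ and establishing $\E_{\mu e_1}\hat\theta_1\le \frac{\mu^3}{m}(1+o(1))$, discarding the nonnegative term $(m-1)\E_{\mu e_2}\hat\theta_1^2$ yields only
\[
B(\pi_S^{\mu,m})\;\ge\;\mu^2-\frac{2\mu^4}{m}\,(1+o(1)),
\]
and this is \emph{weaker} than the claimed bound $\mu^2-\frac{\mu^4}{m}(1+o(1))$, not stronger: you are subtracting twice as much. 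Your own heuristic shows where the missing half comes from — the second term contributes $+\frac{\mu^4}{m}(1+o(1))$ — so to prove the lemma as stated you must \emph{keep} that term and prove a rigorous lower bound $(m-1)\E_{\mu e_2}\hat\theta_1^2\ge \frac{\mu^4}{m}(1+o(1))$. This is exactly what the paper does, and it is not a throwaway step: the paper's argument conditions on $z_1$, applies Jensen's inequality to the convex map $x\mapsto (x+c)^{-2}$ to replace the random denominator by its conditional mean, truncates to $|z_1|\le b$ with the carefully chosen $b=1/\sqrt{\mu}$, and then computes Gaussian integrals to extract the constant. The sharp constant $1$ (rather than $2$) is also what is needed downstream, since the linear estimator's supremum risk matches $\mu^2-\frac{\mu^4}{m}$ exactly in Theorem \ref{thm::regime_1}.

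A second, more technical concern: your step (iii) Taylor-expands $e^{\mu y_i}$ on the event $\max_i|y_i|\le t_m$, which requires $\mu t_m\to 0$; but the lemma assumes only $\mu\to 0$ and $m\to\infty$ with no coupling, so $\mu\sqrt{\log m}$ may diverge (e.g.\ $\mu=1/\log\log m$), and making the bad event negligible relative to $\mu^3/m$ forces $t_m$ of order at least $\sqrt{\log m}$. In that situation the expansion is not uniformly valid over the good event. The paper sidesteps this entirely: for the denominator it uses only the exact inequality $e^{\mu y_i}+e^{-\mu y_i}\ge 2$ (no expansion), and for the numerator it splits $\E_{\mu e_1}p_m$ into three pieces $E_1,E_2,E_3$ whose signs are controlled, evaluating exact Gaussian moment generating functions such as $\E e^{\mu z_1}=e^{\mu^2/2}$ and using a symmetrization trick for $E_3$. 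You should adopt that route (or something equally expansion-free) for the upper bound on $\E_{\mu e_1}\hat\theta_1$, and you must add the lower bound on the second term to recover the stated constant.
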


\begin{proof}
Denote $p_m=\frac{e^{\mu y_1}-e^{-\mu y_1}}{\sum_{i=1}^m(e^{\mu y_i}+e^{-\mu y_i})}$. According to Lemma \ref{lem::spike-bayes-risk-decompose}, the Bayes risk can be lower bounded in the following way:
\begin{align*}
B(\pi_S^{\mu,m})\geq \mu^2\cdot \Big[1-2\E_{\mu e_{1}}p_m+(m-1)\E_{\mu e_{2}}p_m^2\Big].
\end{align*}
It is thus sufficient to prove that $\E_{\mu e_{1}} p_m \leq \frac{\mu^{2}}{m} (1+o(1))$ and $(m-1) \E_{\mu e_{2}} p_{m}^{2} \geq \frac{\mu^{2}}{m}(1+o(1))$. We first prove the former one. We have

\begin{align*}
    \E_{\mu e_{1}} p_{m} & = \E\left[ \frac{e^{\mu (\mu +z_{1})} - e^{-\mu (\mu +z_{1})}}{\sum_{j\neq 1} [e^{\mu z_{j}} + e^{-\mu z_{j}}] + e^{\mu (\mu +z_{1})} + e^{-\mu (\mu +z_{1})} } \right] \\
    & = \E\left[ \frac{(e^{\mu ^{2}} - 1)e^{\mu z_{1}}}{\sum_{j\neq 1} [e^{\mu z_{j}} + e^{-\mu z_{j}}] + e^{\mu (\mu +z_{1})} + e^{-\mu (\mu +z_{1})}} \right] \\
    & + \E\left[ \frac{(1-e^{-\mu ^{2}})e^{-\mu z_{1}}}{\sum_{j\neq 1} [e^{\mu z_{j}} + e^{-\mu z_{j}}] + e^{\mu (\mu +z_{1})} + e^{-\mu (\mu +z_{1})}} \right] \\
    & + \E\left[ \frac{e^{\mu z_{1}} - e^{-\mu z_{1}}}{\sum_{j\neq 1} [e^{\mu z_{j}} + e^{-\mu z_{j}}] + e^{\mu (\mu +z_{1})} + e^{-\mu (\mu +z_{1})}} \right] \\
    & =: E_{1} + E_{2} + E_{3}.
\end{align*}
We study $E_1, E_2$ and $E_3$ separately. For $E_{1}$, given that the numerator inside the expectation is positive, we apply the basic inequality $a+b\geq 2\sqrt{ab},\forall a,b\geq 0$ to the denominator to obtain  
\begin{equation*}
    E_{1}
    \leq \frac{e^{\mu^{2}} - 1}{2m}  \E e^{\mu z_{1}} =\frac{\mu^2}{2m}\cdot \frac{(e^{\mu^2}-1)e^{\mu^2/2}}{\mu^2} =\frac{\mu^2(1+o(1))}{2m}.
\end{equation*}
Similarly, for $E_{2}$ we have
\begin{eqnarray*}
     E_{2}
    &\leq& \frac{1-e^{-\mu^{2}}}{2m}  \E e^{-\mu z_{1}} \\
    &=&\frac{\mu^2}{2m}\cdot \frac{(1-e^{-\mu^2})e^{\mu^2/2}}{\mu^2} =\frac{\mu^2(1+o(1))}{2m}.
\end{eqnarray*}
To study $E_{3}$, define 
\begin{align*}
    & A := \sum_{j\neq 1} [e^{\mu z_{j}} + e^{-\mu z_{j}}] + e^{\mu (\mu +z_{1})} + e^{-\mu (\mu +z_{1})}, \\
    & B := \sum_{j\neq 1} [e^{\mu z_{j}} + e^{-\mu z_{j}}] + e^{\mu (\mu -z_{1})} + e^{-\mu (\mu -z_{1})}.
\end{align*}
The basic inequality $a+b\geq 2\sqrt{ab}$ implies that $A\geq 2m, B\geq 2m$. This together with the symmetry of standard normal distribution yields
\begin{align*}
    E_3&=\E\frac{e^{\mu z_1}}{A}-\mathbb{E}\frac{e^{-\mu z_1}}{A}=\E\frac{e^{\mu z_1}}{A}-\E\frac{e^{\mu z_1}}{B} \\
    &=\E \left[ \frac{(e^{\mu^{2}} - e^{-\mu^{2}}) (e^{-\mu z_{1}} - e^{\mu z_{1}}) e^{\mu z_{1}} }{AB} \right] \\
   &\leq \E \left[ \frac{(e^{\mu^{2}} - e^{-\mu^{2}}) (1 - e^{2\mu z_{1}}) I_{(z_{1}\leq 0)} }{AB} \right] \\
   &\leq \frac{e^{\mu^{2}} - e^{-\mu^{2}}}{4m^2} \E \left[ (1-e^{2\mu z_{1}}) \mathbbm{1}_{(z_{1}\leq 0)} \right]=O\Big(\frac{\mu^2}{m^2}\Big)
\end{align*}

It remains to prove $(m-1) \E_{\mu e_{2}} p_{m}^{2} \geq \frac{\mu^{2}}{m}(1+o(1))$. Denote
\begin{equation*}
    C:= \Big[ e^{\mu b} + e^{-\mu b} + 2(m-2) e^{\frac{\mu ^{2}}{2}} + e^{\frac{3}{2}\mu ^{2}} + e^{-\frac{\mu ^{2}}{2}}\Big]^{2},
\end{equation*}
where $b>0$ is a scalar to be specified later. Then
\begin{eqnarray*}
     && \E_{\mu e_{2}} p_m^{2} \\
     &=&   \E \Bigg[ \frac{(e^{\mu z_{1}} - e^{-\mu z_{1}})^{2}}{\big [\sum_{j\neq 2} (e^{\mu z_{j}} + e^{-\mu z_{j}}) + e^{\mu (\mu +z_{2})} + e^{-\mu (\mu +z_{2})}\big ]^{2}} \Bigg] \\
    & \labelrel\geq{eq::jensen-conditional-on-z1}&   \E \Bigg[ \frac{(e^{\mu z_{1}} - e^{-\mu z_{1}})^{2}}{[ e^{\mu z_{1}} + e^{-\mu z_{1}} + 2(m-2)e^{\frac{\mu ^{2}}{2}} + e^{\frac{3}{2} \mu ^{2}} + e^{-\frac{\mu ^{2}}{2}} ]^{2}} \Bigg] \\
    & \geq &  \E \left[\frac{(e^{\mu z_{1}} - e^{-\mu z_{1}})^{2} I_{(|z_{1}|\leq b)}}{ [e^{\mu b} + e^{-\mu b} + 2(m-2) e^{\frac{\mu ^{2}}{2}} + e^{\frac{3}{2}\mu ^{2}} + e^{-\frac{\mu ^{2}}{2}} ]^{2} } \right]\\
    & = & \frac{2}{C} \left[  \E e^{2\mu z_{1}} I_{(|z_{1}|\leq b)} -  \mathbb{P}(|z_{1}|\leq b) \right] \\
     & = & \frac{2}{C} \left[ e^{2\mu ^{2}} \int_{-b-2\mu }^{b-2\mu } \phi(z) d z - \int_{-b}^{b} \phi(z) d z  \right] \\
    & = & \frac{2}{C} \left[  (e^{2\mu ^{2}} - 1)\int_{-b-2\mu }^{b-2\mu }\phi(z) d z \right. \\
    && \left. - \int_{b-2\mu  }^{b}\phi(z) d z + \int_{-b-2\mu }^{-b}\phi(z) d z \right]  \\
    & \labelrel={eq::select-b} &~ \frac{2}{C} \left[ 2\mu ^{2} (1+o(1)) + o(\mu ^{2}) + o(\mu ^{2}) \right] \\
    & \labelrel\geq{eq::upper-bound-C} & ~\frac{2}{4m^{2} e^{2\sqrt{\mu }}} \cdot 2\mu ^{2} (1+o(1))
    = \frac{\mu ^{2}}{m^{2}} (1+o(1)).
\end{eqnarray*}
Inequality \eqref{eq::jensen-conditional-on-z1} is obtained by conditioning on $z_{1}$ and applying Jensen's inequality on the convex function $1/(x+c)^{2}$ for $x>0$. Equality \eqref{eq::select-b} holds by setting $b = 1/\sqrt{\mu}$, for the purpose of matching the asymptotic order $\frac{\mu^{2}}{m}(1+o(1))$. Finally, inequality \eqref{eq::upper-bound-C} is because $C \leq 4m^{2} e^{2\sqrt{\mu}}$ when $\mu$ is sufficiently small.
\end{proof}

We are in the position to derive the matching lower bound for the minimax risk. Recall that in the block prior we have $m=n/k_n,\mu\in (0,\mu_n]$. Set $\mu=\mu_n$. The assumption $\epsilon_n=k_n/n\rightarrow 0,\mu_n\rightarrow 0$ guarantees that the condition $m\rightarrow \infty,\mu\rightarrow 0 $ in Lemma \ref{lem::spike-bayes-risk-regime-1} is satisfied. 
We therefore combine Lemma \ref{lem::spike-bayes-risk-regime-1} and \eqref{reduce:1d} to obtain
\begin{align*}
 R(\Theta(k_n,\tau_{n}),\sigma_n)&=\sigma_n^2 \cdot R(\Theta(k_n,\mu_{n}),1) \geq k_n\sigma_n^2 \cdot B(\pi_S^{\mu,m}) \\
 &\geq k_n\sigma_n^2\cdot \Big[\mu_n^2-\frac{\mu_n^4k_n}{n}(1+o(1))\Big]\\
&=n\sigma^2_n\cdot\Big(\epsilon_n\mu_n^2-\epsilon_n^2\mu_n^4(1+o(1))\Big).
\end{align*}

\subsection{Proof of Proposition \ref{lem::lasso-risk-regime-1}}\label{app:proof_lasso_regime_1}

Define the supremum risk of optimally tuned soft thresholding estimator as
\[
R_s (\Theta (k_n, \tau_n), \sigma_n) = \inf_{\lambda>0} \sup_{\theta \in \Theta (k_n, \tau_n)} \E_{\theta} \norm{\etahatS(y,\lambda) - \theta}_{2}^{2},
\]
where $y_i = \theta_i + \sigma_n z_i$, with $z_i \overset{i.i.d.}{\sim}\mathcal{N}(0,1)$. It is straightforward to verify that
\begin{equation}
   R_s (\Theta (k_n, \tau_n), \sigma_n) = \sigma_n^2\cdot  R_s (\Theta (k_n, \mu_n), 1). \label{eq::soft-thresholding-risk-scalability}
\end{equation}
Hence, without loss of generality, in the rest of the proof we will assume that $\sigma_n =1$. 

Since $\hat{\eta}_S(y,\lambda)$ is the special case of $\hat{\eta}_E(y,\lambda,\gamma)$ with $\gamma=0$, the supremum risk result stated in Equation \eqref{eq::maximum-elastic-net-risk} for $\hat{\eta}_E(y,\lambda,\gamma)$ applies to $\hat{\eta}_S(y,\lambda)$ as well. It shows that the supremum risk of $\etahatS(y,\lambda)$ is attained on a particular boundary of the parameter space:
\begin{eqnarray}
     && \sup_{\theta \in \Theta(k_n,\mu_{n})} \E \sum_{i=1}^{n} |\etahatS(y_{i},\lambda) - \theta_{i}|_{2}^{2} \nonumber \\
     &=& (n-k_n) r_{S}(\lambda,0) + k_n r_{S}(\lambda,\mu_{n}) \nonumber\\
    & =&  n\left[ (1-\epsilon_{n})r_{S}(\lambda,0) + \epsilon_{n}r_{S}(\lambda,\mu_{n}) \right], \label{eq::soft-thresholding-sup-risk}
\end{eqnarray}
with $\epsilon_{n} = k_n/n$ and $r_{S}(\lambda,\mu)$ defined as
\begin{equation}
    r_{S}(\lambda,\mu) = \E(\etahatS(\mu+z,\lambda)-\mu)^{2},~~z\sim \mathcal{N}(0,1). \label{eq::soft-thresholding-risk-definition0}
\end{equation}
To prove Proposition \ref{lem::lasso-risk-regime-1}, we need to find the optimal $\lambda$ that minimizes the supremum risk in \eqref{eq::soft-thresholding-sup-risk}, or equivalently, the function
\begin{equation}
    F(\lambda) := (1-\epsilon_n) r_{S}(\lambda, 0) + \epsilon_n r_{S}(\lambda, \mu_n). \label{eq::soft-thresholding-risk-function}
\end{equation}

\begin{lemma}
\label{pre:order:tuning}
Denote the optimal tuning by $\lambdas=\argmin_{\lambda \geq 0} F(\lambda)$. It holds that 
\begin{equation}\label{eq:label:optlambda-regime_1}
    \log 2 \epsilon_n^{-1} + \frac{\mu_{n}^{2}}{2} -2 \log \log \frac{2}{\epsilon_{n}} < \lambdas \mu_n <  \log{2\epsilon_n^{-1}} + \frac{\mu_n^2}{2},
\end{equation}
when $n$ is sufficiently large.
\end{lemma}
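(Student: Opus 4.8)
\emph{Proof plan.} The idea is to pin down $\lambdas$ through the stationarity condition $F'(\lambdas)=0$ and to read off the order of $\lambdas\mu_n$ from it using the Mills-ratio bounds of Lemma~\ref{lem::gaussian-tail-mills-ratio}. Differentiating \eqref{eq::soft-thresholding-risk-function} under the integral sign gives
\begin{align*}
F'(\lambda) &= -4(1-\epsilon_n)\bigl[\phi(\lambda)-\lambda(1-\Phi(\lambda))\bigr]\\
&\quad {}+ 2\epsilon_n\bigl[\lambda(1-\Phi(\lambda-\mu_n))+\lambda(1-\Phi(\lambda+\mu_n))\\
&\quad\qquad {}- \phi(\lambda-\mu_n)-\phi(\lambda+\mu_n)\bigr],
\end{align*}
where the two brackets equal $-\tfrac14\partial_\lambda r_S(\lambda,0)$ and $\tfrac12\partial_\lambda r_S(\lambda,\mu_n)$ respectively. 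Set $L_+=\log(2\epsilon_n^{-1})+\mu_n^2/2$ and $L_-=\log(2\epsilon_n^{-1})+\mu_n^2/2-2\log\log(2\epsilon_n^{-1})$. It suffices to prove, for all large $n$, that (i) $F'(\lambda)>0$ whenever $\lambda\mu_n\ge L_+$, and (ii) $F'(\lambda)<0$ whenever $0<\lambda\mu_n\le L_-$. Given these, (i) makes $F$ eventually increasing, so a global minimizer $\lambdas$ exists; it is interior since $F'(0)<0$, hence $F'(\lambdas)=0$; and (i)--(ii) then exclude $\lambdas\mu_n\ge L_+$ and $\lambdas\mu_n\le L_-$, which is exactly \eqref{eq:label:optlambda-regime_1}.

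For (i), I would bound the negative part of $F'$ from above: by the lower tail estimate $1-\Phi(\lambda)\ge\phi(\lambda)(\lambda^{-1}-\lambda^{-3})$ (the $k=0$ instance of Lemma~\ref{lem::gaussian-tail-mills-ratio}) one gets $\phi(\lambda)-\lambda(1-\Phi(\lambda))\le\phi(\lambda)\lambda^{-2}$. I would bound the positive part from below by keeping only the $\lambda-\mu_n$ contribution (the $\lambda+\mu_n$ one is exponentially smaller) and applying the same estimate at $\lambda-\mu_n$: $\lambda(1-\Phi(\lambda-\mu_n))-\phi(\lambda-\mu_n)\ge\phi(\lambda-\mu_n)\bigl(\tfrac{\mu_n}{\lambda-\mu_n}-\tfrac{\lambda}{(\lambda-\mu_n)^3}\bigr)$. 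Since $\phi(\lambda-\mu_n)=\phi(\lambda)e^{\lambda\mu_n-\mu_n^2/2}$, the hypothesis $\lambda\mu_n\ge L_+$ forces $\phi(\lambda-\mu_n)\ge 2\epsilon_n^{-1}\phi(\lambda)$; substituting, and noting that $\lambda\mu_n\to\infty$ on this range so that $\lambda(\lambda-\mu_n)^{-2}=o(\mu_n)$ uniformly, reduces $F'(\lambda)>0$ to $\mu_n-\lambda^{-1}>0$, which holds because $\lambda\ge\mu_n^{-1}\log(2\epsilon_n^{-1})>\mu_n^{-1}$.

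For (ii), I would split the range at a fixed $\Lambda_0>\sqrt3$ (say $\Lambda_0=2$). On $(0,\Lambda_0]$ the first bracket is continuous and strictly positive (Mills ratio), hence has a positive minimum $c$, while the second bracket is bounded by a constant $C$, so $F'(\lambda)\le-4(1-\epsilon_n)c+2\epsilon_nC<0$ for $n$ large. On $[\Lambda_0,L_-/\mu_n]$ I would lower-bound the negative part via the $k=1$ upper tail estimate $1-\Phi(\lambda)\le\phi(\lambda)(\lambda^{-1}-\lambda^{-3}+3\lambda^{-5})$, giving $\phi(\lambda)-\lambda(1-\Phi(\lambda))\ge\phi(\lambda)\lambda^{-2}(1-3\lambda^{-2})\ge\tfrac14\phi(\lambda)\lambda^{-2}$, and upper-bound the positive part via $1-\Phi(\lambda-\mu_n)\le\phi(\lambda-\mu_n)(\lambda-\mu_n)^{-1}$ together with $\phi(\lambda-\mu_n)\le\tfrac{2\epsilon_n^{-1}}{\log^2(2\epsilon_n^{-1})}\phi(\lambda)$ (from $\lambda\mu_n\le L_-$), yielding a positive part at most $\tfrac{8\phi(\lambda)\mu_n}{\lambda\log^2(2\epsilon_n^{-1})}$. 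Comparing the two bounds, $F'(\lambda)<0$ follows once $16\mu_n\lambda\le\log^2(2\epsilon_n^{-1})$, and since $\mu_n\lambda\le L_-\le\log(2\epsilon_n^{-1})$ this reduces to $\log(2\epsilon_n^{-1})\ge16$, valid for $n$ large.

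The main obstacle is the uniform control of the tail-ratio estimates over the whole $\lambda$-range: one must ensure the $(1+o(1))$ corrections in (i) do not deteriorate on the \emph{unbounded} set $\{\lambda\mu_n\ge L_+\}$, and in (ii) one must separately handle the moderate values $\lambda\asymp1$, where the asymptotic Mills expansion is weakest and a compactness argument is unavoidable. It is precisely in order to absorb these effects that the window in \eqref{eq:label:optlambda-regime_1} is stated with a factor-of-two slack rather than with the sharp $-\log\log(2\epsilon_n^{-1})$ correction that the balance actually produces.
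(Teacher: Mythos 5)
Your argument is correct, but it takes a genuinely different route from the paper's. The paper first shows $\lambdas$ is an interior stationary point, proves $\lambdas\to\infty$ and $\lambdas\mu_n\to\infty$ by separate contradiction arguments, and then solves the stationarity equation $F'(\lambdas)=0$ asymptotically to obtain the master relation $2+o(1)=\epsilon_n\mu_n\lambdas\exp(\lambdas\mu_n-\mu_n^2/2)$ in \eqref{a:key:master}, from which both inequalities of \eqref{eq:label:optlambda-regime_1} are read off by contradiction. You instead bracket $\lambdas$ by fixing the sign of $F'$ on the two complementary ranges $\{\lambda\mu_n\geq \log(2\epsilon_n^{-1})+\mu_n^2/2\}$ and $\{\lambda\mu_n\leq \log(2\epsilon_n^{-1})+\mu_n^2/2-2\log\log(2\epsilon_n^{-1})\}$; this collapses the paper's preliminary growth claims into byproducts and disposes of the endpoint cases $\lambdas=0,+\infty$ in the same stroke. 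The details check out: in (i) the Mills bounds are uniform on the unbounded range because $\lambda\geq(\lambda\mu_n)/\mu_n\to\infty$ there, so $\lambda(\lambda-\mu_n)^{-2}\leq 4/\lambda=o(\mu_n)$ uniformly, and the $\lambda+\mu_n$ contribution carries the same $\epsilon_n$ prefactor as the $\lambda-\mu_n$ one and is suppressed by $e^{-2\lambda\mu_n}$; in (ii) the compactness step on $(0,\Lambda_0]$ correctly handles the moderate-$\lambda$ range where the expansion degenerates, and retaining the $-\phi(\lambda-\mu_n)$ term is what produces the crucial $\mu_n/(\lambda-\mu_n)$ factor in the upper bound. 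What your route does not buy is the paper's: the master equation \eqref{a:key:master} and the refined tail expansions surrounding it are reused verbatim in Lemma \ref{refine:order} to evaluate $F(\lambdas)$ to second order, so the paper's derivative-equation derivation is doing double duty, whereas your sign-bracketing would have to be supplemented by that analysis anyway before Proposition \ref{lem::lasso-risk-regime-1} can be completed.
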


\begin{proof}
Using integration by parts, we first obtain a more explicit expression for $F(\lambda)$:
\begin{align}
    F(\lambda) =& (1-\epsilon_n)\E\hat{\eta}^2_S(z,\lambda) + \epsilon_n \mu_{n}^{2} \nonumber \\
    -& 2\epsilon_{n} \mu_{n} \E \etahatS(\mu_{n}+z, \lambda)
    + \epsilon_{n} \E\etahatS^{2}(\mu_{n}+z, \lambda), \label{eq::soft-thresholding-risk-of-lambda}
\end{align}
where the three expectations take the form
\begin{align}
   \E\hat{\eta}^2_S(z,\lambda) =  &2(1+\lambda^{2}) \int_{\lambda}^{\infty} \phi(z) d z - 2 \lambda \phi(\lambda)   \label{risk:at:zero} \\
   \E \etahatS(\mu_{n}+z, \lambda) =& \phi(\lambda-\mu_{n}) + (\mu_{n}-\lambda) \int_{\lambda-\mu_{n}}^{\infty} \phi(z) d z \nonumber \\
   -& \phi(\lambda+\mu_{n}) + (\mu_{n}+\lambda) \int_{\lambda + \mu_{n}}^{\infty} \phi(z) d z\label{eq::soft-thresholding-first-moment} \\
       \E \etahatS^{2}(\mu_{n}+z, \lambda)  =& \bigg[ \left( 1+(\lambda-\mu_{n})^{2}\right) \int_{\lambda-\mu_{n}}^{\infty} \phi(z) d z \nonumber \\
       -& (\lambda - \mu_{n}) \phi(\lambda-\mu_{n})  \bigg] \nonumber\\
       +& \bigg[ \left( 1+(\lambda+\mu_{n})^{2} \right) \int_{\lambda+\mu_{n}}^{\infty} \phi(z) d z \nonumber \\
       -& (\lambda + \mu_{n}) \phi(\lambda+\mu_{n})  \bigg]. \label{eq::soft-thresholding-second-moment}
\end{align} 
Therefore, $F(\lambda)$ is a differentiable function of $\lambda$, and as long as the infimum of $F(\lambda)$ is not achieved at $0$ or $+\infty$, $\lambdas$ will satisfy $F'(\lambdas)=0$. From Equations \eqref{eq::soft-thresholding-risk-of-lambda}-\eqref{eq::soft-thresholding-second-moment}, it is direct to compute $F(0) = 1>F(+\infty)=\epsilon_{n}\mu_{n}^{2}$ for large $n$. Moreover, as we will show in the end of the proof, $F(\lambda)$ is increasing when $\lambda$ is above a threshold. Hence, the optimal tuning $\lambdas \in (0,\infty)$, and we can characterize it through the derivative equation:
\begin{align}
    0 & = F{'}(\lambdas)  = (1-\epsilon_{n}) \left[ 4\lambdas \int_{\lambdas}^{\infty} \phi(z) d z - 4 \phi(\lambdas) \right] \nonumber\\
    & + \epsilon_{n} \Bigg[ -2\phi(\lambdas-\mu_{n}) - 2\phi(\lambdas + \mu_{n})  \nonumber \\
    & + 2\lambdas \left( \int_{\lambdas-\mu_{n}}^{\infty} \phi(z) d z + \int_{\lambda^* + \mu_{n}}^{\infty} \phi(z) d z \right) \Bigg]. \label{eq::soft-thresholding-zero-derivative}
\end{align}

First, we show that $\lambdas \rightarrow \infty$. Suppose this is not true. Then $\lambdas\leq C$ for some constant $C>0$ (take a subsequence if necessary). From  \eqref{eq::soft-thresholding-risk-of-lambda}, we have
\begin{align*}
    F(\lambdas) \geq &  (1-\epsilon_{n}) r_{S}(C,0) \nonumber \\
    =& 2(1-\epsilon_{n}) \left[(1+C^{2})\int_{C}^{\infty}\phi(z)dz - C\phi(C)\right] \nonumber \\
    >& \epsilon_n\mu_n^2=F(+\infty),
\end{align*}
when $n$ is large. This contradicts with the optimality of $\lambdas$.

Second, we prove that $\lambdas \mu_n \rightarrow \infty$. Otherwise, $\lambda_*\mu_n=O(1)$ (take a subsequence if necessary). We will show that it leads to a contradiction in \eqref{eq::soft-thresholding-zero-derivative}. Using the Gaussian tail bound $\int_{t}^{\infty}\phi(z)dz=(\frac{1}{t}-\frac{1+o(1)}{t^3})\phi(t)$ as $t\rightarrow \infty$ from Section \ref{ssec:GTB}, since $\lambdas \rightarrow \infty,\mu_n\rightarrow 0, \lambda_*\mu_n=O(1)$, we obtain 
\begin{align}
-\lambdas \int_{\lambdas}^{\infty} \phi(z) d z + \phi(\lambdas) &= (1+o(1))\cdot  \lambdas^{-2}\phi(\lambdas), \label{lasso:regime_1-lo2}\\
-\phi(\lambdas+\mu_{n}) + \lambdas\int_{\lambdas+\mu_{n}}^{\infty} \phi(z) d z&=O(\lambdas^{-2}\phi(\lambdas)), \label{eq:limit:case1:last2} \\
-\phi(\lambdas-\mu_{n}) + \lambdas\int_{\lambdas-\mu_{n}}^{\infty} \phi(z) d z&=O(\lambdas^{-2}\phi(\lambdas)). \label{eq:limit:case1:last1}
\end{align}
Given that $\epsilon_n\rightarrow 0$, combining the above results with $\eqref{eq::soft-thresholding-zero-derivative}$ implies that $0=F'(\lambdas)\cdot \lambdas^{2}\phi^{-1}(\lambdas)=-4+o(1)$, which is a contradiction. 

Third, we show that $\lambdas\mu_{n} < \log \frac{2}{\epsilon_{n}} + \frac{\mu_{n}^{2}}{2}$ for large $n$. Now that we have proved $\lambdas \mu_n \rightarrow \infty$, results in \eqref{eq:limit:case1:last2}-\eqref{eq:limit:case1:last1} can be strengthened:
\begin{align}
    & -\phi(\lambdas+\mu_{n}) + \lambdas\int_{\lambdas+\mu_{n}}^{\infty} \phi(z) d z =o(\mu_n\lambdas^{-1}\phi(\lambdas-\mu_n)), \label{eq:limit:case1:last2:new} \\
& -\phi(\lambdas-\mu_{n}) + \lambdas\int_{\lambdas-\mu_{n}}^{\infty} \phi(z) d z
=(1+o(1)) \nonumber \\
& \cdot \mu_n\lambdas^{-1} \phi(\lambdas-\mu_n). \label{eq:limit:case1:last1:new}
\end{align}
Plugging \eqref{lasso:regime_1-lo2} and \eqref{eq:limit:case1:last2:new}-\eqref{eq:limit:case1:last1:new} into \eqref{eq::soft-thresholding-zero-derivative} gives $(4+o(1))\cdot \lambdas^{-2}\phi(\lambdas)=(2+o(1))\cdot \epsilon_n\mu_n\lambdas^{-1}\phi(\lambdas-\mu_n)$,
which can be further simplified as 
\begin{align}
\label{a:key:master}
    2+o(1)=\epsilon_n\mu_n \lambdas \exp(\lambdas \mu_n-\mu_n^2/2).
\end{align}
The above equation implies that $\lambdas\mu_{n} < \log \frac{2}{\epsilon_{n}} + \frac{\mu_{n}^{2}}{2}$ for large $n$. Otherwise, the right-hand side will be no smaller than $2\mu_n\lambda_n\rightarrow \infty$ contradicting with the left-hand side term. 

Fourth, we prove that $\lambdas \mu_{n} > \log \frac{2}{\epsilon_{n}} + \frac{\mu_{n}^{2}}{2} - 2 \log \log \frac{2}{\epsilon_{n}}$ when $n$ is large. Otherwise, suppose $\lambdas \mu_{n} \leq \log \frac{2}{\epsilon_{n}} + \frac{\mu_{n}^{2}}{2} - 2 \log \log \frac{2}{\epsilon_{n}}$ (take a subsequence if necessary). This leads to
\begin{align*}
& 0\leq \epsilon_n\mu_n \lambdas \exp(\lambdas \mu_n-\mu_n^2/2)\leq \frac{2\mu_n\lambdas}{(\log \frac{2}{\epsilon_n})^2} \nonumber \\
<& \frac{2\log \frac{2}{2\epsilon_{n}} + \mu_{n}^{2}}{(\log \frac{2}{\epsilon_n})^2}=o(1),
\end{align*}
where we have used the upper bound $\lambdas \mu_{n}<\log \frac{2}{\epsilon_{n}}+ \frac{\mu_{n}^{2}}{2}$ derived earlier. The obtained result contradicts with \eqref{a:key:master}. 

Finally, as mentioned earlier in the proof, we need to show that $\lambdas \neq +\infty$ for large $n$. It is sufficient to prove that $F'(\lambda)>0, \forall \lambda \in [\frac{2}{\mu_n}\log\frac{1}{\epsilon_n},\infty)$, when $n$ is large. To this end, using the Gaussian tail bound $\int_{t}^{\infty}\phi(z)dz\geq (\frac{1}{t}-\frac{1}{t^3})\phi(t), \forall t>0$ and the derivative expression \eqref{eq::soft-thresholding-zero-derivative}, we have
\begin{alignat*}{2}
F'(\lambda)&\geq \frac{\phi(\lambda)}{\lambda^2}\cdot \Big[&& -4+4\epsilon_n +\frac{\mu_n(\lambda-\mu_n)^2-\lambda}{(\lambda-\mu_n)^3\lambda^{-2}}2\epsilon_ne^{\lambda\mu_n-\mu_n^2/2} \nonumber \\
& &&+\frac{-\mu_n(\lambda+\mu_n)^2-\lambda}{(\lambda+\mu_n)^3\lambda^{-2}}2\epsilon_ne^{-\lambda\mu_n-\mu_n^2/2}\Big] \\
&\geq \frac{\phi(\lambda)}{\lambda^2}\cdot \Big[&& -4 +4\epsilon_n +(2+o(1))\cdot \epsilon_n e^{-\mu_n^2/2}\lambda \mu_ne^{\lambda \mu_n}\Big],
\end{alignat*}
where we used that $\lambda \geq \frac{2}{\mu_{n}}\log\frac{1}{\epsilon_{n}}$ implies $\lambda\mu_{n}=\omega(1)$. Note that the above asymptotic notion $o(\cdot)$ is uniform for all $\lambda \geq \frac{2}{\mu_n}\log\frac{1}{\epsilon_n}$ when $n$ is large. Since $\lambda \mu_n\geq 2\log \frac{1}{\epsilon_n}$, we can easily continue from the above inequality to obtain $F'(\lambda)>0$ for sufficiently large $n$.
\end{proof}

The next lemma turns $F(\lambdas)$ into a form that is more amenable to asymptotic analysis.

\begin{lemma}
\label{refine:order}
Define
\begin{eqnarray*}
\mathcal{A}&=&-\mu_n(\lambdas-\mu_n)+1+\frac{\mu_n(\lambdas-\mu_n)^3e^{-2\lambdas\mu_n}}{(\lambdas+\mu_n)^2} \\
&+& \frac{(\lambdas-\mu_n)^3e^{-2\lambdas\mu_n}}{(\lambdas+\mu_n)^3}+O\Big(\frac{\mu_n}{\lambdas}\Big), \\
\mathcal{B}&=&\mu_n(\lambdas-\mu_n)^2-\lambdas+(3+o(1))\lambdas^{-1} \\
&+& \frac{[-\mu_n\lambdas^2-\lambdas(1+2\mu_n^2(1+o(1)))]}{(\lambdas+\mu_n)^3} \\
&\cdot& (\lambdas-\mu_n)^3e^{-2\lambdas\mu_n}.
\end{eqnarray*}
As $\epsilon_n\rightarrow 0, \mu_n\rightarrow 0$, it holds that
\begin{eqnarray*}
F(\lambdas)&=&\epsilon_n\mu_n^2+\frac{4(1-\epsilon_n)\phi(\lambdas)}{\lambdas^3}\cdot \\
&& \Big[1-6\lambdas^{-2}+O(\lambdas^{-4})+\Big(\lambdas-\frac{3+o(1)}{\lambdas}\Big)\frac{\mathcal{A}}{\mathcal{B}}\Big].
\end{eqnarray*}
\end{lemma}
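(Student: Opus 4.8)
\emph{Strategy.} The plan is to start from the closed form of $F$ at the optimal tuning. By Equation \eqref{eq::soft-thresholding-risk-of-lambda},
$F(\lambdas)=(1-\epsilon_n)\,\E\etahatS^{2}(z,\lambdas)+\epsilon_n\mu_n^{2}-2\epsilon_n\mu_n\,\E\etahatS(\mu_n+z,\lambdas)+\epsilon_n\,\E\etahatS^{2}(\mu_n+z,\lambdas)$,
and the three non-trivial expectations will be processed by substituting the exact formulas \eqref{risk:at:zero}--\eqref{eq::soft-thresholding-second-moment}, expanding the Gaussian tails via Lemma \ref{lem::gaussian-tail-mills-ratio} (so that $1-\Phi(t)=\phi(t)(t^{-1}-t^{-3}+3t^{-5}-\cdots)$), and then using the stationarity equation \eqref{eq::soft-thresholding-zero-derivative} to eliminate $\epsilon_n$ from the part of $F(\lambdas)$ that depends on $\mu_n$. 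All asymptotic decisions are made with the help of $\lambdas\to\infty$, $\mu_n\to 0$, $\lambdas\mu_n\to\infty$, and the two-sided bound $\lambdas\mu_n=\log(2/\epsilon_n)+\tfrac{\mu_n^{2}}{2}+O(\log\log(2/\epsilon_n))$ from Lemma \ref{pre:order:tuning}. For the null term, plugging in \eqref{risk:at:zero} gives $\E\etahatS^{2}(z,\lambdas)=2(1+\lambdas^{2})(1-\Phi(\lambdas))-2\lambdas\phi(\lambdas)$, and the Mills expansion makes the $O(\lambdas)$ and $O(\lambdas^{-1})$ contributions cancel, leaving $\E\etahatS^{2}(z,\lambdas)=\tfrac{4\phi(\lambdas)}{\lambdas^{3}}\bigl(1-6\lambdas^{-2}+O(\lambdas^{-4})\bigr)$, which is exactly the factor $\tfrac{4(1-\epsilon_n)\phi(\lambdas)}{\lambdas^{3}}\bigl(1-6\lambdas^{-2}+O(\lambdas^{-4})\bigr)$ of the claim.

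\emph{The signal term.} Substituting \eqref{eq::soft-thresholding-first-moment}--\eqref{eq::soft-thresholding-second-moment} and collecting coefficients, the combination $-2\mu_n\,\E\etahatS(\mu_n+z,\lambdas)+\E\etahatS^{2}(\mu_n+z,\lambdas)$ collapses to $(1+\lambdas^{2}-\mu_n^{2})\bigl[(1-\Phi(\lambdas-\mu_n))+(1-\Phi(\lambdas+\mu_n))\bigr]-(\lambdas+\mu_n)\phi(\lambdas-\mu_n)-(\lambdas-\mu_n)\phi(\lambdas+\mu_n)$; the crucial algebraic observation is that the two tail coefficients become equal and factor as $1+(\lambdas-\mu_n)(\lambdas+\mu_n)$. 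Splitting this expression into its $(\lambdas-\mu_n)$-piece and its $(\lambdas+\mu_n)$-piece, factoring out $\tfrac{2\phi(\lambdas-\mu_n)}{(\lambdas-\mu_n)^{3}}$, expanding each $1-\Phi(\lambdas\pm\mu_n)$ by Mills, and using the exact identity $\phi(\lambdas+\mu_n)=\phi(\lambdas-\mu_n)e^{-2\lambdas\mu_n}$, the remaining bracket is shown to be precisely $\mathcal A$: the $(\lambdas-\mu_n)$-piece yields $-\mu_n(\lambdas-\mu_n)+1$, the $(\lambdas+\mu_n)$-piece yields the two $e^{-2\lambdas\mu_n}$ terms, and every discarded remainder is $O(\mu_n/\lambdas)$ (here $\lambdas\mu_n\to\infty$ is what forces $O(\lambdas^{-2})=o(\mu_n/\lambdas)$). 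Consequently $-2\epsilon_n\mu_n\,\E\etahatS(\mu_n+z,\lambdas)+\epsilon_n\,\E\etahatS^{2}(\mu_n+z,\lambdas)=\epsilon_n\,\tfrac{2\phi(\lambdas-\mu_n)}{(\lambdas-\mu_n)^{3}}\,\mathcal A$.

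\emph{Closing via stationarity.} Equation \eqref{eq::soft-thresholding-zero-derivative} rearranges to $4(1-\epsilon_n)\bigl[\phi(\lambdas)-\lambdas(1-\Phi(\lambdas))\bigr]=2\epsilon_n\bigl[\lambdas(1-\Phi(\lambdas-\mu_n))+\lambdas(1-\Phi(\lambdas+\mu_n))-\phi(\lambdas-\mu_n)-\phi(\lambdas+\mu_n)\bigr]$. Running the same factor-out-and-expand procedure on the right-hand side identifies that bracket as $\tfrac{2\phi(\lambdas-\mu_n)}{(\lambdas-\mu_n)^{3}}\,\mathcal B$; hence $\epsilon_n\,\tfrac{2\phi(\lambdas-\mu_n)}{(\lambdas-\mu_n)^{3}}=\tfrac{4(1-\epsilon_n)[\phi(\lambdas)-\lambdas(1-\Phi(\lambdas))]}{\mathcal B}$. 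Substituting this into the identity from the previous step gives $-2\epsilon_n\mu_n\,\E\etahatS(\mu_n+z,\lambdas)+\epsilon_n\,\E\etahatS^{2}(\mu_n+z,\lambdas)=4(1-\epsilon_n)\bigl[\phi(\lambdas)-\lambdas(1-\Phi(\lambdas))\bigr]\tfrac{\mathcal A}{\mathcal B}$, and a final Mills expansion gives $4\bigl[\phi(\lambdas)-\lambdas(1-\Phi(\lambdas))\bigr]=\tfrac{4\phi(\lambdas)}{\lambdas^{3}}\bigl(\lambdas-\tfrac{3+o(1)}{\lambdas}\bigr)$. Adding $\epsilon_n\mu_n^{2}$, the null term, and this signal term yields the stated expression for $F(\lambdas)$.

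\emph{Main obstacle.} The delicate part is the bookkeeping in the signal-term and stationarity steps: one must carry the Mills expansions of $1-\Phi(\lambdas-\mu_n)$ and $1-\Phi(\lambdas+\mu_n)$ to a high enough order that, after the cancellations forced by the common coefficient $1+(\lambdas-\mu_n)(\lambdas+\mu_n)$, precisely the right terms survive --- in particular the $(3+o(1))\lambdas^{-1}$ and the $2\mu_n^{2}(1+o(1))$ buried in $\mathcal B$, and the two $e^{-2\lambdas\mu_n}$ terms in $\mathcal A$, which depending on the relative rates of $\mu_n$ and $\epsilon_n$ need not be negligible against the $O(\mu_n/\lambdas)$ remainder --- while simultaneously showing that every discarded remainder is uniformly of smaller order. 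This uniform control over the tail expansions is where the sharp estimate of $\lambdas\mu_n$ from Lemma \ref{pre:order:tuning} does the real work.
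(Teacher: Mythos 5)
Your proposal is correct and follows essentially the same route as the paper's proof: expand the three expectations in \eqref{eq::soft-thresholding-risk-of-lambda} via Mills-ratio tail bounds to identify the signal contribution as $2\epsilon_n\mathcal{A}\,\phi(\lambdas-\mu_n)/(\lambdas-\mu_n)^3$, then use the stationarity equation \eqref{eq::soft-thresholding-zero-derivative} to trade $\epsilon_n\phi(\lambdas-\mu_n)/(\lambdas-\mu_n)^3$ for $(1-\epsilon_n)(\lambdas^{-2}-(3+o(1))\lambdas^{-4})\phi(\lambdas)/\mathcal{B}$ and substitute. The only (cosmetic) difference is that you algebraically collapse $-2\mu_n\E\etahatS+\E\etahatS^2$ using the common coefficient $1+\lambdas^2-\mu_n^2$ before expanding, whereas the paper expands each expectation separately first.
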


\begin{proof}
We use Gaussian tail bounds to evaluate the three expectations \eqref{risk:at:zero}-\eqref{eq::soft-thresholding-second-moment} in the expression of $F(\lambdas)$ in \eqref{eq::soft-thresholding-risk-of-lambda}. Note that as shown in Lemma \ref{pre:order:tuning}, $\lambdas\mu_n=\Uptheta(\log2\epsilon_n^{-1})$. The first expectation is
\begin{equation}\label{eq::soft-threshold-regime_1-risk-at-zero}
    \E\hat{\eta}^2_S(z,\lambdas) = 2 \phi(\lambdas)\left[2\lambdas^{-3} -12 \lambdas^{-5} + O(\lambdas^{-7})\right].
\end{equation}
Regarding the second one, we obtain
\begin{eqnarray*}
    && \phi(\lambdas-\mu_{n}) - (\lambdas-\mu_{n})\int_{\lambdas-\mu_{n}}^{\infty} \phi(z) d z \\
    &=& \left[ (\lambdas-\mu_{n})^{-2} + O\left( (\lambdas-\mu_{n})^{-4} \right) \right] \phi(\lambdas-\mu_{n}),
\end{eqnarray*}
and 
\begin{eqnarray*}
    && \phi(\lambdas+\mu_{n}) - (\lambdas+\mu_{n})\int_{\lambdas+\mu_{n}}^{\infty} \phi(z) d z \\
    &=& \left[ (\lambdas+\mu_{n})^{-2}e^{-2\lambdas\mu_{n}} + O\left((\lambdas+\mu_{n})^{-4}e^{-2\lambdas\mu_{n}}\right) \right]\cdot \\
    && \phi(\lambdas-\mu_{n}).
\end{eqnarray*}
Therefore,
\begin{align}\label{eq::soft-threshold-regime_1-risk-first-moment}
    \E \etahatS(\mu_{n}+z, \lambdas) =& \left[ (\lambdas-\mu_{n})^{-2} - (\lambdas+\mu_{n})^{-2}e^{-2\lambdas\mu_{n}} \right. \nonumber \\
    +& \left. 
    O\left( (\lambdas-\mu_{n})^{-4} \right) \right] \phi(\lambdas-\mu_{n}).
\end{align}
For the third expectation, we first have
\begin{align*}
   & \left(1+(\lambdas-\mu_{n})^{2}\right)\int_{\lambdas-\mu_{n}}^{\infty} \phi(z) d z - (\lambdas-\mu_{n}) \phi(\lambdas-\mu_{n}) \\
   =& \left[ 2(\lambdas-\mu_{n})^{-3} + O\left((\lambdas-\mu_{n})^{-5}\right) \right] \phi(\lambdas-\mu_{n}), \\
    &\left(1+(\lambdas+\mu_{n})^{2}\right)\int_{\lambdas+\mu_{n}}^{\infty} \phi(z) d z - (\lambdas+\mu_{n}) \phi(\lambdas+\mu_{n}) \\
    =& \left[ 2(\lambdas+\mu_{n})^{-3} + O\left((\lambdas+\mu_{n})^{-5}\right) \right]\phi(\lambdas+\mu_{n}).
\end{align*}
Thus,
\begin{align}\label{eq::soft-threshold-regime_1-risk-second-moment}
    \E\etahatS^{2}(\mu_{n}+z, \lambdas) =& \Big[ 2(\lambdas-\mu_{n})^{-3} + 2(\lambdas+\mu_{n})^{-3} e^{-2\lambdas\mu_{n}}  \nonumber \\
    +&  O\left((\lambdas-\mu_{n})^{-5}\right) \Big]\phi(\lambdas-\mu_{n}).
\end{align}
Plugging \eqref{eq::soft-threshold-regime_1-risk-at-zero}-\eqref{eq::soft-threshold-regime_1-risk-second-moment} into \eqref{eq::soft-thresholding-risk-of-lambda}, we have
\begin{align}
    F(\lambdas) =&~ 2(1-\epsilon_{n}) \phi(\lambdas)\left[2\lambdas^{-3} -12 \lambdas^{-5}\right.  \nonumber \\
    +&  \left. O(\lambdas^{-7})\right] + \epsilon_{n} \mu_{n}^{2} \nonumber \\
    -& 2\epsilon_{n}\mu_{n} \Big[ (\lambdas-\mu_{n})^{-2} - (\lambdas+\mu_{n})^{-2}e^{-2\lambdas\mu_{n}} \nonumber \\
    +& O\left( (\lambdas-\mu_{n})^{-4} \right) \Big] \phi(\lambdas-\mu_{n}) \nonumber \\
    +& \epsilon_{n} \Big[ 2(\lambdas-\mu_{n})^{-3} + 2(\lambdas+\mu_{n})^{-3} e^{-2\lambdas\mu_{n}}  \nonumber \\
    +&  O\left((\lambdas-\mu_{n})^{-5}\right) \Big]\phi(\lambdas-\mu_{n}) \nonumber \\
    =& ~\epsilon_{n}\mu_{n}^{2} + 2(1-\epsilon_{n}) \phi(\lambdas)\left[2\lambdas^{-3} -12 \lambdas^{-5} + O(\lambdas^{-7})\right] \nonumber \\
    +& \frac{2\epsilon_{n}\mathcal{A}\phi(\lambdas-\mu_{n})}{(\lambdas-\mu_n)^3}. \label{towards:one:step}
\end{align}
Next, we utilize the derivative equation \eqref{eq::soft-thresholding-zero-derivative} to further simplify \eqref{towards:one:step}. We first list the asymptotic approximations needed:
\begin{align*}
& -\lambdas \int_{\lambdas}^{\infty} \phi(z) d z + \phi(\lambdas)= \\
& (1-(3+o(1))\lambdas^{-2})\cdot  \lambdas^{-2}\phi(\lambdas), \\
 & -\phi(\lambdas+\mu_{n}) + \lambdas\int_{\lambdas+\mu_{n}}^{\infty} \phi(z) d z= \\
 & \frac{[-\mu_n\lambdas^2-\lambdas(1+2\mu_n^2(1+o(1)))]e^{-2\lambdas\mu_n}}{(\lambdas+\mu_n)^3}\phi(
 \lambdas-\mu_n), \\
&-\phi(\lambdas-\mu_{n}) + \lambdas\int_{\lambdas-\mu_{n}}^{\infty} \phi(z) d z= \\
& \frac{\mu_n(\lambdas-\mu_n)^2-\lambdas[1-(3+o(1))\lambdas^{-2}]}{(\lambdas-\mu_n)^3}\phi(\lambdas-\mu_n).
\end{align*}
Plugging them into \eqref{eq::soft-thresholding-zero-derivative} yields
\begin{align*}
4(1-\epsilon_n)\Big[\frac{1}{\lambdas^2}-\frac{3+o(1)}{\lambdas^4}\Big]\phi(\lambdas) =2\epsilon_n \frac{\mathcal{B}\phi(\lambdas-\mu_n)}{(\lambdas-\mu_n)^3}.
\end{align*}
Obtaining the expression for $\frac{\phi(\lambdas-\mu_n)}{(\lambdas-\mu_n)^3}$ from the above equation and plugging it into \eqref{towards:one:step} completes the proof. 
\end{proof}

We now apply Lemmas \ref{pre:order:tuning} and \ref{refine:order} to obtain the final form of $F(\lambdas)$. Referring to the expression of $F(\lambdas)$ in Lemma \ref{refine:order}, the key term to compute is $1+\Big(\lambdas-\frac{3+o(1)}{\lambdas}\Big)\frac{\mathcal{A}}{\mathcal{B}}$. Using the fact that $\lambdas\mu_n\rightarrow \infty$, some direct calculations enable us to obtain 
\begin{align*}
\Big(\lambdas-\frac{3+o(1)}{\lambdas}\Big)\mathcal{A}+\mathcal{B} & =(-1+o(1))\lambdas\mu_n^2, \\
\mathcal{B} & =\mu_n\lambdas^2(1+o(1)). 
\end{align*}
Therefore, the expression $F(\lambdas)$ in Lemma \ref{refine:order} can be simplified to 
\begin{align*}
F(\lambdas)=&\epsilon_n\mu_n^2+\frac{4(1-\epsilon_n)\phi(\lambdas)}{\lambdas^3} \\
\cdot& \Big[-6\lambdas^{-2}+O(\lambdas^{-4})-\frac{\mu_n}{\lambdas}(1+o(1))\Big] \\
=&\epsilon_n\mu_n^2-\frac{(4+o(1))\mu_n\phi(\lambdas)}{\lambdas^4}.
\end{align*}
Finally, Lemma \ref{pre:order:tuning} implies that $\lambdas=(1+o(1))\frac{\log \epsilon^{-1}_n}{\mu_n}$. Replacing $\lambdas$ by this rate in the above equation gives us the result in Proposition \ref{lem::lasso-risk-regime-1}.  

\subsection{Proof of Proposition \ref{lem::hardthreshold-risk-regime-1}}\label{proof:hardthreshold:reg1}
Define the supremum risk of optimally tuned hard thresholding estimator as
\[
R_H (\Theta (k_n, \tau_n), \sigma_n) = \inf_{\lambda>0} \sup_{\theta \in \Theta (k_n, \tau_n)} \E_{\theta} \norm{\etahatH(y,\lambda) - \theta}_{2}^{2},
\]
where $y_i = \theta_i + \sigma_n z_i$, with $z_i \overset{i.i.d.}{\sim}\mathcal{N}(0,1)$. It is straightforward to verify that
\begin{equation*}
   R_H (\Theta (k_n, \tau_n), \sigma_n) = \sigma_n^2\cdot  R_H (\Theta (k_n, \mu_n), 1). 
\end{equation*}
   Without loss of generality, let $\sigma_n = 1$ in the model. We first obtain the lower bound, by calculating the risk at a specific value of $\underline{\theta}$ such that $\underline{\theta}_{i} = \mu_{n}$ for $i \in \{ 1, 2, \ldots, k_n\}$ and $\underline{\theta}_i = 0$ for $i > k_n $:
   \begin{align}
   \label{lower:bound:first:hard}
    R_H (\Theta (k_n, \mu_n), 1) \geq \inf_{\lambda>0}\E_{\underline{\theta}}\| \etahatH(y,\lambda) -  \underline{\theta}\|_2^{2}.
   \end{align}
   Denote the one-dimensional risk:
\begin{align*}
    & r_{H}(\lambda,\mu) := \E \left( \etahatH(\mu + z,\lambda) - \mu \right)^{2}, \\
    & z \sim \mathcal{N}(0,1), \quad \forall \mu\in \mathbb{R},\lambda\geq 0.
\end{align*}
It is then direct to confirm that
\begin{align} \label{eq:hard:decomp}
    & \E_{\underline{\theta}}\| \etahatH(y,\lambda) -  \underline{\theta}\|_2^{2} | \nonumber \\
   &= n \Big[ (1-\epsilon_{n})r_{H}(\lambda, 0) + \epsilon_{n} r_{H}(\lambda,\mu_{n}) \Big].
\end{align} 
Let $\lambda_n^*$ be the optimal choice of $\lambda$ in $\E_{\underline{\theta}}\| \etahatH(y,\lambda) -  \underline{\theta}\|^{2}$ so that 
\[
\inf_{\lambda>0}\E_{\underline{\theta}}\| \etahatH(y,\lambda) -  \underline{\theta}\|_2^{2}=\E_{\underline{\theta}}\| \etahatH(y,\lambda_n^*) -  \underline{\theta}\|_2^{2}.
\]
To evaluate the lower bound in \eqref{lower:bound:first:hard}, we consider two scenarios for the optimal choice $\lambda_n^*$ and in each one we obtain a lower bound for $\E_{\underline{\theta}}\| \etahatH(y,\lambda^*_{n}) -  \underline{\theta}\|^{2}$. But before considering these two cases, we use the integration by part to find the following more explicit forms for $r_H(\lambda, 0)$ and $r_H(\lambda, \mu)$:
\begin{align}\label{eq:riskexp:hard}
r_H(\lambda, 0) &= 2 \int_{\lambda}^{\infty} z^2 \phi(z) dz = 2 \lambda \phi(\lambda)+2 (1- \Phi(\lambda)), \nonumber \\
r_H(\lambda, \mu) &= \mu^2 \int_{-\lambda- \mu}^{\lambda-\mu} \phi(z)dz + \int_{-\infty}^{-\lambda- \mu} z^2 \phi(z)dz \nonumber \nonumber \\
&+ \int_{\lambda-\mu}^{\infty} z^2 \phi(z)dz \nonumber \\
&= (\mu^{2} -1) \Big[ \Phi(\lambda - \mu) - \Phi(-\lambda -\mu) \Big] + 1 \nonumber \\
&+ (\lambda -\mu) \phi(\lambda -\mu) + (\lambda +\mu) \phi(\lambda+\mu),
\end{align}
where we recall that $\phi(\cdot)$ and $\Phi(\cdot)$ denote the density and CDF of $\calN(0,1)$ respectively. Now we consider two cases for the optimal choice $\lambda^*_n$ and in each case find a lower bound for the risk. 
\begin{itemize}
    \item \textbf{Case \rom{1} $\lambda^*_{n} = O(1)$:} we have $\lambda^*_{n}\leq c$ for some constant $c >0$. Hence, from \eqref{eq:hard:decomp} we obtain
    \begin{align*}
    &\inf_{\lambda>0} \E_{\underline{\theta}}\| \etahatH(y,\lambda) -  \underline{\theta}\|_2^{2} =\E_{\underline{\theta}}\| \etahatH(y,\lambda_n^*) -  \underline{\theta}\|_2^{2} \\
        \geq ~& n(1-\epsilon_{n}) r_{H}(\lambda^*_{n} ,0) \\
        =~ &  n (1-\epsilon_{n}) \Big[ 2\lambda^*_{n}\phi(\lambda^*_{n}) + 2(1-\Phi(\lambda^*_{n})) \Big] \\
        \geq ~& n(1-\epsilon_{n}) \Big[ 2(1-\Phi(\lambda^*_{n}))  \Big] \\
        \geq ~& n(1-\epsilon_{n}) \Big[ 2(1-\Phi(c))   \Big]\geq n\epsilon_n \mu_{n}^2.
    \end{align*}
    The last inequality is because $\epsilon_n \mu_{n}^2=o(1)$ and $(1-\epsilon_{n}) [ 2(1-\Phi(c))] = \Uptheta(1)$.  
    \item \textbf{Case \rom{2} $\lambda^*_{n}=\omega(1)$:} then $\lambda^*_n \rightarrow \infty$ as $n \rightarrow \infty$. From \eqref{eq:hard:decomp} and \eqref{eq:riskexp:hard}, we have
    \begin{align*}
       & \inf_{\lambda>0}\E_{\underline{\theta}}\| \etahatH(y,\lambda) -  \underline{\theta}\|_2^{2} \\
       =~&\E_{\underline{\theta}}\| \etahatH(y,\lambda_n^*) -  \underline{\theta}\|_2^{2}
       \geq k_n r_{H}(\lambda^*_{n},\mu_{n})\\
        =~& k_n(\mu_{n}^{2} -1) \bigg[ 1-\int_{\lambda^*_{n}-\mu_{n}}^{\infty}\phi(z)dz -\int_{\lambda^*_{n}+\mu_{n}}^{\infty}\phi(z)dz\bigg] + k_n \nonumber \\
        &+k_n(\lambda^*_{n} - \mu_{n}) \phi(\lambda^*_{n}-\mu_{n}) + k_n(\lambda^*_{n} + \mu_{n})\phi(\lambda^*_{n} + \mu_{n}) \\
        \overset{(a)}{=}~& k_n\mu_{n}^{2} +k_n(\lambda_n^*-\mu_n+o(1)) \phi(\lambda^*_{n} - \mu_{n})\\
        & +k_n (\lambda^*_{n} + \mu_{n}+o(1)) \phi(\lambda^*_{n} + \mu_{n})  \\
        \geq~& k_n \mu_{n}^{2}=n\epsilon_n\mu_n^2,
    \end{align*}
    where to obtain (a), we have used the Gaussian tail bound in Lemma \ref{lem::gaussian-tail-mills-ratio} under the scaling $\lambda^*_{n} \rightarrow \infty$ and $\mu_n \rightarrow 0$.
\end{itemize}
Note that since the two cases we have discussed above cover all the ranges of $\lambda_n^*$, we conclude that
\begin{eqnarray*}
R_H (\Theta (k_n, \mu_n), 1) \geq \inf_{\lambda>0}\E_{\underline{\theta}}\| \etahatH(y,\lambda) -  \underline{\theta}\|_2^{2} \geq n \epsilon_n \mu_n^2,
\end{eqnarray*}
for all sufficiently large $n$. To obtain the matching upper bound, we have
\begin{align}
&R_H (\Theta (k_n, \mu_n), 1) \nonumber \\
=& \inf_{\lambda>0} \sup_{\theta \in \Theta (k_n, \mu_n)} \E_{\theta} \norm{\etahatH(y,\lambda) - \theta}_{2}^{2} \nonumber \\
\leq &\lim_{\lambda\rightarrow \infty} \sup_{\theta \in \Theta (k_n, \mu_n)} \E_{\theta} \norm{\etahatH(y,\lambda) - \theta}_{2}^{2} \nonumber \nonumber \\
\leq & \lim_{\lambda\rightarrow \infty}  \Big(\sup_{\theta \in \Theta (k_n, \mu_n)}\E_{\theta}\|\etahatH(y,\lambda)\|_2^2 \nonumber\\
&+\sup_{\theta \in \Theta (k_n, \mu_n)}\E_{\theta}\langle -2\etahatH(y,\lambda),\theta \rangle+\sup_{\theta \in \Theta (k_n, \mu_n)}\|\theta\|_2^2\Big)  \nonumber \\
\leq& n\epsilon_n\mu_n^2+\lim_{\lambda\rightarrow \infty} \Big(\sup_{\theta \in \Theta (k_n, \mu_n)}\E_{\theta}\|\etahatH(y,\lambda)\|_2^2 \nonumber \\
&+2\sqrt{n\epsilon_n\mu_n^2}\sqrt{\sup_{\theta \in \Theta (k_n, \mu_n)}\E_{\theta}\|\etahatH(y,\lambda)\|_2^2}\Big).\label{upper:hard}
\end{align}
To obtain the last inequality, we have used Cauchy–Schwarz inequality and $\sup_{\theta \in \Theta (k_n, \mu_n)}\|\theta\|_2^2=k_n\mu_n^2$. From \eqref{upper:hard}, to show $R_H (\Theta (k_n, \mu_n), 1)\leq n\epsilon_n\mu_n^2$, it is sufficient to prove 
\[
\lim_{\lambda\rightarrow \infty}\sup_{\theta \in \Theta (k_n, \mu_n)}\E_{\theta}\|\etahatH(y,\lambda)\|_2^2=0.
\]
Define $f_{\lambda}(\mu):=\E|\hat{\eta}_H(\mu+z,\lambda)|^2, z\sim \mathcal{N}(0,1)$. It is not hard to verify that $f_{\lambda}(\mu)$, as a function of $\mu$, is symmetric around zero and increasing over $[0,\infty)$ for all $\lambda>0$. As a result, 
\begin{align*}
&\lim_{\lambda\rightarrow \infty}\sup_{\theta \in \Theta (k_n, \mu_n)}\E_{\theta}\|\etahatH(y,\lambda)\|_2^2\\
\leq& \lim_{\lambda\rightarrow \infty}\big[(n-k_n)f_{\lambda}(0)+k_nf_{\lambda}(\sqrt{k_n}\mu_n)\big] \\
=&(n-k_n)\lim_{\lambda\rightarrow \infty}f_{\lambda}(0)+k_n \lim_{\lambda\rightarrow \infty}f_{\lambda}(\sqrt{k_n}\mu_n) \\
=&0+0=0.
\end{align*}
The last line holds because 
$\lim_{\lambda\rightarrow\infty}f_{\lambda}(\mu)=0, \forall \mu\in \mathbb{R}$ from dominated convergence theorem. The dominated convergence theorem can be used since $|\hat{\eta}_H(\mu+z,\lambda)|^2\leq |\mu+z|^2$ and $\lim_{\lambda\rightarrow\infty}|\hat{\eta}_H(\mu+z,\lambda)|^2=0$.


\subsection{Proof of Theorem \ref{thm::regime_2}}
\label{proof:thm4:rg1}

Recall the scale invariance property in \secScaling: $R(\Theta(k_n,\tau_{n}),\sigma_{n}) = \sigma_{n}^{2} \cdot  R(\Theta(k_n,\mu_{n}), 1)$, where $\mu_{n}= \tau_{n}/\sigma_{n}$. Moreover, the estimator $\hat{\eta}_E(y,\lambda,\gamma):=\frac{1}{1+\gamma}\hat{\eta}_S(y,\lambda)$ defined in Equation \eqref{def::elastic-estimator} also preserves an invariance: $t\cdot \hat{\eta}_E(y,\lambda,\gamma)=\hat{\eta}_E(t y,t\lambda,\gamma),\forall t\geq 0$. Therefore, to prove both the upper and lower bounds, in this section, it is sufficient to consider the simpler unit-variance model: 
\begin{align}
    y_{i} = \theta_{i} + z_{i}, \quad i = 1, \ldots, n,\label{model::noise-level-one} 
\end{align}
where $(z_{i}) \simiid \mathcal{N}(0,1)$. We find an upper bound for the minimax risk by calculating the supremum risk of $\eta_E(y,\lambda,\gamma)$ with proper tuning. The lower bound is obtained by using Theorem \thmMinimaxLower\ and considering the independent block prior again. Both steps are more challenging than the corresponding steps in the proof of Theorem \ref{thm::regime_1}. 

\subsubsection{Upper bound}\label{sec:proof:thm:mediumrange}

To analyze the supremum risk of $\hat{\eta}_E(y,\lambda,\gamma)$, it is important to understand its risk in one dimension. Define the one-dimensional risk function as:
\begin{align}
\label{1d:risk:f}
r_e(\mu;\lambda,\gamma) = \E \Big(\frac{1}{1+\gamma} \etahatS (\mu + z, \lambda ) - \mu\Big)^{2},\quad z \sim \mathcal{N}(0,1).
\end{align}

The following property of the risk function plays a pivotal role in our analysis. 

\begin{lemma}
\label{lem::elastic-net-risk}
For any given tuning parameters $\lambda >0 $, $\gamma \in [0,+\infty]$, it holds that 
\begin{enumerate}[label=(\roman*)]
    \item $r_e(\mu;\lambda,\gamma)$, as a function of $\mu$, is symmetric, and increasing over $\mu \in [0,+\infty)$.
    \item $\max_{(x,y):x^{2}+y^{2} = c^{2}} [r_e(x;\lambda,\gamma) + r_e(y;\lambda,\gamma)] = 2r_e(c/\sqrt{2};\lambda,\gamma)$, \quad $\forall c>0.$
\end{enumerate}
\end{lemma}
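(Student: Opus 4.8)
The plan is to rewrite $r_e$ via Stein's identity, and then treat (i) with a stochastic‑ordering argument and (ii) by showing that $r_e$ is concave as a function of $\mu^{2}$.

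\emph{Setup.} Put $a=\tfrac{1}{1+\gamma}\in[0,1]$ and $\delta(y)=a\,\mathrm{sign}(y)(|y|-\lambda)_{+}$, so $r_e(\mu;\lambda,\gamma)=\E(\delta(\mu+z)-\mu)^{2}$ with $z\sim\calN(0,1)$; symmetry in $\mu$ is immediate from the oddness of $\delta$ and $z\overset{d}{=}-z$. Writing $\delta(\mu+z)-\mu=\bigl(\delta(\mu+z)-(\mu+z)\bigr)+z$ and using Stein's identity (legitimate since $y\mapsto y-\delta(y)$ is Lipschitz), one obtains
\[
r_e(\mu;\lambda,\gamma)=\E\,h(\mu+z)-1,\qquad h(y):=\bigl(y-\delta(y)\bigr)^{2}+2\delta'(y),
\]
where $\delta'(y)=a\,I(|y|>\lambda)$ a.e.; explicitly $h(y)=y^{2}$ for $|y|\le\lambda$ and $h(y)=((1-a)|y|+a\lambda)^{2}+2a$ for $|y|>\lambda$. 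Two features matter: $h$ is even, and $h(y)=\psi(y^{2})$ with $\psi(w)=\psi_{0}(w)+2a\,I(w>\lambda^{2})$, where $\psi_{0}(w)=w$ on $[0,\lambda^{2}]$ and $\psi_{0}(w)=((1-a)\sqrt{w}+a\lambda)^{2}$ on $(\lambda^{2},\infty)$; one checks directly that $\psi_{0}$ is continuous, increasing and concave on $[0,\infty)$ (slope $1$, a downward slope jump to $1-a$ at $\lambda^{2}$, then $\psi_{0}''<0$).

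\emph{Part (i).} From the explicit form, $h$ is nondecreasing on $[0,\infty)$ (it grows like $y^{2}$, jumps up by $2a$ at $\lambda$, and keeps increasing), so it suffices to show $\mu\mapsto\E\,h(\mu+z)=\E\,\tilde h(|\mu+z|)$ is nondecreasing on $[0,\infty)$, where $\tilde h(t):=h(t)$, $t\ge0$, is nondecreasing. This follows once $|\mu+z|$ is seen to be stochastically nondecreasing in $\mu$ over $[0,\infty)$, which is elementary: $\mathbb{P}(|\mu+z|>t)=(1-\Phi(t-\mu))+(1-\Phi(t+\mu))$ has $\mu$‑derivative $\phi(t-\mu)-\phi(t+\mu)\ge0$ for $\mu,t\ge0$, since $|t-\mu|\le t+\mu$. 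Integrating $\tilde h$ against this stochastically ordered family gives the monotonicity.

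\emph{Part (ii).} Since $r_e(\cdot;\lambda,\gamma)$ is even, $r_e(x;\lambda,\gamma)=f(x^{2})$ for $f(t):=r_e(\sqrt{t};\lambda,\gamma)$, and it suffices to prove $f$ concave on $[0,\infty)$: then for $x^{2}+y^{2}=c^{2}$ we get $r_e(x;\lambda,\gamma)+r_e(y;\lambda,\gamma)=f(x^{2})+f(y^{2})\le 2f(\tfrac{x^{2}+y^{2}}{2})=2f(c^{2}/2)=2\,r_e(c/\sqrt{2};\lambda,\gamma)$, with equality at $x^{2}=y^{2}=c^{2}/2$, which is (ii). For the concavity, write $R(\mu):=\E\,h(\mu+z)=\E\,\psi\bigl((\mu+z)^{2}\bigr)=\E\,\psi\bigl(\chi^{2}_{1}(\mu^{2})\bigr)$ (noncentral chi‑square, $1$ d.f., noncentrality $\mu^{2}$), and use the Poisson mixture $\chi^{2}_{1}(\mu^{2})\overset{d}{=}\chi^{2}_{1+2N}$, $N\sim\mathrm{Poisson}(\mu^{2}/2)$, so $R(\mu)=\E\,\beta_{N}$ with $\beta_{k}:=\E\,\psi(\chi^{2}_{1+2k})$. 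Since $\tfrac{d^{2}}{d\nu^{2}}\E_{N\sim\mathrm{Poisson}(\nu)}\beta_{N}=\E_{N\sim\mathrm{Poisson}(\nu)}(\beta_{N+2}-2\beta_{N+1}+\beta_{N})$, concavity of $f=R(\sqrt{\cdot})-1$ in $t=\mu^{2}$ reduces to $\beta_{k+1}-\beta_{k}$ being nonincreasing in $k$. Writing $\chi^{2}_{1+2(k+1)}\overset{d}{=}\chi^{2}_{1+2k}+E$ with $E\sim\chi^{2}_{2}=\mathrm{Exp}(1/2)$ independent gives $\beta_{k+1}-\beta_{k}=\E\,D(\chi^{2}_{1+2k})$ with $D(w):=\E_{E}[\psi(w+E)-\psi(w)]$; since $\chi^{2}_{1+2k}$ is stochastically increasing in $k$, it is enough that $D$ be nonincreasing on $[0,\infty)$.

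\emph{Main obstacle.} Showing $D$ nonincreasing is the crux, and it is delicate precisely because $\psi$ has an \emph{upward} jump at $\lambda^{2}$ (the $2\delta'$ term), so $\psi$ itself is not concave. On $(\lambda^{2},\infty)$ one has $D(w)=\E_{E}[\psi_{0}(w+E)-\psi_{0}(w)]$, nonincreasing by concavity of $\psi_{0}$; the work is on $[0,\lambda^{2}]$, where a direct computation with the exponential distribution (using $\E E=2$, memorylessness, and $\sqrt{\lambda^{2}+E}-\lambda\le E/(2\lambda)$) yields the clean identity
\[
D(w)=2+2a(1-a)\,e^{-(\lambda^{2}-w)/2}\Bigl(\lambda\,\E\bigl(\sqrt{\lambda^{2}+E}-\lambda\bigr)-1\Bigr),\qquad 0\le w\le\lambda^{2},
\]
in which the parenthesis is $\le0$ and free of $w$, while $e^{-(\lambda^{2}-w)/2}$ increases in $w$; hence $D$ is nonincreasing on $[0,\lambda^{2}]$, and across $\lambda^{2}$ it can only jump down (by $2a$), so $D$ is nonincreasing on all of $[0,\infty)$. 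The degenerate cases $\gamma=0$ ($a=1$) and $\gamma=+\infty$ ($a=0$, where $r_e(\mu)=\mu^{2}$) are covered by the same formulas. A computationally heavier alternative to the chi‑square/Poisson route is to substitute $t=\mu^{2}$ into the closed form of $r_e(\mu;\lambda,\gamma)$ obtained by integration by parts (in the spirit of the soft‑thresholding risk formulas used elsewhere in the paper) and verify $\partial_{t}^{2}r_e\le0$ directly.
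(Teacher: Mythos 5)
Your proof is correct, and it takes a genuinely different route from the paper's. The paper argues by direct calculus on the Gaussian integrals: for (i) it decomposes $r_e$ into three pieces and computes $\partial r_e/\partial\mu$ term by term, showing every summand is nonnegative; for (ii) it parametrizes the constraint as $H(x)=r_e(x;\lambda,\gamma)+r_e(\sqrt{c^2-x^2};\lambda,\gamma)$ and pins down the maximizer by proving that two auxiliary functions (a difference of Gaussian interval probabilities and a difference of normalized soft-thresholding means) are strictly decreasing, so that $H'$ changes sign exactly at $c/\sqrt{2}$. You instead pass to the SURE representation $r_e(\mu;\lambda,\gamma)=\E\,h(\mu+z)-1$, obtain (i) from the stochastic monotonicity of $|\mu+z|$ in $\mu\ge 0$ combined with the monotonicity of $h$ on $[0,\infty)$, and obtain (ii) from the stronger structural fact that $r_e$ is concave as a function of $\mu^2$, established via the Poisson-mixture representation of $\chi^2_1(\mu^2)$ and the explicit exponential computation for $D$; I verified your closed form for $D$ on $[0,\lambda^2]$, the downward jump of $2a$ at $\lambda^2$, and the bound $\lambda\,\E(\sqrt{\lambda^2+E}-\lambda)\le \E E/2=1$, so the crux step holds. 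Your concavity statement in fact buys more than the paper's two-point claim: by Jensen it gives $\sum_{i=1}^k r_e(x_i;\lambda,\gamma)\le k\,r_e(c/\sqrt{k};\lambda,\gamma)$ for $\sum_i x_i^2=c^2$ in one shot, which is the form actually invoked in Lemma \ref{lem:optimally-tuned_elastic} (the paper reaches it from the two-coordinate statement by an implicit exchange argument), whereas the paper's route is more elementary and avoids the delicate monotonicity of $D$. Two small points you should make explicit: your stochastic-ordering argument as written yields ``nondecreasing'' rather than strictly increasing (strictness follows since $\phi(t-\mu)>\phi(t+\mu)$ for $t,\mu>0$ and $h$ is non-constant), and differentiating the Poisson mixture termwise needs the remark that $\beta_k$ grows only linearly in $k$.
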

\begin{proof}
\textit{(i)} Proving the symmetry of $r_e(\mu;\lambda,\gamma)$ is straightforward and is hence skipped. To prove the monotonicity of $r_e(\mu;\lambda,\gamma)$, we will calculate its derivative and show that it is positive for all $\mu>0$. To this end, we first decompose $r_e(\mu;\lambda,\gamma)$ into three terms: 
\begin{align*}
    r_e(\mu;\lambda,\gamma) =& \frac{1}{(1+\gamma)^{2}} \E(\etahatS(\mu+z,\lambda)-\mu)^{2} \\
    +& \frac{\gamma^2\mu^{2}}{(1+\gamma)^2} + \frac{2\gamma \mu}{(1+\gamma)^2} \E(\mu - \etahatS(\mu+z,\lambda)).
\end{align*}
Accordingly, the derivative of $r_e(\mu;\lambda,\gamma)$ takes the form:
\begin{align}
    \frac{\partial r_e(\mu;\lambda,\gamma)}{\partial \mu}  =& \frac{1}{(1+\gamma)^{2}} \frac{\partial \E(\etahatS(\mu+z,\lambda)-\mu)^{2}}{\partial\mu} +\frac{2\gamma^2\mu}{(1+\gamma)^2} \nonumber \\
    -& \frac{2\gamma}{(1+\gamma)^2}\bigg[\mu \frac{\partial \E(\etahatS(\mu + z,\lambda) - \mu)}{\partial \mu} \nonumber \\
    +&  \E(\etahatS(\mu + z,\lambda) - \mu)\bigg] . \label{dev:general}
    \end{align}
Using the explicit expression $\hat{\eta}_S(\mu+z,\lambda)={\rm sign}(\mu+z)(|\mu+z|-\lambda)_+$, we can calculate
    \begin{align*}
    & \frac{\partial  \E(\etahatS(\mu + z,\lambda) - \mu)}{\partial \mu} = \frac{\partial}{\partial \mu} \E\Big[(-\mu)I_{(|\mu+z|\leq \lambda)}  \\
    & + (z-\lambda)I_{(z+\mu > \lambda)} + (z+\lambda)I_{(z+\mu < -\lambda)}\Big] \\
     =& - \mathbb{P}(|z+\mu|\leq \lambda)  
     -\mu [-\phi(\lambda-\mu)+\phi(-\lambda-\mu)] \\
     -& \mu\phi(\lambda-\mu) + \mu\phi(-\lambda-\mu) 
     = - \mathbb{P}(|z+\mu|\leq \lambda),  
    \end{align*}
    and
    \begin{align*}
    &\frac{\partial \E(\etahatS(\mu+z,\lambda)-\mu)^{2}}{\mu} \\
    =~&\frac{\partial}{\partial \mu} \E \left[\mu^{2}I_{(|\mu+z|\leq \lambda)} + (z-\lambda)^{2}I_{(z+\mu > \lambda)} +(z+\lambda)^{2}I_{(z+\mu < -\lambda)}\right] \\
     =~ &2\mu \mathbb{P}(|\mu + z|\leq \lambda) + \mu^{2} [-\phi(\lambda-\mu) +\phi(-\lambda-\mu)] \\
     & + \mu^{2} \phi(\lambda-\mu) - \mu^{2} \phi(-\lambda-\mu) \\
     = ~&2\mu \mathbb{P}(|\mu + z|\leq \lambda).
\end{align*}
Putting the above two results into \eqref{dev:general}, we obtain, $\forall \mu > 0$,
\begin{align}
  & \frac{\partial r_e(\mu;\lambda,\gamma)}{\partial \mu} = \frac{2\mu}{(1+\gamma)} \mathbb{P}(|z+\mu|\leq \lambda) + \frac{2\gamma^2\mu}{(1+\gamma)^2}  \nonumber\\
  & + \frac{2\gamma}{(1+\gamma)^2}  \E(\mu - \etahatS(\mu + z,\lambda))  > 0,\label{derivative:1d:risk}
\end{align}
where the derivative is positive as all the terms on the right-hand side are non-negative and at least one of them is positive for all $\mu>0$. To verify this, all others are obvious and only the last term $\E(\mu - \etahatS(\mu + z,\lambda))$ needs be checked: this term is positive because it is an odd function and has positive derivative.

\textit{(ii)} Since the case where $\gamma=+\infty$ is trivial, we consider $\gamma\in [0,\infty)$ in the rest of the proof. Let $H(x) := r_e(x;\lambda,\gamma) +r_e(\sqrt{c^{2} - x^{2}};\lambda,\gamma)$ and consider $\max_{0\leq x \leq c} H(x)$. Since $H(x)$ is continuous over $[0,c]$, we find the maximum by evaluating the derivative of $H(x)$ over $(0,c)$. Using the derivative calculation \eqref{derivative:1d:risk}, we have
\begin{align*}
    H^{'}(x)  & = r_{e}^{'}(x;\lambda,\gamma) - \frac{x}{\sqrt{c^{2} - x^{2}}} r_{e}^{'}(\sqrt{c^{2} - x^{2}};\lambda,\gamma) \\
    & = \frac{2x}{1+\gamma} f_{1}(x) + \frac{2\gamma x}{(1+\gamma)^2} f_{2}(x),
    \end{align*}
    where
    \begin{align*}
    f_{1}(x) & :=  \mathbb{P}(|x+z|\leq \lambda) - \mathbb{P}(|\sqrt{c^{2}-x^{2}}+z|\leq \lambda), \\
    f_{2}(x) & :=  \frac{1}{\sqrt{c^{2}-x^{2}}} \E \etahatS(\sqrt{c^{2}-x^{2}}+z,\lambda) - \frac{1}{x} \E \etahatS(x+z,\lambda).
\end{align*}
We now show that $H^{'}(x)>0$ for $x\in (0,\frac{c}{\sqrt{2}})$, $H^{'}(\frac{c}{\sqrt{2}}) = 0 $, and $H^{'}(x) < 0$ for $x\in (\frac{c}{\sqrt{2}},c)$. It is straightforward to confirm that $H^{'}(\frac{c}{\sqrt{2}}) = 0$. Hence, it is sufficient to show both $f_{1}(x)$ and $f_{2}(x)$ are positive over $(0,\frac{c}{\sqrt{2}})$ and negative over $(\frac{c}{\sqrt{2}},c)$. This can be proved if we show that both $f_{1}(x)$ and $f_{2}(x)$ are strictly decreasing over $(0,c)$, given that $f_1(c/\sqrt{2})=f_2(c/\sqrt{2})=0$. 

Regarding $f_1(x)$, it is direct to verify that $\mathbb{P}(|x+z|\leq \lambda)$ is strictly decreasing over $(0,c)$, and accordingly $\mathbb{P}(|\sqrt{c^{2}-x^{2}}+z|\leq \lambda)$ is strictly increasing over $(0,c)$. Hence $f_1(x)$ is strictly decreasing over $(0,c)$. It remains to prove the monotonicity of $f_2(x)$. By the structure of $f_{2}(x)$, it is sufficient to show $\E [ \frac{1}{x}\etahatS(x+z,\lambda)]$ is a strictly increasing function of $x$ for $x>0$. We compute the derivative:
\begin{align*}
    & \frac{\partial \E \frac{1}{x}\etahatS(x+z,\lambda)}{\partial x} = -\frac{1}{x^{2}} \E \etahatS(x+z,\lambda) + \frac{1}{x} \mathbb{P}(|x+z|>\lambda) \\
    & = -\frac{1}{x^{2}} \bigg( \E \left[ (x+z-\lambda)I_{(x+z>\lambda)} + (x+z+\lambda) I_{(x+z<-\lambda)}  \right] \\
    & - x\int_{\lambda-x}^{\infty} \phi(z) d z - x\int_{\lambda+x}^{\infty} \phi(z) d z \bigg) \\
    & =- \frac{1}{x^{2}}  \bigg[\phi(\lambda - x ) - \lambda \int_{\lambda-x}^{\infty} \phi(z) d z
    + \lambda \int_{\lambda+x}^{\infty} \phi(z) d z  \\
    & - \phi(\lambda+x)\bigg]:= -\frac{1}{x^2}h(x).
\end{align*}
Therefore, for $x>0$, $\frac{\partial \E \frac{1}{x}\etahatS(x+z,\lambda)}{\partial x} >0$ if and only if $h(x)<0$. In fact,
\begin{align*}
    h^{'}(x) & = (\lambda-x) \phi(\lambda-x) + (\lambda+x) \phi(\lambda+x) \\
    & - \lambda \phi(\lambda-x) - \lambda \phi(\lambda+x) \\
    & = x(\phi(\lambda+x) - \phi(\lambda-x)) < 0, \quad \forall x>0.
\end{align*}
Also, it is straightforward to confirm that $h(0) = 0$. Thus $h(x)<0$ for $x>0$. 
\end{proof}

The one-dimensional risk function properties in Lemma \ref{lem::elastic-net-risk} will enable us to locate the parameter value at which the supremum risk of $\hat{\eta}_E(y,\lambda,\gamma)$ over the parameter space $\Theta(k_n,\mu_n)$ is achieved. The following lemma provides the detailed supremum risk calculation for a carefully-picked choice of the tuning. 

\begin{lemma}\label{lem:optimally-tuned_elastic}
Consider model \eqref{model::noise-level-one}. Suppose $\epsilon_{n}=k_n/n\rightarrow 0$, $\mu_{n}\rightarrow \infty$, and $\mu_{n} = o(\sqrt{\log \epsilon_{n}^{-1}})$, as $n\rightarrow \infty$. Then the estimator $\hat{\eta}_E(y,\lambda_n,\gamma_n)=\frac{1}{1+\gamma_n}\hat{\eta}_S(y,\lambda_n)$, with $\gamma_{n} = (2\epsilon_{n}\mu_{n}^{2}e^{\frac{3}{2}\mu_{n}^{2}})^{-1}-1$ and $\lambda_{n} = 2\mu_{n}$, has supremum risk:
\begin{align*}
    & \sup_{\theta \in \Theta(k_n,\mu_{n})} \E_{\theta}\|\hat{\eta}_E(y,\lambda_n,\gamma_n)-\theta\|_{2}^{2} \nonumber \\
    =&  k_n \mu_{n}^{2} - (\sqrt{2/\pi}+o(1))\cdot \frac{k_n^{2}}{n}\mu_{n}e^{\mu_{n}^{2}}.
\end{align*}
\end{lemma}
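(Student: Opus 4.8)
The idea is to exploit the product (coordinatewise) structure of $\etahatE$ to turn the supremum risk into a one–dimensional optimization, locate the least favorable parameter using the monotonicity/concavity already established in Lemma~\ref{lem::elastic-net-risk}, and then evaluate the two relevant one–dimensional risks asymptotically with the Gaussian tail bound. Since $\etahatE(y,\lambda_n,\gamma_n)$ acts on each coordinate separately and the $z_i$ are independent,
\[
\E_{\theta}\|\etahatE(y,\lambda_n,\gamma_n)-\theta\|_2^2=\sum_{i=1}^n r_e(\theta_i;\lambda_n,\gamma_n),
\]
with $r_e$ as in \eqref{1d:risk:f}, so the task is to maximize $\sum_{i=1}^n r_e(\theta_i;\lambda_n,\gamma_n)$ over $\theta\in\Theta(k_n,\mu_n)$.

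\textbf{Step 1: reduction to the spike configuration.} By Lemma~\ref{lem::elastic-net-risk}(i), $r_e(\cdot;\lambda_n,\gamma_n)$ is even and nondecreasing on $[0,\infty)$, so we may assume $\theta_i\ge 0$. Put $g(u):=r_e(\sqrt{u};\lambda_n,\gamma_n)$; Lemma~\ref{lem::elastic-net-risk}(ii) gives $g(u)+g(v)\le 2g\big((u+v)/2\big)$ for all $u,v\ge 0$, and since $g$ is continuous this makes $g$ concave on $[0,\infty)$, while $g$ is nondecreasing because $r_e$ is. For any $\theta\in\Theta(k_n,\mu_n)$, enlarge $\supp(\theta)$ to a set $S$ with $|S|=k_n$; then by Jensen's inequality
\[
\sum_{i=1}^n r_e(\theta_i;\lambda_n,\gamma_n)=(n-k_n)r_e(0;\lambda_n,\gamma_n)+\sum_{i\in S}g(\theta_i^2)\le (n-k_n)r_e(0;\lambda_n,\gamma_n)+k_n\,g\big(\|\theta\|_2^2/k_n\big),
\]
and monotonicity of $g$ together with $\|\theta\|_2^2\le k_n\mu_n^2$ upgrades the right side to $(n-k_n)r_e(0;\lambda_n,\gamma_n)+k_n\,r_e(\mu_n;\lambda_n,\gamma_n)$. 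Equality is attained at the $\theta$ having $k_n$ coordinates equal to $\mu_n$ (which lies in $\Theta(k_n,\mu_n)$), so
\[
\sup_{\theta\in\Theta(k_n,\mu_n)}\E_{\theta}\|\etahatE(y,\lambda_n,\gamma_n)-\theta\|_2^2=(n-k_n)\,r_e(0;\lambda_n,\gamma_n)+k_n\,r_e(\mu_n;\lambda_n,\gamma_n).
\]

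\textbf{Step 2: asymptotics of the two one–dimensional risks.} Under the prescribed tuning, $\lambda_n=2\mu_n$ and $c_n:=(1+\gamma_n)^{-1}=2\epsilon_n\mu_n^2 e^{\frac32\mu_n^2}$. Writing $r_e(\mu;\lambda_n,\gamma_n)=\mu^2-2c_n\mu\,\E\etahatS(\mu+z,\lambda_n)+c_n^2\,\E\etahatS^2(\mu+z,\lambda_n)$, I would plug in the closed forms of the soft-thresholding moments in \eqref{risk:at:zero}, \eqref{eq::soft-thresholding-first-moment} and \eqref{eq::soft-thresholding-second-moment}, then apply the Gaussian tail bound in Lemma~\ref{lem::gaussian-tail-mills-ratio}. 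At $\mu=0$: $\E\etahatS^2(z,2\mu_n)=(1+o(1))\,4\phi(2\mu_n)(2\mu_n)^{-3}$, so $r_e(0;\lambda_n,\gamma_n)=(1+o(1))\sqrt{2/\pi}\,\epsilon_n^2\mu_n e^{\mu_n^2}$ and $(n-k_n)r_e(0;\lambda_n,\gamma_n)=(1+o(1))\sqrt{2/\pi}\,(k_n^2/n)\mu_n e^{\mu_n^2}$. At $\mu=\mu_n$: the contributions involving $\lambda_n+\mu_n=3\mu_n$ are of strictly smaller exponential order and drop out, leaving $\E\etahatS(\mu_n+z,2\mu_n)=(1+o(1))\phi(\mu_n)\mu_n^{-2}$ and $\E\etahatS^2(\mu_n+z,2\mu_n)=(1+o(1))2\phi(\mu_n)\mu_n^{-3}$, whence $k_n r_e(\mu_n;\lambda_n,\gamma_n)=k_n\mu_n^2-(1+o(1))\,2\sqrt{2/\pi}\,(k_n^2/n)\mu_n e^{\mu_n^2}+O\big(n\epsilon_n^3\mu_n e^{\frac52\mu_n^2}\big)$. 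The last error term is negligible relative to $(k_n^2/n)\mu_n e^{\mu_n^2}$ precisely because $\mu_n=o(\sqrt{\log\epsilon_n^{-1}})$ forces $\epsilon_n e^{\frac32\mu_n^2}\to 0$. Adding the two pieces, the $+\sqrt{2/\pi}$ from the zero coordinates and the $-2\sqrt{2/\pi}$ from the signal coordinates combine to $-\sqrt{2/\pi}$, giving $k_n\mu_n^2-(\sqrt{2/\pi}+o(1))\,(k_n^2/n)\mu_n e^{\mu_n^2}$, as claimed.

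\textbf{Main obstacle.} The structural reduction in Step 1 is routine once Lemma~\ref{lem::elastic-net-risk} is available; the delicate part is Step 2. One must recognize that the ``cost'' incurred on the $n-k_n$ zero coordinates and the ``gain'' from shrinking the $k_n$ signal coordinates are of the \emph{same} order $(k_n^2/n)\mu_n e^{\mu_n^2}$, keep the two second-order constants exact so that they combine to $\sqrt{2/\pi}$, and verify that the $c_n^2$-order terms are of strictly smaller order — which is exactly where the scaling hypothesis $\mu_n=o(\sqrt{\log\epsilon_n^{-1}})$ enters.
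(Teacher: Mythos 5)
Your proposal is correct and follows essentially the same route as the paper: reduce to the coordinatewise risk, use Lemma~\ref{lem::elastic-net-risk} to place the worst case at $k_n$ coordinates equal to $\mu_n$, and then expand the three soft-thresholding moments with the Gaussian tail bound exactly as in Equations \eqref{eq::maximum-elastic-net-risk}--\eqref{eq::regime_2-elastic_net-A_3}. Your Step 1 is in fact slightly more explicit than the paper's (which simply asserts the maximizer from Lemma~\ref{lem::elastic-net-risk}); the concavity-of-$g$ plus Jensen argument is a clean way to justify that assertion for all $n$ coordinates rather than just pairs.
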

\begin{proof}
Using the one-dimensional risk function in \eqref{1d:risk:f}, we can write:
\begin{align*}
&\sup_{\theta \in \Theta(k_n,\mu_{n})} \E_{\theta}\|\hat{\eta}_E(y,\lambda_n,\gamma_n)-\theta\|_{2}^{2} \\
=& \sup_{\theta \in \Theta(k_n,\mu_{n})}\sum_{i=1}^nr_e(\theta_i;\lambda_n,\gamma_n).
\end{align*}
According to the properties proved in Lemma \ref{lem::elastic-net-risk}, it is clear that the above supremum is attained at the parameter vector $\theta$ in which there are $k_n$ non-zero components and they are all equal to $\mu_n$ (it occurs at a particular boundary of the parameter space $\Theta(k_n,\mu_{n})$). Therefore, the supremum risk can be simplified to
\begin{align}
\label{eq::maximum-elastic-net-risk}
   & \sup_{\theta \in \Theta(k_n,\mu_{n})} \E_{\theta}\|\hat{\eta}_E(y,\lambda_n,\gamma_n)-\theta\|_{2}^{2} 
   \nonumber\\
   =& n\Big[(1-\epsilon_n)r_e(0;\lambda_n,\gamma_n) + \epsilon_n r_e(\mu_n;\lambda_n,\gamma_n)\Big]  \nonumber\\
 =& n\bigg[\frac{1-\epsilon_n}{(1+\gamma_n)^2}\E \etahatS^{2}(z,\lambda_n)+\frac{\epsilon_n}{(1+\gamma_n)^2}\E \etahatS^{2}(\mu_n+z,\lambda_n) \nonumber \\
 -&\frac{2\epsilon_n\mu_n}{1+\gamma_n}\E \etahatS(\mu_n+z,\lambda_n)+\epsilon_n\mu_n^2\bigg]. 
\end{align}
To further calculate the supremum risk, we evaluate the three expectations in the above expression, using the Gaussian tail bound $\int_{t}^{\infty} \phi(z) d z = \left( \frac{1}{t} -\frac{1}{t^{3}} + \frac{3+o(1)}{t^{5}} \right) \phi(t)$ as $t\rightarrow \infty$. For the particular choice $\lambda_n= 2\mu_{n}\rightarrow \infty$, we have

\begin{align}
     \E \etahatS^{2}(z,\lambda_n) & = 2 \bigg[ (1+\lambda_n^{2})\int_{\lambda_n}^{\infty} \phi(z) d z - \lambda_n \phi(\lambda_n) \bigg] \nonumber \\
     & = \frac{1+o(1)}{2\mu_{n}^{3}}\phi(2\mu_n). \label{eq::regime_2-elastic_net-A_1}
\end{align}
Furthermore,
    \begin{align}
    & \E \etahatS^{2}(\mu_{n}+z,\lambda_n) \nonumber\\
    = &\bigg[(1+(\mu_{n}-\lambda_n)^{2})\int_{\lambda_n-\mu_{n}}^{\infty} \phi(z) d z
    -(\lambda_n- \mu_{n})\phi(\lambda_n- \mu_{n})\bigg] \nonumber\\
    +& \bigg[ (1+(\mu_{n}+\lambda_n)^{2})\int_{\lambda_n+\mu_{n}}^{\infty} \phi(z) d z - (\lambda_n+ \mu_{n})\phi(\lambda_n+ \mu_{n}) \bigg] \nonumber \\
    =& \frac{2+o(1)}{(\lambda_n-\mu_{n})^{3}} \phi(\lambda_n-\mu_{n}) + \frac{2+o(1)}{(\lambda_n+\mu_{n})^{3}}\phi(\lambda_n+\mu_{n}) \nonumber \\
    =&\frac{2+o(1)}{\mu_{n}^{3}} \phi(\mu_{n}), \label{eq::regime_2-elastic_net-A_2}
\end{align}
and
\begin{align}
     & \E\etahatS(\mu_{n}+z,\lambda_n) =  \phi(\lambda_n-\mu_{n}) - (\lambda_n-\mu_{n}) \nonumber \\
     \cdot & \int_{\lambda_n-\mu_{n}}^{\infty} \phi(z) d z - \phi(\lambda_n+\mu_{n}) + (\mu_{n}+\lambda_n)\int_{\lambda_n+\mu_{n}}^{\infty}\phi(z) d z  \nonumber\\
    =& \frac{1+o(1)}{(\lambda_n-\mu_{n})^{2}}\phi(\lambda_n-\mu_{n}) - \frac{1+o(1)}{(\lambda_n+\mu_{n})^{2}}\phi(\lambda_n+\mu_{n}) \nonumber \\
    =& \frac{1+o(1)}{\mu_{n}^{2}} \phi(\mu_{n}). \label{eq::regime_2-elastic_net-A_3}
\end{align}
Plugging \eqref{eq::regime_2-elastic_net-A_1}-\eqref{eq::regime_2-elastic_net-A_3} into \eqref{eq::maximum-elastic-net-risk} with the particular choice $\gamma_n=(2\epsilon_n\mu_n^2e^{\frac{3}{2}\mu_n^2})^{-1}-1$ considered in the lemma, we obtain
\begin{align*}
& \sup_{\theta \in \Theta(k_n,\mu_{n})} \E_{\theta}\|\hat{\eta}_E(y,\lambda_n,\gamma_n)-\theta\|_{2}^{2}~ \\
=&~ k_n\mu_n^2+(2+o(1))\cdot n\epsilon_n^2\mu_ne^{\frac{3}{2}\mu_n^2}\phi(\mu_n)
+ (8+o(1)) \\
&\cdot n\epsilon_n^3\mu_ne^{3\mu_n^2}\phi(\mu_n)-(4+o(1))\cdot n\epsilon_n^2\mu_ne^{\frac{3}{2}\mu_n^2}\phi(\mu_n) \\
=&~k_n\mu_n^2-(2+o(1))\cdot n\epsilon_n^2\mu_n e^{\frac{3}{2}\mu_n^2}\phi(\mu_n).
\end{align*}
The last equation holds because $\mu_n=o(\sqrt{\log\epsilon_n^{-1}})$ implies $\epsilon_n e^{\frac{3}{2}\mu_n^2}=o(1)$, so that the third term on the right-hand side of the first equation is negligible. 
\end{proof}

Now we can combine the preceding results we proved to obtain an upper bound for the minimax risk: with $\gamma_{n} = (2\epsilon_{n}\mu_{n}^{2}e^{\frac{3}{2}\mu_{n}^{2}})^{-1}-1$ and $\lambda_{n} = 2\mu_{n}$, it holds that
\begin{align*}
   & R(\Theta(k_n,\tau_{n}),\sigma_{n})  = \sigma_{n}^{2}\cdot  R(\Theta(k_n,\mu_{n}), 1) \\
     \leq & ~\sigma_{n}^{2} \cdot \sup_{\theta \in \Theta(k_n,\mu_{n})} \E_{\theta}\|\hat{\eta}_E(y,\lambda_n,\gamma_n)-\theta\|_{2}^{2} \\
     = &~ \sup_{\theta \in \Theta(k_n,\tau_{n})} \E_{\theta}\|\hat{\eta}_E(y,\sigma_n\lambda_n,\gamma_n)-\theta\|_{2}^{2} \\
    = &~ \sigma_{n}^{2} \left( k_n \mu_{n}^{2} - (\sqrt{2/\pi}+o(1))\cdot \frac{k_n^{2}}{n}\mu_{n}e^{\mu_{n}^{2}} \right) \\
    = &~  n\sigma_n^2\Big(\epsilon_n\mu_n^2-(\sqrt{2/\pi}+o(1))\epsilon_n^2\mu_ne^{\mu_n^2}\Big).
\end{align*}

\subsubsection{Lower bound}\label{sec::lower-bound-regime-2}

The derivation of the lower bound follows the same roadmap of proof for the lower bound in Theorem \ref{thm::regime_1}. It relies on the independent block prior constructed in \secRegimeILower{}. According to \EqBlockPriorLower, the key step is to calculate the Bayes risk $B(\pi_S^{\mu,m})$ of the symmetric spike prior $\mu_S^{\mu,m}$ for $(\mu \in (0,\mu_n])$, in the regime $m=n/k_n \rightarrow \infty, \mu_n\rightarrow \infty, \mu_n=o(\sqrt{\log \epsilon_n^{-1}})$. It turns out that setting $\mu=\mu_n$ will lead to a sharp lower bound. We summarize the result in the next lemma.

\begin{lemma}
\label{lem::single-spike-case}
As $m=n/k_n\rightarrow \infty,\mu_n\rightarrow \infty, \mu_n=o(\sqrt{\log \epsilon_n^{-1}})$, the Bayes risk $B(\pi_S^{\mu_n,m})$ satisfies
\begin{equation*}
    B(\pi_S^{\mu_n,m}) \geq \mu_{n}^{2} \bigg[ 1 - \frac{e^{\mu_{n}^{2}}}{2m} (1+o(1)) \bigg].
\end{equation*}
\end{lemma}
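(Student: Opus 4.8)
The plan is to apply the spike‑risk decomposition of Lemma \ref{lem::spike-bayes-risk-decompose} with $\mu=\mu_n$ and then estimate the two resulting expectations separately. Write $m=n/k_n\to\infty$, so that $\log m=\log\epsilon_n^{-1}$ and the hypothesis $\mu_n=o(\sqrt{\log\epsilon_n^{-1}})$ becomes $\mu_n^2=o(\log m)$; in particular $e^{c\mu_n^2}=m^{o(1)}$ for every fixed $c>0$, a fact I will use repeatedly. Setting $p_m=\hat\theta_1/\mu_n=\frac{e^{\mu_n y_1}-e^{-\mu_n y_1}}{\sum_{i=1}^m(e^{\mu_n y_i}+e^{-\mu_n y_i})}$ (so $|p_m|\le1$), Lemma \ref{lem::spike-bayes-risk-decompose} gives
\[
B(\pi_S^{\mu_n,m})=\mu_n^2\,\E_{\mu_n e_1}(p_m-1)^2+(m-1)\mu_n^2\,\E_{\mu_n e_2}p_m^2\;\geq\;\mu_n^2\Big[1-2\,\E_{\mu_n e_1}p_m+(m-1)\,\E_{\mu_n e_2}p_m^2\Big],
\]
after dropping the nonnegative term $\E_{\mu_n e_1}p_m^2$. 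It therefore suffices to show (A) $\E_{\mu_n e_1}p_m\le\frac{e^{\mu_n^2}}{2m}(1+o(1))$ and (B) $(m-1)\,\E_{\mu_n e_2}p_m^2\ge\frac{e^{\mu_n^2}}{2m}(1+o(1))$, since then $-2\,\E_{\mu_n e_1}p_m+(m-1)\,\E_{\mu_n e_2}p_m^2\ge-\frac{e^{\mu_n^2}}{2m}(1+o(1))$.

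For (A), under $\theta=\mu_n e_1$ one has $y_1=\mu_n+z_1$ and $y_i=z_i$ for $i\ge2$; put $N=e^{\mu_n(\mu_n+z_1)}-e^{-\mu_n(\mu_n+z_1)}$ and $S=\sum_{j=2}^m(e^{\mu_n z_j}+e^{-\mu_n z_j})$, which is independent of $z_1$, satisfies $S\ge2(m-1)$ (from $a+a^{-1}\ge2$), and has $\E S=2(m-1)e^{\mu_n^2/2}$. The denominator of $p_m$ is at least $S$, so $p_m\le N_+/S$ pointwise, and by independence $\E_{\mu_n e_1}p_m\le\E[N_+]\cdot\E[1/S]$. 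Here $N\le e^{\mu_n(\mu_n+z_1)}$, so $\E[N_+]\le\E\,e^{\mu_n(\mu_n+z_1)}=e^{3\mu_n^2/2}$. For $\E[1/S]$, Chebyshev gives $\mathbb{P}(S<(1-\varepsilon_m)\E S)\le\Var(S)/(\varepsilon_m^2(\E S)^2)=O\!\big(e^{\mu_n^2}/(m\varepsilon_m^2)\big)$, and choosing $\varepsilon_m\to0$ slowly (possible precisely because $e^{\mu_n^2}=m^{o(1)}$) makes this probability $o(e^{-\mu_n^2/2})$; splitting on this event and using $1/S\le1/(2(m-1))$ on its complement yields $\E[1/S]\le\frac{1}{(1-\varepsilon_m)\E S}+\frac{o(e^{-\mu_n^2/2})}{2(m-1)}=\frac{1+o(1)}{2(m-1)e^{\mu_n^2/2}}$. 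Multiplying, $\E_{\mu_n e_1}p_m\le\frac{e^{3\mu_n^2/2}(1+o(1))}{2(m-1)e^{\mu_n^2/2}}=\frac{e^{\mu_n^2}}{2m}(1+o(1))$.

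For (B), under $\theta=\mu_n e_2$ one has $p_m=\frac{e^{\mu_n z_1}-e^{-\mu_n z_1}}{U_1+U_2+T}$ with $U_1=e^{\mu_n z_1}+e^{-\mu_n z_1}$, $U_2=e^{\mu_n(\mu_n+z_2)}+e^{-\mu_n(\mu_n+z_2)}$, and $T=\sum_{j=3}^m(e^{\mu_n z_j}+e^{-\mu_n z_j})$ independent of $(z_1,z_2)$ with $\E T=2(m-2)e^{\mu_n^2/2}$. On the event $G=\{|z_1|\le3\mu_n\}\cap\{|z_2|\le3\mu_n\}\cap\{T\le(1+\varepsilon_m)\E T\}$, whose probability tends to $1$ (Chebyshev for the last event with $\varepsilon_m\to0$, and $\mu_n\to\infty$ for the first two), one has $U_1+U_2\le4e^{4\mu_n^2}=m^{o(1)}$, so the denominator is at most $R_m:=4e^{4\mu_n^2}+2(1+\varepsilon_m)(m-2)e^{\mu_n^2/2}=2m\,e^{\mu_n^2/2}(1+o(1))$. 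Hence $\E_{\mu_n e_2}p_m^2\ge R_m^{-2}\,\E[(e^{\mu_n z_1}-e^{-\mu_n z_1})^2\mathbf{1}_G]$, and since the integrand depends only on $z_1$ the expectation factors; a direct computation gives $\E[e^{2\mu_n z_1}\mathbf{1}_{|z_1|\le3\mu_n}]=\E[e^{-2\mu_n z_1}\mathbf{1}_{|z_1|\le3\mu_n}]=e^{2\mu_n^2}(1+o(1))$ (the truncation is harmless because $3\mu_n-2\mu_n\to\infty$), so $\E[(e^{\mu_n z_1}-e^{-\mu_n z_1})^2\mathbf{1}_G]=2e^{2\mu_n^2}(1+o(1))$ and $\E_{\mu_n e_2}p_m^2\ge\frac{2e^{2\mu_n^2}(1+o(1))}{4m^2e^{\mu_n^2}}=\frac{e^{\mu_n^2}}{2m^2}(1+o(1))$, whence $(m-1)\,\E_{\mu_n e_2}p_m^2\ge\frac{e^{\mu_n^2}}{2m}(1+o(1))$. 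Combining (A) and (B) with the displayed inequality completes the argument.

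The main obstacle is that the target second‑order term arises from a \emph{partial cancellation} between $-2\,\E_{\mu_n e_1}p_m$ and $(m-1)\,\E_{\mu_n e_2}p_m^2$, both of exact order $e^{\mu_n^2}/m$; consequently both (A) and (B) must be controlled sharply to leading order, and discarding either would lose a factor $2$. Moreover, unlike the low‑SNR case where $\mu_n\to0$ makes $e^{\pm\mu_n z_j}\approx1$, here one must genuinely control sums of i.i.d.\ ``log‑normal‑type'' variables $e^{\pm\mu_n z_j}$ near their means, and this is exactly where the regime hypothesis $\mu_n=o(\sqrt{\log\epsilon_n^{-1}})$ is indispensable: it forces all heavy‑tail corrections (which carry factors $e^{c\mu_n^2}$) to be $m^{o(1)}$, hence negligible against the $m$‑fold sums, so that the elementary Chebyshev‑plus‑truncation bounds above go through. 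Outside this regime the relevant concentration fails and the minimax lower bound changes form (cf.\ Regime (III), Theorem \ref{thm::regime_4}).
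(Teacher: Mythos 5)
Your proposal is correct. It uses exactly the same reduction as the paper: Lemma \ref{lem::spike-bayes-risk-decompose}, dropping $\E_{\mu_n e_1}p_m^2\ge 0$, and then establishing the two targets $\E_{\mu_n e_1}p_m\le\frac{e^{\mu_n^2}}{2m}(1+o(1))$ and $(m-1)\E_{\mu_n e_2}p_m^2\ge\frac{e^{\mu_n^2}}{2m}(1+o(1))$, whose partial cancellation you correctly identify as the crux. Where you differ is in how the two expectations are estimated. For the first, the paper splits on the event $z_1\ge-\mu_n/2$, uses the symmetry/exchangeability of $\{z_j\}$ to evaluate the main contribution \emph{exactly} as $e^{\mu_n^2}/(2m)$, and disposes of the complementary event via a dominated-convergence argument for a ratio $Z_n/Y_n$; you instead drop the $z_1$-terms from the denominator, exploit independence to write $\E[N_+/S]=\E N_+\cdot\E[1/S]$, and control $\E[1/S]$ by Chebyshev plus the deterministic floor $S\ge 2(m-1)$. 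For the second, the paper applies Jensen's inequality (convexity of $x\mapsto(x+c)^{-2}$) conditionally on $z_1$ to replace the denominator by its conditional mean, whereas you replace it by its typical value on a high-probability event, again via Chebyshev. Both routes land on the same denominator scale $2me^{\mu_n^2/2}$; the paper's Jensen step is non-asymptotic and avoids choosing a vanishing tolerance $\varepsilon_m$, while your concentration-plus-truncation argument is more mechanical and makes transparent exactly where $\mu_n=o(\sqrt{\log\epsilon_n^{-1}})$ (i.e., $e^{c\mu_n^2}=m^{o(1)}$) is used. Two small wording nits: your verification that a valid $\varepsilon_m$ exists should be spelled out (e.g.\ $\varepsilon_m^2=\omega(e^{3\mu_n^2/2}/m)$ with $\varepsilon_m\to 0$, which is possible precisely because $e^{3\mu_n^2/2}=m^{o(1)}$), and in part (B) the integrand $(e^{\mu_n z_1}-e^{-\mu_n z_1})^2\mathbf{1}_G$ does not ``depend only on $z_1$''; the factorization is valid because $G$ is an intersection of independent events in $z_1$, $z_2$, and $(z_3,\dots,z_m)$, the latter two having probability $1-o(1)$. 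Neither affects the validity of the argument.
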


\begin{proof}
The result is an analog of Lemma \lemSpikeBayes\ in Regime (\rom{2}). Adopt the same notation from the proof of Lemma \lemSpikeBayes: $p_m=\frac{e^{\mu y_1}-e^{-\mu y_1}}{\sum_{i=1}^m(e^{\mu y_i}+e^{-\mu y_i})}$. In light of Lemma \lemSpikeBayesDecompose, it is sufficient to show that
\begin{itemize}
\item[(i)] $\E_{\mu_{n}e_{1}} (p_{m} - 1)^{2} \geq   1 - \frac{1}{m}e^{\mu_{n}^{2}} (1+o(1)) $, 
\item[(ii)] $(m-1) \E_{\mu_{n}e_{2}} p_{m}^{2} \geq \frac{1}{2m} e^{\mu_{n}^{2}} (1+o(1))$.
\end{itemize}
Regarding Part (i), we have
\begin{align*}
    &  \E_{\mu_{n}e_{1}} [p_m - 1]^{2} \geq 1 - 2\cdot \\
    & \E \left( \frac{e^{\mu_{n} (\mu_{n} + z_{1})} - e^{-\mu_{n} (\mu_{n}+z_{1})} }{\sum_{j\neq 1}  (e^{\mu_{n}z_{j}} + e^{-\mu_{n}z_{j}}) + e^{\mu_{n}(\mu_{n}+ z_{1})} + e^{-\mu_{n}(\mu_{n}+ z_{1})}}\right)  \\
    & \geq    1 - 2\cdot \\
    & \E \left( \frac{e^{\mu_{n}(\mu_{n}+z_{1})}}{\sum_{j\neq 1} (e^{\mu_{n}z_{j}} + e^{-\mu_{n}z_{j}}) + e^{\mu_{n}(\mu_{n}+z_{1})} + e^{-\mu_{n}(\mu_{n}+z_{1})} } \right).
\end{align*}
Thus, (i) will be proved by showing that
\begin{align*}
    & \E \left(\frac{e^{\mu_{n}(\mu_{n}+z_{1})}}{\sum_{j\neq 1} (e^{\mu_{n}z_{j}} + e^{-\mu_{n}z_{j}}) + e^{\mu_{n}(\mu_{n}+z_{1})} + e^{-\mu_{n}(\mu_{n}+z_{1})} } \right) \\
    & \leq \frac{1}{2m}e^{\mu_{n}^{2}}(1+o(1)).
\end{align*}
The expectation on the left-hand side of the above can be splitted into a summation of two truncated expectations according to the following condition:
\begin{align*}
    & e^{\mu_{n}(\mu_{n}+z_{1})} + e^{-\mu_{n}(\mu_{n}+z_{1})} \geq e^{\mu_{n}z_{1}} + e^{-\mu_{n}z_{1}} \\
    \Leftrightarrow & ~(e^{\mu_{n}^{2}} - 1) \left( e^{\mu_{n}z_{1}} - e^{-\mu_{n}z_{1} - \mu_{n}^{2}} \right) \geq 0 \\
    \Leftrightarrow &~ \mu_{n}z_{1} \geq - \mu_{n}z_{1} - \mu_{n}^{2} \\
    \Leftrightarrow &~ z_{1} \geq - \frac{1}{2} \mu_{n}.
\end{align*}
In the first case, 
\begin{align*}
    & \E \left(\frac{e^{\mu_{n}(\mu_{n}+z_{1})} I_{(z_{1}\geq -\frac{1}{2}\mu_{n})}}{\sum_{j\neq 1} (e^{\mu_{n}z_{j}} + e^{-\mu_{n}z_{j}}) + e^{\mu_{n}(\mu_{n}+z_{1})} + e^{-\mu_{n}(\mu_{n}+z_{1})} }\right) \\
    \leq ~& \E \left(\frac{e^{\mu_{n}(\mu_{n}+z_{1})} I_{(z_{1}\geq -\frac{1}{2}\mu_{n})}}{\sum_{j =1}^m (e^{\mu_{n}z_{j}} + e^{-\mu_{n}z_{j}})}\right) \\
    \leq ~& e^{\mu_n^2} \E\left( \frac{e^{\mu_{n}z_{1}}}{\sum_{j=1}^{m}(e^{\mu_{n}z_{j}} + e^{-\mu_{n}z_{j}})} \right) \\
    = ~& \frac{e^{\mu_n^2}}{2} \E \left( \frac{e^{\mu_{n}z_{1}} + e^{-\mu_{n}z_{1}}}{\sum_{j=1}^{m} (e^{\mu_{n}z_{j}} + e^{-\mu_{n}z_{j}})} \right)= \frac{e^{\mu_n^2}}{2m},
\end{align*}
where in the last two equations we have used the symmetry and exchangeability of i.i.d. standard normal variables $\{z_i\}_{j=1}^m$. In the second case, 
\begin{align}
    &  \E \left(  \frac{e^{\mu_{n}(\mu_{n}+z_{1})}  I_{(z_{1}\leq -\frac{1}{2} \mu_{n})}}{\sum_{j\neq 1} (e^{\mu_{n}z_{j}} + e^{-\mu_{n}z_{j}}) + e^{\mu_{n}(\mu_{n}+z_{1})} + e^{-\mu_{n}(\mu_{n}+z_{1})} }  \right) \nonumber \\
    \leq ~ & e^{\mu_n^2} \E\left(  \frac{e^{\mu_{n}z_{1}} I_{(z_{1}\leq -\frac{1}{2}\mu_{n})}}{\sum_{j=1}^{m}e^{\mu_{n}z_{j}}} \right) \nonumber \\
    = ~ & \frac{e^{\mu_n^2}}{m} \E\left( \frac{\sum_{j=1}^{m} e^{\mu_{n}z_{j}} I_{(z_{j}\leq -\frac{1}{2}\mu_{n})}}{\sum_{j=1}^{m}e^{\mu_{n}z_{j}}} \right), \label{second:case:mid}
\end{align}
where the last equality is again due to exchangeability of $\{z_j\}_{j=1}^m$.
Denoting
\begin{equation*}
   Y_{n}:= \frac{1}{m e^{\frac{1}{2}\mu_{n}^{2}}}  \sum_{j=1}^{m}e^{\mu_{n}z_{j}}, \quad  Z_{n} := \frac{1}{m e^{\frac{1}{2}\mu_{n}^{2}}} \sum_{j=1}^{m} e^{\mu_{n}z_{j}} I_{(z_{j}\leq -\frac{1}{2}\mu_{n})},
\end{equation*}
then the last expectation in \eqref{second:case:mid} can be written as $\E(Z_n/Y_n)$, and it remains to show $\E(Z_n/Y_n)=o(1)$. It is straightforward to check that $\E Y_{n} = 1$. Furthermore, since $\mu_{n} = o(\sqrt{\log \epsilon_{n}^{-1}})$, it is direct to verify that $\Var(Y_{n}) \leq \frac{m}{m^{2}e^{\mu_{n}^{2}}} e^{2\mu_{n}^{2}} = o(1)$. Hence, $Y_{n} \rightarrow 1$ in probability. In addition, 
\begin{align*}
     \E(Z_{n}) =~& \E \left( e^{\mu_{n}z_1} I_{z_1\leq -\frac{1}{2}\mu_{n}} \cdot e^{-\frac{1}{2} \mu_{n}^{2}}\right) \\
     =~&  \int_{-\infty}^{-\frac{1}{2}\mu_{n}} \frac{1}{\sqrt{2\pi}} e^{-\frac{z^{2}}{2} + \mu_{n}z - \frac{1}{2}\mu_{n}^{2}} d z \\
    = ~& \int_{-\infty}^{-\frac{\mu_{n}}{2}} \frac{1}{\sqrt{2\pi}} e^{-\frac{1}{2} (z - \mu_{n})^{2}} d z \\
    =~ & \int_{-\infty}^{-\frac{3}{2}\mu_{n}} \frac{1}{\sqrt{2\pi}} e^{-\frac{1}{2}z^{2}} d z = o(1).
\end{align*}
Thus, $Z_{n} \rightarrow 0$ in probability.  As a result, $Z_{n}/Y_n \rightarrow 0$ in probability. Since $|Z_{n}/Y_n|\leq 1$, dominated convergence theorem guarantees that $\E(Z_{n}/Y_n) \rightarrow 0$.

To prove Part (ii), it is equivalent to prove 
\begin{align*}
     & \E \frac{(e^{\mu_{n}z_{1}} - e^{-\mu_{n}z_{1}})^{2}}{[\sum_{j\neq 2} (e^{\mu_{n}z_{j}} + e^{-\mu_{n}z_{j}}) + e^{\mu_{n}(\mu_{n}+z_{2})} + e^{-\mu_{n}(\mu_{n}+z_{2})}]^{2}} \\
     & \geq \frac{1}{2m^2} e^{\mu_{n}^{2}} (1+o(1)).
\end{align*} 
Towards this goal, we have
\begin{align*}
    & \E \frac{(e^{\mu_{n}z_{1}} - e^{-\mu_{n}z_{1}})^{2}}{[\sum_{j\neq 2} (e^{\mu_{n}z_{j}} + e^{-\mu_{n}z_{j}}) + e^{\mu_{n}(\mu_{n}+z_{2})} + e^{-\mu_{n}(\mu_{n}+z_{2})}]^{2}} \\
    \labelrel\geq{eq::jensen-lower-bound-regime-2}~ &  \E \frac{(e^{\mu_{n}z_{1}} - e^{-\mu_{n}z_{1}})^{2}}{[2(m-2) e^{\frac{\mu_{n}^{2}}{2}} + e^{\frac{3}{2}\mu_{n}^{2}} + e^{-\frac{\mu_{n}^{2}}{2}} + e^{\mu_{n}z_{1}} + e^{-\mu_{n}z_{1}} ]^{2}} \\
    \labelrel\geq{eq::condition-regime-2} ~& \E \frac{(e^{\mu_{n}z_{1}} - e^{-\mu_{n}z_{1}})^{2} I_{(|z_{1}|\leq 3\mu_{n})} }{[2(m-2)e^{\frac{\mu_{n}^{2}}{2}} +4\sqrt{m} e^{\frac{\mu_{n}^{2}}{2}} ]^{2}} \\
    \labelrel={eq::symmetry-regime-2}~ & \frac{2}{e^{\mu_{n}^{2}}(2m-4 + 4\sqrt{m})^{2}} \cdot \left[ \E e^{2\mu_{n}z_{1}} I_{|z_{1}|\leq 3\mu_{n}} - \mathbb{P}(|z_{1}|\leq 3\mu_{n}) \right] \\
    = ~& \frac{2}{e^{\mu_{n}^{2}}(2m-4 + 4\sqrt{m})^{2}} \\
    & \cdot \left( e^{2\mu_{n}^{2}} \int_{-5\mu_{n}}^{\mu_{n}} \phi(z) d z - \int_{-3\mu_{n}}^{3\mu_{n}} \phi(z) d z  \right) \\
    = ~&  \frac{1}{2m^2} e^{\mu_{n}^{2}} (1+o(1)).
\end{align*}
Here, Inequality \eqref{eq::jensen-lower-bound-regime-2} is by applying the Jensen's inequality with respect to $z_{2}, \ldots, z_{m}$ (conditioned on $z_{1}$), as $1/(x+c)^{2}~(c>0)$ is a convex function of $x>0$. Inequality \eqref{eq::condition-regime-2} holds because $e^{\frac{3}{2}\mu_{n}^{2}} + e^{-\frac{\mu_{n}^{2}}{2}} + e^{\mu_{n}z_{1}} + e^{-\mu_{n}z_{1}}\leq 4e^{3\mu_n^2}$ when $|z_1|\leq 3\mu_n$, and $e^{3\mu_n^2}\leq \sqrt{m}e^{\frac{\mu_n^2}{2}}$ (for large $n$) under the condition $\mu_n=o(\sqrt{\log m})$. Equality \eqref{eq::symmetry-regime-2} is due to the symmetry of $z_{1} \sim \mathcal{N}(0,1)$. 
\end{proof}

Our goal now is to use Lemma \ref{lem::single-spike-case} to finish the proof of the lower bound in Theorem \ref{thm::regime_2}:
\begin{align*}
& R(\Theta(k_n,\tau_{n}),\sigma_n))\\
= ~&\sigma_n^2 \cdot R(\Theta(k_n,\mu_{n}),1) \geq k_n\sigma_n^2 \cdot  B(\pi_S^{\mu_n,m})\\
\geq~ & k_n\sigma^2_n \mu_{n}^{2} \bigg[ 1 - \frac{e^{\mu_{n}^{2}}}{2m} (1+o(1)) \bigg] \\
= ~& n\sigma_n^2 \Big[\epsilon_n\mu_n^2-\frac{1}{2}\epsilon_n^2\mu_n^2e^{\mu_n^2}(1+o(1))\Big].
\end{align*}

\subsection{Proof of Theorem \ref{thm::regime_2_compactness}}
\label{thm5:proof:sec}

Like in the proof of Theorems \ref{thm::regime_1} and \ref{thm::regime_2}, we calculate the minimax risk by deriving matching upper and lower bounds. However, a notable difference of the proof of Theorem \ref{thm::regime_2_compactness} is that the tight upper bound is obtained not by analyzing the supremum risk of a given estimator, but rather by a Bayesian approach. In this approach, we establish a uniform upper bound for the Bayes risk of an arbitrary distribution supported \emph{on average} on the parameter space, and use the minimax theorem (i.e. Theorem \thmMinimax) to connect the result to the matching upper bound of the minimax risk. We present the details of the upper and lower bounds in Sections \ref{thm5:upper} and \ref{thm5:lower}, respectively.

\subsubsection{Upper bound}
\label{thm5:upper}

Consider the univariate Gaussian model:
\begin{equation}\label{model::univariate}
    Y = \theta + Z, 
\end{equation}
where $\theta \in \mathbb{R}$ and $Z \sim \mathcal{N}(0,1)$.
For a given constant $A > 1$, define a class of priors for $\theta$:
\begin{align}\label{eq::compact-priors}
    \Gamma^{A}(\epsilon, \mu) := \Big\{&\pi \in \mathcal{P}(\mathbb{R}): \pi(\{0\}) \geq 1-\epsilon,~\E_{\pi}\theta^{2} \leq \epsilon \mu^{2}, \nonumber \\
    & ~\supp(\pi) \in [-A\mu, A\mu] \Big\},
\end{align}
where $\mathcal{P}(\mathbb{R})$ denotes the class of all probability measures defined on $\mathbb{R}$, and $\epsilon \in [0,1], \mu>0$.
Note that $\pi \in \Gamma^{A}(\epsilon, \mu)$ implies that $\pi = (1-\epsilon)\delta_{0} + \epsilon G$, for some distribution $G$ satisfying $\E_G\theta^2 \leq \mu^{2}$ and $\supp(G) \subseteq [-A\mu, A\mu]$. The worst-case Bayes risk (i.e., the one of the least favorable distribution), under this univariate Gaussian model with squared error loss, is defined as 
\begin{equation}\label{eq::univariate-bayes-risk}
    B^{A}(\epsilon, \mu, 1) := \sup \Big\{B(\pi): \pi \in \Gamma^{A}(\epsilon,\mu)\Big\},
\end{equation}
where
\[
B(\pi) = \mathbb{E} (\mathbb{E}(\theta|Y)-\theta)^2,~~\theta\sim \pi, ~Y\mid \theta \sim \mathcal{N}(\theta, 1).
\]

The following lemma allows us to obtain an upper bound for $R(\Theta^{A}(k_n,\tau_{n}), \sigma_{n})$ in terms of $B^{A}(\epsilon, \mu, 1)$.

\begin{lemma}\label{lem::reduce_to_univariate_upper_bound}
The minimax risk satisfies the following inequality:
\begin{equation*}
    R(\Theta^{A}(k_n,\tau_{n}), \sigma_{n}) \leq n \sigma_{n}^{2} \cdot B^{A}(\epsilon_{n}, \mu_{n}, 1).
\end{equation*}
\end{lemma}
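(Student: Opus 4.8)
The goal is to bound the minimax risk over the compact parameter space $\Theta^A(k_n,\tau_n)$ by $n\sigma_n^2\cdot B^A(\epsilon_n,\mu_n,1)$, i.e. to reduce an $n$-dimensional minimax problem to $n$ copies of the univariate worst-case Bayes problem. First I would invoke scale invariance (Section \ref{sec:scaling}) to reduce to $\sigma_n=1$, so it suffices to show $R(\Theta^A(k_n,\mu_n),1)\le n\cdot B^A(\epsilon_n,\mu_n,1)$. The natural route is the minimax theorem (Theorem \thmMinimax): I will build a convex class $\mathcal P$ of priors on $\mathbb{R}^n$ for which $\sup_{\pi\in\mathcal P}B(\pi)$ is controlled, and whose supports lie inside $\Theta^A$, so that $\inf_{\hat\theta}\sup_{\pi\in\mathcal P}B(\hat\theta,\pi)=\sup_{\pi\in\mathcal P}B(\pi)$ upper-bounds $R(\Theta^A)$.

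\textbf{Key steps.} (i) Take $\mathcal P$ to be the set of product priors $\pi=\prod_{i=1}^n\pi_i$ with each $\pi_i\in\Gamma^A(\epsilon_n,\mu_n)$; more precisely, to get convexity and to capture the ``on average'' constraints $\|\theta\|_0\le k_n$, $\|\theta\|_2^2\le k_n\mu_n^2$, I would let $\mathcal P$ be the convex hull of such product priors, or directly the set of all priors on $\mathbb{R}^n$ whose coordinatewise marginals $\pi_i$ satisfy $\pi_i(\{0\})\ge 1-\epsilon_n$, $\E_{\pi_i}\theta_i^2\le\epsilon_n\mu_n^2$, $\supp(\pi_i)\subseteq[-A\mu_n,A\mu_n]$ \emph{and} $\sum_i\E\|\theta\|_0$-type averages hold, but the cleanest is product priors. (ii) For a product prior, the Bayes-optimal estimator is coordinatewise, so $B(\pi)=\sum_{i=1}^n B(\pi_i)\le n\cdot B^A(\epsilon_n,\mu_n,1)$ by definition of $B^A$ in \eqref{eq::univariate-bayes-risk}. (iii) Check that such product priors are ``supported on'' $\Theta^A(k_n,\mu_n)$ \emph{in the relevant sense}: here the subtlety is that a product prior with each marginal placing mass $\epsilon_n$ off zero will typically have $\|\theta\|_0$ concentrated near $k_n$ but not bounded by $k_n$ almost surely, and similarly for $\|\theta\|_2^2$. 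So the honest statement is that $\mathcal P$ consists of priors supported \emph{on average} on $\Theta^A$; the transition from ``on average'' to the actual minimax risk over $\Theta^A$ must be handled — either by a truncation/conditioning argument (restrict to the event $\{\|\theta\|_0\le k_n,\|\theta\|_2^2\le k_n\mu_n^2\}$, which has probability $\to 1$ by concentration, and absorb the small correction into the $o(1)$), or by appealing to a version of the minimax machinery (Theorem \thmMinimaxLower\ / its upper-bound analogue) that already accommodates average constraints. (iv) Apply Theorem \thmMinimax\ to conclude $R(\Theta^A(k_n,\mu_n),1)\le \sup_{\pi\in\mathcal P}B(\pi)\le n\,B^A(\epsilon_n,\mu_n,1)$, then undo the scaling.

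\textbf{Main obstacle.} The crux is step (iii): justifying that controlling the Bayes risk of product priors whose marginals satisfy the constraints \emph{on average} actually yields an upper bound on the minimax risk over the \emph{hard-constrained} set $\Theta^A(k_n,\tau_n)$. One clean way: given any estimator $\hat\theta$, for $\pi\in\mathcal P$ we have $B(\hat\theta,\pi)\ge \E_\pi[\mathbbm 1_{\theta\in\Theta^A}\,\E_\theta\|\hat\theta-\theta\|_2^2]$, and on the complement event (small probability) the loss is still controlled because all coordinates are bounded by $A\mu_n$ and Gaussian tails are light; the Bayes optimality over $\mathcal P$ combined with a careful accounting then transfers. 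Alternatively — and this is likely what the paper does — one constructs $\mathcal P$ as priors that are \emph{literally} supported on $\Theta^A$ by using a slightly sub-$\epsilon_n$ sparsity per block or an explicit block construction as in the lower-bound proofs, accepting a negligible loss. I would first try the block-prior route since it mirrors \secRegimeILower, and fall back to the truncation argument if the support containment is not exact. Everything else — the coordinatewise decomposition of Bayes risk, the definition-chasing for $B^A$, the scale invariance — is routine.
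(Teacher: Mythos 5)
There is a genuine gap, and it is a directional one. You set up the argument as if the condition you need were $\supp\mathcal{P}\subseteq\Theta^{A}$, and then spend your effort worrying that product priors violate the hard constraints $\|\theta\|_0\le k_n$, $\|\theta\|_2^2\le k_n\mu_n^2$, proposing truncation or conditioning to repair this. But support containment is the condition for a \emph{lower} bound (Theorem \thmMinimaxLower): if $\supp\mathcal{P}\subseteq\Theta^{A}$ then $\sup_{\pi\in\mathcal{P}}B(\hat\theta,\pi)\le\sup_{\theta\in\Theta^{A}}\E_\theta\|\hat\theta-\theta\|_2^2$ for every $\hat\theta$, which gives $R(\Theta^{A})\ge\sup_{\mathcal{P}}B(\pi)$, not $\le$. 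For an \emph{upper} bound you need the opposite containment: $\mathcal{P}$ must contain every point mass $\delta_\theta$, $\theta\in\Theta^{A}$, so that $\sup_{\theta\in\Theta^{A}}\E_\theta\|\hat\theta-\theta\|_2^2\le\sup_{\pi\in\mathcal{P}}B(\hat\theta,\pi)$, after which the minimax theorem converts $\inf\sup$ into $\sup_{\pi}B(\pi)$. Your class of product priors with marginals in $\Gamma^{A}(\epsilon_n,\mu_n)$ fails this: $\delta_{\theta_i}(\{0\})=0<1-\epsilon_n$ whenever $\theta_i\neq 0$, so $\delta_\theta\notin\mathcal{P}$ for any nonzero $\theta$, and the first inequality in your step (iv) is simply false. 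The ``on average'' constraints in the paper's class $\calM_n$ (namely $\E_\pi\|\theta\|_0\le k_n$, $\E_\pi\|\theta\|_2^2\le k_n\tau_n^2$ imposed on the \emph{joint} law, not coordinatewise) are there precisely so that the class is a relaxation containing all point masses on $\Theta^{A}$ while remaining convex; no truncation argument is needed or wanted.

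The second missing piece is how to pass from the worst-case Bayes risk over this large joint class to $n$ copies of the univariate problem. You assume product structure from the outset, but a generic $\pi\in\calM_n$ is not a product. The paper first symmetrizes ($B(\pi)\le B(\pi^{e})$ by concavity of the Bayes risk, so one may restrict to exchangeable priors), observes that the univariate marginal $\pi_1$ of an exchangeable member then lands in $\Gamma^{A}(\epsilon_n,\tau_n)$ (this is where the coordinatewise constraints finally appear), and invokes Theorem \thmIndepLessFavor\ (``independence is less favorable'') to get $B(\pi)\le B(\pi_1^{n})=nB(\pi_1)\le nB^{A}(\epsilon_n,\mu_n,1)$ — noting that $\pi_1^{n}$ need not be supported on $\Theta^{A}$, which is harmless because it still lies in $\calM_n$. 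Your proposal contains neither the symmetrization step nor the independence-is-less-favorable comparison, and these, together with the corrected containment, are the substance of the proof.
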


\begin{proof}
 The proof closely follows the arguments in the proof of Theorem 8.21 of \cite{johnstone19}. However, since the parameter space we consider is different, we cover a full proof here for completeness. For notational simplicity, let $\Theta_{n} := \Theta^{A}(k_n,\tau_{n})$. Consider the class of priors
\begin{align*}
    \calM_{n} & := \calM(k_n, \tau_{n}, A) = \Big \{ \pi \in \calP(\R^{n}): \E_{\pi}\norm{\theta}_{0} \leq k_n, \\
    &~ \E_{\pi}\norm{\theta}_{2}^{2} \leq k_n \tau_{n}^{2}, ~\supp(\pi) \subseteq [-A\tau_{n}, A\tau_{n}]^{n}\Big \},
\end{align*}
where $\calP(\R^{n})$ denotes the set of all probability measures on $\mathbb{R}^n$.
Let $\calM_{n}^{e} := \calM^{e}(k_n,\tau_{n}, A) \subseteq \calM(k_n, \tau_{n}, A)$ be its exchangeable subclass, consisting of the distributions $\pi \in \calM_{n}$ that are permutation invariant over the $n$ coordinates. Using notation $B(\pi, \calM) := \sup_{\pi \in \calM} B(\pi)$, we will show that
\begin{equation}
\label{eq::upper_bound_reduce_to_one_dim}
    R(\Theta_{n},\sigma_{n}) \leq B(\pi, \calM_{n}) = B(\pi, \calM_{n}^{e}) \leq n \sigma^{2}_{n} \cdot B^{A}(\epsilon_{n}, \mu_{n},1).
\end{equation}

We start with equality in \eqref{eq::upper_bound_reduce_to_one_dim}. 
\begin{align*}
 & R(\Theta_{n},\sigma_{n}) = \inf_{\hat{\theta}} \sup_{\theta \in \Theta_n} \mathbb{E} \|\hat{\theta}- \theta\|_2^2 \overset{(a)}{\leq} \inf_{\hat{\theta}} \sup_{\pi \in \calM_n} \mathbb{E}_\pi \|\hat{\theta}- \theta\|_2^2 \\
& \overset{(b)}{=} \sup_{\pi \in \calM_n}  \inf_{\hat{\theta}} \mathbb{E}_\pi \|\hat{\theta}- \theta\|_2^2 = B(\pi, M_n). 
\end{align*}
Inequality (a) is due to the fact that $\calM_{n}$ contains all point mass priors $\delta_{\theta}$, for every $\theta \in \Theta_{n}$. To obtain Equality (b) we have used the minimax theorem, i.e. Theorem \thmMinimax, as $\calM_{n}$ is a convex set of probability measures. To prove the second inequality in \eqref{eq::upper_bound_reduce_to_one_dim}, note that for any $\pi \in \calM_{n}$, we can construct a corresponding prior:
\[
\pi^e = \frac{1}{n!} \sum_{\sigma: [n]\rightarrow [n]} \pi \circ \sigma,
\]
where $\sigma$ denotes a permutation of the coordinates of $\theta$, and $\pi \circ \sigma$ is the distribution after permutation. In other words, $\pi^e$ is the distribution averaged over all the permutations, thus $\pi^e \in \calM^e_n$. Given that $B(\pi)$ is a concave function (it is the infimum of linear functions), we have $B(\pi, \calM_{n}) \leq B(\pi, \calM_{n}^{e})$ which implies $B(\pi, \calM_{n}) = B(\pi, \calM_{n}^{e})$ since $\calM_{n}^e \subseteq \calM_{n}$.

To show the last inequality in \eqref{eq::upper_bound_reduce_to_one_dim}, for any exchangeable prior $\pi \in \calM_{n}^{e}$, let $\pi_{1}$ be its univariate marginal distribution. Using the constraints on $\pi$ from $\calM_{n}$ and the fact that $\pi$ is symmetric over its $n$ coordinates, we have
\begin{equation*}
    \pi_{1}(\theta_{1}= 0)\geq 1- \epsilon_{n}, ~ \E_{\pi_{1}} \theta_{1}^{2} \leq \epsilon_{n} \tau_{n}^{2} , ~ \supp \pi_1 \subseteq [-A\tau_{n}, A\tau_{n}]
\end{equation*}
 Hence $\pi_{1}\in \Gamma^{A}(\epsilon_{n}, \tau_{n})$ defined in \eqref{eq::compact-priors}. Furthermore, according to Theorem \thmIndepLessFavor, the product prior $\pi_{1}^{n}$
is less favorable than $\pi^{e}$, namely, $B(\pi) \leq B(\pi_{1}^{n})=nB(\pi_1)$. Rescaling the noise level to one and maximizing over $\pi_{1} \in \Gamma^{A}(\epsilon_{n}, \mu_{n})$ completes the proof.
\end{proof}

Lemma \ref{lem::reduce_to_univariate_upper_bound} reduces the problem of obtaining the upper bound for frequentist minimax risk (under Gaussian sequence model) to the problem of upper bounding the worst-case Bayes risk (under a univariate Gaussian model). Our next goal is to find an upper bound for $B^{A}(\epsilon_{n}, \mu_{n},1)$. Towards this end, we first state a useful lemma.

\begin{lemma}\label{lem::second-moment-under-pi}
Under model \eqref{model::univariate}, consider prior $\pi = (1-\epsilon) \delta_{0} + \epsilon G \in \Gamma^{A}(\epsilon, \mu)$, as defined in \eqref{eq::compact-priors}. Then,
\begin{equation*}
    \E(\E(\theta|Y))^2 = \int \frac{\epsilon^2 (\int t e^{t z - \frac{t^{2}}{2}} d G(t) )^2}{1-\epsilon + \epsilon \int e^{t z - \frac{t^{2}}{2}} d G (t) } \phi(z) d z,
\end{equation*}
where $\phi(\cdot)$ denotes the density function of standard normal random variable.
\end{lemma}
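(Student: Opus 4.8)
The plan is to compute the posterior mean $\E(\theta\mid Y)$ in closed form under the two-point mixture prior $\pi = (1-\epsilon)\delta_0 + \epsilon G$, and then integrate its square against the marginal density of $Y$. First I would apply Bayes' rule. Since the atom at $0$ contributes $0$ to the numerator of the posterior expectation, we get
\[
\E(\theta\mid Y=y) = \frac{\epsilon \int t\,\phi(y-t)\,dG(t)}{(1-\epsilon)\phi(y) + \epsilon \int \phi(y-t)\,dG(t)}.
\]
Then I would invoke the elementary Gaussian identity $\phi(y-t) = \phi(y)\,e^{ty - t^{2}/2}$ and factor $\phi(y)$ out of both numerator and denominator, which yields
\[
\E(\theta\mid Y=y) = \frac{\epsilon \int t\,e^{ty - t^{2}/2}\,dG(t)}{1-\epsilon + \epsilon \int e^{ty - t^{2}/2}\,dG(t)}.
\]

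Next I would record the marginal density of $Y$, namely $f_{Y}(y) = (1-\epsilon)\phi(y) + \epsilon \int \phi(y-t)\,dG(t) = \phi(y)\bigl(1-\epsilon + \epsilon \int e^{ty - t^{2}/2}\,dG(t)\bigr)$, using the same identity. Finally, writing $\E(\E(\theta\mid Y))^{2} = \int \bigl(\E(\theta\mid Y=y)\bigr)^{2} f_{Y}(y)\,dy$ and substituting both displays, one factor of $\bigl(1-\epsilon + \epsilon \int e^{ty - t^{2}/2}\,dG(t)\bigr)$ cancels between the squared denominator and the marginal density, leaving precisely the asserted expression after renaming the dummy variable $y$ to $z$.

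There is no real obstacle here: the only points needing a remark are measure-theoretic, namely the use of Fubini/Tonelli to identify $\int \phi(y-t)\,dG(t)$ with the relevant conditional density and the finiteness of every integral. Both hold immediately because $\supp(G)\subseteq[-A\mu,A\mu]$ is compact, so $t\,e^{ty - t^{2}/2}$ and $e^{ty - t^{2}/2}$ are dominated uniformly in $t$ by quantities with at most Gaussian growth in $y$, and the denominator stays bounded below by $1-\epsilon>0$ (and, in the degenerate case $\epsilon=1$, by $\epsilon\,e^{-A^{2}\mu^{2}/2 - A\mu|y|}>0$). Hence the lemma is a routine Bayes-rule calculation; the substantive work is deferred to the subsequent lemmas that bound $B^{A}(\epsilon_{n},\mu_{n},1)$.
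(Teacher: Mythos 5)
Your proposal is correct and follows essentially the same route as the paper: compute the posterior mean by Bayes' rule, use the identity $\phi(y-t)=\phi(y)e^{ty-t^{2}/2}$ to normalize, and integrate the squared posterior mean against the marginal density so that one factor of the denominator cancels. The paper reaches the marginal density by first splitting the outer expectation over the two mixture components and then recombining (a change of variables in the $\epsilon G$ part), but this is only a cosmetic difference from writing $f_Y$ directly as you do.
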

\begin{proof}
Given the prior $\pi = (1-\epsilon) \delta_{0} + \epsilon G$, the posterior mean of $\theta$ is given by
\begin{equation*}
    \E (\theta|Y=y) = \frac{\epsilon \int \theta \phi(y-\theta) dG(\theta)}{(1-\epsilon)\phi(y)+\epsilon \int \phi(y-\theta)dG(\theta)}.
\end{equation*}
Thus,
\begin{align*}
     & \E(\E(\theta|Y))^2 \\
     =~ & (1-\epsilon) \int \left[ \frac{\epsilon \int t \phi(z-t) dG(t)}{(1-\epsilon)\phi(z)+\epsilon \int \phi(z-t)dG(t)} \right]^{2} \phi(z) d z \\
     & +\epsilon \iint \left[ \frac{\epsilon \int t \phi(\theta + \Tilde{z} -t) dG(t)}{(1-\epsilon)\phi(\theta + \Tilde{z})+\epsilon \int \phi(\theta + \Tilde{z} -t)dG(t)} \right]^{2} \\
     & \cdot \phi(\Tilde{z}) d \Tilde{z} d G(\theta) \\
    = ~& \int \left[ \frac{\epsilon \int t \phi(z-t) dG(t)}{(1-\epsilon)\phi(z)+\epsilon \int \phi(z-t)dG(t)} \right]^{2} \\
    & \cdot \left[ (1-\epsilon) \phi(z) + \epsilon \int \phi(z-\theta) d G(\theta) \right] d z \\
    = ~& \int \left[ \frac{ \epsilon \int t e^{tz-\frac{t^{2}}{2}} d G(t) }{(1-\epsilon) + \epsilon \int e^{tz - \frac{t^{2}}{2}} d G(t)} \right]^{2} \\
    & \cdot \left[ (1-\epsilon) + \epsilon \int e^{tz - \frac{t^{2}}{2}} d G(t) \right] \phi(z) d z \\
    = ~& \int \frac{\epsilon^2 (\int t e^{t z - \frac{t^{2}}{2}} d G(t) )^2}{1-\epsilon + \epsilon \int e^{t z - \frac{t^{2}}{2}} d G (t) } \phi(z) d z,
\end{align*}
where the second equality is by a simple change of variable.
\end{proof}

We can now obtain a sharp upper bound for $B^{A}(\epsilon_{n}, \mu_{n},1)$. 

\begin{lemma}\label{lem::univariate-Bayes}
Consider $\epsilon_n\rightarrow 0,\mu_n\rightarrow \infty,\mu_n=o(\sqrt{\log\epsilon_n^{-1}})$. Under model \eqref{model::univariate}, the worst-case Bayes risk $B^{A}(\epsilon_n, \mu_n, 1)$ defined in \eqref{eq::univariate-bayes-risk} satisfies that for any $A>1$,
\begin{equation*}
    B^{A}(\epsilon_n, \mu_n, 1) \leq \epsilon_n \mu^2_n -\frac{1+o(1)}{2}\epsilon_n^2\mu_n^2e^{\mu_{n}^{2}}.
\end{equation*}
\end{lemma}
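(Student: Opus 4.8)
The plan is to bound the Bayes risk $B(\pi)$ uniformly over $\pi=(1-\epsilon_n)\delta_0+\epsilon_n G \in \Gamma^A(\epsilon_n,\mu_n)$ by writing $B(\pi)=\E_\pi\theta^2 - \E(\E(\theta|Y))^2 \le \epsilon_n\mu_n^2 - \E(\E(\theta|Y))^2$, using the prior constraint $\E_\pi\theta^2\le\epsilon_n\mu_n^2$. Hence the whole problem reduces to producing a lower bound of the form $\E(\E(\theta|Y))^2 \ge \tfrac{1+o(1)}{2}\epsilon_n^2\mu_n^2 e^{\mu_n^2}$ that holds for \emph{every} admissible $G$, i.e. the \emph{worst} $G$ over the class. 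I would start from the exact identity in Lemma \ref{lem::second-moment-under-pi},
\[
\E(\E(\theta|Y))^2 = \int \frac{\epsilon_n^2 \big(\int t e^{tz-t^2/2}\,dG(t)\big)^2}{1-\epsilon_n+\epsilon_n\int e^{tz-t^2/2}\,dG(t)}\,\phi(z)\,dz.
\]
Since $\epsilon_n\to0$ and $\supp G\subseteq[-A\mu_n,A\mu_n]$ with $\mu_n=o(\sqrt{\log\epsilon_n^{-1}})$, the denominator is $1+o(1)$ uniformly in $z$ over the relevant range (because $\epsilon_n e^{A^2\mu_n^2/2+A\mu_n|z|}$ is small once $|z|$ is not too large, and the far tail contributes negligibly after multiplying by $\phi(z)$). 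So to leading order $\E(\E(\theta|Y))^2 = (1+o(1))\epsilon_n^2\int\big(\int t e^{tz-t^2/2}\,dG(t)\big)^2\phi(z)\,dz$, and the task becomes: minimize $Q(G):=\int\big(\int t e^{tz-t^2/2}\,dG(t)\big)^2\phi(z)\,dz$ subject to $\E_G t^2\le\mu_n^2$ and $\supp G\subseteq[-A\mu_n,A\mu_n]$, and show the minimum is $(1+o(1))\tfrac12\mu_n^2 e^{\mu_n^2}$.

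For the minimization I would expand $Q(G)=\iint t s\, K(t,s)\,dG(t)\,dG(s)$ with kernel $K(t,s)=\int e^{(t+s)z-(t^2+s^2)/2}\phi(z)\,dz = e^{ts}$ (completing the square in $z$). Thus $Q(G)=\iint ts\,e^{ts}\,dG(t)\,dG(s)$, which is a clean quadratic functional of $G$. Now I want to minimize $\iint ts\,e^{ts}\,dG\,dG$ over probability measures $G$ on $[-A\mu_n,A\mu_n]$ with second moment at most $\mu_n^2$. The key observation is that $ts\,e^{ts}$ is minimized (made most negative) by putting mass on opposite signs, but the constraint $\E_G t^2\le\mu_n^2$ together with the identity $Q(G)\ge$ (something) forces the answer; the extremal $G$ should be the symmetric two-point mass $\tfrac12\delta_{\mu_n}+\tfrac12\delta_{-\mu_n}$ (which saturates the variance constraint and is balanced), giving $Q=\tfrac12(-\mu_n^2 e^{-\mu_n^2}) + \tfrac12(\mu_n^2 e^{\mu_n^2})\cdot 0 \cdots$ — more carefully, for that $G$, $\iint ts e^{ts}dGdG = \tfrac14[\mu_n^2e^{\mu_n^2} - \mu_n^2e^{-\mu_n^2} - \mu_n^2 e^{-\mu_n^2} + \mu_n^2 e^{\mu_n^2}] = \tfrac12\mu_n^2 e^{\mu_n^2}(1+o(1))$. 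So I would prove $Q(G)\ge \tfrac12\mu_n^2e^{\mu_n^2}(1+o(1))$ for all admissible $G$: split $G$ into its restriction to $\{t>0\}$ and $\{t<0\}$, use symmetrization/Cauchy–Schwarz to reduce to the symmetric case, and control the cross terms (which are bounded by $\mu_n^2$ in absolute value, hence negligible against $\mu_n^2e^{\mu_n^2}$), while the ``like-sign'' terms are bounded below using convexity of $t\mapsto t^2e^{t^2}$ (or Jensen applied to $\E_G[\, \cdot\,\mathbbm{1}_{t>0}]$) together with the fact that the bulk of the second moment must sit near $\pm A\mu_n$... actually near $\pm\mu_n$ is what the extremal analysis gives, so I'd instead argue directly that among all $G$ with $\E_G t^2 \le \mu_n^2$ the quantity $\E_G[t^2 e^{t^2}]$-type contributions are minimized when mass concentrates, and handle the upper constraint $|t|\le A\mu_n$ only to guarantee the $1+o(1)$ denominator control.

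\textbf{The main obstacle} I anticipate is the variational lower bound on $Q(G)=\iint ts\,e^{ts}\,dG(t)\,dG(s)$: the functional is \emph{not} convex in $G$ (the kernel $ts\,e^{ts}$ is indefinite), so I cannot simply invoke a convexity argument to locate the minimizer, and I must rule out the possibility that cleverly spread-out or asymmetric $G$ does better than the two-point symmetric prior. The right way around this is probably to decompose $dG = dG_+ + dG_-$ by sign of $t$, write $Q = \iint_{++} + \iint_{--} + 2\iint_{+-}$, bound $|\iint_{+-} ts e^{ts}| \le \mu_n^2$ (since $ts e^{ts} \ge -e^{-1}$ is bounded below and $|t|,|s|$ have the $L^2$ budget), and lower-bound $\iint_{++} ts e^{ts}\,dG_+\,dG_+ \ge (\int t\,dG_+)^2 \cdot$ (something), then optimize over the split of the variance budget between $G_+$ and $G_-$ — the worst split being the balanced one — to recover the $\tfrac12\mu_n^2 e^{\mu_n^2}$ constant. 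Making the $e^{\mu_n^2}$ factor emerge sharply (rather than with a sub-optimal constant) requires that the admissible $G$ cannot avoid placing a constant fraction of its second moment near the extreme points $\pm\mu_n$; this is where the analysis is most delicate, and I would devote the bulk of the proof to it, with the denominator estimate and the reduction via Lemma \ref{lem::second-moment-under-pi} being comparatively routine.
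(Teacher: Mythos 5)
Your overall architecture matches the paper's: reduce via $B(\pi)=\E_\pi\theta^2-\E(\E(\theta|Y))^2$, invoke Lemma \ref{lem::second-moment-under-pi}, control the denominator, and reduce to a variational lower bound on the quadratic functional $\iint tt'e^{tt'}\,dG(t)\,dG(t')$ (your kernel computation $K(t,s)=e^{ts}$ is the same completion of the square the paper performs, modulo the truncation in $z$). However, there is a genuine logical gap in your very first reduction. After replacing $\E_\pi\theta^2$ by its upper bound $\epsilon_n\mu_n^2$, you claim it suffices to prove $\E(\E(\theta|Y))^2\ge\tfrac{1+o(1)}{2}\epsilon_n^2\mu_n^2e^{\mu_n^2}$ for \emph{every} admissible $G$. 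That uniform lower bound is false: take $G=\delta_0$ (or any $G$ with small second moment), for which $\E(\E(\theta|Y))^2$ is zero or negligible. The decoupling of the two terms discards exactly the structure that makes the lemma true. The correct route — and the one the paper takes — is to keep $v:=\E_G|t|^2\le\mu_n^2$ as a free parameter throughout, obtain a lower bound of the form $\E(\E(\theta|Y))^2\gtrsim\tfrac{1}{2}\epsilon_n^2\big(ve^{v}-v\big)$, so that $B(\pi)\lesssim\epsilon_n v-\tfrac{1}{2}\epsilon_n^2 ve^{v}$, and only then verify that this expression is increasing in $v$ on $[0,\mu_n^2]$ (which uses $\epsilon_n(1+\mu_n^2)e^{\mu_n^2}=o(1)$, a consequence of $\mu_n=o(\sqrt{\log\epsilon_n^{-1}})$) to conclude the supremum over $\pi$ is attained at $v=\mu_n^2$.

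The second weak point is the variational step itself, which you correctly identify as the main obstacle but do not resolve: your sign-decomposition sketch with cross terms bounded by $\mu_n^2$ and an appeal to "convexity of $t\mapsto t^2e^{t^2}$" does not obviously produce the sharp constant, and, more importantly, it is aimed at the (false) target $\tfrac12\mu_n^2e^{\mu_n^2}$ rather than the $v$-dependent bound needed above. The paper's argument here is short and worth knowing: expand $e^{tt'}=\sum_k(tt')^k/k!$, use independence of $t,t'\sim G\times G$ to factor each term restricted to $\{tt'\ge0\}$ as $(\E|t|^{k+1}I_{t>0})^2+(\E|t|^{k+1}I_{t<0})^2$, apply $2(x^2+y^2)\ge(x+y)^2$ and then H\"older's inequality $(\E|t|^2)^{k+1}\le(\E|t|^{k+1})^2$ to arrive at $\iint_{tt'\ge0}tt'e^{tt'}\,dG\,dG\ge\tfrac12\big(\E|t|^2e^{\E|t|^2}-\E|t|^2\big)$, while the $\{tt'<0\}$ contribution is bounded below by $-A^2\mu_n^2$ since $e^{tt'}\le1$ there. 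This simultaneously handles the non-convexity you were worried about and delivers the bound as a function of $\E_G|t|^2$, which is what closes the gap in your reduction.
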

\begin{proof}
For prior $\pi \in \Gamma^{A}(\epsilon, \mu)$, using the law of total expectation, 
\begin{align}\label{eq::MSE-decomposition}
    \E (\E(\theta|Y) - \theta)^{2} 
    & = \E \theta^{2} - \E(\E(\theta|Y))^{2}.
\end{align}
We first obtain a lower bound for the term $\E(\E(\theta|Y))^{2}$. We start with the expression derived in Lemma \ref{lem::second-moment-under-pi} and develop a series of lower bounds, 
\begin{align}
      & \E(\E(\theta|Y))^2 = \int \frac{\epsilon^2 (\int t e^{t z - \frac{t^{2}}{2}} d G(t) )^2}{1-\epsilon + \epsilon \int e^{t z - \frac{t^{2}}{2}} d G (t) }\phi(z) d z  \nonumber \\
      \geq ~& \int_{|z|\leq \sqrt{\log 1/\epsilon}} \frac{\epsilon^2 (\int t e^{tz - \frac{t^2}{2}} d G(t) )^2}{1-\epsilon + \epsilon \int e^{tz - \frac{t^2}{2}}dG(t)}\phi(z)d z \nonumber \\
    \labelrel\geq{eq::dominate-denominator} ~& \frac{\epsilon^2}{1-\epsilon + \epsilon^{\frac{1}{2}}} \int_{|z|\leq \sqrt{\log 1/\epsilon}} \left(\int t e^{tz - \frac{t^2}{2}} d G(t) \right)^2 \phi(z) d z \nonumber \\
    \labelrel={eq::GGprime} ~& \frac{\epsilon^2}{1-\epsilon + \epsilon^{\frac{1}{2}}} \iint \bigg[t t' e^{t t' }  \int_{-\sqrt{\log 1/\epsilon}- (t+t')}^{\sqrt{\log 1/\epsilon} -(t+t')} \phi(z) d z\bigg]d G(t) d G(t')\nonumber \\
    = ~& \frac{\epsilon^2}{1-\epsilon + \epsilon^{\frac{1}{2}}} \iint_{tt'\geq 0} \bigg[t t' e^{t t' }  \int_{-\sqrt{\log 1/\epsilon}- (t+t')}^{\sqrt{\log 1/\epsilon} -(t+t')} \phi(z) d z\bigg]d G(t) d G(t') \nonumber \\
      + ~& \frac{\epsilon^2}{1-\epsilon + \epsilon^{\frac{1}{2}}} \iint_{tt'<0} \bigg[t t' e^{t t' }  \int_{-\sqrt{\log 1/\epsilon}- (t+t')}^{\sqrt{\log 1/\epsilon} -(t+t')} \phi(z) d z\bigg] d G(t) d G(t') \nonumber \\
    \labelrel\geq{eq::negative-exponent}~ & \frac{ \epsilon^2}{1-\epsilon + \epsilon^{\frac{1}{2}}} \Bigg( \iint_{tt'\geq 0} \bigg[t t' e^{t t' }  \int_{-\sqrt{\log 1/\epsilon}- (t+t')}^{\sqrt{\log 1/\epsilon} -(t+t')} \phi(z) d z\bigg]  \nonumber \\
    & \cdot  d G(t) d G(t') - |A\mu|^2 \Bigg) \nonumber \\
\labelrel\geq{eq::inequality-compact} ~& \frac{ \epsilon^2}{1-\epsilon + \epsilon^{\frac{1}{2}}} \left(\int_{-\sqrt{\log 1/\epsilon}- 2A\mu}^{\sqrt{\log 1/\epsilon} -2A\mu} \phi(z) d z \right. \nonumber \\
& \left. \cdot \iint_{tt'\geq 0} t t' e^{t t' } d G(t) d G(t') - |A\mu|^2 \right).\label{square:term:bayes}
\end{align}
Inequality \eqref{eq::dominate-denominator} holds because for $|z|\leq \sqrt{\log 1/\epsilon}$, 
\begin{equation*}
    \epsilon \int e^{tz - \frac{t^2}{2}} d G(t) = \epsilon e^{\frac{1}{2}z^2}\int e^{-\frac{1}{2}(z-t)^2} d G(t) \leq \epsilon e^{\frac{1}{2}z^2}\leq \epsilon^{\frac{1}{2}}.
\end{equation*}
To obtain Equality \eqref{eq::GGprime} we do the following simple calculations:
\begin{align*}
    & \int_{|z|\leq \sqrt{\log 1/\epsilon}} \left(\int t e^{tz - \frac{t^2}{2}} d G(t) \right)^2 \phi(z) d z \\
    = ~& \int_{|z|\leq \sqrt{\log 1/\epsilon}} \bigg[\iint  tt' e^{zt - t^2 /2} e^{zt'-{t'}^2 /2}d G(t) d G(t')\bigg]  \phi(z) d z \\
    = ~& \iint \bigg[t t' e^{tt'}  \int_{|z| \leq \sqrt{\log 1/\epsilon}} \frac{1}{\sqrt{2\pi}} e^{-\frac{1}{2}(z-(t+t'))^2} d z\bigg]d G(t) d G(t') \\
    = ~& \iint \bigg[t t' e^{tt'}  \int_{ -\sqrt{\log 1/\epsilon}-(t+t')}^{\sqrt{\log 1/\epsilon}-(t+t')} \frac{1}{\sqrt{2\pi}} e^{-\frac{1}{2}z^2} d z\bigg]d G(t) d G(t')
\end{align*}

Inequality \eqref{eq::negative-exponent} holds because $e^{-|tt'|}\leq 1$ and $\supp G \subseteq [-A\mu, A\mu]$. Inequality \eqref{eq::inequality-compact} is due to the fact that $\supp G \subseteq [-A\mu, A\mu]$ and $\int_{-\sqrt{\log 1/\epsilon}-a}^{\sqrt{\log 1/\epsilon}-a}\phi(z)dz$ (as a function of $a$) is symmetric and decreasing over $[0,\infty)$. To continue from \eqref{square:term:bayes}, we further lower bound $ \iint_{tt'\geq 0} tt' e^{ tt'} d G(t) dG(t')$. To simplify notation, define two random variables $t,t'\overset{i.i.d.}{\sim} G$. We have
\begin{align*}
    & \iint_{tt'\geq 0} tt' e^{ tt'} d G(t) dG(t') = \E[tt'e^{tt'}I_{(tt'\geq 0)}] \\
    = ~& \sum_{k=0}^{\infty} \E  \frac{1}{k!}(tt')^{k+1} (I_{(t> 0,t'> 0)}  + I_{(t< 0,t'< 0)})  \\
     = ~& \sum_{k=0}^{\infty} \frac{1}{k!} \left(\E[t^{k+1}I_{(t> 0)}]  \cdot \E [(t')^{k+1} I_{(t'> 0)}] \right. \\
     + ~& \left. \E [t^{k+1}I_{(t< 0)}] \cdot \E [(t')^{k+1} I_{(t'< 0)}]\right)\\
     = ~&\sum_{k=0}^{\infty} \frac{1}{k!} \left((\E t^{k+1}I_{(t > 0)})^2 + (\E t^{k+1} I_{(t< 0)})^2 \right)  \\
     = ~& \sum_{k=0}^{\infty} \frac{1}{k!} \left((\E |t|^{k+1} I_{(t> 0)})^2 + (\E |t|^{k+1} I_{(t<0)})^2 \right) \\
     \labelrel\geq{eq::basic} ~& \sum_{k=0}^{\infty} \frac{1}{k!} \frac{1}{2} \left( \E |t|^{k+1} I_{(t> 0)} +  \E |t|^{k+1} I_{(t< 0)} \right)^{2} \\
     = ~& \frac{1}{2} \sum_{k=0}^{\infty} \frac{1}{k!} \left( \E |t|^{k+1} \right)^{2}
     \labelrel\geq{eq::jensen-inequality} \frac{1}{2}(\E|t|)^2 \\
     +~& \frac{1}{2}\sum_{k=1}^{\infty} \frac{1}{k!} \left(\E |t|^{2} \right)^{k+1} \geq  \frac{1}{2} \Big(\E |t|^2 e^{\E |t|^2}-\E|t|^2 \Big),
\end{align*}
where \eqref{eq::basic} is due to the basic inequality $2(x^2+y^2)\geq (x+y)^2$, and \eqref{eq::jensen-inequality} is by H\"{o}lder's inequality $(\E|t|^2)^{k+1}\leq(\E|t|^{k+1})^2, k\geq 1$. Combining the above inequality with \eqref{eq::MSE-decomposition} and \eqref{square:term:bayes} gives
\begin{align}
\label{key:form:nonasym}
 & B^{A}(\epsilon_n, \mu_n, 1)=\sup_{\pi\in \Gamma^A(\epsilon_n,\mu_n)} \E (\E(\theta|Y) - \theta)^{2} \nonumber \\
 \leq &\sup_{\E|t|^2\leq \mu_n^2}\Big(\epsilon_n+\frac{\epsilon_n^2\Delta_n }{2(1-\epsilon_n+\sqrt{\epsilon_n})}\Big)\E|t|^2 \nonumber \\
 - & \frac{\epsilon_n^2\Delta_n}{2(1-\epsilon_n+\sqrt{\epsilon_n})}\E|t|^2e^{\E|t|^2}+\frac{\epsilon^2A^2\mu_n^2}{1-\epsilon+\sqrt{\epsilon}},
\end{align}
where $\Delta_n=\int_{-\sqrt{\log 1/\epsilon_n}- 2\mu_n A}^{\sqrt{\log 1/\epsilon_n} - 2\mu_n A} \phi(z) dz$. The results we obtained so far are non-asymptotic. We now make use of the conditions $\epsilon_n\rightarrow 0,\mu_n\rightarrow \infty, \mu_n=o(\sqrt{\log\epsilon_n^{-1}})$ to derive the final asymptotic result. Under such scaling conditions, it is straightforward to confirm that the expression on the right-hand side of \eqref{key:form:nonasym} is increasing in $\E|t|^2$ when $n$ is sufficiently large (by calculating its derivative). As a result, 
\begin{align*}
& B^{A}(\epsilon_n, \mu_n, 1) \leq \Big(\epsilon_n+\frac{\epsilon_n^2\Delta_n }{2(1-\epsilon_n+\sqrt{\epsilon_n})}\Big)\mu_n^2 \\
& - \frac{\epsilon_n^2\Delta_n}{2(1-\epsilon_n+\sqrt{\epsilon_n})}\mu_n^2e^{\mu_n^2}+\frac{\epsilon^2A^2\mu_n^2}{1-\epsilon+\sqrt{\epsilon}} \\
&=\epsilon_n\mu_n^2+\frac{1+o(1)}{2}\epsilon_n^2\mu_n^2-\frac{1+o(1)}{2}\epsilon_n^2\mu_n^2e^{\mu_n^2}+O(\epsilon_n^2\mu_n^2) \\
&=\epsilon_n\mu_n^2-\frac{1}{2}\epsilon_n^2\mu_n^2e^{\mu_n^2}(1+o(1)).
\end{align*}
\end{proof}

Combing Lemmas \ref{lem::reduce_to_univariate_upper_bound} and \ref{lem::univariate-Bayes} provides the upper bound for the minimax risk:
\[
 R(\Theta^{A}(k_n,\tau_{n}), \sigma_{n})\leq n\sigma_n^2\Big(\epsilon_n\mu_n^2-\frac{1}{2}\epsilon_n^2\mu_n^2e^{\mu_n^2}(1+o(1))\Big).
\]

\subsubsection{Lower bound}
\label{thm5:lower}

Recall that in the lower bound derivation for Theorem \ref{thm::regime_2}, in Section \ref{sec::lower-bound-regime-2}, the proof is based on the independent block prior $\pi^{IB}$ with single spike distribution $\pi_S^{\mu_n,m}$ which is first introduced in \secRegimeILower{}. Since the spike locations are at $\pm \mu_{n}$, which are contained in $[-A \mu_{n}, A \mu_{n}]$ for any $A>1$,  this implies that $\supp \pi^{IB} \subseteq \Theta^{A}(k_n,\mu_{n})$ as well. As a result, the proof in Section \ref{sec::lower-bound-regime-2} also works for the new parameter space $\Theta^{A}(k_n,\mu_{n})$ and it yields the same lower bound:
\[
R(\Theta^{A}(k_n,\tau_{n}), \sigma_{n})\geq n\sigma_n^2\Big(\epsilon_n\mu_n^2-\frac{1}{2}\epsilon_n^2\mu_n^2e^{\mu_n^2}(1+o(1))\Big).
\]
\subsection{Proof of Proposition \ref{lem::lasso-risk-regime-2}}\label{app:proof_prop2}

Comparing the results in Propositions \ref{lem::lasso-risk-regime-1} and \ref{lem::lasso-risk-regime-2}, we can see that the supremum risk of optimally tuned soft thresholding has the same second-order asymptotic approximation in Regimes (\rom{1}) and (\rom{2}). Thus, the proof of Proposition \ref{lem::lasso-risk-regime-2} shares a lot of similarity with that of Proposition \ref{lem::lasso-risk-regime-1}. For simplicity we will not repeat every detail. Referring to the proof of Proposition \ref{lem::lasso-risk-regime-1} in \secPropI, the key is to obtain the accurate order of the optimal tuning $\lambdas$ and evaluate the function value $F(\lambdas)$, where we recall the definitions: $\lambdas=\argmin_{\lambda\geq 0}F(\lambda)$, $z\sim \mathcal{N}(0,1)$ and
\begin{align*}
    F(\lambda)=(1-\epsilon_n)\E\hat{\eta}^2_S(z,\lambda)+\epsilon_n \E(\hat{\eta}_S(\mu_n+z,\lambda)-\mu_n)^2.
\end{align*}

We first address the order of $\lambdas$.

\begin{lemma}
\label{optimal:tuing:regime:2}
Consider $\epsilon_n\rightarrow 0,\mu_n\rightarrow \infty,\mu_n=o\left(\sqrt{\log \epsilon_n^{-1}}\right)$, as $n\rightarrow \infty$. It holds that 
\begin{equation}\label{eq:label:optlambda}
    \log 2 \epsilon_n^{-1} + \frac{\mu_{n}^{2}}{2} -2 \log \log \frac{2}{\epsilon_{n}} < \lambdas \mu_n <  \log{2\epsilon_n^{-1}} + \frac{\mu_n^2}{2},
\end{equation}
for sufficiently large $n$.
\end{lemma}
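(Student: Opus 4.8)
The plan is to reproduce the proof of Lemma~\ref{pre:order:tuning} essentially verbatim, since the asserted inequality \eqref{eq:label:optlambda} is literally the same as \eqref{eq:label:optlambda-regime_1}; only a handful of Gaussian-tail estimates have to be re-derived because now $\mu_n\to\infty$ instead of $\mu_n\to 0$. First I would recall that $F(\lambda)=(1-\epsilon_n)\E\etahatS^2(z,\lambda)+\epsilon_n\,\E(\etahatS(\mu_n+z,\lambda)-\mu_n)^2$ is differentiable with $F$ and $F'$ given explicitly by \eqref{eq::soft-thresholding-risk-of-lambda}--\eqref{eq::soft-thresholding-zero-derivative}, that $F(0)=1$, and that $F(+\infty)=\epsilon_n\mu_n^2\to 0$ (the limit is $0$ since $\mu_n=o(\sqrt{\log\epsilon_n^{-1}})$ forces $\epsilon_n\mu_n^2\to 0$). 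Exactly as in Regime~(\rom{1}), if $\lambdas\le C$ along a subsequence then $F(\lambdas)\ge(1-\epsilon_n)r_{S}(C,0)>\epsilon_n\mu_n^2$ for large $n$, contradicting optimality, so $\lambdas\to\infty$; and because $\mu_n\to\infty$ as well, $\lambdas\mu_n\to\infty$ is now automatic, so the subsequence argument used in Regime~(\rom{1}) is not needed here. I would also carry over the ``finally'' step of that proof: using Lemma~\ref{lem::gaussian-tail-mills-ratio} one checks that on the range $\lambda\ge\frac{2}{\mu_n}\log\epsilon_n^{-1}$ --- which is legitimate because $\mu_n^2=o(\log\epsilon_n^{-1})$ makes this threshold diverge and exceed $\mu_n$, and makes $\mu_n/\lambda=o(1)$ uniformly there --- one has $F'(\lambda)\ge\frac{\phi(\lambda)}{\lambda^2}\big[-4+4\epsilon_n+(2+o(1))\epsilon_n e^{-\mu_n^2/2}\lambda\mu_n e^{\lambda\mu_n}\big]>0$ for large $n$; this simultaneously shows $\lambdas<\infty$, so $F'(\lambdas)=0$, and gives the crude bound $\lambdas\le\frac{2}{\mu_n}\log\epsilon_n^{-1}\le 2\log\epsilon_n^{-1}$.

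The one genuinely new step --- trivial when $\mu_n\to 0$ --- is to show $\mu_n=o(\lambdas)$, so that $\lambdas\pm\mu_n=(1+o(1))\lambdas$ and the tail expansions of Regime~(\rom{1}) remain valid. For this I would combine $F(\lambdas)\le F(+\infty)=\epsilon_n\mu_n^2$ with the expansion $\E\etahatS^2(z,\lambdas)=(4+o(1))\lambdas^{-3}\phi(\lambdas)$ (from \eqref{risk:at:zero} and Lemma~\ref{lem::gaussian-tail-mills-ratio}), obtaining $\lambdas^{-3}\phi(\lambdas)=O(\epsilon_n\mu_n^2)$. Taking logarithms gives $\tfrac12\lambdas^2\ge\log\epsilon_n^{-1}-2\log\mu_n-3\log\lambdas-O(1)$; since $\log\mu_n=o(\log\epsilon_n^{-1})$ and, by the crude bound just obtained, $\log\lambdas=o(\log\epsilon_n^{-1})$, this yields $\lambdas^2\ge(2-o(1))\log\epsilon_n^{-1}$. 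Combined with $\mu_n=o(\sqrt{\log\epsilon_n^{-1}})$ this gives $\mu_n/\lambdas\to 0$.

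With $\lambdas\to\infty$, $\lambdas\mu_n\to\infty$ and $\mu_n/\lambdas\to 0$ in place, Lemma~\ref{lem::gaussian-tail-mills-ratio} produces exactly the asymptotic identities \eqref{lasso:regime_1-lo2} and \eqref{eq:limit:case1:last2:new}--\eqref{eq:limit:case1:last1:new} used in the proof of Lemma~\ref{pre:order:tuning} (the $\lambdas+\mu_n$ contribution is again negligible, now because of the factor $e^{-2\lambdas\mu_n}\to 0$), so substituting into $F'(\lambdas)=0$ reproduces the master relation \eqref{a:key:master}: $2+o(1)=\epsilon_n\mu_n\lambdas\exp(\lambdas\mu_n-\mu_n^2/2)$. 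From this point the two inequalities in \eqref{eq:label:optlambda} follow by the same elementary arguments as in Regime~(\rom{1}): if $\lambdas\mu_n\ge\log\frac{2}{\epsilon_n}+\frac{\mu_n^2}{2}$ the right side of \eqref{a:key:master} would be at least $2\mu_n\lambdas\to\infty$; and if $\lambdas\mu_n\le\log\frac{2}{\epsilon_n}+\frac{\mu_n^2}{2}-2\log\log\frac{2}{\epsilon_n}$, then, using the just-proved upper bound, the right side would be at most $\big(2\log\tfrac{1}{\epsilon_n}+\mu_n^2\big)/\big(\log\tfrac{2}{\epsilon_n}\big)^2=o(1)$, either way contradicting \eqref{a:key:master}.

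The main obstacle is the bootstrap of the middle paragraph. Unlike the $\mu_n\to 0$ case, where $\mu_n=o(\lambdas)$ is free, here the a priori order $\lambdas=\Uptheta(\mu_n^{-1}\log\epsilon_n^{-1})$ --- or at least $\mu_n=o(\lambdas)$ --- must be extracted from the two-sided control $F(\lambdas)\le\epsilon_n\mu_n^2$ together with the eventual monotonicity of $F$, before the Gaussian-tail expansions leading to \eqref{a:key:master} can legitimately be applied; and it is precisely in this step that the regime hypothesis $\mu_n=o(\sqrt{\log\epsilon_n^{-1}})$ is used. Everything else transfers from the proof of Lemma~\ref{pre:order:tuning} without change.
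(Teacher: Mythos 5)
Your proposal is correct and follows the paper's proof in all essentials: the paper likewise reduces everything to the master relation $2+o(1)=\epsilon_n\mu_n\lambdas e^{\lambdas\mu_n-\mu_n^2/2}$ obtained from $F'(\lambdas)=0$ via the same Gaussian-tail expansions, and then runs the same two elementary contradiction arguments to get the upper and lower bounds on $\lambdas\mu_n$. The only point of divergence is how the key order relation $\mu_n=o(\lambdas)$ is established: the paper assumes $\lambdas\le C\mu_n$ and contradicts $F(\lambdas)\le F(+\infty)=\epsilon_n\mu_n^2$ via $F(\lambdas)\ge(1-\epsilon_n)\E\hat{\eta}_S^2(z,C\mu_n)=\frac{4+o(1)}{\mu_n^3}\phi(C\mu_n)>\epsilon_n\mu_n^2$, whereas you invert the same comparison at $\lambdas$ itself to get the quantitative bound $\lambdas^2\ge(2-o(1))\log\epsilon_n^{-1}$ --- both hinge on $\mu_n^2=o(\log\epsilon_n^{-1})$ in exactly the same way, so your bootstrap is a valid (and slightly more informative) variant of the paper's step.
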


\begin{proof}
This lemma is an analog of Lemma \lemPropItuning\ (comparing \eqLemPropItuning~with \eqref{eq:label:optlambda}). The proof is thus similar too. We will skip equivalent calculations and only highlight the differences. 

First, we show that $\lambdas\mu_n^{-1}\rightarrow \infty$. Otherwise, $\lambdas\mu_n^{-1}\leq C$ for some constant $C>0$ (take a subsequence if necessary). Then when $n$ is large,
\begin{align*}
     & F(\lambdas) \geq (1-\epsilon_n)\E\hat{\eta}^2_S(z,\lambdas) \geq (1-\epsilon_n)\E\hat{\eta}^2_S(z,C\mu_n) \\
    & = 2(1-\epsilon_n) \left[ (1+(C\mu_{n})^2) \int_{C\mu_{n}}^{\infty} \phi(z) d z - C\mu_{n} \phi(C\mu_{n})\right] \\
    & \overset{(a)}{=} \frac{4+o(1)}{\mu_{n}^{3}}\phi(C\mu_{n}) \overset{(b)}{>} \epsilon_n\mu_n^2=F(+\infty),
\end{align*}
where (a) is by the Gaussian tail bound, and (b) is due to $\mu_{n} = o\left(\sqrt{\log \epsilon_{n}^{-1}}\right)$. The result $F(\lambdas)>F(+\infty)$ contradicts with the optimality of $\lambdas$.

Second, we utilize the derivative equation $F'(\lambdas)=0$ in \eqSoftThresholdZeroDerivative\ to obtain more accurate order information of $\lambdas$. The results $\mu_n\rightarrow \infty,\lambdas\mu_n^{-1}\rightarrow \infty$ imply that $\lambdas\rightarrow\infty,\lambdas-\mu_n\rightarrow\infty,\lambdas\mu_n\rightarrow\infty$. This is all needed to obtain \eqLassoRegimeILoII\ and \eqLimitCaseILastNew{}. As a result, \eqAKeyMaster\ holds here as well:
\begin{align}
\label{first:order:equation}
 2+o(1)=\epsilon_n\mu_n \lambdas \exp(\lambdas \mu_n-\mu_n^2/2).
\end{align}
To reach \eqref{eq:label:optlambda} under the scaling $\mu_n=o(\sqrt{\log\epsilon_n^{-1}})$, the rest of the argument is exactly the same as the one in the proof of Lemma \lemPropItuning\ .  
\end{proof}

The next lemma characterizes $F(\lambdas)$.

\begin{lemma}
\label{function:eval:opt}
Consider $\epsilon_n\rightarrow 0,\mu_n\rightarrow \infty,\mu_n=(\sqrt{\log \epsilon_n^{-1}})$, as $n\rightarrow \infty$. It holds that 
\begin{align*}
F(\lambdas)=\epsilon_{n} \mu_{n}^{2} - \exp\left[ -\frac{1}{2} \frac{1}{\mu_{n}^{2}} \left(\log \frac{1}{\epsilon_{n}}\right)^{2} \left(1+o(1)\right) \right].
\end{align*}
\end{lemma}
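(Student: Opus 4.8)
The plan is to follow the proof of Proposition~\ref{lem::lasso-risk-regime-1} in \secPropI\ essentially verbatim, because by Propositions~\ref{lem::lasso-risk-regime-1} and~\ref{lem::lasso-risk-regime-2} the second-order expansion of $F(\lambdas)$ is the same in Regimes~(\rom{1}) and~(\rom{2}). Lemma~\ref{optimal:tuing:regime:2} already provides everything about the tuning that the argument needs: $F'(\lambdas)=0$ (i.e. $\lambdas$ is interior), $\lambdas\to\infty$, $\lambdas-\mu_n\to\infty$, $\lambdas\mu_n\to\infty$, and moreover $\mu_n=o(\lambdas)$ (since $\lambdas\mu_n=(1+o(1))\log\epsilon_n^{-1}$ while $\mu_n^2=o(\log\epsilon_n^{-1})$, so $\mu_n/\lambdas=(1+o(1))\mu_n^2/\log\epsilon_n^{-1}=o(1)$). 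Under exactly these scaling conditions the Gaussian tail expansions of \lemGaussianMill\ are valid, so the first step is to re-run the computation behind Lemma~\ref{refine:order}: expand the three expectations \eqref{risk:at:zero}--\eqref{eq::soft-thresholding-second-moment} at $\lambda=\lambdas$, and use the stationarity identity \eqSoftThresholdZeroDerivative\ to trade the factor $\phi(\lambdas-\mu_n)/(\lambdas-\mu_n)^3$ for $\phi(\lambdas)/\lambdas^3$. This yields
\[
F(\lambdas)=\epsilon_n\mu_n^2+\frac{4(1-\epsilon_n)\phi(\lambdas)}{\lambdas^{3}}\Big[1-6\lambdas^{-2}+O(\lambdas^{-4})+\Big(\lambdas-\tfrac{3+o(1)}{\lambdas}\Big)\tfrac{\mathcal{A}}{\mathcal{B}}\Big],
\]
with $\mathcal{A},\mathcal{B}$ defined exactly as in Lemma~\ref{refine:order}; the only point to check here is that every $O(\cdot)$ recorded there for $\mu_n\to 0$ remains uniform once we only know $\mu_n=o(\lambdas)$, which holds because $\lambdas-\mu_n\asymp\lambdas$.

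The second step is to simplify the bracket. Since $\lambdas\mu_n\to\infty$, the factors $e^{-2\lambdas\mu_n}\asymp\epsilon_n^2$ inside $\mathcal{A}$ and $\mathcal{B}$ beat every polynomial in $\lambdas$ and can be dropped; combined with $\mu_n=o(\lambdas)$ this gives $\mathcal{B}=\mu_n\lambdas^2(1+o(1))$. The crucial identity is the exact cancellation $(\lambdas-\tfrac{3+o(1)}{\lambdas})\mathcal{A}+\mathcal{B}=(-1+o(1))\lambdas\mu_n^2$: the $\mp\mu_n\lambdas^2$ leading contributions of $\lambdas\mathcal{A}$ and $\mathcal{B}$ annihilate, and expanding one order further in $\mu_n/\lambdas$ shows the genuine leading term is $-\mu_n^2\lambdas$ (the $\mu_n^3$, $O(\mu_n)$, and $\lambdas^{-1}$ remainders being $o(\mu_n^2\lambdas)$ because $\lambdas\gg\mu_n$ and $\lambdas\mu_n\to\infty$). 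Hence $1+(\lambdas-\tfrac{3+o(1)}{\lambdas})\mathcal{A}/\mathcal{B}=-\tfrac{\mu_n}{\lambdas}(1+o(1))$, and since $\lambdas^{-2}=o(\mu_n/\lambdas)$ (equivalently $\mu_n\lambdas\to\infty$) the whole bracket equals $-\tfrac{\mu_n}{\lambdas}(1+o(1))$. Therefore $F(\lambdas)=\epsilon_n\mu_n^2-\dfrac{(4+o(1))\mu_n\,\phi(\lambdas)}{\lambdas^{4}}$.

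The last step inserts the order of $\lambdas$. Lemma~\ref{optimal:tuing:regime:2} together with $\mu_n^2=o(\log\epsilon_n^{-1})$ gives $\lambdas\mu_n=(1+o(1))\log\epsilon_n^{-1}$, hence $\lambdas=(1+o(1))\tfrac{\log\epsilon_n^{-1}}{\mu_n}$ and $\lambdas^{2}=(1+o(1))\tfrac{(\log\epsilon_n^{-1})^{2}}{\mu_n^{2}}$. Then $\phi(\lambdas)=\tfrac{1}{\sqrt{2\pi}}\exp[-\tfrac12\lambdas^{2}]=\exp[-\tfrac12\tfrac{(\log\epsilon_n^{-1})^{2}}{\mu_n^{2}}(1+o(1))]$, and the algebraic prefactor $\tfrac{(4+o(1))\mu_n}{\lambdas^{4}}$ has logarithm of order $O(\log\log\epsilon_n^{-1}+\log\mu_n)=o((\log\epsilon_n^{-1})^{2}/\mu_n^{2})$, so it is absorbed into the $(1+o(1))$ in the exponent. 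This produces $F(\lambdas)=\epsilon_n\mu_n^2-\exp[-\tfrac12\tfrac{1}{\mu_n^{2}}(\log\tfrac1{\epsilon_n})^{2}(1+o(1))]$, which is the claim.

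The main obstacle is the cancellation in the second step: because the two dominant terms of $\lambdas\mathcal{A}$ and $\mathcal{B}$ vanish together, one must carry the expansions of $\mathcal{A}$ and $\mathcal{B}$ one extra order in $\mu_n/\lambdas$ and verify (a) that the $e^{-2\lambdas\mu_n}$-weighted pieces of $\mathcal{A},\mathcal{B}$ are negligible in the new regime $\mu_n\to\infty$ (they are, since $\lambdas\mu_n\to\infty$ forces $e^{-2\lambdas\mu_n}$ to dominate any polynomial factor), and (b) that the error terms inherited from the tail expansions are smaller than the surviving $\mu_n/\lambdas$ term. Everything else is a routine transcription of the Regime~(\rom{1}) argument.
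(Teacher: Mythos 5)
Your proposal is correct and follows essentially the same route as the paper: Gaussian-tail expansion of the three expectations at $\lambdas$, use of the stationarity equation $F'(\lambdas)=0$ to handle the near-cancellation of the $\epsilon_n\mu_n$-order contributions, arrival at the intermediate formula $F(\lambdas)=\epsilon_n\mu_n^2-(4+o(1))\mu_n\phi(\lambdas)/\lambdas^{4}$, and substitution of $\lambdas=(1+o(1))\mu_n^{-1}\log\epsilon_n^{-1}$ with the polynomial prefactor absorbed into the $(1+o(1))$ in the exponent. The only difference is organizational: the paper's own proof deliberately avoids re-deriving the $\mathcal{A}/\mathcal{B}$ bookkeeping of \lemRefineOrder\ and instead solves the refined stationarity identity \eqref{key:refinement} for $e^{\lambdas\mu_n-\mu_n^2/2}$ and substitutes it directly, whereas you transplant the Regime (\rom{1}) machinery after correctly verifying that its expansions and the key cancellation $(\lambdas-\tfrac{3+o(1)}{\lambdas})\mathcal{A}+\mathcal{B}=(-1+o(1))\lambdas\mu_n^2$ only require $\lambdas\to\infty$, $\mu_n=o(\lambdas)$, and $\lambdas\mu_n\to\infty$, all of which Lemma \ref{optimal:tuing:regime:2} supplies.
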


\begin{proof}
This proof deviates a bit from the one of \lemRefineOrder{}. We will more directly utilize the order information of $\lambdas$ proved in Lemma \ref{optimal:tuing:regime:2} to calculate $F(\lambdas)$. Before that, we need a refinement of \eqref{first:order:equation}. This is achieved by refining \eqLassoRegimeILoII\ and \eqLimitCaseILastNew\ with higher-order approximations: 
\begin{align*}
& - \lambdas\int_{\lambdas}^{\infty} \phi(z)dz + \phi(\lambdas) = \frac{1+O(\lambdas^{-2})}{\lambdas^{2}}\phi(\lambdas),  \\
  & -\phi(\lambdas-\mu_{n}) + \lambdas\int_{\lambdas-\mu_{n}}^{\infty} \phi(z)dz = \\
  & \left[\frac{\mu_{n}}{\lambdas-\mu_{n}} - \frac{\lambdas+O(\lambdas^{-1})}{(\lambdas-\mu_{n})^{3}}\right]\phi(\lambdas-\mu_{n}), \\
    & -\phi(\lambdas+\mu_{n}) + \lambdas\int_{\lambdas+\mu_{n}}^{\infty} \phi(z)dz
    = o\left(\frac{1}{\lambdas^{4}}\right) \phi(\lambdas-\mu_{n}).
\end{align*}
Plugging the above into \eqSoftThresholdZeroDerivative\ and arranging terms gives 
\begin{align}
    & e^{\lambdas\mu_n-\frac{\mu_n^2}{2}}\frac{\epsilon_n\mu_n\lambdas^2}{2(\lambdas-\mu_n)}-1 \nonumber \\
    =~&\frac{(1-\epsilon_n)(1+O(\lambdas^{-2}))\mu_n}{\mu_n-(\lambdas-\mu_n)^{-2}(\lambdas+O(\lambdas^{-1}))}-1 \nonumber \\
    =~&\frac{\lambdas(\lambdas-\mu_n)^{-2}+O(\lambdas^{-2}\mu_n)}{\mu_n-(\lambdas-\mu_n)^{-2}(\lambdas+O(\lambdas^{-1}))}=\frac{1+o(1)}{\lambdas\mu_n}, \label{key:refinement}
\end{align}
where in the second equality we have used $\epsilon_n\lambdas^2=o(1)$ and $\lambdas^{-1}\mu_n=o(1)$ which are implied by the order of $\lambdas$ from Lemma \ref{optimal:tuing:regime:2}.

Now we are ready to evaluate $F(\lambdas)$. We first use Gaussian tail bound to approximate the three expectations (i.e. \eqRiskAtZertoSoftSecondMom) in the expression of $F(\lambdas)$ (i.e. Equation \eqref{eq::soft-thresholding-risk-of-lambda}):
\begin{align*}
\E\hat{\eta}^2_S(z,\lambdas) &= \frac{4+O(\lambdas^{-2})}{\lambdas^{3}}\phi(\lambdas), \\
\E\hat{\eta}_S(\mu_n+z,\lambdas)&= \frac{1+O(\lambdas^{-2})}{(\lambdas - \mu_{n})^{2}} \phi(\lambdas-\mu_{n}), \\
\E \etahatS^{2}(\mu_{n}+z, \lambdas)&=\frac{2+O(\lambdas^{-2})}{(\lambdas-\mu_{n})^{3}} \phi(\lambdas - \mu_{n}).
\end{align*}
Using these three approximations in Equation \eqref{eq::soft-thresholding-risk-of-lambda}, we obtain
\begin{align*}
    & F(\lambdas)  = (1-\epsilon_{n}) \frac{4+O(\lambdas^{-2})}{\lambdas^{3}}\phi(\lambdas) + \epsilon_{n} \mu_{n}^{2} \\
    & - 2\epsilon_{n} \mu_{n} \frac{1+O(\lambdas^{-2})}{(\lambdas-\mu_{n})^{2}} \phi(\lambdas-\mu_{n}) + \epsilon_{n} \frac{2+O(\lambdas^{-2})}{(\lambdas-\mu_{n})^{3}} \phi(\lambdas -\mu_{n}) \\
    & = \epsilon_{n} \mu_{n}^{2} - \phi(\lambdas) \left[ \frac{-4+O(\epsilon_{n}+\lambdas^{-2})}{\lambdas^{3}} + \frac{2\epsilon_{n}\mu_{n}}{(\lambdas-\mu_{n})^{2}} \right. \\
    & \cdot \left. e^{\lambdas \mu_{n} - \frac{\mu_{n}^{2}}{2}} \left(1+O\left(\frac{1}{\lambdas\mu_{n}}\right)\right)\right].
    \end{align*}
    We further replace $e^{\lambdas\mu_n-\frac{\mu_n^2}{2}}$ in the above with the result from \eqref{key:refinement} to have
    \begin{align*}
    & F(\lambdas) = \epsilon_{n} \mu_{n}^{2} - \phi(\lambdas) \cdot \left[ \frac{-4 + O(\epsilon_{n} + \lambdas^{-2})}{\lambdas^{3}} \right. \\
    & + \left. \frac{4}{\lambdas^{2}(\lambdas-\mu_{n})} \left(1+O\left(\frac{1}{\lambdas\mu_{n}}\right)\right)\right] \\
    & \overset{(a)}{=} \epsilon_{n} \mu_{n}^{2} - \phi(\lambdas) \frac{4\mu_{n}}{\lambdas^{3}(\lambdas-\mu_{n})} \left(1+O\left(\frac{1}{\mu_{n}^{2}} \right)\right) \\
    & = \epsilon_{n} \mu_{n}^{2} - \frac{4+o(1)}{\sqrt{2\pi}} e^{-\frac{\lambdas^2}{2}} \cdot \frac{\mu_{n}}{\lambdas^4} \nonumber \\
        & \overset{(b)}{=} \epsilon_{n} \mu_{n}^{2} - \exp\left[ -\frac{1}{2} \frac{1}{\mu_{n}^{2}} \left(\log \frac{1}{\epsilon_{n}}\right)^{2} \left(1+o(1)\right) \right].
\end{align*}
Here, to obtain $(a)$ we have used $\epsilon_n\lambdas^2=o(1)$ and $\lambdas^{-1}\mu_n=o(1)$ implied by Lemma \ref{optimal:tuing:regime:2}; $(b)$ is due to the order $\lambdas=\mu_n^{-1}\log\epsilon_n^{-1}(1+o(1))$ again from Lemma \ref{optimal:tuing:regime:2}.
\end{proof}

Lemma \ref{function:eval:opt} readily leads to the supremum risk of optimally tuned soft thresholding:
\begin{align*}
& \inf_{\lambda}\sup_{\theta\in \Theta(k_n,\tau_n)}\E_{\theta}\|\hat{\eta}_S(y,\lambda)-\theta\|_2^2 
= n\sigma_n^2F(\lambdas)\\
= ~& n\sigma_n^2\left(\epsilon_{n} \mu_{n}^{2} - \exp\bigg[ -\frac{1}{2} \frac{1}{\mu_{n}^{2}} \left(\log \frac{1}{\epsilon_{n}}\right)^{2} \left(1+o(1)\right) \bigg]\right).
\end{align*}


\subsection{Proof of Proposition \ref{lem::hardthreshold-risk-regime-2}}\label{proof:hardthreshold:reg2}
The proof of this proposition is similar to the proof of Proposition \ref{lem::hardthreshold-risk-regime-1} presented in Section \ref{proof:hardthreshold:reg1}. Hence, for the sake of brevity we adopt the same notation from Section \ref{proof:hardthreshold:reg1} and only discuss the differences. If $R_H (\Theta (k_n, \tau_n), \sigma_n)$ denotes the supremum risk of optimally tuned hard thresholding estimator, then we will have
\begin{equation*}
   R_H (\Theta (k_n, \tau_n), \sigma_n) = \sigma_n^2\cdot  R_H (\Theta (k_n, \mu_n), 1). 
\end{equation*}
   Without loss of generality, let $\sigma_n = 1$ in the model.  As in the proof of Proposition \ref{lem::hardthreshold-risk-regime-1}, we obtain a lower bound by calculating the risk at the following specific value of $\underline{\theta}$ such that $\underline{\theta}_{i} = \mu_{n}$ for $i \in \{ 1, 2, \ldots, k_n\}$ and $\underline{\theta}_i = 0$ for $i > k_n$. We have
   \begin{equation}
    \E_{\underline{\theta}} \| \etahatH(y,\lambda) -  \underline{\theta}\|_2^{2}  =  n \Big[ (1-\epsilon_{n})r_{H}(\lambda, 0) + \epsilon_{n} r_{H}(\lambda,\mu_{n}) \Big]. \label{eq:hard:decomp:p4}
   \end{equation}
To evaluate $\inf_{\lambda>0}\E_{\underline{\theta}}\| \etahatH(y,\lambda) -  \underline{\theta}\|^{2}$, we consider three scenarios for the optimal choice of $\lambda_n$, denoted by $\lambda_n^*$.
\begin{itemize}
    \item \textbf{Case \rom{1} $\lambda^*_{n} = O(1)$:} In this case, $\lambda^*_{n}\leq c$ for some constant $c >0$. Using the same argument as the one presented for Case \rom{1} in the proof of Proposition \ref{lem::hardthreshold-risk-regime-1}, we have 
    \begin{align*}
     \inf_{\lambda>0} ~\E_{\underline{\theta}}\| \etahatH(y,\lambda) -  \underline{\theta}\|_2^{2} \geq 2n(1-\epsilon_{n}) (1-\Phi(c))  . 
    \end{align*}
    Since $\epsilon_n \mu_{n}^2 \rightarrow 0$ and $(1-\epsilon_{n}) 2(1-\Phi(c)) = \Uptheta(1)$, we conclude that $\inf_{\lambda>0}\E_{\underline{\theta}}\| \etahatH(y,\lambda) -  \underline{\theta}\|_2^{2} = \omega(n\epsilon_n \mu_{n}^2 )$.  
\item \textbf{Case \rom{2}} $\lambda^*_n = \omega(1)$ and $\lambda^*_{n}=O(\mu_n)$: Let $c_{1}$ be a fixed number larger than $1$. There exists $c_{2}$ such that for large enough $n$, $c_{1}<\lambda^*_n \leq c_{2} \mu_n$. We thus obtain 
  \begin{align*}
      & \inf_{\lambda>0} ~\E_{\underline{\theta}}\| \etahatH(y,\lambda) -  \underline{\theta}\|_2^{2}=\E_{\underline{\theta}}\| \etahatH(y,\lambda^*_{n}) -  \underline{\theta}\|_2^{2} \nonumber \\
   &= n \Big[ (1-\epsilon_{n})r_{H}(\lambda^*_{n}, 0) + \epsilon_{n} r_{H}(\lambda^*_{n},\mu_{n}) \Big] \nonumber \\
        & \geq  n(1-\epsilon_{n}) r_{H}(\lambda^*_{n} ,0) \nonumber \\
        &=  n (1-\epsilon_{n}) \Big[ 2\lambda^*_{n}\phi(\lambda^*_{n}) + 2(1-\Phi(\lambda^*_{n})) \Big] \\
        &\geq  2n (1-\epsilon_{n}) \lambda^*_{n}\phi(\lambda^*_{n}) \\
        &\geq 2n (1- \epsilon_n) \frac{c_{1}}{\sqrt{2\pi}} {\rm e}^{- \frac{c_{2}^2 \mu_n^2}{2}}\geq n\epsilon_n\mu_n^2,
    \end{align*}
    where the last inequality is due to the scaling $\mu_n = o(\sqrt{\log\epsilon_{n}^{-1}})$ in the current regime. 
    \item \textbf{Case \rom{3} $\lambda^*_{n}=\omega(\mu_n)$:} In a similar way as in the proof of Case II of Proposition \ref{lem::hardthreshold-risk-regime-1}, we can conclude that 
    \begin{align*}
      &\inf_{\lambda>0} ~\E_{\underline{\theta}}\| \etahatH(y,\lambda) -  \underline{\theta}\|_2^{2}\nonumber \\
      \geq~& k_n\mu_{n}^{2} +k_n(\lambda_n^*-\mu_n+o(\lambda_n^*))\cdot \phi(\lambda^*_{n} - \mu_{n}) \nonumber \\
      +~& k_n (\lambda^*_{n} + \mu_{n}+o(\lambda_n^*))\cdot \phi(\lambda^*_{n} + \mu_{n})\\
        \geq~& k_n \mu_{n}^{2}=n\epsilon_n\mu_n^2.
    \end{align*}
    \end{itemize}
Note that since the three cases we have discussed above cover all the ranges of $\lambda_n^*$, we conclude that
\begin{eqnarray*}
R_H (\Theta (k_n, \mu_n), 1)\geq \inf_{\lambda>0} \E_{\underline{\theta}}\| \etahatH(y,\lambda) -  \underline{\theta}\|_2^{2} \geq n \epsilon_{n} \mu_n^2.
\end{eqnarray*}
The proof of the upper bound is the same as the proof of the upper bound for Proposition \ref{lem::hardthreshold-risk-regime-1} and is hence skipped here.


\subsection{Proof of Theorem \ref{thm::regime_4}}\label{prf:thm:nine}

Based on the scale invariance property of minimax risk mentioned in \secScaling, it is equivalent to prove
\begin{equation*}
    R(\Theta(k_n,\mu_{n}), 1) = 2n\epsilon_n\log \epsilon_{n}^{-1} - 2 n\epsilon_n \nu_{n} \sqrt{2 \log \nu_{n}} \left( 1+o(1) \right),
\end{equation*}
where $\nu_{n} = \sqrt{2 \log \epsilon_{n}^{-1}}$. As in the proof of Theorems \ref{thm::regime_1} and \ref{thm::regime_2}, we first obtain an upper bound by analyzing the supremum risk of hard thresholding, and then develop a matching lower bound via the Bayesian approach. Before proceeding with the proof, we cover a few properties of the one-dimensional risk function of hard thresholding that becomes useful in the proof of Theorem \ref{thm::regime_4}.  

\subsubsection{Properties of the risk of hard thresholding estimator}

Consider the one-dimensional risk of hard thresholding for $\mu \in \mathbb{R}$ and $\lambda>0$,
\begin{equation*}
    r_{H}(\lambda,\mu) := \E \left( \etahatH(\mu + z,\lambda) - \mu \right)^{2}, \quad z \sim \mathcal{N}(0,1).
\end{equation*}
The following lemma from \cite{johnstone19} gives simple and yet accurate bounds for $r_{H}(\lambda,\mu)$. Let
\begin{equation*}
    \Bar{r}_{H}(\lambda, \mu) = \begin{cases}
    \min \{r_{H}(\lambda,0) + 1.2\mu^{2},  1+ \mu^{2}\} & 0 \leq \mu \leq \lambda \\
    1 + \mu^{2}(1-\Phi(\mu-\lambda)) & \mu \geq \lambda,
    \end{cases}
\end{equation*}
where $\Phi(\cdot)$ is the CDF of standard normal random variable.

\begin{lemma}[Lemma 8.5 in \cite{johnstone19}]
\label{lem::hard-threshold-risk-nonasymptotic}
\emph{}
\begin{enumerate}[label=(\alph*)]
    \item For $\lambda > 0$ and $\mu \in \R$,
    \begin{equation*}
        (5/12)\Bar{r}_{H}(\lambda,\mu) \leq r_{H}(\lambda,\mu) \leq \Bar{r}_{H}(\lambda,\mu).
    \end{equation*}
    \item The large $\mu$ component of $\Bar{r}_{H}$ has the bound
    \begin{equation*}
        \sup_{\mu \geq \lambda} \mu^{2}(1-\Phi(\mu-\lambda)) \leq \begin{cases}
        \lambda^{2}/2 & \text{if } \lambda \geq \sqrt{2\pi} \\
        \lambda^{2} & \text{if } \lambda \geq 1.
        \end{cases}
    \end{equation*}
\end{enumerate}
\end{lemma}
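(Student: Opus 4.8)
The plan is to work entirely with the exact one-dimensional risk. Since hard thresholding returns either $y=\mu+z$ or $0$ on disjoint events, one has
\[
r_H(\lambda,\mu)=\E\big[z^{2}I(|\mu+z|>\lambda)\big]+\mu^{2}\,\mathbb P\big(|\mu+z|\le\lambda\big),\qquad \mathbb P\big(|\mu+z|\le\lambda\big)=\Phi(\lambda-\mu)-\Phi(-\lambda-\mu).
\]
By the $\mu\mapsto-\mu$ symmetry it suffices to treat $\mu\ge0$, which is also why $\bar r_H$ depends on $\mu$ only through $\mu^{2}$. The routine half of (a) drops straight out of this formula: $\E[z^{2}I(\cdot)]\le\E z^{2}=1$ gives $r_H\le1+\mu^{2}$ for all $\mu$, and when $\mu\ge\lambda$ we have $\lambda-\mu\le0$, hence $\mathbb P(|\mu+z|\le\lambda)\le\Phi(\lambda-\mu)=1-\Phi(\mu-\lambda)$ and $r_H\le1+\mu^{2}(1-\Phi(\mu-\lambda))$, the $\mu\ge\lambda$ branch of $\bar r_H$.

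The substantive bound in (a) is $r_H(\lambda,\mu)\le r_H(\lambda,0)+1.2\,\mu^{2}$ for $0\le\mu\le\lambda$, equivalently that $\mu\mapsto r_H(\lambda,\mu)-1.2\mu^{2}$ is non-increasing there. Differentiating the risk formula,
\[
\partial_\mu r_H(\lambda,\mu)=\lambda(\lambda-2\mu)\phi(\lambda-\mu)-\lambda(\lambda+2\mu)\phi(\lambda+\mu)+2\mu\,\mathbb P\big(|\mu+z|\le\lambda\big),
\]
which vanishes at $\mu=0$, so it would suffice to prove $\partial_\mu r_H(\lambda,\mu)\le2.4\,\mu$ on $[0,\lambda]$ and integrate. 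The difficulty — and the reason the constant $1.2$ is tight — is that the two contributions cannot be bounded separately: the second $\mu$-derivative at $\mu=0$, namely $2\lambda\phi(\lambda)(\lambda^{2}-2)+2(2\Phi(\lambda)-1)$, approaches $2.4$ from below only because the positive first summand (whose maximum over $\lambda$ slightly exceeds $0.4$) is compensated by $2(2\Phi(\lambda)-1)<2$. I would therefore split over $\lambda$: for $\lambda\le\sqrt2$ the factor $\lambda^{2}-2$ is non-positive and the density terms are shown to be $\le0$; for large $\lambda$ the factor $\phi(\lambda)$ renders them negligible; and on the bounded middle interval one verifies $\partial_\mu r_H(\lambda,\mu)\le2.4\mu$ by explicit Gaussian estimates. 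The lower bound $(5/12)\bar r_H\le r_H$ follows in the same spirit: for $\mu\ge\lambda$ one lower-bounds $\E[z^{2}I(|\mu+z|>\lambda)]$ by a tail integral and $\mu^{2}\mathbb P(|\mu+z|\le\lambda)=\mu^{2}\big(\bar\Phi(\mu-\lambda)-\bar\Phi(\mu+\lambda)\big)$ from below, then compares with $1+\mu^{2}\bar\Phi(\mu-\lambda)$ using the numeric bound $\sup_{\mu}\mu^{2}\bar\Phi(\mu)\le c$ (a constant, bounded exactly as in (b)); for $0\le\mu\le\lambda$ one uses $r_H(\lambda,\mu)\ge r_H(\lambda,0)$ together with a direct comparison.

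For part (b), set $f(\mu)=\mu^{2}\bar\Phi(\mu-\lambda)$ and $u=\mu-\lambda\ge0$; then $f'(\mu)=(\lambda+u)K(u)$ with $K(u)=2\bar\Phi(u)-(\lambda+u)\phi(u)$. Since $K'(u)=\phi(u)(u^{2}+\lambda u-3)$, $K$ is first decreasing then increasing, while $K(u)\to0^{-}$ as $u\to\infty$ by the Mills bound $\bar\Phi(u)\le u^{-1}\phi(u)$ (Lemma \ref{lem::gaussian-tail-mills-ratio}). Hence if $K(0)=1-\lambda/\sqrt{2\pi}\le0$, i.e.\ $\lambda\ge\sqrt{2\pi}$, then $K\le0$ on $[0,\infty)$, $f$ is non-increasing, and $\sup_{\mu\ge\lambda}f(\mu)=f(\lambda)=\lambda^{2}/2$. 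When $1\le\lambda<\sqrt{2\pi}$, $f$ has a unique interior maximizer $\mu^{*}=\lambda+u^{*}$ with $2\bar\Phi(u^{*})=(\lambda+u^{*})\phi(u^{*})$; the Mills bound forces $u^{*}(\lambda+u^{*})\le2$, hence $u^{*}\le1$ (as $\lambda\ge1$), and $f(\mu^{*})=\tfrac12(\lambda+u^{*})^{3}\phi(u^{*})$, so $f(\mu^{*})\le\lambda^{2}$ reduces to $(\lambda+u^{*})^{3}e^{-u^{*2}/2}\le2\sqrt{2\pi}\,\lambda^{2}$, an inequality in the single bounded parameter $\lambda\in[1,\sqrt{2\pi}]$ (with $u^{*}$ determined by $\lambda$) that is checked directly. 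The main obstacle of the whole lemma is thus the sharpness of the constant $1.2$ (and dually $5/12$) in (a): it is attained in the limit $\lambda\to\infty$, $\mu\to0$, leaving no slack to decouple the variance term from the ``zeroing'' term in $r_H$, so the argument must proceed through a careful, partly numerical, case analysis in $\lambda$ rather than a single clean inequality.
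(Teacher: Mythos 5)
The paper does not prove this statement at all: it is imported verbatim as Lemma 8.5 of \cite{johnstone19}, so there is no internal argument to compare against and you are supplying a proof where the authors supply a citation. Within that framing, your part (b) is essentially a complete and correct proof: the factorization $f'(\mu)=(\lambda+u)K(u)$ with $K(u)=2\bar\Phi(u)-(\lambda+u)\phi(u)$, the sign analysis of $K'$, the conclusion $K\le 0$ when $\lambda\ge\sqrt{2\pi}$ (giving $\sup f=f(\lambda)=\lambda^2/2$), and the reduction of the case $1\le\lambda<\sqrt{2\pi}$ to the stationarity identity $f(\mu^*)=\tfrac12(\lambda+u^*)^3\phi(u^*)$ with $u^*(\lambda+u^*)\le 2$ are all sound; the residual one-parameter inequality on a compact $\lambda$-interval is legitimately checkable (note a crude bound $u^*\le1$, $e^{-u^{*2}/2}\le1$ fails at $\lambda=1$, so the check genuinely needs the defining relation for $u^*$). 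Your derivative formula for $\partial_\mu r_H$ and the routine halves of (a) ($r_H\le 1+\mu^2$ everywhere, and $r_H\le 1+\mu^2(1-\Phi(\mu-\lambda))$ for $\mu\ge\lambda$) are also correct.

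The genuine gap is that the two constant-sensitive inequalities in part (a) are announced rather than proved. For the upper bound you reduce matters to $\partial_\mu r_H(\lambda,\mu)\le 2.4\mu$ on $0\le\mu\le\lambda$ and correctly diagnose why the terms cannot be decoupled (the non-probability part of the second derivative at $\mu=0$ peaks near $0.44$, so the bound survives only through the coupling with $2\mu\,\mathbb P(|\mu+z|\le\lambda)$), but the verification on the ``bounded middle interval'' of $\lambda$ — which is exactly where all the work lies — is left as an unexecuted case analysis. Worse, the $(5/12)$ lower bound, which is the other half of the statement, is dispatched with ``follows in the same spirit'': the intermediate claim $r_H(\lambda,\mu)\ge r_H(\lambda,0)$ is itself asserted without proof (and is not obvious, since $r_H(\lambda,\cdot)$ is not monotone on $[0,\lambda]$ — your own derivative formula shows $\partial_\mu r_H<0$ near $\mu=\lambda$ for large $\lambda$), and the ``direct comparison'' that would produce the specific factor $5/12$ against $\min\{r_H(\lambda,0)+1.2\mu^2,\,1+\mu^2\}$ is not specified at all. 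As it stands the proposal is a credible proof plan with one part fully executed, not a proof of the lemma.
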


Our main goal in this section is to derive accurate approximations for $\sup_{\mu \geq 0} r_{H}(\lambda, \mu)$. The next lemma provides an accurate characterization of the risk for two different choices of $\mu$. The importance of these choices becomes clear when we analyze $\sup_{\mu \geq 0} r_{H}(\lambda, \mu)$ later in this section.

\begin{lemma}\label{lem::hard-threshold-risk-near-lambda}
As $\lambda \rightarrow \infty$, the risk of the hard thresholding, $r_{H}(\lambda,\mu)$, satisfies
\begin{align*}
r_{H}(\lambda, \lambda) & = \frac{1+o(1)}{2}\lambda^{2},\\
r_{H}(\lambda, \lambda-\sqrt{2 \log \lambda}) & = \lambda^2-(2\sqrt{2}+o(1))\lambda\sqrt{\log\lambda}.
\end{align*}
\end{lemma}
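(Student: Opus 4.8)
The plan is to work directly from the closed-form expression for the one-dimensional hard thresholding risk established in \eqref{eq:riskexp:hard},
\[
r_{H}(\lambda,\mu) = (\mu^{2}-1)\big[\Phi(\lambda-\mu) - \Phi(-\lambda-\mu)\big] + 1 + (\lambda-\mu)\phi(\lambda-\mu) + (\lambda+\mu)\phi(\lambda+\mu),
\]
substitute the two prescribed values of $\mu$, and invoke the Gaussian tail bound of Lemma \ref{lem::gaussian-tail-mills-ratio} to identify and discard the negligible terms. No new machinery is needed beyond careful asymptotic bookkeeping.

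For the first identity I would set $\mu=\lambda$, so that $\lambda-\mu=0$ and $\lambda+\mu=2\lambda$, giving
\[
r_{H}(\lambda,\lambda) = (\lambda^{2}-1)\Big(\tfrac12 - \Phi(-2\lambda)\Big) + 1 + 2\lambda\phi(2\lambda).
\]
Since $\Phi(-2\lambda)$ and $\phi(2\lambda)$ decay faster than any polynomial in $\lambda^{-1}$, both $(\lambda^{2}-1)\Phi(-2\lambda)$ and $2\lambda\phi(2\lambda)$ are $o(1)$, whence $r_{H}(\lambda,\lambda) = \tfrac12\lambda^{2} + \tfrac12 + o(1) = \tfrac{1+o(1)}{2}\lambda^{2}$.

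For the second identity I would write $\mu = \lambda - s$ with $s:=\sqrt{2\log\lambda}$, noting $s\to\infty$ while $s/\lambda\to 0$, so that $\lambda-\mu = s$ and $\lambda+\mu = 2\lambda - s$. Expanding $\mu^{2} = \lambda^{2} - 2\lambda s + s^{2}$, the formula becomes
\[
r_{H}(\lambda,\mu) = (\lambda^{2} - 2\lambda s + s^{2} - 1)\big[\Phi(s) - \Phi(-(2\lambda-s))\big] + 1 + s\phi(s) + (2\lambda-s)\phi(2\lambda-s).
\]
The relevant numerics are $\phi(s) = \tfrac{1}{\sqrt{2\pi}}e^{-\log\lambda} = \tfrac{1}{\sqrt{2\pi}\,\lambda}$ and, by Mills' ratio (Lemma \ref{lem::gaussian-tail-mills-ratio}), $1-\Phi(s) = (1+o(1))\phi(s)/s = (1+o(1))\tfrac{1}{\sqrt{2\pi}\,\lambda s}$, while $\Phi(-(2\lambda-s))$ and $(2\lambda-s)\phi(2\lambda-s)$ are super-polynomially small. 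Hence the bracket equals $1 - (1+o(1))\tfrac{1}{\sqrt{2\pi}\,\lambda s}$, and multiplying by $\mu^{2}-1 = \Theta(\lambda^{2})$ produces, beyond the leading term $\lambda^{2} - 2\lambda s + s^{2} - 1$, only a correction of order $\lambda^{2}\cdot\tfrac{1}{\lambda s} = \lambda/\sqrt{2\log\lambda}$. Since $\lambda/\sqrt{\log\lambda} = o(\lambda\sqrt{\log\lambda})$, and likewise $s\phi(s) = O(\sqrt{\log\lambda}/\lambda) = o(1)$ and $s^{2} = 2\log\lambda = o(\lambda\sqrt{\log\lambda})$, all these are absorbed into the error term. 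Collecting, $r_{H}(\lambda,\mu) = \lambda^{2} - 2\lambda s + o(\lambda s) = \lambda^{2} - (2\sqrt{2}+o(1))\lambda\sqrt{\log\lambda}$, using $2\lambda s = 2\sqrt{2}\,\lambda\sqrt{\log\lambda}$.

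The argument is essentially a careful expansion; the only genuinely delicate point — hence the main (mild) obstacle — is verifying that the Mills-ratio contribution $\mu^{2}(1-\Phi(s))$, which is of true polynomial size $\Theta(\lambda/\sqrt{\log\lambda})$ rather than exponentially small, is nonetheless of strictly smaller order than the asserted second-order term $\lambda\sqrt{\log\lambda}$, so that it can be swept into the $o(1)$.
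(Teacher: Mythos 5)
Your proposal is correct and follows essentially the same route as the paper: substitute the two values of $\mu$ into the closed-form risk expression, apply the Mills-ratio tail bound to the $\Phi(\lambda-\mu)$ term, and check that the resulting $\Theta(\lambda/\sqrt{\log\lambda})$ correction (together with the super-polynomially small terms) is $o(\lambda\sqrt{\log\lambda})$. The delicate point you flag at the end is exactly the step the paper's proof also hinges on, and your bookkeeping of it is accurate.
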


\begin{proof}
First note that the risk of hard thresholding can be written as
\begin{align}
     r_{H}(\lambda, \mu) = &\mu^{2} \left[ \Phi(\lambda - \mu) - \Phi(-\lambda - \mu) \right] + \int_{|z+\mu|>\lambda} z^{2} \phi(z) d z \nonumber\\
    = & (\mu^{2} - 1) \left[ \Phi(\lambda-\mu) - \Phi(-\lambda - \mu) \right] \nonumber \\
    + & 1 + (\lambda - \mu) \phi(\lambda -\mu) + (\lambda + \mu) \phi(\lambda + \mu). \label{eq::hard-threshold-risk-form-3}
\end{align}
Let $\mu = \lambda - \sqrt{2 \log \lambda}$. As $\lambda \rightarrow \infty$, we analyze the order of each term in the above expression:
\begin{align*}
    & r_{H}(\lambda, \lambda - \sqrt{2 \log \lambda}) \\
    = & ~  [(\lambda - \sqrt{2 \log \lambda})^{2}-1]\cdot \left( 1-\frac{1+o(1)}{\sqrt{2\log \lambda}}\phi(\sqrt{2\log\lambda}) \right) \\
    + & ~  1 + \sqrt{2 \log \lambda} \cdot \phi(\sqrt{2 \log \lambda}) \\
    + & ~ (2\lambda-\sqrt{2\log\lambda})\phi(2\lambda-\sqrt{2\log\lambda})  \\
    = & ~ \left( \lambda - \sqrt{2 \log \lambda} \right)^{2}  + O\left(\frac{\lambda}{\sqrt{\log \lambda}}\right) \\
    = & ~ \lambda^2-(2\sqrt{2}+o(1))\lambda\sqrt{\log\lambda},
\end{align*}
where in the first equality we have applied the Gaussian tail bound: $1-\Phi(x)=(1+o(1))x^{-1}\phi(x)$ as $x \rightarrow \infty$. To prove the first part of the lemma, let $\mu = \lambda$. From \eqref{eq::hard-threshold-risk-form-3} we have
\begin{align*}
    r_{H}(\lambda, \lambda) & = (\lambda^{2} - 1) \left(\frac{1}{2} - \Phi(-2\lambda)\right) + 1 + 2 \lambda \phi(2\lambda) \\
    & = \lambda^{2}/2 \left(1 + o(1)\right).
\end{align*}
\end{proof}

We now obtain the asymptotic approximation of $\sup_{\mu \geq 0} r_{H}(\lambda, \mu)$ in the next lemma.

\begin{lemma}\label{lem::sup-risk-hard-threshold}
 As $\lambda \rightarrow \infty$, the supremum risk satisfies
\begin{align*}
    \sup_{\mu \geq 0} r_{H}(\lambda, \mu) 
    & = \lambda^{2} - 2\sqrt{2} \lambda \sqrt{ \log \lambda} + o(\lambda \sqrt{ \log \lambda}).
\end{align*}
\end{lemma}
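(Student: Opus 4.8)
The plan is to prove the claimed asymptotic equality by establishing matching lower and upper bounds, both of the form $\lambda^{2}-2\sqrt{2}\,\lambda\sqrt{\log\lambda}+o(\lambda\sqrt{\log\lambda})$.

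The lower bound is immediate from Lemma~\ref{lem::hard-threshold-risk-near-lambda}: taking $\mu=\lambda-\sqrt{2\log\lambda}$,
\[
\sup_{\mu\ge 0}r_{H}(\lambda,\mu)\ \ge\ r_{H}\bigl(\lambda,\lambda-\sqrt{2\log\lambda}\bigr)\ =\ \lambda^{2}-(2\sqrt{2}+o(1))\lambda\sqrt{\log\lambda}\ =\ \lambda^{2}-2\sqrt{2}\,\lambda\sqrt{\log\lambda}+o(\lambda\sqrt{\log\lambda}).
\]
All the remaining work is the matching upper bound, which I would obtain by splitting the range of $\mu$ into $\mu\ge\lambda$ and $0\le\mu\le\lambda$. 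For $\mu\ge\lambda$ the crude bounds in Lemma~\ref{lem::hard-threshold-risk-nonasymptotic} already suffice: $r_{H}(\lambda,\mu)\le\Bar r_{H}(\lambda,\mu)=1+\mu^{2}(1-\Phi(\mu-\lambda))$, and part (b) gives $\sup_{\mu\ge\lambda}\mu^{2}(1-\Phi(\mu-\lambda))\le\lambda^{2}/2$ for $\lambda\ge\sqrt{2\pi}$, so $\sup_{\mu\ge\lambda}r_{H}(\lambda,\mu)\le 1+\lambda^{2}/2$, which is strictly less than $\lambda^{2}-2\sqrt{2}\,\lambda\sqrt{\log\lambda}$ once $\lambda$ is large (as $2\sqrt{2}\,\lambda\sqrt{\log\lambda}=o(\lambda^{2})$).

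For $0\le\mu\le\lambda$ I would set $t=\lambda-\mu\in[0,\lambda]$ and start from the exact risk formula \eqref{eq::hard-threshold-risk-form-3}. Since $t\le\lambda$ forces $\lambda+\mu=2\lambda-t\ge\lambda\to\infty$, the Mills-ratio bounds of Lemma~\ref{lem::gaussian-tail-mills-ratio} show that the two ``far tail'' contributions $[(\lambda-t)^{2}-1]\Phi(-(2\lambda-t))$ and $(2\lambda-t)\phi(2\lambda-t)$ are $O(\lambda\phi(\lambda))=o(1)$ uniformly in $t$, so \eqref{eq::hard-threshold-risk-form-3} collapses to
\[
r_{H}(\lambda,\lambda-t)\ =\ (\lambda-t)^{2}\Phi(t)+(1-\Phi(t))+t\phi(t)+o(1)\ \le\ (\lambda-t)^{2}\Phi(t)+2
\]
uniformly for $t\in[0,\lambda]$, using $1-\Phi(t)\le 1$ and $t\phi(t)\le\phi(1)<1$. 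It then suffices to show $(\lambda-t)^{2}\Phi(t)\le\lambda^{2}-2\sqrt{2}\,\lambda\sqrt{\log\lambda}+o(\lambda\sqrt{\log\lambda})$ for all $t\in[0,\lambda]$. The device is to introduce the threshold $t_{0}=t_{0}(\lambda)$ defined by $1-\Phi(t_{0})=2\sqrt{2}\sqrt{\log\lambda}/\lambda$; this is well defined and $t_{0}\to\infty$ for large $\lambda$, and the Mills-ratio asymptotics give $t_{0}=\sqrt{2\log\lambda}+o(\sqrt{\log\lambda})$, hence $(\lambda-t_{0})^{2}=\lambda^{2}-2\lambda t_{0}+t_{0}^{2}=\lambda^{2}-2\sqrt{2}\,\lambda\sqrt{\log\lambda}+o(\lambda\sqrt{\log\lambda})$. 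Now dichotomize on $t$: if $t\le t_{0}$ then $(\lambda-t)^{2}\Phi(t)\le\lambda^{2}\Phi(t)\le\lambda^{2}\Phi(t_{0})=\lambda^{2}-2\sqrt{2}\,\lambda\sqrt{\log\lambda}$; if $t_{0}\le t\le\lambda$ then $0\le\lambda-t\le\lambda-t_{0}$, so $(\lambda-t)^{2}\Phi(t)\le(\lambda-t)^{2}\le(\lambda-t_{0})^{2}=\lambda^{2}-2\sqrt{2}\,\lambda\sqrt{\log\lambda}+o(\lambda\sqrt{\log\lambda})$. Either way $r_{H}(\lambda,\mu)\le\lambda^{2}-2\sqrt{2}\,\lambda\sqrt{\log\lambda}+o(\lambda\sqrt{\log\lambda})$, and combining with the $\mu\ge\lambda$ range completes the upper bound and hence the lemma.

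The main obstacle is the transition region where $\mu$ is close to $\lambda$ (equivalently $t$ of order $\sqrt{\log\lambda}$): there the crude bound $r_{H}(\lambda,\mu)\le\Bar r_{H}(\lambda,\mu)\le 1+\mu^{2}$ can be as large as $1+\lambda^{2}$ and so misses the entire second-order term, forcing one to work with the exact risk \eqref{eq::hard-threshold-risk-form-3} together with a carefully chosen splitting point. The value $t_{0}$ must be located precisely (this is where Lemma~\ref{lem::gaussian-tail-mills-ratio} is essential), and the particular constant $2\sqrt{2}$ in its definition is exactly the one for which the small-$t$ estimate $\lambda^{2}\Phi(t_{0})$ and the large-$t$ estimate $(\lambda-t_{0})^{2}$ agree to order $\lambda\sqrt{\log\lambda}$; any smaller constant would make the first branch too weak, while a larger one makes the bound worse than required.
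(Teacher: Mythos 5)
Your proposal is correct, and the upper bound is obtained by a genuinely different route from the paper's. The paper first localizes the maximizer $\mu^{*}=\argmax_{\mu\ge 0}r_{H}(\lambda,\mu)$ through a chain of four contradiction arguments (showing successively $\mu^{*}\le\lambda$, $\mu^{*}/\lambda\to 1$, $\lambda-\mu^{*}\to\infty$, $\lambda-\mu^{*}\le c\sqrt{2\log\lambda}$ for every $c>1$, and finally $(\lambda-\mu^{*})/\sqrt{2\log\lambda}\to 1$), each step comparing a candidate bound against $r_{H}(\lambda,\lambda-\sqrt{2\log\lambda})$ from Lemma~\ref{lem::hard-threshold-risk-near-lambda}; only then does it evaluate $r_{H}(\lambda,\mu^{*})\le(\mu^{*})^{2}+O(1)$. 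You instead never locate the argmax: after reducing the exact formula \eqref{eq::hard-threshold-risk-form-3} to $(\lambda-t)^{2}\Phi(t)+O(1)$ uniformly over $t=\lambda-\mu\in[0,\lambda]$, you bound this quantity uniformly by a dichotomy at the implicitly defined threshold $t_{0}$ with $1-\Phi(t_{0})=2\sqrt{2}\sqrt{\log\lambda}/\lambda$, using $\Phi(t)\le\Phi(t_{0})$ on one side and $(\lambda-t)^{2}\le(\lambda-t_{0})^{2}$ on the other. Both branches then agree to order $\lambda\sqrt{\log\lambda}$ precisely because of the choice of constant in $t_{0}$. Your argument is shorter and avoids the repeated subsequence-extraction contradictions; the paper's argument yields as a byproduct the precise location $\mu^{*}=\lambda-(1+o(1))\sqrt{2\log\lambda}$ of the worst-case signal, though the only downstream fact actually used later (that the maximizer over $[0,\infty)$ lies in $[0,\lambda]$, so the supremum over $\Theta(k_n,\tau_n)$ reduces to \eqref{eq::sup-risk-hard-threshold}) is also delivered by your treatment of the range $\mu\ge\lambda$. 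All the individual estimates in your write-up check out, including the uniformity of the $o(1)$ far-tail terms over $t\in[0,\lambda]$ and the Mills-ratio asymptotics $t_{0}=\sqrt{2\log\lambda}+o(\sqrt{\log\lambda})$.
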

\begin{proof}
Define
\begin{equation*}
    \mu^{*} = \argmax_{\mu\geq 0} r_{H}(\lambda,\mu).
\end{equation*}
Comparing the upper bounds from Lemma \ref{lem::hard-threshold-risk-nonasymptotic} and the risk at $\lambda-\sqrt{2\log\lambda}$ in Lemma \ref{lem::hard-threshold-risk-near-lambda}, we can conclude that the superemum risk is attained at $\mu=\mu^* \leq \lambda$ (when $\lambda$ is large). To evaluate $r_H(\lambda,\mu^*)$, it is important to derive an accurate approximation for $\mu^*$. We first claim that $\mu^*/\lambda\rightarrow 1$. Suppose this is not true. Then $\mu^* \leq c \lambda $ for some constant $c \in [0,1)$ (take a sequence if necessary). According to Lemma \ref{lem::hard-threshold-risk-nonasymptotic} (a), for large enough values of $\lambda$, we have 
\begin{equation*}
    r_{H}(\lambda, \mu^*) \leq \Bar{r}_{H}(\lambda,\mu^*) \leq 1 + (\mu^*)^{2} \leq \Tilde{c} \lambda^2, \quad \Tilde{c} 
    \in (0,1).
\end{equation*}
However, the above upper bound is strictly smaller than the risk $r_{H}(\lambda, \lambda-\sqrt{2\log \lambda})$ calculated in Lemma \ref{lem::hard-threshold-risk-near-lambda}, contradicting with the definition of $\mu^*$.

Second, we show that $\lambda - \mu^{*} \rightarrow \infty$, while $(\lambda- \mu^*)/\lambda \rightarrow 0$. Otherwise, it satisfies $0\leq \lambda - \mu^* \leq c$ for some finite constant $c\geq 0$ (take a sequence if necessary). Then from \eqref{eq::hard-threshold-risk-form-3} we have
\begin{align*}
    r_{H}(\lambda,\mu^*) \leq \Phi(c)\lambda^{2} \left( 1+o(1) \right).
\end{align*}
Comparing this with $r_{H}(\lambda, \lambda-\sqrt{2\log \lambda})$ from Lemma \ref{lem::hard-threshold-risk-near-lambda} leads to the same contradiction. 

Third, we prove that for any given $c>1$, $\lambda-\mu^*\leq c\sqrt{2\log\lambda}$ for sufficiently large $\lambda$. Otherwise, there exists some constant $c>1$ such that $\lambda_n-\mu_n^*>c\sqrt{2\log\lambda_n}$ for a sequence $\lambda_n\rightarrow \infty$ as $n\rightarrow \infty$. As a result, using Equation \eqref{eq::hard-threshold-risk-form-3}, and the result proved earlier that $\lambda_n-\mu_n^*\rightarrow \infty$, we obtain that for large $n$,
\begin{align*}
    r_{H}(\lambda_n,\mu_n^*) 
    & \leq (\mu_n^*)^{2}  +1 + (\lambda_n- \mu_n^* ) \phi(\lambda_n-\mu_n^*) \nonumber \\
    & + (\lambda_n+\mu_n^*) \phi(\lambda_n+\mu_n^*) \nonumber\\
    & \leq \left( \lambda_n- c\sqrt{2\log \lambda_n} \right)^{2} +O(1)  \nonumber \\
    & = \lambda_n^{2} - (2c+o(1))\lambda_n \sqrt{2 \log \lambda_n}.
\end{align*}
Again, comparing the above with $r_{H}(\lambda_n, \lambda_n-\sqrt{2\log \lambda_n})=\lambda_n^2- (2+o(1))\lambda_n \sqrt{2 \log \lambda_n}$ in Lemma \ref{lem::hard-threshold-risk-near-lambda}, we see that $r_{H}(\lambda_n,\mu_n^*)<r_{H}(\lambda_n, \lambda_n-\sqrt{2\log \lambda_n})$ when $n$ is large, which is a contradiction.

Finally, we prove that $(\lambda-\mu^*)/\sqrt{2\log\lambda}\rightarrow 1$ as $\lambda\rightarrow \infty$. Suppose this is not true. Given the result proved in the last paragraph, then there exists some constant $c<1$ such that $\lambda_n-\mu_n^*<c\sqrt{2\log\lambda_n}$ for a sequence $\lambda_n\rightarrow \infty$ as $n\rightarrow \infty$. Using Equation \eqref{eq::hard-threshold-risk-form-3} and Gaussian tail bound $1-\Phi(x)=\frac{1+o(1)}{x}\phi(x)$ as $x\rightarrow \infty$, we have
\begin{align*}
    r_{H}(\lambda_n, \mu_n^*) & =  (\mu_n^*)^{2} \left[ \Phi(\lambda_n-\mu_n^*) - \Phi(-\lambda_n-\mu_n^*) \right]  + O(1) \\
    & \leq (\mu_n^*)^{2} \Phi(\lambda_n-\mu_n^*)+ O(1) \\
    & = (\mu_n^*)^{2} \left[ 1 - \frac{1+o(1)}{\lambda_n-\mu_n^*} \phi(\lambda_n-\mu^*_n) \right]+O(1).
\end{align*}
Because $\phi(\lambda_n - \mu_n^*) \geq 1/\sqrt{2\pi} \cdot \exp\left(-\frac{2c^{2}\log \lambda_n}{2}\right) = 1/(\sqrt{2\pi} \lambda_n^{c^2})$, we continue with 
\begin{align*}
    r_{H}(\lambda_n,\mu_n^*) & \leq (\mu_n^*)^{2} - \frac{(\lambda_n- c\sqrt{2 \log \lambda_n})^{2}}{c \sqrt{2\log \lambda_n}} \frac{1}{\sqrt{2\pi}\lambda_n^{c^2}} \\
    & \cdot \left(1+o(1)\right)+O(1)\\
    & \leq \lambda_n^{2} - \frac{\lambda_n^{2-c^2}}{\sqrt{\log\lambda_n}}\cdot \Big(\frac{1}{2c\sqrt{\pi}}+o(1)\Big).
\end{align*}
Note that for $c<1$, $\lambda_n^{2-c^2}/\sqrt{\log\lambda_n} = \omega(\lambda_n\sqrt{\log \lambda_n})$. Hence $r_{H}(\lambda_n,\mu_n^*)<r_{H}(\lambda_n, \lambda_n-\sqrt{2\log \lambda_n})$ when $n$ is sufficiently large. The same contradiction arises.

Having the precise order that $\mu^*=\lambda-(1+o(1))\sqrt{2\log \lambda}$, we can easily evaluate $ \sup_{\mu \geq 0} r_H(\lambda, \mu)$ from \eqref{eq::hard-threshold-risk-form-3}: as $\lambda\rightarrow \infty$,
\begin{align*}
 & r_{H}(\lambda, \lambda-\sqrt{2\log \lambda})\leq ~\sup_{\mu\geq 0} r_H(\lambda, \mu) = r_H(\lambda, \mu^*) \\
= & ~ (\mu^*)^2 (\Phi (\lambda - \mu^*)-\Phi(-\lambda - \mu^*))+ O(1)  \\
   \leq & ~ (\mu^*)^2 +O(1)=(\lambda-(1+o(1))\sqrt{2\log \lambda})^2+O(1)  \\
    = & ~\lambda^2-2\sqrt{2}\lambda\sqrt{\log\lambda}+o(\lambda\sqrt{\log\lambda}).
\end{align*}
Combining this result with Lemma \ref{lem::hard-threshold-risk-near-lambda} completes the proof. 
\end{proof}

\subsubsection{Upper bound}

We are in the position to compute the supremum risk of $\hat{\eta}_H(y,\lambda_n)$ with $\lambda_n=\sigma_n\sqrt{2\log\epsilon_n^{-1}}$ in Theorem \ref{thm::regime_4}. First of all, due to the scale invariance of hard thresholding, the supremum risk can be written in the form: 
\begin{align*}
    & \sup_{\theta \in \Theta(k_n,\tau_{n})} \E_{\theta} \norm{\etahatH(y, \lambda_n) - \theta}_{2}^{2} \\
    =& ~ \sigma_n^2\bigg[(n-k_n) r_{H}(\nu_n, 0) + \sup_{\|\tilde{\theta}\|_2^2 \leq k_n \mu_{n}^{2}}\sum_{i=1}^{k_n}r_{H}(\nu_n, \tilde{\theta}_i)\bigg],
\end{align*}
where $\tilde{\theta}\in\mathbb{R}^{k_n}$ and $\nu_n=\sqrt{2\log\epsilon_n^{-1}}$. Given that the one-dimensional risk function $r_{H}(\nu_n, \tilde{\theta}_i)$ is symmetric in $\tilde{\theta}_i$, if its maximizer satisfies $\argmax_{\tilde{\theta}_i\geq 0}r_{H}(\nu_n, \tilde{\theta}_i) \leq \mu_n$, then we will have
\begin{align}\label{eq::sup-risk-hard-threshold}
  & \sup_{\theta \in \Theta(k_n,\tau_{n})} \E_{\theta} \norm{\etahatH(y, \lambda_n) - \theta}_{2}^{2}  \nonumber \\
  = & ~\sigma_n^2\bigg[(n-k_n) r_{H}(\nu_n, 0) + k_n \sup_{\mu\geq 0}r_{H}(\nu_n,\mu)\bigg].
\end{align}
This will allow us to focus on finding the supremum risk of hard thresholding in the univariate setting that we discussed in the last section. In the proof of Lemma \ref{lem::sup-risk-hard-threshold}, we already showed that $\argmax_{\tilde{\theta}_i\geq 0}r_{H}(\nu_n, \tilde{\theta}_i)\leq \nu_n$ when $n$ is large. It is then clear that in the current regime $\mu_{n} = \omega(\sqrt{2 \log \epsilon_{n}^{-1}})$, it holds that $\argmax_{\tilde{\theta}_i\geq 0}r_{H}(\nu_n, \tilde{\theta}_i) \leq \mu_n$ for large $n$. Therefore, the supremum risk of hard thresholding over $\Theta(k_n,\tau_{n})$ can be simplified as in  \eqref{eq::sup-risk-hard-threshold}. We can apply Lemma \ref{lem::sup-risk-hard-threshold} to continue from \eqref{eq::sup-risk-hard-threshold}:
\begin{align}
      &\sup_{\theta \in \Theta(k_n,\tau_{n})} \E_{\theta} \norm{\etahatH(y, \lambda_n) - \theta}_{2}^{2} \nonumber \\
      = & ~ n\sigma_n^2 \left[ (1-\epsilon_{n}) r_{H}(\nu_n,0) + \epsilon_{n} \sup_{\mu \geq 0} r_{H}(\nu_n,\mu) \right]  \nonumber\\
     = & ~ n \sigma_n^2 \left[ (1-\epsilon_{n}) r_{H}(\nu_n,0) + \epsilon_{n}\left(\nu_n^{2} - 2\nu_n \sqrt{2\log \nu_n} \right. \right. \nonumber \\
     + &~ \left. \left. o(\nu_n \sqrt{\log \nu_n}) \right) \right], \label{eq::hard-threshold-sup-risk}
\end{align}
where $\nu_n=\sqrt{2\log\epsilon_n^{-1}}$. We now identify the dominating terms in the above expression. First,
\begin{align}\label{eq::hard-threshold-risk-at-zero}
    r_{H}(\nu_n, 0) & = 2 \int_{\nu_n}^{\infty} z^{2} \phi(z) d z = 2\nu_n \phi(\nu_n) + 2(1-\Phi(\nu_n)) \nonumber \\
    = & (2+o(1))\nu_n \phi(\nu_n)=O(\epsilon_n\nu_n),
\end{align}
where the last two equations are due to the Gaussian tail bound $1-\Phi(x)=\frac{1+o(1)}{x}\phi(x)$ as $x\rightarrow \infty$ and $\nu_n=\sqrt{2\log\epsilon_n^{-1}}$.
Therefore, from \eqref{eq::hard-threshold-sup-risk} we obtain
\begin{align*}
   & \sup_{\theta \in \Theta(k_n,\tau_{n})} \E_{\theta} \norm{\etahatH(y, \lambda_n) - \theta}_{2}^{2} \\
    =& ~ n\sigma_n^2 \left[ \epsilon_{n}\nu_n^{2} - 2\epsilon_{n}\nu_n \sqrt{2\log \nu_n} + o (\epsilon_n\nu_n \sqrt{\log \nu_n}) \right] \nonumber \\
    =& ~ n \sigma_n^2\epsilon_n \left( 2\log\epsilon_n^{-1} - (2+o(1)) \nu_{n} \sqrt{2 \log \nu_{n}}\right). 
\end{align*}
This completes our proof of the upper bound in Theorem \ref{thm::regime_4}.

The sharp upper bound we have derived is from the hard thresholding estimator $\hat{\eta}_H(y,\lambda_n)$ with tuning $\lambda_n=\sigma_n\nu_n$. To shed more light on the performance of hard thresholding, we provide a discussion on the optimal choices of $\lambda_n$. The lemma below characterizes the possible choices of $\lambda_n$ that leads to optimal supremum risk (up to second order).

\begin{lemma} \label{lem::regime_3-hard-thresholding-lambda-choice}
    Consider model \eqref{model::gaussian_sequence}, and parameter space \eqref{eq::parameter-space-SNR} under Regime (\rom{3}), in which $\epsilon_{n}  \rightarrow 0$, $\mu_{n} \rightarrow \infty$, $\mu_{n} = \omega(\sqrt{\log \epsilon_{n}^{-1}})$, as $n \rightarrow \infty$. Let $\nu_{n} = \sqrt{2\log \epsilon_{n}^{-1}}$. Consider the tuning regime $\lambda_n\sigma_n^{-1}\rightarrow \infty$ and $\lambda_n\sigma_n^{-1}\leq \mu_n$. If $\lambda_n$ satisfies:
    \begin{equation*}
        (\nu_{n}^{2} - c_{1} \log \log \nu_{n}) \leq \lambda_n^{2}\sigma^{-2}_n \leq (\nu_{n}^{2} + c_{2} \nu_{n} \sqrt{2\log \nu_{n}})
    \end{equation*}
    when $n$ is large, for some constant $c_{1}<1$ and every $c_{2}>0$, then
    \begin{align}
        & \inf_{\lambda} \sup_{\theta \in \Theta(k_n,\tau_{n})} \E_{\theta} \norm{\etahatH(y, \lambda) - \theta}_{2}^{2} \nonumber \\
        =~ & \sup_{\theta \in \Theta(k_n,\tau_{n})} \E_{\theta} \norm{\etahatH(y, \lambda_{n}) - \theta}_{2}^{2} + o\left( n\sigma_{n}^{2}\epsilon_{n} \nu_{n} \sqrt{ \log \nu_{n}}  \right).\label{eq:sec:or:opt}
    \end{align}
    On the other hand, if $(\nu_{n}^{2} - c_{1} \log \log \nu_{n}) \geq \lambda_n^{2}\sigma^{-2}_n$ for a constant $c_1\geq 1$ or if $\lambda_n^{2}\sigma^{-2}_n \geq (\nu_{n}^{2} + c_{2} \nu_{n} \sqrt{2\log \nu_{n}})$ for some $c_2>0$, then the conclusion \eqref{eq:sec:or:opt} will not hold. 
\end{lemma}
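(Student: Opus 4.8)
The plan is to reduce the supremum risk of $\etahatH(y,\lambda_{n})$ over $\Theta(k_n,\tau_{n})$ to a scalar function of $t_{n}:=\lambda_{n}/\sigma_{n}$ and to compare that function, to accuracy $o(n\sigma_{n}^{2}\epsilon_{n}\nu_{n}\sqrt{\log\nu_{n}})$, with its value at the reference tuning $t_{n}=\nu_{n}$. By the scale invariance of \secScaling\ we may take $\sigma_{n}=1$. As in the proof of Theorem \ref{thm::regime_4}, the symmetry of the one-dimensional risk $r_{H}(t,\cdot)$, together with the fact --- established in the first step of the proof of Lemma \ref{lem::sup-risk-hard-threshold} --- that for $t\to\infty$ the maximizer $\mu^{*}(t):=\argmax_{\mu\ge0}r_{H}(t,\mu)$ satisfies $\mu^{*}(t)<t$, implies that since $t_{n}\le\mu_{n}$ the $\ell_{2}$-budget $\norm{\theta}_{2}^{2}\le k_n\mu_{n}^{2}$ is slack at the worst case: the supremum is attained by placing exactly $k_n$ coordinates at $\mu^{*}(t_{n})$. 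Writing $g(t):=(1-\epsilon_{n})r_{H}(t,0)+\epsilon_{n}\sup_{\mu\ge0}r_{H}(t,\mu)$, this yields
\[
\sup_{\theta\in\Theta(k_n,\tau_{n})}\E_{\theta}\norm{\etahatH(y,\lambda_{n})-\theta}_{2}^{2}=n\,g(t_{n}),
\]
and $n\,g(\nu_{n})$ is the supremum risk of $\etahatH(y,\nu_{n})$, which by Theorem \ref{thm::regime_4} equals the minimax risk up to an additive $o(n\epsilon_{n}\nu_{n}\sqrt{\log\nu_{n}})$. Since $\inf_{\lambda}\sup_{\theta}\E_{\theta}\norm{\etahatH(y,\lambda)-\theta}_{2}^{2}$ lies between the minimax risk and $n\,g(\nu_{n})$, we get $\inf_{\lambda}\sup_{\theta}\E_{\theta}\norm{\etahatH(y,\lambda)-\theta}_{2}^{2}=n\,g(\nu_{n})+o(n\epsilon_{n}\nu_{n}\sqrt{\log\nu_{n}})$. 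Hence it suffices to show $g(t_{n})-g(\nu_{n})=o(\epsilon_{n}\nu_{n}\sqrt{\log\nu_{n}})$ in the good range, and $g(t_{n})-g(\nu_{n})\ge c\,\epsilon_{n}\nu_{n}\sqrt{\log\nu_{n}}$ for some $c>0$ (at least along a subsequence) in each bad range.

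The comparison rests on two ingredients. First, $r_{H}(t,0)=2t\phi(t)+2(1-\Phi(t))=(2+o(1))t\phi(t)$ as $t\to\infty$; combined with $\epsilon_{n}=e^{-\nu_{n}^{2}/2}$ and the monotonicity of $t\mapsto t\phi(t)$ for $t>1$, this gives: if $t_{n}^{2}\ge\nu_{n}^{2}-c_{1}\log\log\nu_{n}$ then $(1-\epsilon_{n})r_{H}(t_{n},0)=O\big(\epsilon_{n}\nu_{n}(\log\nu_{n})^{c_{1}/2}\big)$, which is $o(\epsilon_{n}\nu_{n}\sqrt{\log\nu_{n}})$ exactly when $c_{1}<1$ and is $\ge\Omega(\epsilon_{n}\nu_{n}\sqrt{\log\nu_{n}})$ when $c_{1}\ge1$; and if $t_{n}=o(\nu_{n})$ then $r_{H}(t_{n},0)$ alone already exceeds the full first-order term $\epsilon_{n}\nu_{n}^{2}$. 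This single term is what pins down the lower threshold. Second, by Lemma \ref{lem::sup-risk-hard-threshold}, $\sup_{\mu\ge0}r_{H}(t,\mu)=t^{2}-2\sqrt{2}\,t\sqrt{\log t}+o(t\sqrt{\log t})$; for $t_{n}^{2}=\nu_{n}^{2}+\delta_{n}$ with $\delta_{n}=o(\nu_{n}^{2})$, a Taylor expansion of $t\mapsto t\sqrt{\log t}$ about $\nu_{n}$ yields
\[
\sup_{\mu\ge0}r_{H}(t_{n},\mu)-\sup_{\mu\ge0}r_{H}(\nu_{n},\mu)=\delta_{n}\big(1-o(1)\big)+o(\nu_{n}\sqrt{\log\nu_{n}}),
\]
while for $t_{n}$ not asymptotically equivalent to $\nu_{n}$ the crude bound $\sup_{\mu\ge0}r_{H}(t_{n},\mu)=t_{n}^{2}(1-o(1))$ suffices.

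Putting these together: in the good range, $\delta_{n}^{+}=o(\nu_{n}\sqrt{\log\nu_{n}})$ and $\delta_{n}^{-}\le c_{1}\log\log\nu_{n}$ with $c_{1}<1$, so both $(1-\epsilon_{n})(r_{H}(t_{n},0)-r_{H}(\nu_{n},0))$ and $\epsilon_{n}\big(\sup_{\mu}r_{H}(t_{n},\mu)-\sup_{\mu}r_{H}(\nu_{n},\mu)\big)=\epsilon_{n}\delta_{n}(1+o(1))+o(\epsilon_{n}\nu_{n}\sqrt{\log\nu_{n}})$ are $o(\epsilon_{n}\nu_{n}\sqrt{\log\nu_{n}})$, giving $g(t_{n})=g(\nu_{n})+o(\epsilon_{n}\nu_{n}\sqrt{\log\nu_{n}})$ and hence \eqref{eq:sec:or:opt}. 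In bad range B2 ($\delta_{n}\ge c_{2}\nu_{n}\sqrt{2\log\nu_{n}}$, $c_{2}>0$) the term $\epsilon_{n}\delta_{n}\ge\sqrt{2}\,c_{2}\epsilon_{n}\nu_{n}\sqrt{\log\nu_{n}}$ dominates when $\delta_{n}=o(\nu_{n}^{2})$, while for larger $t_{n}$ up to $\mu_{n}$ one has $g(t_{n})-g(\nu_{n})\gtrsim\epsilon_{n}t_{n}^{2}-\epsilon_{n}\nu_{n}^{2}\gg\epsilon_{n}\nu_{n}\sqrt{\log\nu_{n}}$ (using $\mu_{n}\gg\nu_{n}$). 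In bad range B1 ($\nu_{n}^{2}-t_{n}^{2}\ge c_{1}\log\log\nu_{n}$, $c_{1}\ge1$) the exponentially growing $(1-\epsilon_{n})r_{H}(t_{n},0)\gtrsim\epsilon_{n}\nu_{n}e^{(\nu_{n}^{2}-t_{n}^{2})/2}$ dwarfs the at most linear decrease $\epsilon_{n}(\nu_{n}^{2}-t_{n}^{2})$ in $\epsilon_{n}\sup_{\mu}r_{H}(\cdot,\mu)$, so $g(t_{n})-g(\nu_{n})\ge\Omega(\epsilon_{n}\nu_{n}\sqrt{\log\nu_{n}})$; the case $t_{n}=o(\nu_{n})$ is even more extreme. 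In every bad case $g(t_{n})-g(\nu_{n})$ is at least a constant multiple of $\epsilon_{n}\nu_{n}\sqrt{\log\nu_{n}}$, so \eqref{eq:sec:or:opt} fails, and rescaling by $\sigma_{n}^{2}$ gives the general statement.

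The main obstacle is the second ingredient: making the error in $\sup_{\mu\ge0}r_{H}(t,\mu)=t^{2}-2\sqrt{2}\,t\sqrt{\log t}+o(t\sqrt{\log t})$ uniform enough in $t$ near $\nu_{n}$ that the \emph{difference} $\sup_{\mu}r_{H}(t_{n},\mu)-\sup_{\mu}r_{H}(\nu_{n},\mu)$ is governed by $\delta_{n}=t_{n}^{2}-\nu_{n}^{2}$ up to $o(\nu_{n}\sqrt{\log\nu_{n}})$; a careless argument would leave an uncontrolled $\pm\Theta(\nu_{n}\sqrt{\log\nu_{n}})$ slack, which is exactly the size of the second-order term. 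This requires revisiting the location estimate $\mu^{*}(t)=t-(1+o(1))\sqrt{2\log t}$ from the proof of Lemma \ref{lem::sup-risk-hard-threshold} and tracking its $t$-dependence. The remaining bookkeeping --- deciding which of the two competing effects (exponential blow-up of $r_{H}(t,0)$ below the threshold versus linear growth of $\epsilon_{n}t^{2}$ above it) wins, including the boundary case $c_{1}=1$ and the range $\nu_{n}\ll t_{n}\le\mu_{n}$ --- is then routine.
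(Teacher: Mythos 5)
Your proposal is correct and follows essentially the same route as the paper's proof: reduce (after rescaling to $\sigma_n=1$) to the scalar objective $(1-\epsilon_n)r_H(\lambda,0)+\epsilon_n\sup_{\mu\ge 0}r_H(\lambda,\mu)$, control the first term via the Gaussian tail of $r_H(\lambda,0)=(2+o(1))\lambda\phi(\lambda)$ and the second via Lemma \ref{lem::sup-risk-hard-threshold}, and then run the case analysis on where $\lambda_n^2\sigma_n^{-2}-\nu_n^2$ sits relative to $-\log\log\nu_n$ and $+\nu_n\sqrt{\log\nu_n}$. The "uniformity" obstacle you flag is not actually an issue — the $o(\cdot)$ in Lemma \ref{lem::sup-risk-hard-threshold} depends on $\lambda$ alone and both $t_n$ and $\nu_n$ tend to infinity with $t_n\sim\nu_n$, so the two expansions subtract cleanly — and the paper handles it exactly this way.
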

\begin{proof}
Denote $\tilde{\lambda}_{n}=\lambda_n\sigma^{-1}_n$. Given that we focus on the tuning regime $\tilde{\lambda}_{n}\rightarrow \infty$ and $\tilde{\lambda}_{n}\leq \mu_n$, the result \eqref{eq::hard-threshold-sup-risk} continues to hold here:
\begin{align*}
& \sup_{\theta \in \Theta(k_n,\tau_{n})} \E_{\theta} \norm{\etahatH(y, \lambda_n) - \theta}_{2}^{2} \\
=& ~n\sigma_n^2\cdot \bigg[(1-\epsilon_{n})r_{H}(\tilde{\lambda}_{n},0) + \epsilon_{n} \Big(\tilde{\lambda}_{n}^2 - 2\tilde{\lambda}_{n} \sqrt{2\log \tilde{\lambda}_{n}} \nonumber \\
+& o(\tilde{\lambda}_{n} \sqrt{\log \tilde{\lambda}_{n}}) \Big)\bigg].
\end{align*}
Hence, we define 
\begin{align}
    A(\lambda) := & (1-\epsilon_{n})r_{H}(\lambda,0) + \epsilon_{n} \Big[\lambda^2 \nonumber \\
    & - 2\lambda \sqrt{2\log \lambda} +o\left(\lambda \sqrt{\log \lambda}\right)\Big], \label{eq::hard-threshold-risk-of-lambda}
\end{align}
where the notation $o(\cdot)$ is understood as $\lambda\rightarrow \infty$. We proved before that $A(\nu_n)=\epsilon_n(\nu_n^2-(2+o(1))\nu_n\sqrt{2\log\nu_n})$. Now we consider four different regions for $\tilde{\lambda}_{n}$ (when $n$ is large):
\begin{itemize}
    \item Case  $\tilde{\lambda}_n^{2} \leq \nu_{n}^{2} - 2 c\log(\nu_{n}/\sqrt{2\pi})$ for some constant $c >1 $. Equation \eqref{eq::hard-threshold-risk-at-zero} implies
\begin{align*}
     & A(\tilde{\lambda}_{n}) \geq (1-\epsilon_{n}) r_{H}(\tilde{\lambda}_{n}, 0) \\
     \geq ~&  (1-\epsilon_{n}) r_{H}\left(\left(\nu_{n}^{2} - 2 c \log (\nu_{n}/2\pi)\right)^{1/2}, 0\right) \\
    =~ & (2+o(1)) \left(\nu_{n}^{2} - 2 c \log (\nu_{n}/2\pi)\right)^{1/2} \\
    & \cdot \phi\left(\left(\nu_{n}^{2} - 2 c \log (\nu_{n}/\sqrt{2\pi})\right)^{1/2}\right) \\
    = ~& \frac{2+o(1)}{\sqrt{2\pi}}\nu_{n} \exp\left(- \frac{\nu_{n}^{2} - 2c \log \frac{\nu_{n}}{\sqrt{2\pi}}}{2}\right) \\
    = ~&  \Uptheta\left( \epsilon_{n} (\nu_{n})^{1+c}\right).
\end{align*}
Note that $A(\tilde{\lambda}_n)=\omega(A(\nu_n))$, and hence $\tilde{\lambda}_n$ does not satisfy \eqref{eq:sec:or:opt}.

\item Case $\nu_{n}^{2} - 2 c_{1} \log(\nu_{n}/\sqrt{2\pi}) \leq \tilde{\lambda}_n^{2} \leq \nu_{n}^{2} - c_{2} \log \log \nu_{n}$ for any constant $c_{1}\leq 1$ and some constant $c_{2} \geq 1$. Since $\tilde{\lambda}_n^{2} \leq \nu_{n}^{2} - c_{2} \log \log \nu_{n}$, the same argument as in the previous case gives
\begin{align}
\label{part:one:order}
   (1-\epsilon_n) r_{H}(\tilde{\lambda}_{n}, 0) \geq 
     \frac{2+o(1)}{\sqrt{2 \pi}}\left(\epsilon_{n} \nu_{n} \left(\sqrt{\log \nu_{n}}\right)^{c_{2}}\right).
\end{align}
Moreover, using the upper and lower bounds we set for  $\tilde{\lambda}_{n}$, we obtain
\begin{align}
    & \epsilon_{n} \left(\tilde{\lambda}_{n}^{2} - 2\tilde{\lambda}_{n} \sqrt{2\log \tilde{\lambda}_{n}} + o\left(\tilde{\lambda}_{n}\sqrt{\log \tilde{\lambda}_{n}} \right)\right) \nonumber \\
    \geq~ & \epsilon_{n} \left[ \nu_{n}^{2} - 2c_{1}\log \frac{\nu_{n}}{\sqrt{2\pi}} - 2\nu_{n} \sqrt{2\log \nu_{n}} \right. \nonumber \\
    & + \left. o\left(\nu_{n} \sqrt{\log \nu_{n}}\right)\right] \nonumber \\
    = ~& \epsilon_{n} \left[\nu_{n}^{2} - 2\nu_{n} \sqrt{2\log \nu_{n}} + o\left(\nu_{n} \sqrt{\log \nu_{n}}\right)\right].\label{part:two:order}
\end{align}
Combining \eqref{part:one:order}-\eqref{part:two:order} yields
\begin{align*}
A(\tilde{\lambda}_n) & \geq \epsilon_n\Big[\nu_n^2+\nu_n\sqrt{2\log\nu_n}\Big(-2+o(1) \\
& +\frac{2+o(1)}{2\sqrt{\pi}}(\sqrt{\log\nu_n})^{c_2-1}\Big)\Big].
\end{align*}
Since $c_2\geq 1$, it is clear that $A(\tilde{\lambda}_n)-A(\nu_n)=\Omega(\epsilon_n\nu_n\sqrt{\log\nu_n})$. Therefore, this choice of $\tilde{\lambda}_{n}$ does not satisfy \eqref{eq:sec:or:opt}.

\item Case $\nu_{n}^{2} - c_{1} \log \log \nu_{n} \leq \tilde{\lambda}_n^{2} \leq \nu_{n}^{2} + c_{2}\nu_{n}\sqrt{2\log \nu_{n}}$ for some constant $c_{1}<1$ and every $c_{2} > 0$. With the lower bound of $\tilde{\lambda}_{n}$, similar calculations as in the previous two cases lead to
\begin{align*}
    & (1-\epsilon_{n})r_{H}(\tilde{\lambda}_{n}, 0) \leq r_{H}\left(\left(\nu_{n}^{2} - c_{1} \log \log \nu_{n}\right)^{1/2}, 0\right) \\
    =& ~ \Uptheta\left(\epsilon_{n}\nu_{n}\left(\sqrt{\log \nu_{n}}\right)^{c_{1}}\right).
\end{align*}
Furthermore, the upper and lower bounds of $\tilde{\lambda}_n$ for some $c_1<1$ and every $c_2>0$ imply that $\tilde{\lambda}_n^2-\nu_n^2=o(\nu_n\sqrt{\log\nu_n})$. Thus,
\begin{align*}
     & \epsilon_{n} \left(\tilde{\lambda}_{n}^{2} - 2\tilde{\lambda}_{n}\sqrt{2\log \tilde{\lambda}_{n}} + o\left(\tilde{\lambda}_{n}\sqrt{2\log \tilde{\lambda}_{n}}\right)\right)  \\
     \leq ~ & \epsilon_{n}\left(\nu_{n}^{2} - 2\nu_{n}\sqrt{2\log \nu_{n}} + o\left( \nu_{n}\sqrt{\log \nu_{n}}\right)\right).
\end{align*}
Putting together the above two results into \eqref{eq::hard-threshold-risk-of-lambda}, we have
\begin{align*}
    A(\tilde{\lambda}_{n})
    & \leq \Uptheta\left(\epsilon_{n}\nu_{n}\left(\sqrt{\log \nu_{n}}\right)^{c_{1}}\right)  \\
    & + \epsilon_{n}\left(\nu_{n}^{2} - 2\nu_{n}\sqrt{2\log \nu_{n}} + o\left( \nu_{n}\sqrt{\log \nu_{n}}\right)\right) \\
    & = \epsilon_{n}\left(\nu_{n}^{2} - (2+o(1))\nu_{n}\sqrt{2\log \nu_{n}} \right).
\end{align*}
Thus, $A(\tilde{\lambda}_n)\leq A(\nu_n)+o(\epsilon_n\nu_n\sqrt{\log\nu_n})$, and $\tilde{\lambda}_{n}$ satisfies \eqref{eq:sec:or:opt}.

\item Case $\tilde{\lambda}_n^{2} \geq \nu_{n}^{2} + c\nu_{n}\sqrt{2\log \nu_{n}}$ for some constant $c > 0$. We only need consider $\tilde{\lambda}_n=(1+o(1))\nu_n$, because for larger values of $\lambda_n$, \eqref{eq::hard-threshold-risk-of-lambda} implies that $A(\tilde{\lambda}_n)/A(\nu_n)>1$ for large $n$. When $\tilde{\lambda}_n=(1+o(1))\nu_n$, we have
\begin{align*}
A(\tilde{\lambda}_n) & \geq    \epsilon_{n} \left(\tilde{\lambda}_{n}^{2} - 2\tilde{\lambda}_{n}\sqrt{2\log \tilde{\lambda}_{n}} \right. \\
& + \left. o\left(\tilde{\lambda}_{n}\sqrt{2\log \tilde{\lambda}_{n}}\right)\right) \\
&\geq \epsilon_{n}\left(\nu_{n}^{2} - (2-c)\nu_{n}\sqrt{2\log \nu_{n}} \right. \\
& + \left. o\left( \nu_{n}\sqrt{2\log \nu_{n}}\right) \right).
\end{align*}
Since $c>0$, the above implies that $A(\tilde{\lambda}_n)-A(\nu_n)=\Omega(\epsilon_n\nu_n\sqrt{\log\nu_n})$. Hence $\tilde{\lambda}_n$ does not satisfy \eqref{eq:sec:or:opt}.

\end{itemize}
\end{proof}

\subsubsection{Lower bound}

As in the proof of lower bound in Theorems \ref{thm::regime_1}-\ref{thm::regime_2_compactness}, we will apply Theorem \thmMinimaxLower\ and utilize the independent block prior that is first described in \secRegimeILower{}. To simplify the calculations a bit here, we will use the block prior with one minor modification: adopting the notation from \secRegimeILower, the spike prior $\pi_S^{\mu,m}$ in use is now changed to a one-sided spike prior:
\begin{align}
\label{one:side:spike}
    \pi_S^{\mu,m}(\theta^{(j)}=\mu e_i)=\frac{1}{m},\quad 1\leq i \leq m,
\end{align}
where $\mu\in (0,\mu_n]$. The key is to calculate the Bayes risk $B( \pi_S^{\mu,m})$ and obtain a result like Lemma \lemSpikeBayes{}. To this end, we first mention a lemma that will become useful later in the proof.

\begin{lemma}\label{lem::F2-complement}
Let $z_{1}, \ldots, z_{m} \simiid \mathcal{N}(0,1)$ and $\nu_{m} = \sqrt{2\log m}$. Suppose $2\mu > \nu_{m}$ and $\delta < \Phi(\nu_{m}-\mu)$. Then
\begin{align*}
    & \mathbb{P}\left(m^{-1} e^{-\frac{1}{2}\mu^{2}} \sum_{j=1}^{m} e^{\mu z_{j}}
    \leq \delta \right) \\
    \leq ~ & \frac{1}{\sqrt{2\pi}\nu_{m}}  + \frac{1}{\sqrt{2\pi}} \frac{1}{\left[\Phi(\nu_{m}-\mu)-\delta \right]^{2}} \frac{1}{2\mu-\nu_{m}} e^{-(\mu-\nu_{m})^{2}}.
\end{align*}
\end{lemma}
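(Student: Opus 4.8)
The plan is to reduce the statement to a lower-deviation concentration bound for a sum of i.i.d.\ \emph{bounded} variables obtained by truncating the summands, plus a union bound that controls the truncation error. Define, for $1\le j\le m$, the truncated variables $V_j := e^{\mu z_j}\,I(z_j\le \nu_m)$, which are i.i.d., non-negative and bounded by $e^{\mu\nu_m}$. Completing the square in the Gaussian integral gives $\E V_j = e^{\mu^2/2}\Phi(\nu_m-\mu)$ and $\E V_j^2 = e^{2\mu^2}\Phi(\nu_m-2\mu)$. First I would split the event according to whether any coordinate exceeds $\nu_m$:
\[
\mathbb{P}\Big(m^{-1}e^{-\mu^2/2}\sum_{j=1}^m e^{\mu z_j}\le\delta\Big)\le \mathbb{P}\big(\textstyle\max_j z_j>\nu_m\big)+\mathbb{P}\Big(m^{-1}e^{-\mu^2/2}\sum_{j=1}^m V_j\le\delta\Big),
\]
which is valid because on the complement of $\{\max_j z_j>\nu_m\}$ one has $\sum_j e^{\mu z_j}=\sum_j V_j$ (equivalently, $\sum_j e^{\mu z_j}\ge\sum_j V_j$ always). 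For the first term, the Gaussian tail bound of Lemma~\ref{lem::gaussian-tail-mills-ratio} combined with $\nu_m^2=2\log m$ (so $\phi(\nu_m)=(m\sqrt{2\pi})^{-1}$) gives $\mathbb{P}(z_1>\nu_m)\le\nu_m^{-1}\phi(\nu_m)=(m\nu_m\sqrt{2\pi})^{-1}$, and a union bound over the $m$ coordinates produces exactly the first term $(\nu_m\sqrt{2\pi})^{-1}$.

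For the second term, the hypothesis $\delta<\Phi(\nu_m-\mu)$ says that $\delta$ lies strictly below the mean $m^{-1}e^{-\mu^2/2}\E\sum_j V_j=\Phi(\nu_m-\mu)$, so this is a lower-deviation event and I would apply Chebyshev's inequality:
\[
\mathbb{P}\Big(\sum_{j=1}^m V_j\le m e^{\mu^2/2}\delta\Big)\le\frac{m\,\Var(V_1)}{m^2 e^{\mu^2}\big(\Phi(\nu_m-\mu)-\delta\big)^2}\le\frac{e^{\mu^2}\,\Phi(\nu_m-2\mu)}{m\big(\Phi(\nu_m-\mu)-\delta\big)^2},
\]
using $\Var(V_1)\le\E V_1^2$. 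Since $2\mu>\nu_m$, we have $\Phi(\nu_m-2\mu)=1-\Phi(2\mu-\nu_m)\le(2\mu-\nu_m)^{-1}\phi(2\mu-\nu_m)$ by Lemma~\ref{lem::gaussian-tail-mills-ratio} again. Substituting $\phi(2\mu-\nu_m)=(2\pi)^{-1/2}e^{-(2\mu-\nu_m)^2/2}$ and $e^{\mu^2}/m=e^{\mu^2-\nu_m^2/2}$, the exponent collapses:
\[
\mu^2-\tfrac{\nu_m^2}{2}-\tfrac{(2\mu-\nu_m)^2}{2}=-(\mu-\nu_m)^2,
\]
so the second term is at most $\frac{1}{\sqrt{2\pi}}\cdot\frac{1}{(\Phi(\nu_m-\mu)-\delta)^2}\cdot\frac{1}{2\mu-\nu_m}\cdot e^{-(\mu-\nu_m)^2}$. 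Adding the two contributions gives the claimed inequality.

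There is no genuine obstacle here; the only points requiring a little care are the two Gaussian moment computations (routine completion of squares), the observation that $2\mu-\nu_m>0$ is precisely what licenses the Mills-ratio estimate for $\Phi(\nu_m-2\mu)$ and keeps the bound positive and finite, and the algebraic simplification of the exponent to $-(\mu-\nu_m)^2$ via $\log m=\nu_m^2/2$. Chebyshev is somewhat wasteful (the boundedness of the $V_j$ would allow a Bernstein-type refinement), but it already delivers the stated bound, so I would not pursue the sharper route.
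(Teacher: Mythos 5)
Your proof is correct and follows essentially the same route as the paper's: truncate the summands at $e^{\mu\nu_m}$ (your $V_j$ is exactly the paper's $\bar X_{mj}$), control the truncation failure by a union bound giving the $(\sqrt{2\pi}\,\nu_m)^{-1}$ term, and apply Chebyshev with $\Var(V_1)\le\E V_1^2=e^{2\mu^2}\Phi(\nu_m-2\mu)$ plus the Mills-ratio bound to produce the second term. All the computations (the two truncated Gaussian moments and the collapse of the exponent to $-(\mu-\nu_m)^2$) check out.
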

\begin{proof}
Define the notation: 
\begin{align*}
& X_{mj} = e^{\mu z_{j}}, \quad \Bar{X}_{mj} = X_{mj} I_{(X_{mj}\leq e^{\mu\nu_m})},\\
&S_{m}=\sum_{j=1}^mX_{mj},\quad \bar{S}_m=\sum_{j=1}^m\bar{X}_{mj},\\
& a_{m} = \E\bar{S}_m = me^{\mu^{2}/2} \Phi(\nu_{m} - \mu).
\end{align*}
Then 
\begin{align*}
    & \mathbb{P}\Big(m^{-1} e^{-\frac{1}{2}\mu^{2}} \sum_{j=1}^{m} e^{\mu z_{j}} \leq \delta \Big) \\
    = ~ & \mathbb{P}\left \{a_{m}-S_{m} \geq \left[ \Phi(\nu_{m} - \mu)-\delta  \right] \cdot m e^{\frac{1}{2}\mu^{2}} \right \} \\
    = ~ & \mathbb{P}\left(\frac{a_{m} - S_{m}}{e^{\mu \nu_m}} \geq t\right), 
\end{align*}
where $t := \left[\Phi(\nu_{m} - \mu)-\delta  \right] \cdot m e^{\frac{1}{2}\mu^{2} - \mu \nu_{m}}$. Clearly,
\begin{equation*}
    \mathbb{P}\left(\frac{a_{m} - S_{m}}{e^{\mu \nu_m}} \geq t\right) \leq \mathbb{P} \left( S_{m} \neq \Bar{S}_{m} \right) + \mathbb{P}\left( \left| \frac{\Bar{S}_{m} - a_{m}}{e^{\mu\nu_m}} \right| > t \right).
\end{equation*}
For the following calculation, we will use Gaussian tail bound $1-\Phi(x) \leq x^{-1} \phi(x)$ for $x>0$. To obtain a proper upper bound for the first term, we note that
\begin{align*}
     & \mathbb{P} \left( S_{m} \neq \Bar{S}_{m} \right) \leq  \mathbb{P} \left( \cup_{j=1}^{m} \{ \Bar{X}_{mj} \neq X_{mj} \} \right) \\
     \leq & \sum_{j=1}^{m} \mathbb{P} \left( X_{mj} > e^{\mu\nu_m} \right)
     = \sum_{j=1}^{m} \mathbb{P} \left( e^{\mu z_{j}} > e^{\mu \nu_{m}} \right) \\
     =&  ~ m(1- \Phi(\nu_{m})) \leq   \frac{m}{\nu_{m}} \phi(\nu_{m}) =  \frac{1}{\sqrt{2\pi}\nu_{m}}.
\end{align*}
For the second term, we use Chebyshev's inequality and the fact that $a_{m} = \E \Bar{S}_{m}$ and $\Var (X) \leq \E X^{2}$,
\begin{align*}
    & \mathbb{P}\left( \left| \frac{\Bar{S}_{m} - a_{m}}{e^{\mu\nu_m}} \right| > t \right)
    \leq  t^{-2}e^{-2\mu\nu_m} \E  (\Bar{S}_{m} - a_{m})^{2} \\
    \leq & ~ (te^{\mu\nu_m})^{-2} \sum_{j=1}^{m} \E \Bar{X}_{mj}^{2} \\
    \leq & ~ \frac{1}{\left[\Phi(\nu_{m}-\mu)- \delta  \right]^{2}} \frac{1}{\sqrt{2\pi}} \frac{1}{2\mu - \nu_{m}} e^{-(\mu -\nu_{m})^{2}}.
\end{align*}
The last inequality is based on the following calculation:
\begin{align*}
     & \E \Bar{X}_{mj}^{2} = \E \left( e^{\mu z_{j}} I_{(e^{\mu z_{j}} \leq e^{\mu \nu_{m}})} \right)^{2} = \int_{z\leq \nu_{m}} e^{2\mu z} \phi(z) d z \\
     & =  e^{2\mu^{2}} (1-\Phi (2\mu - \nu_{m}) )
     \leq \frac{1}{\sqrt{2\pi}} \frac{1}{2\mu-\nu_{m}} e^{2\mu^{2} - \frac{1}{2}(2\mu-\nu_{m})^{2}} \\
     & = \frac{1}{\sqrt{2\pi}} \frac{1}{2\mu-\nu_{m}} e^{-\frac{1}{2}\nu_{m}^{2} + 2\mu \nu_{m}},
\end{align*}
and
\begin{align*}
    & (te^{\mu\nu_m})^{-2} m \cdot \frac{1}{\sqrt{2\pi}} \frac{1}{2\mu-\nu_{m}} e^{-\frac{1}{2}\nu_{m}^{2} + 2\mu \nu_{m}} \\
    = & ~ \frac{1}{\left[\Phi(\nu_{m}-\mu) -\delta\right]^{2} } \frac{1}{m^{2}} e^{-\mu^{2}} m \frac{1}{\sqrt{2\pi}} \frac{1}{2\mu-\nu_{m}} e^{-\frac{1}{2}\nu_{m}^{2}+2\mu \nu_{m}} \\
    = & ~ \frac{1}{\left[\Phi(\nu_{m}-\mu) -\delta\right]^{2} } \frac{1}{\sqrt{2\pi}} \frac{1}{2\mu-\nu_{m}} e^{-(\mu-\nu_{m})^{2}}.
\end{align*}
\end{proof}

We are now ready to calculate the Bayes risk $B( \pi_S^{\mu,m})$ in the following lemma. 

\begin{lemma}\label{lem::single-one-sided-spike}
Let $\nu_{m}= \sqrt{2 \log m}$ and $\mu = \nu_{m-1} - \sqrt{2 \log \nu_{m-1}}$. As $m\rightarrow\infty$, the Bayes risk $B( \pi_S^{\mu,m})$ satisfies
\begin{equation*}
    B( \pi_S^{\mu,m}) \geq \nu_{m}^{2} - 2\nu_{m} \sqrt{2\log \nu_{m}} \left( 1 + o(1)\right).
\end{equation*}
\end{lemma}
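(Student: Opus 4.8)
## Proof Plan for Lemma~\ref{lem::single-one-sided-spike}

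The plan is to follow the same roadmap used in Lemmas~\ref{lem::spike-bayes-risk-regime-1} and~\ref{lem::single-spike-case}: decompose the Bayes risk of the one-sided spike prior into a ``signal'' term and a ``false alarm'' term, and then show the dominant contribution matches $\nu_m^2$ up to the claimed second-order correction. First I would write down the analog of Lemma~\ref{lem::spike-bayes-risk-decompose} for the one-sided spike prior $\pi_S^{\mu,m}$ of \eqref{one:side:spike}. Since now the $m$ atoms are all at $\mu e_i$ (rather than $\pm \mu e_i$), the posterior mean for the first coordinate becomes $\hat{\theta}_1 = \mu \, e^{\mu y_1}/\sum_{i=1}^m e^{\mu y_i} = \mu q_m$, where $q_m := e^{\mu y_1}/\sum_{i=1}^m e^{\mu y_i}$, and by exchangeability $B(\pi_S^{\mu,m}) = \mathbb{E}_{\mu e_1}(\hat{\theta}_1 - \mu)^2 + (m-1)\mathbb{E}_{\mu e_2}\hat{\theta}_1^2 = \mu^2\big[\mathbb{E}_{\mu e_1}(q_m-1)^2 + (m-1)\mathbb{E}_{\mu e_2}q_m^2\big]$. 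Discarding the nonnegative second term, it suffices to prove $\mathbb{E}_{\mu e_1}(q_m - 1)^2 \geq 1 - o(1)$, i.e. that $q_m \to 1$ in (say) $L^2$ under $y \sim \mathcal{N}(\mu e_1, I)$, together with bounding $\mu^2$ from below. The choice $\mu = \nu_{m-1} - \sqrt{2\log\nu_{m-1}}$ gives $\mu^2 = \nu_m^2 - 2\nu_m\sqrt{2\log\nu_m}(1+o(1))$ after a routine expansion (using $\nu_{m-1}^2 = \nu_m^2 + o(1)$), so the whole lemma reduces to showing $\mathbb{E}_{\mu e_1} q_m \to 1$ (since $0 \le q_m \le 1$, $L^1$ convergence of $q_m$ to $1$ forces $L^2$ convergence).

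The core estimate is therefore: under $y_1 = \mu + z_1$ and $y_j = z_j$ for $j\ge 2$, show
\[
\mathbb{E}_{\mu e_1} q_m = \mathbb{E}\left[\frac{e^{\mu(\mu+z_1)}}{e^{\mu(\mu+z_1)} + \sum_{j=2}^m e^{\mu z_j}}\right] \longrightarrow 1.
\]
Equivalently, $1 - \mathbb{E}_{\mu e_1} q_m = \mathbb{E}\big[\sum_{j\ge 2} e^{\mu z_j}\big/(e^{\mu(\mu+z_1)}+\sum_{j\ge 2}e^{\mu z_j})\big] \to 0$. The numerator $\sum_{j=2}^m e^{\mu z_j}$ concentrates around $(m-1)e^{\mu^2/2}$, while the ``planted'' term $e^{\mu(\mu+z_1)} = e^{\mu^2}e^{\mu z_1}$ is of order $e^{\mu^2}$ up to the fluctuation $e^{\mu z_1}$. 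Since $\mu = \nu_{m-1} - \sqrt{2\log\nu_{m-1}}$, we have $e^{\mu^2} = e^{\nu_m^2}/(\text{polylog factor})\cdot e^{o(\nu_m\sqrt{\log\nu_m})}$, which is much larger than $m e^{\mu^2/2}$: indeed $m e^{\mu^2/2} = e^{\nu_m^2/2 + \mu^2/2} \ll e^{\mu^2}$ because $\mu^2 > \nu_m^2/2 \cdot (1+o(1))$... wait, one must check $\mu^2$ vs $\nu_m^2$. Here $\mu^2 = \nu_m^2 - 2\nu_m\sqrt{2\log\nu_m}(1+o(1)) \sim \nu_m^2$, so $\mu^2 + \nu_m^2 \approx 2\nu_m^2$ while $2\mu^2 \approx 2\nu_m^2$ to leading order, and the gap $e^{\mu^2} / (m e^{\mu^2/2}) = e^{(\mu^2 - \nu_m^2)/2} = e^{-\nu_m\sqrt{2\log\nu_m}(1+o(1))} \to 0$ --- so in fact the planted term is \emph{smaller} than the bulk! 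This is the subtlety: $q_m$ does \emph{not} converge to $1$ pointwise, and the previous naive heuristic fails. The resolution --- and this is exactly what Lemma~\ref{lem::F2-complement} is designed for --- is that one must split on the event that the bulk sum $\sum_{j\ge 2}e^{\mu z_j}$ is atypically \emph{small}, of order comparable to $e^{\mu^2}$, which happens with non-negligible probability precisely because $\mu$ exceeds $\nu_{m-1}$. On that event, $q_m$ is bounded away from $0$; off it, $q_m$ is negligible; and the probability of the favorable event, computed via Lemma~\ref{lem::F2-complement}, is what delivers the $1+o(1)$.

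Concretely, I would proceed as follows. Fix a slowly-growing threshold $\delta_m$ (to be chosen, e.g. $\delta_m = $ some small constant, or $\delta_m \to 0$ slowly) and write
\[
1 - \mathbb{E}_{\mu e_1} q_m \le \mathbb{E}\Big[\tfrac{\sum_{j\ge 2}e^{\mu z_j}}{e^{\mu^2}e^{\mu z_1} + \sum_{j\ge 2}e^{\mu z_j}}\,I_{E}\Big] + \mathbb{P}(E^c),
\]
where $E = \{m^{-1}e^{-\mu^2/2}\sum_{j=1}^m e^{\mu z_j} \le \delta_m\}$ (up to harmless relabeling between $m$ and $m-1$ terms). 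On $E^c$ the integrand is at most $1$... no --- rather, I want the event where the \emph{bulk is small} so the planted term dominates. Let me set $E = \{(m-1)^{-1}e^{-\mu^2/2}\sum_{j\ge 2}e^{\mu z_j} \le \delta_m\}$. On $E$, $\sum_{j\ge 2}e^{\mu z_j} \le \delta_m (m-1)e^{\mu^2/2}$; comparing with $e^{\mu^2}e^{\mu z_1}$ and restricting further to $z_1 \ge -\nu_m$ (a high-probability event), the integrand is $\le \delta_m(m-1)e^{\mu^2/2}\big/(e^{\mu^2-\mu\nu_m})$, which one checks is $o(1)$ for a suitable choice of $\delta_m$ given $\mu = \nu_{m-1}-\sqrt{2\log\nu_{m-1}} > \tfrac12\nu_{m-1}$ eventually. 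On $E^c$, apply Lemma~\ref{lem::F2-complement} with $m \rightsquigarrow m-1$: the hypotheses $2\mu > \nu_{m-1}$ and $\delta_m < \Phi(\nu_{m-1}-\mu)$ hold because $\nu_{m-1}-\mu = \sqrt{2\log\nu_{m-1}} \to \infty$ so $\Phi(\nu_{m-1}-\mu) \to 1$, and the lemma's bound gives $\mathbb{P}(E^c) \le \tfrac{1}{\sqrt{2\pi}\,\nu_{m-1}} + \tfrac{1}{\sqrt{2\pi}}\tfrac{1}{[\Phi(\nu_{m-1}-\mu)-\delta_m]^2}\tfrac{1}{2\mu-\nu_{m-1}}e^{-(\mu-\nu_{m-1})^2}$; since $\mu - \nu_{m-1} = -\sqrt{2\log\nu_{m-1}}$, we get $e^{-(\mu-\nu_{m-1})^2} = e^{-2\log\nu_{m-1}} = \nu_{m-1}^{-2}$, so $\mathbb{P}(E^c) = O(\nu_{m-1}^{-1}) = o(1)$. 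Combining, $1 - \mathbb{E}_{\mu e_1}q_m = o(1)$, hence $\mathbb{E}_{\mu e_1}(q_m-1)^2 \to 1$ (using $|q_m - 1|^2 \le |q_m - 1|$), and finally $B(\pi_S^{\mu,m}) \ge \mu^2(1 - o(1)) = \nu_m^2 - 2\nu_m\sqrt{2\log\nu_m}(1+o(1))$, as claimed.

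\textbf{Main obstacle.} The delicate point is the joint calibration of $\mu$, $\delta_m$, and the truncation level $\nu_{m-1}$ (or $\nu_m$) inside Lemma~\ref{lem::F2-complement}: one must verify $2\mu > \nu_{m-1}$, $\delta_m < \Phi(\nu_{m-1}-\mu)$, and simultaneously that on $E$ the ratio is genuinely $o(1)$ --- all while the error terms $e^{-(\mu-\nu_{m-1})^2}/(2\mu-\nu_{m-1})$ and $\nu_{m-1}^{-1}$ are negligible. This is a chain of elementary but error-prone asymptotic comparisons of exponentials; the conceptual content is simply that with this choice of $\mu$ the planted spike is recoverable with probability $1-o(1)$ even though individual realizations of $q_m$ need not be close to $1$. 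A secondary bookkeeping point is passing between the $m$-term and $(m-1)$-term sums and between $\nu_m$ and $\nu_{m-1}$, but since $\nu_m^2 - \nu_{m-1}^2 = 2\log\tfrac{m}{m-1} = o(1)$ this is harmless and can be absorbed into the $(1+o(1))$ throughout.
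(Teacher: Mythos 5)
There is a genuine gap, and it is a sign error in the logic rather than a technical slip. From the decomposition $B(\pi_S^{\mu,m})=\mu^2\bigl[\E_{\mu e_1}(q_m-1)^2+(m-1)\E_{\mu e_2}q_m^2\bigr]$, after discarding the second term you need $\E_{\mu e_1}(q_m-1)^2\geq 1-o(1)$, which forces $q_m$ to concentrate near $0$, \emph{not} near $1$: if $\E_{\mu e_1}q_m\to 1$ then, since $0\leq q_m\leq 1$, $\E(q_m-1)^2\leq 1-\E q_m\to 0$ and your lower bound collapses. The correct reduction is $\E(q_m-1)^2\geq 1-2\E_{\mu e_1}q_m$ together with $\E_{\mu e_1}q_m\to 0$; this is what the paper proves ($\E_{\mu e_1}p_m=O(1/\nu_{m-1})$), and it is the statistically meaningful statement --- the spike at $\mu=\nu_{m-1}-\sqrt{2\log\nu_{m-1}}$ sits just below the detection threshold, so the posterior essentially misses it and the risk is $\approx\mu^2$. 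Your own intermediate computation, $e^{\mu^2}/(me^{\mu^2/2})=e^{(\mu^2-\nu_m^2)/2}\to 0$, already shows the planted term is dominated by the bulk, i.e.\ $q_m\to 0$ in probability; at that point the argument should have flipped to bounding $\E q_m$ from above, not doubling down on $\E q_m\to 1$. Consistent with this inversion, your application of Lemma \ref{lem::F2-complement} is also reversed (it upper-bounds the probability that the bulk is atypically \emph{small}, i.e.\ $\mathbb{P}(E)$ in your notation, not $\mathbb{P}(E^c)$), and the quantity you assert is $o(1)$ on $E$, namely $\delta_m(m-1)e^{\mu^2/2}/e^{\mu^2-\mu\nu_m}=\delta_m e^{(\nu_{m-1}^2-\mu^2)/2+\mu\nu_m}$, is in fact of order $e^{\nu_m^2(1+o(1))}$ and diverges.

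The fix is to run the conditioning the other way, as the paper does: introduce $\mathcal{F}_1=\{(m-1)e^{-\mu^2/2-\mu z_1}\geq M\}$ (the planted term is not atypically large) and $\mathcal{F}_2=\{(m-1)^{-1}e^{-\mu^2/2}\sum_{j\neq 1}e^{\mu z_j}\geq\delta\}$ (the bulk is not atypically small); on $\mathcal{F}_1\cap\mathcal{F}_2$ the ratio is at most $1/(M\delta)$, $\mathbb{P}(\mathcal{F}_1^c)$ is controlled by a direct Gaussian tail bound, and $\mathbb{P}(\mathcal{F}_2^c)$ is exactly what Lemma \ref{lem::F2-complement} controls. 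With $M=\nu_{m-1}$ and $\delta$ a fixed constant this yields $\E_{\mu e_1}q_m=O(1/\nu_{m-1})$. Note also that mere $o(1)$ convergence would not suffice: since the target second-order term is $2\nu_m\sqrt{2\log\nu_m}$, you need $\mu^2\,\E_{\mu e_1}q_m=o(\nu_m\sqrt{\log\nu_m})$, i.e.\ a rate $\E q_m=o(\sqrt{\log\nu_m}/\nu_m)$, which the $O(1/\nu_{m-1})$ bound delivers. Your expansion $\mu^2=\nu_m^2-2\nu_m\sqrt{2\log\nu_m}(1+o(1))$ and the bookkeeping between $\nu_m$ and $\nu_{m-1}$ are fine.
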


\begin{proof}
For the one-sided spike prior $\pi_S^{\mu,m}$ introduced in \eqref{one:side:spike}, doing similar calculations as in the proof of Lemma \lemSpikeBayesDecompose, we can obtain the expression for the Bayes risk:
\begin{align}\label{eq::bayes-risk-decompose}
      B( \pi_S^{\mu,m}) & = \mu^{2} \E_{\mu e_{1}} ( p_{m} - 1 )^{2} + (m-1) \mu^{2} \E_{\mu e_{2}} p_{m}^{2} \nonumber \\ 
      \geq & \mu^{2} - 2 \mu^{2}\E_{\mu e_{1}} p_{m},
\end{align}
where $p_{m} = \frac{e^{\mu y_1}}{\sum_{j=1}^me^{\mu y_j}}$; $\E_{\mu e_1}(\cdot)$ is taken with respect to $y\sim \mathcal{N}(\mu e_1,I)$ and $\E_{\mu e_2}(\cdot)$ for $y\sim \mathcal{N}(\mu e_2,I)$. Now the goal is to upper bound $\E_{\mu e_{1}} p_{m}$. We have
\begin{align}
\label{inter:med}
    & \E_{\mu e_1} p_{m} = \E\frac{e^{\mu(\mu + z_{1})}}{\sum_{j\neq 1} e^{\mu z_{j}} + e^{\mu (\mu+z_{1})}} \nonumber `\\
    = & ~ \E\frac{ (m-1)^{-1}e^{\frac{1}{2}\mu^{2}+\mu z_{1}}}{(m-1)^{-1}e^{\frac{1}{2}\mu^{2}+\mu z_{1}} + (m-1)^{-1}e^{-\frac{1}{2}\mu^{2}} \sum_{j\neq 1} e^{\mu z_{j}} },
\end{align}
where $z_1,\ldots, z_m\overset{i.i.d.}{\sim}\mathcal{N}(0,1)$. Define the following two events:
\begin{align*}
     & \mathcal{F}_{1} = \Big\{ (m-1) e^{-\frac{1}{2}\mu^{2} - \mu z_{1}} \geq M \Big\}, \\
     & \mathcal{F}_{2} = \Big\{ (m-1)^{-1} e^{-\frac{1}{2}\mu^{2}} \sum_{j\neq 1} e^{\mu z_{j}} \geq \delta \Big\},
\end{align*}
where $\delta$ and $M$ are two positive constants to be determined later. Since the ratio inside the expectation of \eqref{inter:med} is smaller than one, and on the event $\mathcal{F}_{1}\cap \mathcal{F}_{2}$ it is smaller than $\frac{1}{M\delta}$, we can continue from \eqref{inter:med} to obtain
\begin{equation}\label{eq::expectation-p1n}
    \E_{\mu e_{1}} p_{m} \leq \frac{1}{M \cdot \delta} + \mathbb{P}(\mathcal{F}_{1}^{c}) + \mathbb{P}(\mathcal{F}_{2}^{c}).
\end{equation}
Hence, we aim to find upper bounds for $\mathbb{P}(\mathcal{F}_{1}^{c})$ and $\mathbb{P}(\mathcal{F}_{2}^{c})$.
For the first probability, using Gaussian tail bound that $1-\Phi(x) \leq \frac{1}{x}\phi(x)$ for $x>0$, and that $e^{\nu_{m-1}^{2}/2} = m-1$, we have
\begin{align*}
     & \mathbb{P}(\mathcal{F}_{1}^{c}) = ~\mathbb{P}\left( (m-1) e^{-\frac{1}{2}\mu^{2} -\mu z} < M \right) \\
    = & ~ \mathbb{P}\left( z> -\frac{1}{2}\mu - \frac{1}{\mu} \log \frac{M}{m-1} \right) \\
    = & ~ 1-\Phi\left( -\frac{1}{\mu} \log M + \frac{1}{2\mu} (\nu_{m-1}^{2} - \mu^{2}) \right) \\
    \leq & ~ \frac{1}{ -\frac{1}{\mu} \log M + \frac{1}{2\mu} (\nu_{m-1}^{2} - \mu^{2})} \frac{1}{\sqrt{2\pi}} \\
    \cdot & \exp\left(-\frac{1}{2\mu^{2}} \left[ \frac{1}{2}(\nu_{m-1}^{2} - \mu^{2}) - \log M \right]^{2} \right) := U_{1},
    \end{align*}
    as long as $\nu_{m-1}^2-\mu^2>2\log M$.
Regarding $\mathbb{P}(\mathcal{F}_{2}^{c})$, if we limit our choice of $0 < \delta < \Phi(\nu_{m-1}-\mu)$, then from Lemma \ref{lem::F2-complement},
\begin{align*}
    \mathbb{P}(\mathcal{F}_{2}^{c}) & \leq ~ \frac{1}{\sqrt{2\pi}} \frac{1}{\nu_{m-1}} + \frac{1}{\sqrt{2\pi}} \frac{1}{\left[\Phi(\nu_{m-1}-\mu)-\delta\right]^{2}} \\
    & \cdot \frac{1}{2\mu-\nu_{m-1}} e^{-(\mu-\nu_{m-1})^{2}} := U_{2}.
\end{align*}

Now we set $M=\nu_{m-1}$ and recall $\mu=\nu_{m-1} - \sqrt{2\log \nu_{m-1}}$. We will show that $U_{1} = o(\nu_{m-1}^{-1})$ and $U_{2}=O(\nu_{m-1}^{-1})$. First, for $U_{1}$,
\begin{align*}
    & \frac{1}{2\mu^{2}} \left[\frac{1}{2}(\nu_{m-1}^{2} - \mu^{2}) - \log M \right]^{2} \\
    = & ~ \frac{1}{2\mu^{2}} \left[\frac{1}{2}(2 \nu_{m-1} \sqrt{2\log\nu_{m-1}} - 2\log \nu_{m-1}) - \log \nu_{m-1} \right]^{2} \\
    = & ~ \frac{1}{2\mu^{2}} \left[ \nu_{m-1}\sqrt{2\log\nu_{m-1}} - 2\log\nu_{m-1} \right]^{2} \\
    = & ~ \frac{\nu_{m-1}^{2}}{\mu^{2}} \log \nu_{m-1} - \frac{2\sqrt{2}\nu_{m-1}}{\mu^{2}} \left( \log \nu_{m-1} \right)^{3/2} \\
    & + \frac{2}{\mu^{2}} \left( \log \nu_{m-1} \right)^{2} \geq  \log \nu_{m-1} + o(1),
\end{align*}
where in the last inequality we used $\mu^2<\nu_{m-1}^2$ (for large $m$).
Therefore, 
\begin{equation*}
    e^{-\frac{1}{2\mu^{2}} \left[ \frac{1}{2}(\nu_{m-1}^{2} - \mu^{2}) - \log M \right]^{2} } \leq \nu_{m-1}^{-1} \left( 1+o(1) \right),
\end{equation*}
and
\begin{align*}
    & \frac{1}{ -\frac{1}{\mu} \log M + \frac{1}{2\mu} (\nu_{m-1}^{2} - \mu^{2})} \\
    = & ~ \frac{1}{\frac{1}{\mu} \cdot \left( \nu_{m-1}\sqrt{2\log\nu_{m-1}} - 2\log\nu_{m-1} \right)} \\
    \leq & ~ \left( \sqrt{2\log \nu_{m-1}} - \frac{2\log \nu_{m-1}}{\nu_{m-1}} \right)^{-1} = o(1).
\end{align*}
In combination,
\begin{equation}\label{eq::U1}
    U_{1} \leq o(1) \cdot \nu_{m-1}^{-1} \left( 1+o(1) \right)  = o(\nu_{m-1}^{-1}).
\end{equation}
For $U_{2}$, we set $\delta$ to be any fixed constant between $(0,1)$. Since $\nu_{m-1} - \mu \rightarrow +\infty$, it holds that $\Phi(\nu_{m-1}-\mu)-\delta > \delta^{'}$ for some constant $\delta^{'}>0$, when $m$ is large. Also, we have the identity $e^{-(\mu-\nu_{m-1})^{2}} = e^{-2\log \nu_{m-1}} = \nu_{m-1}^{-2}$. So the second term in $U_{2}$ is of order $O(\nu_{m-1}^{-3})$. Thus,
\begin{equation}\label{eq::U2}
    U_{2} =  \frac{1+o(1)}{\sqrt{2\pi}\nu_{m-1}}.
\end{equation}
Note that we have set $M=\nu_{m-1}$. Hence, $1/(M\cdot \delta) = O(1/\nu_{m-1})$. Combining \eqref{eq::expectation-p1n}-\eqref{eq::U2}, we have
\begin{equation*}
    \E_{\mu e_{1}} p_{m} \leq O(1/\nu_{m-1}).
\end{equation*}
Finally, the above together with \eqref{eq::bayes-risk-decompose} shows that
\begin{align*}
    B(\pi^{\mu,m}_S) & \geq \mu^{2} - 2\mu^{2}  O\left(\nu_{m-1}^{-1}\right)  \\
    & = \nu_{m-1}^{2} - 2\nu_{m-1}\sqrt{2\log \nu_{m-1}} \left( 1+o(1) \right) \\
    & = \nu_{m}^{2} - 2\nu_{m}\sqrt{2\log \nu_{m}} \left( 1+o(1) \right).
\end{align*}
\end{proof}

Now, we aim to apply Lemma \ref{lem::single-one-sided-spike} to derive the minimax lower bound. First note that in the current regime $\epsilon_n\rightarrow 0,\mu_n=\omega(\sqrt{\log\epsilon_n^{-1}})$, the choice of $\mu$ with $m=n/k_n=\epsilon_n^{-1}$ in Lemma \ref{lem::single-one-sided-spike} satisfies $\mu<\mu_n$ when $n$ is large. Thus, the constructed block prior is supported on the parameter space $\Theta(k_n,\mu_n)$ so that we can use \EqBlockPriorLower\ and Lemma \ref{lem::single-one-sided-spike} to conclude
\begin{align*}
    & R(\Theta(k_n,\tau_{n}),\sigma_n)= ~\sigma_n^2\cdot R(\Theta(k_n,\mu_{n}),1) \geq k_n\sigma_n^2\cdot B(\pi_S^{\mu,m}) \\
     \geq & ~ k_n\sigma_n^2 \cdot \Big(\nu_m^2-2\nu_m\sqrt{2\log\nu_m}(1+o(1)\Big) \\
     = & ~ n\sigma_n^2\Big(2\epsilon_n\log\epsilon_n^{-1}-2\epsilon_n\nu_m\sqrt{2\log\nu_m}(1+o(1)\Big),
\end{align*}
where $\nu_m=\sqrt{2\log m}=\sqrt{2\log\epsilon_n^{-1}}$.
\subsection{Proof of Proposition \ref{lem::lasso-risk-regime-4}} \label{sec::lasso-risk-regime_3}

\subsubsection{Roadmap of the proof}

Propositions \ref{lem::lasso-risk-regime-1} and \ref{lem::lasso-risk-regime-2} have derived the supremum risk of optimally tuned soft thresholding in Regimes (\rom{1}) and (\rom{2}) respectively. Proposition \ref{lem::lasso-risk-regime-4} continues to obtain it in Regime (\rom{3}). Hence, we will use some existing results from the proof of Propositions \ref{lem::lasso-risk-regime-1} and \ref{lem::lasso-risk-regime-2} to simplify the present proof. First of all, referring to Equations \eqref{eq::soft-thresholding-risk-scalability}-\eqref{eq::soft-thresholding-risk-function} in the proof of Proposition \ref{lem::lasso-risk-regime-1}, the supremum risk can be expressed as
\begin{align*}
  & \inf_{\lambda}\sup_{\theta\in \Theta(k_n,\tau_{n})} \E_{\theta} \norm{\etahatS(y,\lambda) - \theta}_{2}^{2} \\
  = ~& n\sigma_n^2\cdot \inf_{\lambda}\underbrace{\Big[(1-\epsilon_n)\E\hat{\eta}^2_S(z,\lambda)+\epsilon_n\E(\hat{\eta}_S(z+\mu_n,\lambda)-\mu_n)^2\Big]}_{:=F(\lambda)},
\end{align*}
with $z\sim \mathcal{N}(0,1)$.
Define the optimal tuning $\lambdas=\argmin_{\lambda\geq 0}F(\lambda)$. Then it is equivalent to prove
\[
F(\lambdas)=2\epsilon_n\log\epsilon_n^{-1}-(6+o(1))\epsilon_n\log\nu_n,
\]
where $\nu_n=\sqrt{2\log \epsilon_n^{-1}}$. To reach the above, we will first find the tight upper bound for $F(\lambdas)$ in Section \ref{sec::upper-bound-lasso-sup-risk}, and then obtain the matching lower bound in Section \ref{ssec:lower:softhrsh}. Before we do these two parts, let us prove a lemma that provides an approximation for $F(\lambda)$. This approximation will help us in the calculation of both the upper and lower bounds.

\begin{lemma}\label{lem::analysis-second-order-lasso-risk-regime-4}
Consider $\epsilon_{n}\rightarrow 0$, $\mu_{n}=\omega(\sqrt{\log \epsilon_{n}^{-1}})$, as $n\rightarrow \infty$. If $\lambda \rightarrow \infty$ and $\mu_{n} - \lambda \rightarrow +\infty$, then
\begin{align*}
    F(\lambda) & = ~ 2(1-\epsilon_{n}) \left[ (1+\lambda^{2}) (1-\Phi(\lambda)) - \lambda \phi(\lambda) \right]  \\
    & +  \epsilon_{n} \left[ \lambda^{2}  + 1 - \frac{(2+o(1))\mu_{n}}{(\mu_{n}-\lambda)^{2}} \phi(\mu_{n}-\lambda) \right].
\end{align*}
Furthermore, when $\lambda$ is large, it holds that
\begin{equation*}
    C(\lambda) \leq F(\lambda) \leq D(\lambda),
\end{equation*}
where 
\begin{align}\label{eq::C(lambda)}
    C(\lambda) & := 2(1-\epsilon_{n}) \cdot \left( \frac{2}{\lambda^{3}} - \frac{12}{\lambda^{5}} \right) \frac{1}{\sqrt{2\pi}} \epsilon_{n} \cdot e^{\frac{1}{2}(\nu_{n}^{2} - \lambda^{2})} \\
    & +  \epsilon_{n}\left[ \lambda^{2}  + 1- \frac{(2+o(1))\mu_{n}}{(\mu_{n}-\lambda)^{2}}\phi(\mu_{n}-\lambda) \right]  , \nonumber
\end{align}
and 
\begin{equation}\label{eq::D(lambda)}
    D(\lambda) :=  \epsilon_{n} \left\{ (1-\epsilon_{n}) \frac{4}{\sqrt{2\pi}\lambda^{3}}e^{\frac{1}{2}(\nu_{n}^{2}-\lambda^{2})}  +\lambda^{2}+1 \right\} .
\end{equation}
\end{lemma}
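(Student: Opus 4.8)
The plan is to reduce everything to the closed-form one-dimensional soft-thresholding risk expressions already obtained in \secPropI. Starting from the decomposition \eqref{eq::soft-thresholding-risk-of-lambda}, namely $F(\lambda)=(1-\epsilon_{n})\,\E\hat{\eta}_S^2(z,\lambda)+\epsilon_{n}\mu_{n}^2-2\epsilon_{n}\mu_{n}\,\E\hat{\eta}_S(\mu_{n}+z,\lambda)+\epsilon_{n}\,\E\hat{\eta}_S^2(\mu_{n}+z,\lambda)$, the ``zero'' part is already in the desired shape because \eqref{risk:at:zero} gives $\E\hat{\eta}_S^2(z,\lambda)=2(1+\lambda^2)(1-\Phi(\lambda))-2\lambda\phi(\lambda)$. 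Hence the only genuine computation is to simplify the contribution of the nonzero coordinates, $r_S(\lambda,\mu_{n}):=\E\hat{\eta}_S^2(\mu_{n}+z,\lambda)-2\mu_{n}\E\hat{\eta}_S(\mu_{n}+z,\lambda)+\mu_{n}^2$, under the scaling $\lambda\to\infty$, $\mu_{n}-\lambda\to+\infty$.

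First I would expand $\E\hat{\eta}_S(\mu_{n}+z,\lambda)$ and $\E\hat{\eta}_S^2(\mu_{n}+z,\lambda)$ via \eqref{eq::soft-thresholding-first-moment}--\eqref{eq::soft-thresholding-second-moment}. Since $\lambda+\mu_{n}\to\infty$ at least as fast as $\mu_{n}$, every summand carrying $\phi(\lambda+\mu_{n})$ or $\int_{\lambda+\mu_{n}}^{\infty}\phi$ is doubly exponentially small and is absorbed into the error. For the surviving terms I would write $\int_{\lambda-\mu_{n}}^{\infty}\phi=\Phi(\mu_{n}-\lambda)=1-(1-\Phi(\mu_{n}-\lambda))$ and apply the Mills-ratio expansion of Lemma~\ref{lem::gaussian-tail-mills-ratio} to two nonzero orders, giving $\E\hat{\eta}_S(\mu_{n}+z,\lambda)=(\mu_{n}-\lambda)+(1+o(1))(\mu_{n}-\lambda)^{-2}\phi(\mu_{n}-\lambda)$ and $\E\hat{\eta}_S^2(\mu_{n}+z,\lambda)=1+(\mu_{n}-\lambda)^2-(2+o(1))(\mu_{n}-\lambda)^{-3}\phi(\mu_{n}-\lambda)$. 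Substituting into $r_S(\lambda,\mu_{n})$, the polynomial part collapses via the identity $(\mu_{n}-\lambda)^2-2\mu_{n}(\mu_{n}-\lambda)+\mu_{n}^2=\lambda^2$, and the two correction terms combine into $-\frac{2\mu_{n}}{(\mu_{n}-\lambda)^2}\phi(\mu_{n}-\lambda)\big(1+(\mu_{n}(\mu_{n}-\lambda))^{-1}\big)(1+o(1))=-\frac{(2+o(1))\mu_{n}}{(\mu_{n}-\lambda)^2}\phi(\mu_{n}-\lambda)$, where both $\mu_{n}\to\infty$ and $\mu_{n}-\lambda\to\infty$ are used to discard the extra factor. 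This gives the first displayed identity for $F(\lambda)$.

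For the sandwich $C(\lambda)\le F(\lambda)\le D(\lambda)$ I would not re-expand the signal part: the correction term in $r_S(\lambda,\mu_{n})$ is nonnegative, so dropping it yields $r_S(\lambda,\mu_{n})\le\lambda^2+1$ (used for $D$), while retaining it verbatim is exactly the signal contribution to $C(\lambda)$. It then remains to bound $\E\hat{\eta}_S^2(z,\lambda)=2[(1+\lambda^2)(1-\Phi(\lambda))-\lambda\phi(\lambda)]$ on both sides. Using $\phi(\lambda)=\frac{1}{\sqrt{2\pi}}e^{-\lambda^2/2}$ together with $\epsilon_{n}=e^{-\nu_{n}^2/2}$ (which holds since $\nu_{n}=\sqrt{2\log\epsilon_{n}^{-1}}$) rewrites this quantity as a constant multiple of $\epsilon_{n}e^{\frac12(\nu_{n}^2-\lambda^2)}$. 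I would then invoke $\Tilde{\Phi}_{2k+1}(\lambda)\le 1-\Phi(\lambda)\le\Tilde{\Phi}_{2k}(\lambda)$ from Lemma~\ref{lem::gaussian-tail-mills-ratio}: taking the even truncation (e.g.\ $k=2$), multiplying by $1+\lambda^2$ and subtracting $\lambda\phi(\lambda)$ produces $\phi(\lambda)\big(\frac{2}{\lambda^3}-\frac{12}{\lambda^5}+\frac{90}{\lambda^7}+\cdots\big)$, whose tail past the $\lambda^{-5}$ term is negative for large $\lambda$; hence $\E\hat{\eta}_S^2(z,\lambda)\le\frac{4}{\sqrt{2\pi}\lambda^3}\epsilon_{n}e^{\frac12(\nu_{n}^2-\lambda^2)}$, the zero part of $D(\lambda)$. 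The odd truncation similarly gives $\E\hat{\eta}_S^2(z,\lambda)\ge 2\big(\frac{2}{\lambda^3}-\frac{12}{\lambda^5}\big)\frac{1}{\sqrt{2\pi}}\epsilon_{n}e^{\frac12(\nu_{n}^2-\lambda^2)}$, the zero part of $C(\lambda)$. Multiplying by $1-\epsilon_{n}$ and adding the two signal bounds yields $C(\lambda)\le F(\lambda)\le D(\lambda)$.

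The main obstacle is the bookkeeping in the second paragraph: the Mills-ratio expansions of the first and second moments must be carried far enough that the $O(\mu_{n}^2)$ and $O((\mu_{n}-\lambda)^2)$ pieces cancel exactly to $\lambda^2$, and one must then confirm that among the several residual contributions of differing sizes---the discarded $\lambda+\mu_{n}$ terms, the $(\mu_{n}-\lambda)^{-3}$ piece, and the higher Mills remainders---the term $\frac{\mu_{n}}{(\mu_{n}-\lambda)^2}\phi(\mu_{n}-\lambda)$ is genuinely dominant. The remaining points, namely the sign pattern of the truncated Mills series for large $\lambda$ and the fact that $\mu_{n}-\lambda\to\infty$ automatically in the range of $\lambda$ of interest in Regime~(\rom{3}), are routine.
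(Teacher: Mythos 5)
Your proposal is correct and follows essentially the same route as the paper: the same decomposition \eqref{eq::soft-thresholding-risk-of-lambda} with the closed forms \eqref{risk:at:zero}--\eqref{eq::soft-thresholding-second-moment}, the same Mills-ratio expansions leading to the cancellation $(\mu_n-\lambda)^2-2\mu_n(\mu_n-\lambda)+\mu_n^2=\lambda^2$ and the dominant correction $\frac{(2+o(1))\mu_n}{(\mu_n-\lambda)^2}\phi(\mu_n-\lambda)$, and the same identification $\frac{1}{\sqrt{2\pi}}\epsilon_n e^{\frac{1}{2}(\nu_n^2-\lambda^2)}=\phi(\lambda)$ for the sandwich. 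One tiny slip: for the upper bound it is the truncated series' tail past the $\lambda^{-3}$ term (namely $-12\lambda^{-5}+90\lambda^{-7}+\cdots$), not past the $\lambda^{-5}$ term, that is negative for large $\lambda$; the conclusion is unaffected.
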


\begin{proof}

Throughout the proof, we will use the Gaussian tail bound in \lemGaussianMill\ to do calculations. With the expression of $F(\lambda)$ calculated in \eqRiskOfLambdatoSoftSecondMom, we have that as $\lambda\rightarrow \infty,\mu_n-\lambda\rightarrow +\infty$,
\begin{align*}
    F(\lambda) & = 2(1-\epsilon_{n}) \cdot  \left[ (1+\lambda^{2}) (1-\Phi(\lambda)) - \lambda \phi(\lambda) \right] \\
    & + \epsilon_{n} \cdot \bigg\{ (\lambda^{2}+1) + \Big[ (\mu_{n}^{2} - \lambda^{2} -1) (1-\Phi(\mu_{n}-\lambda))  \\
    & - (\mu_{n}+\lambda) \phi(\mu_{n}-\lambda) \Big] - \Big[ (\mu_{n}^{2} - \lambda^{2} - 1) \\
    & \cdot (1-\Phi(\mu_{n}+\lambda))   - (\mu_{n}-\lambda)\phi(\mu_{n}+\lambda) \Big]  \bigg\} \\
    & =  2(1-\epsilon_{n}) \left[ (1+\lambda^{2}) (1-\Phi(\lambda)) - \lambda \phi(\lambda) \right] \\
    & +  \epsilon_{n} \left[ \lambda^{2}  + 1 - \frac{(2+o(1))\mu_{n}}{(\mu_{n}-\lambda)^{2}} \phi(\mu_{n}-\lambda) \right] ,
\end{align*}
where in the last equation we have used $1-\Phi(x)=\left(\frac{1}{x}-\frac{1+o(1)}{x^3}\right)\phi(x)$ as $x\rightarrow \infty$.

As $\lambda\rightarrow \infty$, we obtain
\begin{align*}
    & (1+\lambda^{2}) (1-\Phi(\lambda)) - \lambda \phi(\lambda) \\
    = ~& \left[ (1+\lambda^{2}) \left( \frac{1}{\lambda} - \frac{1}{\lambda^{3}} + \frac{3}{\lambda^{5}} - \frac{15}{\lambda^{7}} + \frac{105}{\lambda^{9}} \right) -\lambda \right] \phi(\lambda ) \\
    + ~& O\left(\frac{\phi(\lambda)}{\lambda^9}\right) 
    = \left( \frac{2}{\lambda^{3}} -\frac{12}{\lambda^{5}} + \frac{90}{\lambda^{7}} \right)\phi(\lambda)+ O\left(\frac{\phi(\lambda)}{\lambda^9}\right).
\end{align*}
Thus, 
\begin{align*}
    F(\lambda)  & =  2(1-\epsilon_{n}) \cdot  \left( \frac{2}{\lambda^{3}} - \frac{12}{\lambda^{5}} + \frac{90}{\lambda^{7}} + O\left( \frac{1}{\lambda^{9}}\right) \right) \\
    & \cdot \frac{1}{\sqrt{2\pi}} \epsilon_{n} \cdot e^{\frac{1}{2}(\nu_{n}^{2}-\lambda^{2})}  +  \epsilon_{n} \Big[ \lambda^{2}  + 1  \\
    & -  \frac{(2+o(1))\mu_{n}}{(\mu_{n}-\lambda)^{2}} \phi(\mu_{n}-\lambda) \Big].
\end{align*}
As a result, it is straightforward to verify that $C(\lambda)$ and $D(\lambda)$ defined in \eqref{eq::C(lambda)}-\eqref{eq::D(lambda)} provide lower and upper bounds for $F(\lambda)$.
\end{proof}

\subsubsection{Upper bound}\label{sec::upper-bound-lasso-sup-risk}
Consider $\lambda = \sqrt{\nu_{n}^{2}- 6\log \nu_{n}}$, then $\lambda \rightarrow \infty$ and $\mu_n-\lambda\rightarrow \infty$. From Lemma \ref{lem::analysis-second-order-lasso-risk-regime-4}, 
\begin{align}
    & F(\lambdas)\leq F(\lambda)\leq D(\lambda) \nonumber \\
    = ~& \epsilon_{n} \left\{ (1-\epsilon_{n}) \frac{4}{\sqrt{2\pi}} e^{\frac{1}{2} \left[(\nu_{n}^{2}-\lambda^{2}) - 6\log\lambda \right]} + \lambda^{2}+1 \right\}  \nonumber \\
    = ~& \epsilon_{n}\left\{\frac{4+o(1)}{\sqrt{2\pi}} +\lambda^{2}+1 \right\} =  \epsilon_n\nu_n^2- 6\epsilon_{n}  \log \nu_{n} \left( 1+o(1)\right).\label{bench:mark:risk}
\end{align}

\subsubsection{Lower bound}\label{ssec:lower:softhrsh}

We now derive a matching lower bound for $F(\lambdas)$. This requires a careful analysis of the order of the optimal tuning $\lambdas$. We break it down in several steps:

\textbf{\underline{Step 1}}.
First, we show that $\lambdas\rightarrow \infty, \mu_n-\lambdas\rightarrow +\infty$. We will need the following lemma.
\begin{lemma}[Lemma 8.3 in \cite{johnstone19}] \label{lem::soft-threshold-risk-nonasymptotic}
Define $r_S(\lambda,\mu)=\E(\hat{\eta}_S(\mu+z,\lambda)-\mu)^2$, and $\Bar{r}_{S}(\lambda,\mu) = \min \{ r_{S}(\lambda,0) + \mu^{2}, 1 + \lambda^{2} \}$. For all $\lambda>0$ and $\mu \in \R$,
\begin{equation*}
    \frac{1}{2}\Bar{r}_{S}(\lambda,\mu) \leq r_{S}(\lambda,\mu) \leq \Bar{r}_{S}(\lambda,\mu).
\end{equation*}
\end{lemma}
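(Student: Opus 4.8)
This is the classical Donoho--Johnstone risk bound for the soft threshold (it is cited here as Lemma 8.3 of \cite{johnstone19}), so the plan is to reconstruct its elementary argument rather than to invent anything new. The starting point is an exact formula for the univariate risk. Using $\etahatS(x,\lambda)=x-\lambda\,\mathrm{sign}(x)$ for $|x|>\lambda$ and $\etahatS(x,\lambda)=0$ otherwise, one has $(\etahatS(x,\lambda)-x)^2=\min(x^2,\lambda^2)$ and $\partial_x\etahatS(x,\lambda)=I(|x|>\lambda)$ almost everywhere, so Stein's unbiased risk identity gives
\begin{equation*}
r_S(\lambda,\mu)=1+\E\big[\min((\mu+z)^2,\lambda^2)\big]-2\,\mathbb{P}(|\mu+z|\le\lambda),\qquad z\sim\calN(0,1).
\end{equation*}
From this formula the upper bound $r_S(\lambda,\mu)\le\bar r_S(\lambda,\mu)=\min\{r_S(\lambda,0)+\mu^2,\,1+\lambda^2\}$ and the lower bound $r_S(\lambda,\mu)\ge\tfrac12\bar r_S(\lambda,\mu)$ are proved separately.

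\textbf{Upper bound.} For the piece $r_S(\lambda,\mu)\le 1+\lambda^2$ I would just note $\min((\mu+z)^2,\lambda^2)\le\lambda^2$ and $-2\mathbb{P}(|\mu+z|\le\lambda)\le0$ in the formula above. For the piece $r_S(\lambda,\mu)\le r_S(\lambda,0)+\mu^2$, by symmetry of $r_S$ in $\mu$ I would assume $\mu\ge0$ and differentiate the formula in $\mu$; the density-derivative contributions cancel and one is left with $\partial_\mu r_S(\lambda,\mu)=2\mu\,\mathbb{P}(|\mu+z|\le\lambda)$, which is both nonnegative and $\le 2\mu$. Integrating on $[0,\mu]$ yields $r_S(\lambda,\mu)-r_S(\lambda,0)\le\mu^2$, and taking the minimum of the two pieces gives the upper bound. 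A useful by-product of this step is that $r_S(\lambda,\cdot)$ is nondecreasing on $[0,\infty)$ with $\sup_\mu r_S(\lambda,\mu)=1+\lambda^2$; I will reuse the monotonicity in the lower bound.

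\textbf{Lower bound (the main obstacle).} The delicate part is to produce the universal constant $\tfrac12$ in $r_S(\lambda,\mu)\ge\tfrac12\bar r_S(\lambda,\mu)$, uniformly over all $(\lambda,\mu)$ -- in particular in the awkward range of small thresholds where $r_S(\lambda,0)$, $\mu^2$, and $1+\lambda^2$ are all comparable. I would work from the pointwise decomposition (valid for $\mu\ge0$)
\begin{equation*}
r_S(\lambda,\mu)=\mu^2\,\mathbb{P}(-\lambda-\mu<z<\lambda-\mu)+\int_{\lambda-\mu}^{\infty}(z-\lambda)^2\phi(z)\,dz+\int_{-\infty}^{-\lambda-\mu}(z+\lambda)^2\phi(z)\,dz,
\end{equation*}
together with $r_S(\lambda,0)=2\int_\lambda^\infty(z-\lambda)^2\phi(z)\,dz$ and the monotonicity $r_S(\lambda,\mu)\ge r_S(\lambda,0)$. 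When $\mu^2\le r_S(\lambda,0)$ the monotonicity alone finishes it, since then $\tfrac12\big(r_S(\lambda,0)+\mu^2\big)\le r_S(\lambda,0)\le r_S(\lambda,\mu)$. In the complementary range $\mu^2> r_S(\lambda,0)$ I would bound $r_S(\lambda,\mu)$ below by its squared bias, $r_S(\lambda,\mu)\ge\big(\mu-\E\etahatS(\mu+z,\lambda)\big)^2$, use $\mu-\E\etahatS(\mu+z,\lambda)=\int_0^\mu\mathbb{P}(|t+z|\le\lambda)\,dt$ to show the bias captures a constant multiple of $\min\{\mu^2,1+\lambda^2\}$, and combine with $r_S(\lambda,\mu)\ge r_S(\lambda,0)$ to recover $\tfrac12\bar r_S$. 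The difficulty here is bookkeeping of the constants rather than any conceptual point, and it is exactly the computation carried out in \cite{johnstone19}; for the present paper it suffices to invoke that reference, and the only way the lemma is used downstream is through the qualitative two-sided comparison $r_S(\lambda,\mu)\asymp\min\{r_S(\lambda,0)+\mu^2,\,1+\lambda^2\}$ that drives Step~1 of the lower bound for Proposition~\ref{lem::lasso-risk-regime-4}.
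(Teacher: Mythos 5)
The paper does not prove this lemma at all: it is imported verbatim as Lemma 8.3 of \cite{johnstone19} (just like Lemmas \ref{lem::gaussian-tail-mills-ratio} and \ref{lem::hard-threshold-risk-nonasymptotic}), so there is no in-paper argument to compare against, and your ultimate decision to defer the constant-chasing to the reference is consistent with what the authors themselves do. Your reconstruction of the upper bound is correct and complete: the SURE identity $r_S(\lambda,\mu)=1+\E\min((\mu+z)^2,\lambda^2)-2\mathbb{P}(|\mu+z|\le\lambda)$ gives $r_S\le 1+\lambda^2$ immediately, and the derivative identity $\partial_\mu r_S(\lambda,\mu)=2\mu\,\mathbb{P}(|\mu+z|\le\lambda)\in[0,2\mu]$ (which the paper itself derives inside the proof of Lemma \ref{lem::elastic-net-risk}) yields both the monotonicity in $|\mu|$ and $r_S(\lambda,\mu)\le r_S(\lambda,0)+\mu^2$.

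The lower bound, however, is where the lemma actually lives, and the route you sketch does not close. In the range $\mu^2>r_S(\lambda,0)$ you propose to get $\tfrac12\bar r_S$ from the squared bias $b(\mu)^2=\big(\int_0^\mu\mathbb{P}(|t+z|\le\lambda)\,dt\big)^2$ combined with $r_S(\lambda,\mu)\ge r_S(\lambda,0)$. Take $\lambda=\mu=1$: then $r_S(1,0)\approx 0.151$, so $\bar r_S(1,1)=\min\{1.151,\,2\}\approx 1.151$ and the target is $\tfrac12\bar r_S\approx 0.575$; but $b(1)\approx 0.609$ so $b(1)^2\approx 0.37$, and even $b(1)^2+r_S(1,0)\approx 0.52$ falls short (the lemma itself is fine here, since $r_S(1,1)\approx 0.725$). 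So neither of your two lower bounds, nor their maximum or sum, reaches $\tfrac12\bar r_S$ in this intermediate regime; the variance term must be bounded below by more than $r_S(\lambda,0)$, which is exactly the bookkeeping your sketch omits. If you want a self-contained proof you need a different handle on this regime (e.g., keeping the full decomposition $r_S(\lambda,\mu)=\mu^2\,\mathbb{P}(-\lambda-\mu<z<\lambda-\mu)+\int_{\lambda-\mu}^{\infty}(z-\lambda)^2\phi+\int_{-\infty}^{-\lambda-\mu}(z+\lambda)^2\phi$ and comparing it to $\tfrac12(1+\lambda^2)$ and $\tfrac12(r_S(\lambda,0)+\mu^2)$ directly); otherwise, state the lower bound as a citation only, which is all the paper needs, since Step 1 of Section \secPropIV{} uses the lemma only through $r_S(\lambdas,\mu_n)\ge\tfrac12\min\{\mu_n^2,\lambdas^2\}$.
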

Suppose $\lambdas\rightarrow \infty$ is not true. Then $\lambdas\leq c$ for some finite constant $c\geq 0$ (take a subsequence if necessary). Then, from the definition of $F(\lambdas)$ we have
\begin{align*}
F(\lambdas)& \geq (1-\epsilon_n)\E\hat{\eta}^2_S(z,\lambdas)\geq (1-\epsilon_n)\E\hat{\eta}^2_S(z,c) \\
& =\Omega(1)=\omega(\epsilon_n\nu_n^2),
\end{align*}
which contradicts with \eqref{bench:mark:risk}. Further suppose $\mu_n-\lambdas\rightarrow +\infty$ is not true. Then $\lambdas\geq \mu_n-c$ for some finite constant $c$ (take a subsequence if necessary). From Lemma \ref{lem::soft-threshold-risk-nonasymptotic} we obtain for large $n$,
\begin{align*}
F(\lambdas) & \geq \epsilon_n r_S(\lambdas,\mu_n) \geq \frac{1}{2}\epsilon_n\min(\mu_n^2,\lambdas^2) \\
& \geq \frac{1}{4}\epsilon_n\mu_n^2 = \omega(\epsilon_{n} \nu_{n}^{2}),
\end{align*}
where we used $\mu_{n} = \omega(\sqrt{2\log \epsilon_{n}^{-1}}) = \omega( \nu_{n})$. The same contradiction arises. 

\textbf{\underline{Step 2}}.
We next claim that $\lambdas=(1+o(1))\nu_n$. Otherwise, $\lambdas=(c+o(1))\nu_n$ for some constant $c \neq 1$ (take a subsequence if necessary). For $c>1$, given that we have proved $\lambdas\rightarrow \infty,\mu_n-\lambdas\rightarrow +\infty$, we can apply Lemma \ref{lem::analysis-second-order-lasso-risk-regime-4} to reach
\begin{align*}
    F(\lambdas) & \geq \epsilon_{n} \left[ \lambdas^{2}  + 1 - \frac{(2+o(1))\mu_{n}}{(\mu_{n}-\lambdas)^{2}} \phi(\mu_{n}-\lambdas) \right] \\
    & = \epsilon_{n} \lambdas^2(1+o(1)) =(c^2+o(1))\cdot \epsilon_n\nu_n^2.
\end{align*}
This contradicts with \eqref{bench:mark:risk}. For $c<1$, we have the same contradiction by applying Lemma \ref{lem::analysis-second-order-lasso-risk-regime-4} again:
\begin{align*}
 F(\lambdas)=\frac{4+o(1)}{\lambdas^3}\phi(\lambdas)+\epsilon_n\lambdas^2(1+o(1))=\omega(\epsilon_n\nu_n^2).
\end{align*}
Here, the last inequality holds because $\lambdas\leq (1-\gamma)\nu_{n}$ for some constant $\gamma \in (0,1)$ when $n$ is large, so that
\begin{align*}
    & \frac{1}{\lambdas^{3}} e^{-\frac{\lambdas^{2}}{2}} \geq \frac{1}{(1-\gamma)^{3}\nu_{n}^{3}} e^{-\frac{(1-\gamma)^{2}}{2}\nu_{n}^{2}} \\
    & = \epsilon_{n} \frac{1}{(1-\gamma)^{3}\nu_{n}^{3}} e^{(\gamma-\frac{\gamma^{2}}{2})\nu_{n}^{2}}= \omega(\epsilon_{n} \nu_{n}^{2}).
\end{align*}

\textbf{\underline{Step 3}}. Finally, we prove that $\nu_n^2-\lambdas^2=(6+o(1))\log\nu_n$. Suppose this is not true. Then $\nu_n^2-\lambdas^2=(c+o(1))\log\nu_n$ for some $c\neq 6$ (take a subsequence if necessary). Since we have proved $\lambdas=(1+o(1))\nu_n$, we can use the lower bound in Lemma \ref{lem::analysis-second-order-lasso-risk-regime-4} and simplify it to
\begin{align}
\label{final:ref:to}
    F(\lambdas)\geq C(\lambdas)& =\frac{(4+o(1))\epsilon_n}{\sqrt{2\pi}}\frac{e^{\frac{1}{2}(\nu_n^2-\lambdas^2)}}{\lambdas^3} \nonumber \\
    & +\epsilon_n\Big(\lambdas^2+1+o(1)\Big).
\end{align}
For the case $c>6$, since
\begin{equation*}
    \frac{1}{\lambdas^{3}} e^{\frac{1}{2}(\nu_{n}^{2}-\lambdas^{2})} = e^{\frac{1}{2}(\nu_{n}^{2}-\lambdas^{2}-6\log\nu_{n}) + 3\log \frac{\nu_{n}}{\lambda}} =\nu_n^{\tilde{c}},
\end{equation*}
with $\tilde{c}=\frac{c-6+o(1)}{2}>0$, \eqref{final:ref:to} implies that
\[
F(\lambdas)\geq \Uptheta(\epsilon_n\nu_n^{\tilde{c}})+\epsilon_n\nu_n^2-(c+o(1))\epsilon_n\log \nu_n,
\]
contradicting with \eqref{bench:mark:risk}. Regarding the case $c<6$, \eqref{final:ref:to} directly leads to
\begin{align*}
    F(\lambdas)\geq \epsilon_n\nu_n^2-(c+o(1))\epsilon_n\log\nu_n+(1+o(1))\epsilon_n.
\end{align*}
No mater what value $c\in [-\infty,6)$ takes, the above lower bound is larger than the upper bound in \eqref{bench:mark:risk}, resulting in the same contradiction. 

Now that we have derived the accurate order information for $\lambdas$: $\lambdas^2=\nu_n^2-(6+o(1))\log\nu_n$, we can plug it into \eqref{final:ref:to} to obtain the sharp lower bound:
\[
F(\lambdas)\geq \epsilon_n\Big(\nu_n^2-(6+o(1))\log\nu_n\Big).
\]

\subsection{Proof of Proposition \ref{prop:ridge:thirdregime}}\label{sec:proof:thridregime:ridge}
Using the simple form of $\hat{\eta}_L(y,\lambda)$, the calculation is straightforward:
\begin{eqnarray*}
    && \inf_{\lambda} \sup_{\theta \in \Theta(k_n , \tau_n)} \mathbb{E}_{\theta}\|\hat{\eta}_L(y,\lambda)-\theta\|_2^2 \\
    &=& \inf_{\lambda} \sup_{\theta \in \Theta(k_n , \tau_n)} \E_{\theta} \sum_{i=1}^{n} \left(\frac{1}{1+\lambda} y_{i} - \theta_{i}\right)^{2} \\
    &=&  \inf_{\lambda} \sup_{\theta \in \Theta(k_n , \tau_n)}  \sum_{i=1}^{n} \left[ \left(\frac{\lambda}{1+\lambda}\right)^{2} \theta_{i}^{2} + \left(\frac{1}{1+\lambda}\right)^{2}\sigma_n^2 \right] \\
    &=&  \inf_{\lambda}\frac{\lambda^2k_n\tau_n^2+n\sigma_n^2}{(1+\lambda)^2} = \frac{n\sigma_n^2\epsilon_n\mu_n^2}{1+\epsilon_n\mu_n^2}. 
       \end{eqnarray*}




\bibliographystyle{IEEEtran}
\bibliography{reference.bib}
 
\begin{IEEEbiographynophoto}{Yilin Guo}
holds a Ph.D. in Statistics from Columbia University in 2023. Before joining Columbia, she received a B.S. in Statistics from University of Science and Techonology of China in 2018. Her research interests include high-dimensional statistics and statistical machine learning.
\end{IEEEbiographynophoto}

\begin{IEEEbiographynophoto}{Haolei Weng}
 is currently an Assistant Professor at the Department of Statistics and Probability, Michigan State University. Prior to MSU, he completed his Ph.D. in Statistics from Columbia University in 2017 and was a postdoctoral researcher at Princeton University in 2018. Before going to Columbia, he received a B.S. in Statistics from University of Science and Technology of China. His research interests are broadly in the area of high-dimensional statistics and statistical machine learning.
\end{IEEEbiographynophoto}


\begin{IEEEbiographynophoto}{Arian Maleki}
 is an associate professor in the Department of Statistics at Columbia University. He received his PhD from Stanford University in 2011. Before joining Columbia University, he was a postdoctoral scholar at Rice University. Arian’s research interests include high-dimensional statistics, computational imaging, compressed sensing, and machine learning.

\end{IEEEbiographynophoto}


\end{document}